\numberwithin{equation}{section}
\newtheorem*{rep@theorem}{\rep@title}
\newcommand{\newreptheorem}[2]{%
\newenvironment{rep#1}[1]{%
 \def\rep@title{#2 \ref{##1}}%
 \begin{rep@theorem}}%
 {\end{rep@theorem}}}
\newtheorem*{theoremO}{Theorem}
\newtheorem*{theoremA}{Theorem A}
\newtheorem*{theoremB}{Theorem B}
\newtheorem*{theoremC}{Theorem C}
\newtheorem*{theoremD}{Theorem D}
\newtheorem{theorem}{Theorem}[section]
\newtheorem{proposition}[theorem]{Proposition}
\newtheorem{lemma}[theorem]{Lemma}
\newtheorem{corollary}[theorem]{Corollary}
\theoremstyle{definition}
\newtheorem{example}[theorem]{Example}
\theoremstyle{remark}
\newtheorem*{acknowledgements}{Acknowledgements}
\newcommand{\RR}{\ensuremath{\mathbbmss{R}}}
\newcommand{\TT}{\ensuremath{\mathbbmss{T}}} 
\newcommand{\ZZ}{\ensuremath{\mathbbmss{Z}}} 
\newcommand{\II}{\ensuremath{\mathbbmss{1}}}
\newcommand{\Ccal}{\mathcal{C}}
\newcommand{\Dcal}{\mathcal{D}}
\newcommand{\Pcal}{\mathcal{P}}
\def\Dscr{\mathscr{D}}
\def\Iscr{\mathscr{I}}
\def\VEscr{\mathscr{VE}}
\def\DRscr{\mathscr{DR}}
\newcommand{\VE}{\mathrm{VE}}
\newcommand{\gfrak}{\mathfrak{g}}
\newcommand{\Ad}{\operatorname{Ad}}
\newcommand{\ad}{\operatorname{ad}}
\newcommand{\End}{\operatorname{End}}
\newcommand{\Hom}{\operatorname{Hom}}
\newcommand{\CE}{\operatorname{CE}}
\newcommand{\HC}{\operatorname{HC}}
\newcommand{\HH}{\operatorname{HH}}
\newcommand{\Tot}{\operatorname{Tot}}
\newcommand{\dW}{d_{\uW}}
\newcommand{\dCE}{\delta_{\CE}}
\newcommand{\Rep}{\operatorname{\mathbf{Rep}}}
\newcommand{\DGRep}{\operatorname{\mathbf{DGRep}}}
\newcommand{\Vect}{\operatorname{\mathbf{Vect}}}
\newcommand{\DGVect}{\operatorname{\mathbf{DGVect}}}
\newcommand{\BSS}{\operatorname{\mathbf{BSS}}}
\newcommand{\Mod}{\operatorname{\mathbf{Mod}}}
\newcommand{\DGMod}{\operatorname{\mathbf{DGMod}}}
\newcommand{\Ob}{\operatorname{Ob}}
\newcommand{\A}{\mathsf{A}}
\def\uC{{\mathrm C}}
\def\uW{{\mathrm W}}
\def\uS{{\mathrm S}}
\def\bas{{\rm bas}}
\def\ev{{\rm ev}}
\def\id{{\rm id}}
\def\pr{{\rm pr}}
\def\us{{\sf s}}
\def\uu{{\sf u}}
\def\uq{{\sf q}}
\def\ue{{\sf e}}
\def\Csf{{\sf C}}
\def\Ssf{{\sf S}}
\def\DR{{\sf{DR}}}
\def\AM{{\sf{AM}}}
\def\AW{{\sf{AW}}}
\def\EZ{{\sf{EZ}}}
\def\VE{{\sf{VE}}}
\def\sym{{\rm sym}}
\def\VEsrc{{\mathscr{VE}}}
\def\AMsrc{{\mathscr{AM}}}
\def\DRsrc{{\mathscr{DR}}}
\let\originalleft\left
\let\originalright\right
\renewcommand{\left}{\mathopen{}\mathclose\bgroup\originalleft}
\renewcommand{\right}{\aftergroup\egroup\originalright}
\DeclareFontFamily{U}{matha}{\hyphenchar\font45}
\DeclareFontShape{U}{matha}{m}{n}{
      <5> <6> <7> <8> <9> <10> gen * matha
      <10.95> matha10 <12> <14.4> <17.28> <20.74> <24.88> matha12
      }{}
\DeclareSymbolFont{matha}{U}{matha}{m}{n}
\DeclareMathSymbol{\abxcup}{\mathbin}{matha}{'131}
\newcommand*{\sbullet}{\raisebox{0.1ex}{\scalebox{0.6}{$\bullet$}}}
\newcommand*{\ssbullet}{\raisebox{-0.01ex}{\scalebox{0.6}{$\bullet$}}}
\newcommand*{\usbullet}{\raisebox{0.35ex}{\scalebox{0.6}{$\bullet$}}}
\DeclareMathAlphabet{\mathbfit}{OML}{cmm}{b}{it}
\newcommand*{\bigcdot}{\raisebox{-0.25ex}{\scalebox{1.3}{\,$\cdot$\,}}}
\begin{document}

\title{Singular chains on Lie groups and the Cartan relations II}

\author{Camilo Arias Abad\footnote{Escuela de Matem\'{a}ticas, Universidad Nacional de Colombia Sede Medell\'{i}n, email: camiloariasabad@gmail.com} { and} Alexander Quintero V\'{e}lez\footnote{Escuela de Matem\'{a}ticas, Universidad Nacional de Colombia Sede Medell\'{i}n, email: aquinte2@unal.edu.co}}

 \maketitle
 
 \begin{abstract}
Let $G$ be a simply connected Lie group with Lie algebra $\gfrak$ and denote by $\uC_{\sbullet}(G)$ the DG Hopf algebra of smooth singular chains on $G$. In a companion paper it was shown that the category of `sufficiently smooth’ modules over $\uC_{\sbullet}(G)$ is equivalent to the category of representations of $\TT \gfrak$, the DG Lie algebra which is universal for the Cartan relations. In this paper we show that, if $G$ is compact, this equivalence of categories can be extended to an $\A_{\infty}$-quasi-equivalence of the corresponding DG categories. As an intermediate step we construct an $\A_{\infty}$-quasi-isomorphism between the Bott-Shulman-Stasheff DG algebra associated to $G$ and the DG algebra of Hochschild cochains on $\uC_{\sbullet}(G)$. The main ingredients in the proof are the Van Est map and Gugenheim’s $\A_{\infty}$ version of De~Rham’s theorem. 
\end{abstract}

\tableofcontents

\section{Introduction}
This paper is a continuation of previous work by the first named author \cite{AriasAbad2019}, where an infinitesimal description of the category of modules over the algebra of singular chains on a Lie group is presented. Our main result is that in the compact case, the correspondence of \cite{AriasAbad2019} can be promoted to an $\A_\infty$ quasi-equivalence of DG categories.

Let $G$ be a Lie group with Lie algebra $\gfrak$, and denote by $\uC_{\sbullet}(G)$ the space of smooth singular chains on $G$. This space carries the structure of a DG Hopf algebra, where the product is induced by the Eilenberg-Zilber map and the coproduct by the Alexander-Whitney map. We write $\Mod(\uC_{\sbullet}(G))$ for the category of  sufficiently smooth modules over this DG Hopf algebra. We also denote by  $\TT \gfrak$ the DG Lie algebra which is universal for the Cartan relations on $\gfrak$,  and write $\Rep(\TT \gfrak)$ for the corresponding category of representations. The main result of \cite{AriasAbad2019} is the following.

\begin{theoremO}
Suppose that $G$ is a simply connected Lie group. There exists a differentiation functor \linebreak $\Dscr \colon  {\Mod(\uC_{\sbullet}(G)) \to \Rep(\TT \gfrak)}$ and an integration functor $\Iscr \colon \Rep(\TT \gfrak) \to \Mod(\uC_{\sbullet}(G))$ which are inverses to one another. In particular, the categories $\Mod(\uC_{\sbullet}(G))$ and $\Rep(\TT \gfrak)$ are equivalent as symmetric monoidal categories. 
\end{theoremO}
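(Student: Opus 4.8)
The plan is to construct the two functors explicitly and then verify that they are mutually inverse and symmetric monoidal. Recall that a representation of $\TT\gfrak$ amounts to a DG vector space $(V,D)$, with $D$ of degree $+1$, together with a degree $0$ operator $L_X$ and a degree $-1$ operator $\iota_X$ for each $X\in\gfrak$, depending linearly on $X$ and satisfying the Cartan relations $[D,\iota_X]=L_X$, $[\iota_X,\iota_Y]=0$, $[L_X,\iota_Y]=\iota_{[X,Y]}$, $[L_X,L_Y]=L_{[X,Y]}$ and $[D,L_X]=0$, where the brackets are graded commutators. The task is to match such data, on the one hand, with the action of chains on a module, on the other.

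First I would define the differentiation functor $\Dscr$. Given a sufficiently smooth module $M$ over $\uC_{\sbullet}(G)$, its internal differential supplies the operator $D$. For $X\in\gfrak$, differentiating the action of the family of $0$-simplices $\exp(tX)$ at $t=0$ produces the degree $0$ operator $L_X$, while differentiating the action of the family of $1$-simplices $s\mapsto\exp(stX)$ at $t=0$ produces the degree $-1$ operator $\iota_X$; smoothness of $M$ is precisely what guarantees that these derivatives exist. The Cartan relations are then read off from the simplicial identities: the magic formula $[D,\iota_X]=L_X$ reflects, via the Leibniz rule for the DG action, that the boundary of the path from $e$ to $\exp(tX)$ is $\exp(tX)-e$, and the remaining relations reflect the compatibility of the Eilenberg--Zilber product with degeneracies and with the group multiplication of $G$.

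Next I would construct the integration functor $\Iscr$ by an iterated-integral formula. Given a representation $(V,D,L,\iota)$, the action of a smooth $k$-simplex $\sigma\colon\Delta^k\to G$ on $V$ should be defined by a Chen-type holonomy built from the pullback of the Maurer--Cartan form along $\sigma$, contracted against the operators $\iota_X$ and transported by the $L_X$; this is the standard recipe for integrating a flat $\gfrak$-superconnection. The crucial points to check are that this assignment descends to chains, is compatible with the face and degeneracy maps, and is multiplicative for the Eilenberg--Zilber product, so that it really defines a module structure. Here simple connectivity of $G$ is what guarantees that the holonomy is well defined and independent of the admissible choices.

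Finally I would show that $\Dscr$ and $\Iscr$ are mutually inverse and symmetric monoidal. The composite $\Dscr\circ\Iscr\cong\id$ should follow by differentiating the iterated-integral formula and invoking the fundamental theorem of calculus on the standard simplices, which recovers $L_X$, $\iota_X$ and $D$ on the nose. Monoidal compatibility is obtained by matching the Alexander--Whitney coproduct on $\uC_{\sbullet}(G)$, which controls the tensor product of modules, with the cocommutative structure on $\TT\gfrak$ that controls the tensor product of representations, together with the evident symmetry. I expect the \emph{main obstacle} to be the reverse composite $\Iscr\circ\Dscr\cong\id$: this is a homotopy-coherent analogue of Lie's second theorem, asserting that a sufficiently smooth module is completely reconstructed from its infinitesimal data, and it is exactly here that simple connectivity of $G$ and the full strength of the ``sufficiently smooth'' hypothesis must be used.
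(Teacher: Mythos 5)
First, note that this theorem is not proved in the present paper at all: it is the main result of the companion paper \cite{AriasAbad2019}, and here it is only quoted, with the constructions of $\Dscr$ and $\Iscr$ recalled in \S\ref{sec:2.6}. Your differentiation functor agrees exactly with that construction: $L_x$ is obtained by differentiating the action of the $0$-simplices $\exp(tx)$ and $i_x$ by differentiating the action of the $1$-simplices $\sigma[tx]\colon s\mapsto\exp(stx)$, with the Cartan relations checked from the DG module axioms.

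Where you genuinely diverge is the integration functor, and this is also where your sketch has a gap. The paper does not use a Chen-type iterated integral of a flat superconnection. Instead, it attaches to a representation $V$ the single left-equivariant \emph{representation form} $\Phi_V=\sum_{k\ge 0}\Phi_V^{(k)}$ on $G$, determined at the identity by $\Phi_V^{(k)}(e)(x_1,\dots,x_k)=i_{x_1}\circ\cdots\circ i_{x_k}$, and sets $\Iscr(\rho)(\sigma)=\int_{\Delta_k}\sigma^*\Phi_V$ --- one integral of one form per simplex. The two properties that make this work are the descent equations \eqref{eqn:2.34} (which give the chain-map property via Stokes) and the multiplicativity $\mu^*\Phi_V^{(k)}=\sum_{i+j=k}(-1)^{ij}\pi_1^*\Phi_V^{(i)}\wedge\pi_2^*\Phi_V^{(j)}$ of \eqref{eqn:2.35}, which, combined with the shuffle decomposition defining the Eilenberg--Zilber product, yields \emph{strict} compatibility with the product on $\uC_{\sbullet}(G)$. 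Your proposed holonomy recipe is the standard one for producing $\A_{\infty}$-modules (as in Gugenheim or Abad--Sch\"atz), and you do not explain why it would be strictly multiplicative for the $\EZ$ product rather than multiplicative only up to homotopy; this is precisely the point the multiplicative form $\Phi_V$ is designed to handle, so as written your construction of $\Iscr$ does not obviously land in $\Mod(\uC_{\sbullet}(G))$. Relatedly, simple connectivity is not used for path-independence of a holonomy: it is used to integrate the Lie derivative action $L$ to the genuine $G$-representation $\Phi_V^{(0)}$, which is what makes the correspondence between objects of $\Rep(\TT\gfrak)$ and left-equivariant representation forms bijective; once that correspondence is in place, both composites $\Dscr\circ\Iscr$ and $\Iscr\circ\Dscr$ are identities essentially by construction, so the ``reverse composite'' you flag as the main obstacle is not where the real work lies.
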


Let us now explain the content of the present work.
The category $\Mod(\uC_{\sbullet}(G))$ admits an enhancement to a DG category, which we shall denote by $\DGMod(\uC_{\sbullet}(G))$, and whose spaces of morphisms are Hochschild complexes for $\uC_{\sbullet}(G)$. Similarly, the category $\Rep(\TT\gfrak)$ may be enhanced to a DG category, which we denote by $\DGRep(\TT\gfrak)$, and whose spaces of morphisms are constructed in terms of the Weil algebra of $\gfrak$. Our main result is the following.

\begin{theoremA}
Suppose that $G$ is compact and simply connected. There exists a zig-zag of $\A_{\infty}$-quasi-equivalences that connects $\DGRep(\TT\gfrak)$ to $\DGMod(\uC_{\sbullet}(G))$. In particular, the DG categories $\DGRep(\TT\gfrak)$ and $\DGMod(\uC_{\sbullet}(G))$ are $\A_{\infty}$-quasi-equivalent. 
\end{theoremA}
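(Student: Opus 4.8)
The plan is to begin from the equivalence of Theorem~O, which already identifies the objects of $\DGRep(\TT\gfrak)$ with those of $\DGMod(\uC_{\sbullet}(G))$ through the functors $\Dscr$ and $\Iscr$, and to promote this identification to the level of morphism complexes. On the module side the Hom complex between two objects is a Hochschild-type complex for $\uC_{\sbullet}(G)$; on the representation side it is built from the Weil algebra $W(\gfrak)$. Rather than producing a single $\A_{\infty}$-functor directly, the strategy is to factor the comparison through an intermediate DG category whose morphism spaces are modeled on the $\BSS$ algebra, splitting the problem into a ``de Rham'' leg and a ``Van Est'' leg and thereby realizing the desired connection as a span of $\A_{\infty}$-quasi-equivalences.

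First I would make precise the identification underlying the intermediate step announced in the abstract. The Hochschild cochain algebra of $\uC_{\sbullet}(G)$ should be recognized as (a twisted version of) the total complex of smooth singular cochains on the nerve $NG$, with $NG_n = G^{\times n}$: using the Eilenberg--Zilber equivalence $\uC_{\sbullet}(G)^{\otimes n} \simeq \uC_{\sbullet}(G^{\times n})$, the $n$-th Hochschild cochain group becomes cochains on $G^{\times n}$, and the bar/Hochschild differential reproduces the simplicial differential of $NG$ (the Pontryagin product corresponding to the inner face maps). Dually, the $\BSS$ algebra is precisely the smooth simplicial de Rham complex $\Omega^{\sbullet}(NG_{\sbullet})$. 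The $\A_{\infty}$-quasi-isomorphism between them is then obtained by applying Gugenheim's $\A_{\infty}$ version of de Rham's theorem level-wise on the nerve and assembling the levels compatibly with the simplicial structure, the iterated-integral maps furnishing the higher $\A_{\infty}$-components. Care is needed to reconcile the product structures --- the cup/Eilenberg--Zilber product on cochains against the wedge/shuffle product on forms --- so that the result is a genuine $\A_{\infty}$-algebra morphism and not merely a quasi-isomorphism of complexes.

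The second leg uses the Van Est map $\VE$, which differentiates a simplicial form at the identity to produce an element of the Weil algebra, thereby mapping the $\BSS$ complex to the morphism complexes of $\DGRep(\TT\gfrak)$. Here compactness of $G$ is essential: averaging over $G$ against the Haar measure shows that $\VE$ is a quasi-isomorphism and supplies an explicit homotopy inverse, so the induced functor out of the intermediate category is a quasi-equivalence. Combining the two legs produces a span
\[
\DGRep(\TT\gfrak) \;\xleftarrow{\ \VE\ }\; (\text{intermediate } \BSS\text{-category}) \;\xrightarrow{\ \text{Gug.}\ }\; \DGMod(\uC_{\sbullet}(G)),
\]
in which each arrow is an $\A_{\infty}$-quasi-equivalence; essential surjectivity on the homotopy categories is guaranteed by Theorem~O.

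The hard part will be organizing all of this coherently at the level of DG \emph{categories} rather than single algebras. Both Gugenheim's map and the Van Est map must be made natural in the coefficient data attached to each object, so that the higher homotopies assemble into $\A_{\infty}$-functors respecting composition up to a coherent system of homotopies; tracking signs and the compatibility between the two product structures through the iterated-integral expansion is the principal technical burden. A secondary difficulty is the ``sufficiently smooth'' hypothesis on modules, which must be shown to be preserved along the zig-zag and to interact correctly with the smooth singular cochains entering Gugenheim's construction.
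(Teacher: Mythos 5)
Your overall architecture --- an intermediate category with $\BSS$-type morphism complexes, a Gugenheim/iterated-integral leg towards $\DGMod(\uC_{\sbullet}(G))$, and a Van Est leg towards $\DGRep(\TT\gfrak)$, with objects matched via Theorem~O --- is the same as the paper's, and your description of the de~Rham leg (Eilenberg--Zilber identification of $\HC^{\sbullet}(\uC_{\sbullet}(G))$ with cochains on the nerve, levelwise Gugenheim maps assembled through a totalization compatible with the simplicial structure, Maurer--Cartan twisting of the morphism complexes) matches what the paper actually does, modulo the technical point that one must pass through normalized chains so that the Eilenberg--Zilber map commutes with the outer face maps.

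There is, however, a genuine gap in your Van Est leg as stated. The morphism complexes of $\DGRep(\TT\gfrak)$ are the \emph{basic} subcomplexes $(\uW\gfrak\otimes\Hom(V,V'))_{\bas}$, and the Van Est map on the full Bott--Shulman--Stasheff complex does \emph{not} take values in the basic part of the Weil algebra; moreover the full Weil algebra is acyclic, so a map landing there could not possibly be a quasi-isomorphism onto the basic part. Consequently the functor you describe out of the single intermediate category is not well defined into $\DGRep(\TT\gfrak)$. The paper repairs this by inserting a second intermediate category $\BSS^{G}(G)$, whose morphism spaces are the $G_{\sbullet+1}$-invariant subcomplexes $[\Omega^{\sbullet}(BG_{\sbullet})\otimes\Hom(V,V')]^{G_{\ssbullet+1}}$: it is only the restriction of $\VE$ to these invariants that lands in the basic Weil algebra (identified with $(\uS^{\sbullet}\gfrak^{*}\otimes\Hom(V,V'))^{G}$), and the zig-zag therefore has three arrows, $\DGRep(\TT\gfrak)\leftarrow\BSS^{G}(G)\rightarrow\BSS(G)\rightarrow\DGMod(\uC_{\sbullet}(G))$. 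Compactness enters where you put the averaging argument, but for a different purpose than you claim: Cartan's theorem shows the inclusion $\BSS^{G}(G)\to\BSS(G)$ is a quasi-equivalence, while the quasi-isomorphism property of the restricted Van Est map is established not by averaging but by exhibiting an explicit left inverse built from the Alekseev--Meinrenken/Chern--Weil map $\widehat{\AM}{}^{\theta}\colon(\uS^{\sbullet}\gfrak^{*})^{G}\to\Omega^{\sbullet}(BG_{\sbullet})^{G_{\ssbullet+1}}$, which is a quasi-isomorphism for compact connected $G$. Without identifying the invariant subcategory and the landing-in-basics statement, the span you propose does not compile into a comparison with $\DGRep(\TT\gfrak)$.
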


For the proof we introduce an intermediate DG category $\BSS(G)$ and an ivariant version of it $\BSS^G(G)$, whose morphism spaces are defined by twisting the Bott-Shulman-Stasheff DG algebra introduced in \cite{BSS}. The following diagram, where each arrow represents an $\A_\infty$ equivalence of DG categories, summarizes the structure of the paper: 
\[
\xymatrix{
\DGRep(\TT\gfrak) && \BSS(G)\ar[rd]&\\
 &\ar[ul] \ar[ur]\BSS^G(G)&& \DGMod(\uC_{\sbullet}(G)).
}\]
The second arrow is an inclusion of categories which is a quasi-equivalence when $G$ is compact.\\

The comparison between $\BSS^G(G)$ and $\DGRep(\TT\gfrak)$
uses the Van Est map \cite{VanEst1953, AriasAbad-Crainic2011,Li-Bland-Meinrenken2015,Crainic2003,Xu-Weinstein1991,meinrenken2019van} and the noncommutative Weil algebra of Alekseev and Meinrenken \cite{Alekseev-Meinrenken2005,alekseev2000non}. We prove the following results.

\begin{theoremB}
Let $G$ be a compact and simply connected Lie group. There exists a DG functor \[\VEscr \colon \BSS^{G}(G) \to \DGRep(\TT\gfrak)\]which is a quasi-equivalence.  
\end{theoremB}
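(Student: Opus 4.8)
The plan is to let $\VEscr$ agree on objects with the differentiation correspondence of Theorem~O, and to realize it on morphism complexes by a twisted Van~Est map. Under the identification of Theorem~O the two categories have, up to the differentiation functor $\Dscr$, the same objects, so that $\VEscr$ is a bijection on isomorphism classes and essential surjectivity is immediate; the whole content of the statement therefore concerns the morphism complexes. Concretely, $\Hom_{\BSS^G(G)}(V,W)$ is the total complex of the invariant, twisted Bott--Shulman--Stasheff double complex associated with the nerve $G^{\sbullet}$---simplicial degree in one direction, de~Rham degree of invariant forms in the other, with coefficients twisted by the module structures on $V$ and $W$---while $\Hom_{\DGRep(\TT\gfrak)}(V,W)$ is modelled on the basic subcomplex $\bigl(W(\gfrak)\otimes\Hom(V,W)\bigr)_{\bas}$ of the Weil algebra.

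First I would define the chain map by applying the Van~Est map with coefficients: differentiate a twisted form on $G^{\sbullet}$ along the group directions and evaluate at the identity, producing a Weil-algebra-valued cochain. That this is a chain map reduces to the standard fact that Van~Est intertwines the total simplicial--de~Rham differential with the Weil differential; compatibility with the contractions and Lie derivatives, hence with the $\TT\gfrak$-action, follows from naturality of Van~Est under the infinitesimal $\gfrak$-action. The delicate point is multiplicativity---that composition in $\BSS^{G}(G)$ is carried to composition in $\DGRep(\TT\gfrak)$---and it is here that the noncommutative Weil algebra of Alekseev and Meinrenken enters: the Van~Est map naturally lands in a $U(\gfrak)$-flavoured complex, and the noncommutative Weil algebra is the model in which this output is genuinely multiplicative, with a quantization (symmetrization) map identifying its cohomology with that of the classical model $W(\gfrak)$.

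It then remains to prove that $\VEscr$ is a quasi-isomorphism on each hom-complex. I would filter both total complexes by simplicial degree and compare the associated spectral sequences. On the $E_1$-page the source rows compute the differentiable group cohomology of $G$ with the appropriate twisted coefficients, the target rows compute the corresponding Chevalley--Eilenberg cohomology, and the map induced by $\VEscr$ is precisely the van~Est comparison in each fixed simplicial degree. Here compactness of $G$ is decisive: averaging against normalized Haar measure supplies the contracting homotopies that make the twisted Van~Est map a quasi-isomorphism degree by degree---equivalently, it collapses the higher differentiable cohomology onto the invariant part realized by the Weil model. An isomorphism on $E_1$ then propagates to the abutment, giving quasi-full-faithfulness.

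The step I expect to be hardest is not the quasi-isomorphism---which, once the spectral-sequence comparison is in place, follows cleanly from the compact Van~Est theorem---but the strict functoriality of $\VEscr$. For plain coefficients the Van~Est map is only multiplicative up to homotopy, so promoting it to an honest DG functor, sending composition to composition without correction terms and respecting units and associativity, is exactly the role played by the noncommutative Weil algebra. Verifying that the resulting assignment is genuinely functorial, rather than merely a collection of chain maps on hom-spaces, is the technical heart of the argument.
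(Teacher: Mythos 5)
Your overall architecture agrees with the paper's: the functor is the identity on objects (both categories have the objects of $\Rep(\TT\gfrak)$, so essential surjectivity is trivial and no appeal to $\Dscr$ is needed), it is given on hom-complexes by the Van~Est map tensored with $\id_{\Hom(V,V')}$, and compactness is what makes it a quasi-isomorphism. But two steps as you describe them would fail. First, you never address the twisting of the differentials. The hom-complexes on both sides carry differentials perturbed by Maurer--Cartan elements --- $\Phi_V-\id_V$ on the Bott--Shulman--Stasheff side and $t^{a}\otimes L_{a}-w^{a}\otimes i_{a}$ on the Weil side --- and intertwining the \emph{untwisted} simplicial--de~Rham differential with the Weil differential is not enough: one must compute that $\VEscr_{\End(V)}$ carries $\Phi_V-\id_V$ to $t^{a}\otimes L_{a}-w^{a}\otimes i_{a}$ (Proposition~4.11 of the paper, using that $\VE$ kills $\Phi_V^{(k)}$ for $k\geq 2$). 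Without this the map is not a chain map for $\partial_{D,D'}$. Relatedly, you do not verify that the restriction of $\VE$ to the $G_{\ssbullet+1}$-invariant subcomplex lands in the \emph{basic} part $(\uW\gfrak\otimes\Hom(V,V'))_{\bas}$; this requires the equivariance of evaluation at the identity under the residual $\gamma_0$-action (Lemmas~3.2--3.3 and Proposition~3.4) and is precisely why the functor is defined on $\BSS^{G}(G)$ rather than on $\BSS(G)$.

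Second, you have the role of the noncommutative Weil algebra reversed. In the paper the antisymmetrized Van~Est formula is already a morphism of DG algebras into the \emph{commutative} Weil algebra, so strict functoriality is not the obstruction you anticipate; and retargeting at $\widetilde{\uW}\gfrak$ would take you out of $\DGRep(\TT\gfrak)$ as defined. The noncommutative Weil algebra enters in the \emph{opposite} direction: $\Omega^{\sbullet}(EG_{\sbullet})$ with the cup product is a noncommutative $\gfrak$-DG algebra, and its universal property plus symmetrization produce the Alekseev--Meinrenken map $\widehat{\AM}{}^{\theta}\colon(\uS^{\sbullet}\gfrak^*)^G\to\Omega^{\sbullet}(BG_{\sbullet})^{G_{\ssbullet+1}}$. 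The paper then proves quasi-full-faithfulness not by a spectral-sequence comparison but by the identity $\VE\circ\widehat{\AM}{}^{\theta}=\id$ (Theorem~3.7) together with Alekseev--Meinrenken's theorem that $\AM^{\theta}$ is a quasi-isomorphism for compact connected $G$. Your spectral-sequence route is not hopeless, but as stated it is miscalibrated: after passing to invariants the columns are the finite-dimensional spaces $(\Lambda^{q}\gfrak_{p}^*\otimes\Hom(V,V'))^{G}$, not differentiable group cochains, so the $E_1$ page is not differentiable cohomology of $G$ and the averaging/vanishing theorem cannot be invoked in the form you use it; compactness instead enters through Cartan's theorem (the inclusion of invariants is a quasi-isomorphism) and through the Alekseev--Meinrenken isomorphism.
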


In order to compare $\BSS(G)$ and $\DGMod(\uC_{\sbullet}(G))$, we construct an $\A_\infty$ quasi-isomorphism between the Bott-Shulman-Stasheff algebra and the algebra of Hochschild cochains on singular chains on $G$.

\begin{theoremC}\label{thm:3.17aa}
Let $G$ be a Lie group. There is an $\A_{\infty}$-morphism \[\DR^{\Theta} \colon \Tot (\Omega^{\sbullet}(BG_{\sbullet})) \to \HC^{\sbullet}(\uC_{\sbullet}(G))\]
which is a quasi-isomorphism.
\end{theoremC}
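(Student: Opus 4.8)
The plan is to construct the $\A_\infty$-morphism $\DR^\Theta$ by combining a classical cochain-level de Rham comparison with Gugenheim's $\A_\infty$ perturbation of it, applied simplicially to the nerve $BG_\sbullet$. Let me think about what the two sides are.

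The left side is the totalization of the simplicial-de Rham double complex: in simplicial degree $p$ we have $\Omega^\bullet(G^p)$ (forms on the $p$-th space $BG_p = G^p$ of the nerve), with the de Rham differential in one direction and the simplicial (alternating face) differential in the other. The right side, $\HC^\bullet(\uC_\sbullet(G))$, is the Hochschild cochain complex of the DG coalgebra (or algebra) of singular chains. A Hochschild $p$-cochain is essentially a map $\uC_\sbullet(G)^{\otimes p} \to \kk$, i.e. a functional on chains on $G^p$, and de Rham's theorem identifies forms on $G^p$ with such functionals via integration.

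Let me sketch the steps. First, I would recall Gugenheim's $\A_\infty$ de Rham theorem: for a manifold $M$, integration of forms over smooth simplices gives a map $\Omega^\bullet(M) \to \uC^\bullet(M)$ (singular cochains) which, while not multiplicative on the nose, extends to an $\A_\infty$-quasi-isomorphism. The higher homotopies are built from Chen's iterated integrals / the classical homotopy between the Alexander-Whitney and Eilenberg-Zilber maps, encoded by a choice of contracting data $\Theta$ (this is presumably what the superscript $\Theta$ records). Second, I would apply this fibrewise in each simplicial degree $p$, to the manifold $G^p$, obtaining $\A_\infty$-quasi-isomorphisms $\Omega^\bullet(G^p) \to \uC^\bullet(G^p)$ that are compatible with the simplicial structure maps up to the prescribed homotopies. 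Third, I would totalize: assemble these degree-wise maps, together with the higher $\A_\infty$-components coming from the interplay between the de Rham totalization product (which uses the shuffle/Eilenberg-Zilber product across simplicial degrees) and the Hochschild cup product. The resulting $\Tot$ of the left-hand cosimplicial object must be matched against $\HC^\bullet(\uC_\sbullet(G))$, using that Hochschild cochains on $\uC_\sbullet(G)$ are built precisely from multilinear functionals on tensor powers, i.e. functionals on chains on the nerve.

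The heart of the argument is verifying the $\A_\infty$-relations for the totalized map, which amounts to checking that the de Rham comparison intertwines the two distinct multiplicative structures: on the Bott-Shulman-Stasheff side the product combines the wedge product of forms with the shuffle product along the simplicial direction, whereas on the Hochschild side the product is the cup product dual to the coproduct on $\uC_\sbullet(G)$ (itself built from the Alexander-Whitney map). Since the Eilenberg-Zilber and Alexander-Whitney maps are only homotopy-inverse, this compatibility holds only up to coherent homotopy, and those homotopies are exactly the higher $\A_\infty$-components of $\DR^\Theta$. I expect the main obstacle to be organizing these homotopies coherently across both the simplicial and the tensor directions simultaneously, i.e. producing a single homotopy datum $\Theta$ that simultaneously controls Gugenheim's manifold-level homotopies on each $G^p$ and the bar/cobar bookkeeping relating the totalization to the Hochschild complex.

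Finally, quasi-isomorphism follows from a spectral sequence / filtration argument. Both sides carry the simplicial (or word-length) filtration; on the associated graded the map reduces, in each simplicial degree $p$, to the linear part of $\DR^\Theta$, which is Gugenheim's integration map $\Omega^\bullet(G^p) \to \uC^\bullet(G^p)$. That map is a quasi-isomorphism by the ordinary de Rham theorem, so the induced map on $E_1$-pages is an isomorphism and the comparison theorem for filtered complexes yields that $\DR^\Theta$ is a quasi-isomorphism. The only care needed here is that the filtration be exhaustive and bounded enough for convergence, which holds because in each total degree only finitely many simplicial degrees contribute.
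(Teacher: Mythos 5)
Your proposal follows essentially the same route as the paper: apply Gugenheim's $\A_{\infty}$ de~Rham map levelwise to $BG_p = G^p$, use the Eilenberg--Zilber map to identify functionals on chains on the nerve with Hochschild cochains on $\uC_{\sbullet}(G)$, totalize via the functoriality of $\Tot$ on semi-cosimplicial $\A_{\infty}$-morphisms, and conclude with the spectral sequence of the simplicial filtration. The only point where the actual argument is simpler than you anticipate is that no extra coherent homotopies are needed to reconcile the shuffle and cup products: after passing to normalized chains (so that the outer face maps involving the counit behave), the Eilenberg--Zilber maps form a \emph{strict} morphism of semi-simplicial DG coalgebras and the identification $\Theta$ of $\Tot$ of the dual tensor powers with $\HC^{\sbullet}(\uC_{\sbullet}(G))$ is a strict DG algebra isomorphism, so all higher $\A_{\infty}$-components come solely from Gugenheim's levelwise construction.
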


This construction uses Chen's iterated integrals and combines a version of Gugenheim's $\A_{\infty}$ De~Rham's theorem for the classifying space $BG$ with the Eilenberg-Zilberg map. 

 \begin{theoremD}
 Let $G$ be a Lie group. There exists an $\A_{\infty}$-functor 
 $$
 \DRscr \colon \BSS(G) \longrightarrow \DGMod(\uC_{\sbullet}(G)),
 $$
which is an $\A_{\infty}$-quasi-equivalence of DG categories. 
\end{theoremD}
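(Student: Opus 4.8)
The strategy is to deduce Theorem~D from Theorem~C by the standard principle that an $\A_{\infty}$-quasi-isomorphism between the algebras underlying two DG categories of twisted objects induces an $\A_{\infty}$-quasi-equivalence between the categories themselves. Both sides are built by twisting: a Hom-complex in $\BSS(G)$ is the Bott--Shulman--Stasheff algebra $\Tot(\Omega^{\sbullet}(BG_{\sbullet}))$, with coefficients in the appropriate space of endomorphisms, deformed by a Maurer--Cartan element attached to the objects, while a Hom-complex in $\DGMod(\uC_{\sbullet}(G))$ is the corresponding twist of the Hochschild cochain algebra $\HC^{\sbullet}(\uC_{\sbullet}(G))$. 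The first step is to make these descriptions precise and to record, for each object, the Maurer--Cartan element that defines its twist.

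With this in place I would define $\DRscr$ as the functor induced by the $\A_{\infty}$-morphism $\DR^{\Theta}$ of Theorem~C. On objects, I transport the twisting datum along $\DR^{\Theta}$ by the usual Maurer--Cartan transfer $\alpha \mapsto \sum_{k \ge 1} \DR^{\Theta}_{k}(\alpha, \dots, \alpha)$; the higher components of $\DRscr$ are obtained by inserting these twisting elements, in all admissible positions, into the multilinear components $\DR^{\Theta}_{k}$ between the genuine morphism arguments. The $\A_{\infty}$-functor relations for $\DRscr$ then follow formally from the $\A_{\infty}$-relations for $\DR^{\Theta}$, so that the only real content at this stage is the convergence of the infinite sums, which I expect to control through the filtration by simplicial and tensor degree together with the sufficient-smoothness condition built into the objects of $\DGMod(\uC_{\sbullet}(G))$.

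To see that $\DRscr$ is a quasi-equivalence I would treat cohomological full faithfulness and essential surjectivity separately. The linear component of $\DRscr$ on a fixed Hom-complex is $\DR^{\Theta}_{1}$ twisted by the Maurer--Cartan elements of the source and target; since this twisting is a perturbation of the differential that respects the degree filtration, the perturbation lemma (equivalently, a comparison of the associated spectral sequences) reduces the assertion that the twisted map is a quasi-isomorphism to the statement that $\DR^{\Theta}_{1}$ is one, which is exactly Theorem~C. For essential surjectivity I would use that an $\A_{\infty}$-quasi-isomorphism induces a bijection on gauge-equivalence classes of Maurer--Cartan elements; under the identification of objects with twisting data this says precisely that every object of $\DGMod(\uC_{\sbullet}(G))$ is homotopy-equivalent to one in the image of $\DRscr$. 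Note that this argument is insensitive to $\pi_{1}(G)$, consistent with the hypothesis that $G$ is an arbitrary Lie group.

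The step I expect to be the main obstacle is making the perturbation and Maurer--Cartan arguments rigorous in the absence of an obvious boundedness for the twisting filtration. To apply both the perturbation lemma and the bijection on Maurer--Cartan moduli I must verify that the relevant filtrations are exhaustive and complete, so that the infinite twisting sums converge and the associated spectral sequences do indeed compute the cohomology of the twisted complexes. This is precisely where the explicit form of the components $\DR^{\Theta}_{k}$ produced by Chen's iterated integrals, and the smoothness constraints defining $\DGMod(\uC_{\sbullet}(G))$, must be brought to bear.
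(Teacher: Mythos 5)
Your overall strategy --- induce $\DRscr$ from the $\A_{\infty}$-quasi-isomorphism $\DR^{\Theta}$ of Theorem~C by twisting with the Maurer--Cartan elements attached to the objects, then check quasi-full-faithfulness by a filtration argument and essential surjectivity separately --- is the same as the paper's. The essential difference is that the paper never needs the generic (infinite) Maurer--Cartan transfer, because of a vanishing result your proposal lacks: Proposition~\ref{prop:3.18} shows that $\DR^{\Theta}_{n}$ vanishes for $n>1$ on tensors of forms living on $BG_{1}=G$. Since the twisting element $\Phi_{V}-\id_{V}$ of an object $V$ is concentrated in simplicial degree one (each $\Phi^{(k)}_{V}$ is a form on $G$), the transfer series $\sum_{k\geq 1}\DR^{\Theta}_{k}(\alpha,\dots,\alpha)$ collapses to its first term, which is computed explicitly to be $\Iscr(\rho)-\II_{V}$: the transported object is literally the strict DG module $\Iscr(V)$ given by the integration functor of the companion paper, the twisted differential on the Hom-complexes is the ordinary Hochschild differential of that module (Proposition~\ref{prop:4.16}), and the higher components of $\DRscr$ require no insertions of Maurer--Cartan elements at all. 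This eliminates the convergence, exhaustiveness and completeness issues that you correctly identify as the main obstacle; in the generic approach they are a genuine problem here, since $\Phi_{V}$ has components in all form degrees and the relevant filtration is not obviously complete.

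The one step of your argument that would not go through as written is essential surjectivity. The objects of $\DGMod(\uC_{\sbullet}(G))$ are strict DG modules, i.e.\ Maurer--Cartan elements concentrated in Hochschild tensor degree one, not arbitrary Maurer--Cartan elements of $\HC^{\sbullet}(\uC_{\sbullet}(G),\End(V))$ (the latter are $\A_{\infty}$-module structures); likewise the objects of $\BSS(G)$ carry only the special twisting data $\Phi_{V}-\id_{V}$ arising from representations of $\TT\gfrak$. The bijection on gauge-equivalence classes of Maurer--Cartan elements induced by an $\A_{\infty}$-quasi-isomorphism therefore only shows that every $\A_{\infty}$-module is equivalent to something in the image, and an additional strictification step would be needed to reach the declared objects. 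The paper sidesteps this entirely: since $\DRscr$ acts on objects as the integration functor $\Iscr$, essential surjectivity is inherited directly from the equivalence between $\Mod(\uC_{\sbullet}(G))$ and $\Rep(\TT\gfrak)$ established in the companion paper. Your perturbation-lemma argument for quasi-full-faithfulness, on the other hand, is sound and is essentially what the paper's (rather terse) proof of Theorem~\ref{thm:4.17aa} implicitly uses.
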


The $\A_{\infty}$-quasi-equivalence of DG categories between $\DGRep(\TT\gfrak)$ and $\DGMod(\uC_{\sbullet}(G))$ may be interpreted in terms of Chern-Weil theory for $\infty$-local systems on the classifying space $BG$, as studied in \cite{Block-Smith2014,Rivera-Zeinalian2018,Holstein2015,Abad-Schatz2016,AriasAbad-QuinteroVelez-VelezVasquez2019,brav2016relative}. Indeed,  $\infty$-local systems on a  topological space $X$ are described as objects of the DG category $\DGMod(\uC_{\sbullet}(\Omega X))$, where $\Omega X$ denotes the Moore based loop space of $X$. In  case  $X$ is  $BG$, the monoid of Moore loops on $BG$ is $\A_\infty$ equivalent to $G$. Thus, the DG category $\DGMod(\uC_{\sbullet}(G))$ can be thought of as that which parametrises $\infty$-local systems on $BG$. The $\A_{\infty}$-quasi-equivalence between $\DGRep(\TT\gfrak)$ and $\DGMod(\uC_{\sbullet}(G))$ is then an extension of the Chern-Weil computation of the cohomology of $BG$ for the trivial $\infty$-local system to the case of arbitrary $\infty$-local systems. The explicit construction of a Chern-Weil DG functor categorifying the Chern-Weil homomorphism is the subject of a forthcoming work \cite{AriasAbad-Pineda-QuinteroVelez2020}. 

The structure of the paper is as follows. In Section~\ref{sec:2}, we collect some preliminaries on DG categories, Hochschild complexes, Gugenheim's $\A_{\infty}$ De~Rham theorem, the Alexander-Whitney and Eilenberg-Zilber maps, representations of the DG Lie algebra $\TT \gfrak$, and the main result of \cite{AriasAbad2019} concerning the equivalence between $\Rep(\TT\gfrak)$ and $\Mod(\uC_{\sbullet}(G))$. Section~\ref{sec:3} is devoted to the study of the properties of the Van~Est map that are used in the proof of our main results, and the construction of the $\A_{\infty}$-quasi-isomorphism between the Bott-Shulman-Stasheff DG algebra $\Omega^{\sbullet}(BG_{\sbullet})$ and the DG algebra of Hochschild cochains $\HC^{\sbullet}(\uC_{\sbullet}(G))$. We conclude in Section~\ref{sec:4} with a discussion of the DG enhanced categories $\DGRep(\TT\gfrak)$ and $\BSS(G)$ and the proof of Theorems~B and C, which together imply our main result, Theorem~A. We also present some examples.

\begin{acknowledgements}
We would like to acknowledge the support of Colciencias through  their grant ``Estructuras lineales en topolog\'ia y geometr\'ia'', with contract number FP44842-013-2018.  We also thank  the Alexander von Humboldt foundation which supported our work through the Humboldt Institutspartnerschaftet ``Representations of Gerbes and higher holonomies''. We are grateful to Anders Kock for pointing us to his book \cite{Kock2006}, where the image of the De~Rham map is described. We would like to thank Konrad Waldorf for his hospitality during a visit to Greifswald, where part of this work was completed. We are also grateful to Manuel Rivera for several conversations related to this work.
\end{acknowledgements}

\subsection*{Notation and Conventions}
All vector spaces and algebras are defined over the field of real numbers $\RR$. If $V = \bigoplus_{k \in \ZZ} V^k$ is a graded vector space, we denote by $\us V$ its suspension, that is, the graded vector space with grading defined by
$$
(\us V)^{k} = V^{k+1},
$$
and by $\uu V$ its unsuspension, that is, the graded vector space with grading defined by
$$
(\uu V)^{k} = V^{k-1}.
$$
All our complexes will be cochain complexes, meaning that the differentials increase the degree by one. For each $n \geq 1$, we write $\Delta_n$ for the standard $n$-simplex. The geometric realisation of $\Delta_n$ that we take is
$$
\Delta_n = \left\{ (t_1, \dots, t_{n}) \in \RR^{n} \mid 1 \geq t_1 \geq \cdots \geq t_n \geq 0 \right\}.  
$$ 
If $M$ is a smooth manifold, we respectively denote by $\Omega^{\sbullet}(M)$, $\uC^{\sbullet}(M)$ and $\uC_{\sbullet}(M)$ the spaces of differential forms, smooth singular cochains and smooth singular chains defined on $M$.


\section{Preliminaries}\label{sec:2}
In this section, we review the basic definitions and results that will be needed in the sequel, in an attempt at making our paper as self-contained as possible. For a more detailed exposition on some of the topics covered in Sections \ref{sec:2.1}, \ref{sec:2.2}, \ref{sec:2.4} and \ref{sec:2.5}, the reader may consult \cite{Seidel2008}, \cite{Maclane1963} and \cite{Meinrenken2013}. 
\subsection{Hochschild chain and cochain complexes}\label{sec:2.1}
Let $A$ be a DG algebra and let $M$ be a DG bimodule over $A$. The \emph{Hochschild chain complex} of $A$ with values in $M$ is the graded vector space
\begin{equation}
\HC_{\sbullet}(A,M) = \bigoplus_{n \geq 0}M \otimes (\uu A)^{\otimes n},
\end{equation}
equipped with a differential $b$ which is the sum of two components $b_1$ and $b_2$ defined by the formulas
\begin{align}
\begin{split}
&b_1(m  \otimes  \uu a_1 \otimes \cdots  \otimes \uu a_n) = d_M m \otimes \uu a_1 \otimes \cdots \otimes \uu a_n \\
&\qquad  + \sum_{i=1}^{n} (-1)^{\vert m \vert + \sum_{j=1}^{i-1}\vert a_j \vert - i} m \otimes \uu a_1 \otimes \cdots \otimes \uu a_{i-1} \otimes \uu d a_i \otimes \uu a_{i+1} \otimes \cdots \otimes \uu a_n, 
\end{split}
\end{align}
and
\begin{align}
\begin{split}
& b_2 (m \otimes  \uu a_0 \otimes \cdots \otimes \uu a_n) = (-1)^{\vert m \vert + 1} m a_1 \otimes \uu a_2 \otimes \cdots \otimes \uu a_n \\
&\qquad +\sum_{i=1}^{n-1}(-1)^{\vert m \vert + \sum_{j=1}^{i}\vert a_j \vert - i+1} m \otimes \uu a_1 \otimes \cdots \otimes \uu a_{i-1} \otimes \uu (a_i a_{i+1}) \otimes \uu a_{i+2} \otimes  \cdots \otimes \uu a_n \\
&\qquad +(-1)^{(\vert m \vert + \sum_{j=1}^{n-1}\vert a_j\vert -n -1)(\vert a_n \vert -1)} a_n m \otimes \uu a_1 \otimes \cdots \otimes \uu a_{n-1},
\end{split}
\end{align}
for homogeneous elements $m \in M$ and $a_1,\dots, a_n \in A$.  The resulting cohomology is called the \emph{Hochschild homology of $A$ with values in $M$}. 
In the special case where $M = \RR$ is the trivial bimodule, we shall write $\HC_{\sbullet}(A)$ instead of $\HC_{\bullet}(A,\RR)$. 


The \emph{Hochschild cochain complex} of $A$ with values in $M$ is the cochain complex
\begin{equation}
\HC^{\sbullet}(A,M) = \Hom (\HC_{\sbullet}(A), M) = \bigoplus_{n \geq 0} \Hom ((\uu A)^{\otimes n}, M),
\end{equation}
with differential $b$ characterised by
\begin{align}\label{eqn:2.6aa}
\begin{split}
&(b \varphi) (\uu a_1 \otimes \cdots \otimes \uu a_{n})  =  d_M (\varphi(\uu a_1 \otimes \cdots  \otimes \uu a_n))  - (-1)^{\vert \varphi \vert} \varphi (b(\uu a_1 \otimes \cdots \otimes \uu a_n))  \\
 &\qquad   + (-1)^{\vert \varphi \vert(\vert a_1 \vert +1)} a_1 \varphi(\uu a_2 \otimes \cdots \otimes \uu a_n) - (-1)^{\vert \varphi \vert + \sum_{j=1}^{n-1}\vert a_j \vert + n-1} \varphi (\uu a_1 \otimes \cdots \otimes \uu a_{n-1})  a_n,
 \end{split}
\end{align}
for homogeneous elements $\varphi \in \Hom ((\uu A)^{\otimes n}, M)$ and $a_1,\dots, a_n \in A$. The resulting cohomology is called the \emph{Hochschild cohomology of $A$ with values in $M$}. In case $M$ is the trivial module $\mathbb{R}$, we will  write $\HC^{\sbullet}(A)$ instead of $\HC^{\sbullet}(A,M)$.

A case of special interest arises in the following way. Let $V$, $V'$ and $V''$ be DG modules over $A$, so that the hom-complexes $\Hom(V,V')$ and $\Hom(V',V'')$ are naturally DG bimodules over $A$. Then there is a cup product
$$
\abxcup \colon \HC^{\sbullet}(A,\Hom(V',V'')) \otimes \HC^{\sbullet}(A,\Hom(V,V')) \longrightarrow \HC^{\sbullet}(A,\Hom(V,V'')),
$$
which is defined by
\begin{align}\label{eqn:2.7aa}
\begin{split}
&(\psi \abxcup \varphi) (\uu a_1 \otimes \cdots \otimes \uu a_{m+n}) \\
 &\qquad\quad\,\,= (-1)^{\vert \varphi \vert (\sum_{i=1}^{m}\vert a_i \vert - m)} \psi (\uu a_1 \otimes \cdots \otimes \uu a_{m}) \circ \varphi (\uu a_{m+1} \otimes \cdots \otimes \uu a_{m+n}),
\end{split}
\end{align}
for homogeneous elements $\varphi \in \HC^{\sbullet}(A,\Hom(V,V'))$, $\psi \in \HC^{\sbullet}(A,\Hom(V',V''))$ and $a_1,\dots, a_{m+n} \in A$. This cup product is compatible with the differential $b$ in the sense that it satisfies the Leibniz rule. Given a DG algebra $A$, one can form the DG category of DG modules over $A$, which we denote by $\DGMod(A)$. Its objects are, of course, DG modules over $A$. For any two such objects $V$ and $V'$, the space of morphisms is the complex $\HC^{\sbullet}(A,\Hom(V,V'))$ and the composition law is the cup product. 

\subsection{DG categories, DG funtors and $\mathsf{A}_{\infty}$-functors}\label{sec:2.2}
A \emph{DG category} (where DG stands for ``differential graded'') over a field $K$ is a $K$-linear category $\Ccal$ such that for every two objects $X$ and $Y$ the space of arrows $\Hom_{\Ccal}(X,Y)$ is equipped with a structure of a cochain complex of $K$-vector spaces, and for every three objects $X$, $Y$ and $Z$ the composition map $\Hom_{\Ccal}(Y,Z) \otimes_K \Hom_{\Ccal}(X,Y) \to \Hom_{\Ccal}(X,Z)$ 
is a morphism of cochain complexes. Thus, by definition, 
$$
\Hom_{\Ccal}(X,Y) = \bigoplus_{n \in \ZZ} \Hom_{\Ccal}^{n}(X,Y)
$$
is a graded $K$-vector space with a differential $d \colon \Hom_{\Ccal}^{n}(X,Y) \to \Hom_{\Ccal}^{n+1}(X,Y)$. The elements $f \in \Hom_{\Ccal}^{n}(X,Y)$ are called \emph{homogeneous of degree $n$}, and we write $\vert f \vert = n$. We shall denote the set of objects of $\Ccal$ by $\Ob \Ccal$.

The fundamental example of a DG category is the category of cochain complexes of $K$-vector spaces, which we denote by $\DGVect_K$. Its objects are cochain complexes of $K$-vector spaces and the morphism spaces $\Hom_{\DGVect_K}(X,Y)$ are endowed with the differential defined as
$$
d (f) = d_{Y} \circ f - (-1)^n f \circ d_{X},
$$
for any homogeneous element $f$ of degree $n$.   

Given a DG category $\Ccal$ one defines an ordinary category $\mathbf{Ho}(\Ccal)$ by keeping the same set of objects and replacing each $\Hom$ complex by its $0$th cohomology. We call $\mathbf{Ho}(\Ccal)$ the \emph{homotopy category} of $\Ccal$. 
If $\Ccal$ and $\Dcal$ are DG categories, a DG functor $F \colon \Ccal \to \Dcal$ is an $K$-linear functor whose associated map for $X, Y \in \Ob \Ccal$,
$$
F_{X,Y} \colon \Hom_{\Ccal}(X,Y) \to \Hom_{\Dcal}(F(X),F(Y)),
$$ 
is a morphism of cochain complexes. Notice that any DG functor $F \colon \Ccal \to \Dcal$ induces an ordinary functor 
$$
\mathbf{Ho}(F) \colon \mathbf{Ho}(\Ccal) \to \mathbf{Ho}(\Dcal)
$$
between the corresponding homotopy categories. A DG functor $F \colon \Ccal \to \Dcal$ is said to be \emph{quasi fully faithful} if for every pair of objects $X, Y \in \Ob \Ccal$ the morphism $F_{X,Y}$ is a quasi-isomorphism. Moreover, the DG functor $F $ is said to be \emph{quasi essentially surjective} if $\mathbf{Ho}(F)$ is essentially surjective. A DG functor which is both quasi fully faithful and quasi essentially surjective is called a \emph{quasi-equivalence}. 

There is a more general notion of functor between DG categories, that of an $\mathsf{A}_{\infty}$-functor, where the composition is preserved only up to an infinite sequence of coherence conditions. It will be useful to introduce first the Hochschild chain complex of a DG category. 

Let $\Ccal$ be a small DG category. The \emph{Hochschild cochain complex} of $\Ccal$ is the complex 
$$
\bigoplus_{X_0,\dots, X_n}\us\! \Hom_{\Ccal}(X_{n-1},X_n) \otimes_K \cdots \otimes_K \us\! \Hom_{\Ccal}(X_{0},X_1),
$$
where $X_0,\dots,X_n$ range through the objects of $\Ccal$, and whose differential $b$ is the sum of two components $b_1$ and $b_2$ given by the formulas
$$
b_1 (f_{n-1} \otimes \cdots \otimes f_0) = \sum_{i=0}^{n-1} (-1)^{\sum_{j=i+1}^{n-1}\vert f_{j} \vert  + n - i - 1}  f_{n-1} \otimes \cdots \otimes d f_i \otimes \cdots\otimes f_0
$$
and 
$$
b_2 (f_{n-1} \otimes \cdots \otimes f_0) = \sum_{i=0}^{n-2} (-1)^{\sum_{j=i+2}^{n-1}\vert f_{j} \vert + n - i } f_{n-1} \otimes \cdots \otimes (f_{i+1} \circ f_i) \otimes \cdots\otimes f_0
$$
for homogeneous elements $f_0 \in  \us\! \Hom_{\Ccal}(X_{0},X_1), \dots, f_{n-1} \in \us\! \Hom_{\Ccal}(X_{n-1},X_n)$.  Here $d$ denotes indistinctly the differential in any of the spaces $\Hom_{\Ccal} (X_{i},X_{i+1})$. It is easy to check that indeed $b^2=0$, by cancellation of terms with opposite signs. 

With this in mind, the formal definition of an $\mathsf{A}_{\infty}$-functor is given as follows. Let $\Ccal$ and $\Dcal$ be DG categories. An \emph{$\mathsf{A}_{\infty}$-functor} $F \colon \Ccal \to \Dcal$ is the datum of a map of sets $F_0 \colon \Ob \Ccal \to \Ob \Dcal$ and a collection of $K$-linear maps of degree $0$
$$
F_n \colon \us\! \Hom_{\Ccal}(X_{n-1},X_n) \otimes_K \cdots \otimes_K \us\! \Hom_{\Ccal}(X_{0},X_1) \to \Hom_{\Dcal}(F_0(X_0),F_0(X_n))
$$
for every collection $X_0,\dots,X_n \in \Ob \Ccal$, such that the relation
\begin{align*}
b_1 \circ  F_n  + \sum_{i+j = n} b_2  \circ (F_i \otimes F_j ) = \sum_{i + j + 1= n} F_n \circ (\id^{\otimes i} \otimes b_1 \otimes \id^{\otimes j}) + \sum_{i + j + 2 = n} F_{n-1} \circ (\id^{\otimes i} \otimes b_2 \otimes \id^{\otimes j}) 
\end{align*} 
is satisfied for any $n \geq 1$. One also requires that $F_1(\id_{X}) = \id_{F_0(X)}$ for all objects $A$ in $\Ccal$, as well as $F_n(f_{n-2} \otimes \cdots \otimes f_{i} \otimes \id_{X_{i}} \otimes f_{i-1} \otimes \cdots \otimes f_0 ) =0$ for any $n \geq 1$, any $0 \leq i \leq n-2$, and any chain of morphisms $f_0 \in  \us\! \Hom_{\Ccal}(X_{0},X_1), \dots, f_{n-2} \in \us\! \Hom_{\Ccal}(X_{n-2},X_{n-1})$. 

The above relation when $n=1$ implies that $F_1$ is a morphism of cochain complexes. On the oner hand, for $n=2$ we find that $F_1$ preserves the compositions on $\Ccal$ and $\Dcal$, up to a homotopy defined by $F_2$. In particular, a DG functor between $\Ccal$ and $\Dcal$ is the same as an $\mathsf{A}_{\infty}$-functor such that $F_n = 0$ for $n \geq 2$. It also follows that $F_1$ induces and ordinary functor 
$$
\mathbf{Ho}(F_1) \colon  \mathbf{Ho}(\Ccal)  \to \mathbf{Ho}(\Dcal). 
$$
An $\mathsf{A}_{\infty}$-functor $F \colon \Ccal \to \Dcal$ is called $\A_{\infty}$-\emph{quasi fully faithfull} if $F_1$ is a quasi-isomorphism, and it is called $\A_{\infty}$-\emph{quasi essentially surjective} if $\mathbf{Ho}(F_1)$ is essentially surjective. Finally, an $\mathsf{A}_{\infty}$-functor $F$ is called a $\A_{\infty}$-\emph{quasi-equivalence} if it is both quasi fully faithfull and quasi essentially surjective.  We say that two DG categories are $\A_\infty$ equivalent if they can be connected by a zig-zag of $\A_\infty$ quasi-equivalences.

\subsection{Gugenheim's $\A_{\infty}$ De~Rham theorem} \label{sec:2.3}
The usual De~Rham map, which sends a differential form to a singular cochain by integration, is not an algebra map. However, it induces an isomorphism of algebras in cohomology. A more complete explanation of this fact is due to Gugenheim. In \cite{Gugenheim1977}, this author uses Chen's iterated integrals \cite{Chen1977} to extend the De~Rham map to an $\A_{\infty}$-quasi-isomorphism of DG algebras. Here we will review this construction, which will be needed later. We follow the presentation in \cite{Abad-Schatz2013}. 

Let us start with some background. For a smooth manifold $M$ we denote by $\Pcal M$ the path space of $M$, that is, the space of all smooth maps from $I$ to $M$ which we regard as a diffeological space. Given another manifold $X$, one says that a map $f \colon X \to \Pcal M$ is smooth if the map $\widehat{f} \colon [0,1] \times X \to M$ defined for any $t \in I$ and $x \in X$ by
\begin{equation}
\widehat{f} (t, x) = f(x)(t),
\end{equation}
is smooth. With this in mind, we may define differential forms on $\Pcal M$ as follows. We first consider the category $C^{\infty}(-,\Pcal M)$ whose objects are pairs $(X, f)$ where $X$ is a smooth manifold and $f$ is a smooth map from $X$ to $\Pcal M$ and whose morphisms from one such pair $(X,f)$ to another $(Y,g)$ are smooth maps $h \colon X \to Y$ such that $f = g \circ h$. Next, if $\Vect_{\RR}$ denotes the category of real vector spaces, we consider the functor $\RR(-)$ from $C^{\infty}(-,\Pcal M)$ to $\Vect_{\RR}$ which sends any object in $C^{\infty}(-,\Pcal M)$ to $\RR$ and every morphism to the identity, along with the functor $\Omega^{\sbullet}(-)$ from $C^{\infty}(-,\Pcal M)$ to $\Vect_{\RR}$ sending an object $(X,f)$ to $\Omega^{\sbullet}(X)$ and a morphism $h$ to its pullback $h^*$. Then, a differential form on $\Pcal M$ is a natural transformation from $\RR(-)$ to $\Omega^{\sbullet}(-)$. This definition simply means that we declare a differential form on $\Pcal M$ to be determined by its pullback along smooth maps from a smooth manifold. 
We shall now explain Chen's iterated integrals taking values on differential forms on the path space $\Pcal M$. First we need the following piece of notation. If $\Delta_n$ denotes the $n$-simplex, we write $\ev \colon \Delta_n \times \Pcal M \to M^{n}$ for the evaluation map defined as
\begin{equation}
\ev((t_1,\dots, t_n), \gamma) = (\gamma(t_1),\dots, \gamma(t_n)),
\end{equation}
for $(t_1,\dots,t_n) \in \Delta_n$ and $\gamma \in \Pcal M$. Further, we let $p_i$ stand for the $i$-th projection from $M^n$ to $M$ for any $i = 1,\dots, n$, and $\pi$ for the projection from $\Delta_n \times M$ to $M$. Then, Chen's map $\Csf \colon (\us \Omega^{\sbullet}(M))^{\otimes n} \to \Omega^{\sbullet}(\Pcal M)$ is defined by setting
\begin{equation}
\Csf (\omega_1 \otimes \cdots \otimes \omega_n) = (-1)^{\sum_{j=1}^{n}\vert \omega_j \vert (n-j)} \pi_{*} (\ev^{*}(p_1^* \omega_1 \wedge \cdots \wedge p_n^* \omega_n )),  
\end{equation}
for homogeneous elements $\omega_1 ,\dots ,\omega_n \in \us \Omega^{\sbullet}(M)$, where here $\pi_{*} \colon \Omega^{\sbullet}(\Delta_n \times M) \to \Omega^{\sbullet}(M)$ denotes the pushforward along the projection $\pi$.\footnote{In the case when $M$ is compact and oriented, the pushforward $\pi_*$ is characterized by the property that
$$
\int_{M} \pi_* \omega = \int_{\Delta_n \times M} \omega,
$$ 
for all $\omega \in \Omega^{\sbullet}(\Delta_n \times M)$. }

Besides Chen's map, Gugenheim's construction uses some combinatorial maps that we now describe. For each $n \geq 1$, let $\lambda_n \colon I^{n-1} \to \Pcal I^{n}$ be the map  that sends every element $(s_1,\dots,s_{n-1})$ of $I^{n-1}$ to the piecewise linear path which goes backwards through the $n+1$ points 
$$
0 \longleftarrow  s_1 e_1 \longleftarrow  s_1 e_1 + s_2 e_2 \longleftarrow \dots \longleftarrow  s_1 e_1 + \cdots + s_{n-1} e_{n-1} \longleftarrow s_1 e_1 + \cdots + s_{n-1} e_{n-1} + e_n,
$$
with $e_1,\dots, e_n$ being the standard basis of $\RR^{n}$, and $\pi_n \colon I^{n} \to \Delta_n$ the map given by
\begin{equation}
\pi_n (s_1,\dots,s_n) = (t_1,\dots,t_n), 
\end{equation}
for $(s_1,\dots, s_n) \in I^{n}$, with $t_i = \max\{s_i,\dots,s_n\}$ for any $i = 1,\dots, n$. We then obtain, for each $n \geq 1$, a map $\theta_n \colon I^{n-1} \to \Pcal \Delta_n$ which is defined as the composition
$$
I^{n-1} \xlongrightarrow{\phantom{a}\lambda_n\phantom{a}} \Pcal I^{n} \xlongrightarrow{\phantom{a}\Pcal\pi_n\phantom{a}} \Pcal \Delta_n,
$$
where $\Pcal \pi_n$ is the map induced on path spaces by $\pi_n$. We also, by convention, set $\theta_0$ to be the map from a point to a point. \\

Using the above notation, we consider the map $\Ssf \colon \Omega^{\sbullet}(\Pcal M) \to \us \uC^{\sbullet}(M)$ from the de~Rham complex of the path space $\Pcal M$ to the unsuspension of $\uC^{\sbullet}(M)$, obtained as the composition of the map $\Omega^{\sbullet}(\Pcal M) \to \uC^{\sbullet}(M)$ given, for each $\varphi \in \Omega^{\sbullet}(\Pcal M)$, by 
$$
(\sigma \colon \Delta_n \to M) \longmapsto \int_{I^{n-1}} \theta_n^* \Pcal \sigma^* \varphi \in \RR,
$$
followed by the unsuspension $\us \colon \uC^{\sbullet}(M) \to \us\uC^{\sbullet}(M)$. We then proceed to define, for $n \geq 1$, a sequence of linear maps $\DR_n \colon (\us \Omega^{\sbullet}(M))^{\otimes n} \to \us \uC^{\sbullet}(M)$ in the following way. For $n = 1$, we set
\begin{equation}\label{eqn:2.12}
\DR_1(\omega)(\sigma) = \int_{\Delta_k} \sigma^*\omega,
\end{equation}
for $\omega \in \us \Omega^{\sbullet}(M)$ and $\sigma \in \uC_k (M)$. For $n > 1$, we set
\begin{equation}
\DR_n (\omega_1 \otimes \cdots \otimes \omega_n) = (-1)^{\sum_{j=1}^{n}\vert \omega_j \vert + n} (\Ssf \circ \Csf)(\omega_1 \otimes \cdots \otimes \omega_n),
\end{equation}
for homogeneous elements $\omega_1,\dots,\omega_n \in \us\Omega^{\sbullet}(M)$. In term of these, we are now in position to state Gugenheim's main result.   

\begin{theorem}\label{thm:2.1}
For $n \geq 1$, the sequence of maps $\DR_n \colon (\us \Omega^{\sbullet}(M))^{\otimes n} \to \us \uC^{\sbullet}(M)$ determines an $\A_{\infty}$-morphism from the de~Rham complex of $M$ to the singular cochain complex of $M$, both viewed as DG algebras. Moreover, this $\A_{\infty}$-morphism is an $\A_{\infty}$-quasi-isomorphism which is natural with respect to pullbacks along smooth maps. 
\end{theorem}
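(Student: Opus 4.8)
The plan is to verify directly that the collection $\{\DR_n\}$ satisfies the defining relations of an $\A_{\infty}$-morphism, and then to extract the quasi-isomorphism and naturality statements almost formally. Since both $\Omega^{\sbullet}(M)$ and $\uC^{\sbullet}(M)$ are honest DG algebras, the induced bar differentials have only the two components $b_1$ (from the de~Rham, resp.\ simplicial, differential) and $b_2$ (from the wedge, resp.\ cup, product), so the relation to be checked for each $n$ takes the simplified form
\begin{equation*}
b_1 \circ \DR_n + \sum_{i+j=n} b_2 \circ (\DR_i \otimes \DR_j) = \sum_{i+j+1=n} \DR_n \circ (\id^{\otimes i} \otimes b_1 \otimes \id^{\otimes j}) + \sum_{i+j+2=n} \DR_{n-1} \circ (\id^{\otimes i} \otimes b_2 \otimes \id^{\otimes j}).
\end{equation*}
The whole content is geometric: both sides are integrals of pulled-back forms, and the identity must come from two applications of Stokes' theorem.

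First I would establish the basic Stokes formula for Chen's map. Writing $\ud$ for the de~Rham differential on $\Pcal M$, one obtains a formula of the shape
\begin{equation*}
\ud\, \Csf(\omega_1 \otimes \cdots \otimes \omega_n) = \pm\, \Csf(\cdots \otimes \ud\omega_i \otimes \cdots) \pm \Csf(\cdots \otimes (\omega_i \wedge \omega_{i+1}) \otimes \cdots) + (\text{endpoint terms}),
\end{equation*}
which is exactly the result of applying Stokes to the fibre integral $\pi_*$ over $\Delta_n$: the interior faces $t_i = t_{i+1}$ collapse two consecutive evaluation points and produce the wedge terms, while the outer faces $t_1 = 1$ and $t_n = 0$ produce evaluations at the endpoints of the path. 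This identity is the engine of the argument.

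Next I would analyse the combinatorial maps $\theta_n = \Pcal\pi_n \circ \lambda_n$ on the boundary of the cube $I^{n-1}$. Since $\Ssf$ is, up to sign, integration of $\theta_n^* \Pcal\sigma^* \varphi$ over $I^{n-1}$, a second application of Stokes — now over the cube — converts the exterior derivative into a sum over the faces $s_k = 0$ and $s_k = 1$. On these faces the piecewise-linear path $\lambda_n$ degenerates in a controlled way: collapsing a coordinate either makes $\theta_n$ factor through a lower $\theta$ composed with a face or degeneracy of $\sigma$ (matching the simplicial differential in $b_1$ on $\uC^{\sbullet}(M)$), or splits the path as a concatenation, which under Chen's map is precisely the source of the product terms $\sum b_2 \circ (\DR_i \otimes \DR_j)$. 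Matching faces to terms then yields the relation above, the case $n=1$ being the statement that the classical de~Rham map $\DR_1$ of \eqref{eqn:2.12} is a cochain map. With the relations in hand the remaining assertions are short: $\DR_1$ is the ordinary de~Rham map, hence a quasi-isomorphism by the classical de~Rham theorem, and since it is the linear component this is exactly what it means for $\DR$ to be an $\A_{\infty}$-quasi-isomorphism; naturality with respect to a smooth $f \colon N \to M$ follows because $f$ induces $\Pcal f$, intertwines the evaluation maps and projections, and commutes with pushforward, so $f^* \circ \DR_n = \DR_n \circ (f^*)^{\otimes n}$.

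The main obstacle is the sign bookkeeping together with the precise identification of the cube faces in the second Stokes step. One must check that every codimension-one face of $I^{n-1}$ contributes to exactly one term on the right-hand side (an internal differential, an internal product, or a split into $\DR_i \otimes \DR_j$), that the degenerate contributions vanish, and that the orientation signs from the two Stokes applications combine with the prefactors $(-1)^{\sum_j \vert\omega_j\vert(n-j)}$ and $(-1)^{\sum_j \vert\omega_j\vert + n}$ built into $\Csf$ and $\DR_n$ to reproduce exactly the signs in the $\A_{\infty}$-relation. This is the genuinely delicate part of Gugenheim's argument.
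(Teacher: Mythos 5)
The paper itself gives no proof of Theorem~\ref{thm:2.1}: it is recalled verbatim from Gugenheim \cite{Gugenheim1977}, following the presentation of \cite{Abad-Schatz2013}, so there is no in-paper argument to compare against. Your outline is, however, exactly the strategy of those references: Chen's differential formula for $\Csf$ (Stokes applied to the fibre integral over $\Delta_n$, with interior faces $t_i=t_{i+1}$ giving the wedge terms and the outer faces giving endpoint evaluations), followed by a second application of Stokes over the cube $I^{n-1}$ underlying $\Ssf$, where the faces of the cube are matched to the simplicial differential of $\uC^{\sbullet}(M)$ and to the concatenation/splitting terms $b_2\circ(\DR_i\otimes\DR_j)$; the case $n=1$ reduces to the classical de~Rham theorem, and naturality follows from naturality of $\ev$, $\Pcal f$, $\theta_n$ and the fibre integration. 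Your attribution of which geometric degenerations produce which terms of the $\A_{\infty}$-relation is correct. The one caveat is that, as you yourself flag, what you have written is a roadmap rather than a proof: the two Stokes computations, the verification that degenerate faces contribute zero, and the reconciliation of the orientation signs with the prefactors $(-1)^{\sum_j\vert\omega_j\vert(n-j)}$ and $(-1)^{\sum_j\vert\omega_j\vert+n}$ are precisely where all the work lies, and none of it is carried out. For the purposes of this paper that is acceptable, since the result is imported as a black box, but the sketch should be cited to \cite{Gugenheim1977} and \cite{Abad-Schatz2013} rather than presented as a self-contained argument.
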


\subsection{Alexander-Whitney and Eilenberg-Zilber maps}\label{sec:2.4}
In this subsection we recall the definitions of the Alexander-Whitney and Eilenberg-Zilber maps. These will enable us to give the singular chain complex of a Lie group the structure of a DG Hopf algebra. We begin with the Alexander-Whitney map. For $p \leq n$, the inclusions of the standard $p$-simplex as the front and the back $p$-th face of the standard $n$-simplex will be denoted respectively by $f_{p}^{n} \colon \Delta_p \to \Delta_n$ and $b_{p}^{n}\colon \Delta_p \to \Delta_n$. Explicitly, 
\begin{align}\label{eqn:2.7}
\begin{split}
f_{p}^{n}(t_1,\dots, t_p) &= (t_1,\dots, t_p, 0,\dots, 0), \\
b_{p}^{n}(t_1,\dots, t_p) &= (1,\dots, 1, t_1,\dots, t_p).
\end{split}
\end{align}
Also, for two fixed smooth manifolds $X$ and $Y$, we let $\pi_1 \colon X \times Y \to X$ and $\pi_2 \colon X \times Y \to Y$ denote the two natural projections. Then, the Alexander-Whitney map $\AW \colon \uC_{\sbullet}(X \times Y) \to \uC_{\sbullet}(X) \otimes \uC_{\sbullet}(Y)$ is the chain map given, for each singular $n$-simplex $\sigma \colon \Delta_n \to X \times Y$, by the formula
\begin{equation}
\AW (\sigma) = \sum_{p + q = n} (\sigma_1 \circ f_{p}^{n}) \otimes (\sigma_2 \circ b_{q}^{n}), 
\end{equation}
where $\sigma_1 = \pi_1 \circ \sigma$ and $\sigma_2 = \pi_2 \circ \sigma$. For us, the most important property of this map is that it allows us to define a DG coalgebra structure on the space of singular chains $\uC_{\sbullet}(X)$. The coproduct $\Delta \colon \uC_{\sbullet} (X) \to \uC_{\sbullet}(X) \otimes \uC_{\sbullet}(X)$ is formed by composition of the  map $\uC_{\sbullet}(X) \to \uC_{\sbullet}(X \times X)$ induced by the diagonal $X \to X \times X$ with the Alexander-Whitney map $\AW \colon \uC_{\sbullet}(X \times X) \to \uC_{\sbullet}(X) \otimes \uC_{\sbullet}(X)$. The counit $\varepsilon \colon \uC_{\sbullet}(X) \to \RR$ is induced by the projection map which collapses $X$ to a point. 

Now we turn to the Eilenberg-Zilber map. Such a map is based on the simple fact that a cube of dimension $n$ is the union of $n!$ simplices. 
\begin{center}
\includegraphics[scale=0.5]{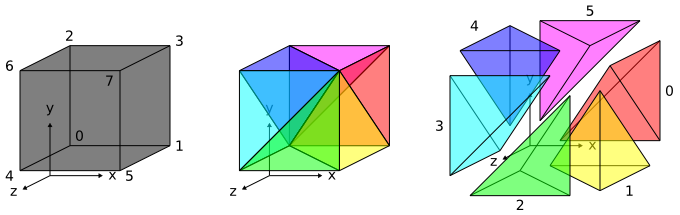}
\end{center}
We fix again two smooth manifolds $X$ and $Y$. The Eilenberg-Zilber map $\EZ \colon \uC_{\sbullet}(X) \otimes \uC_{\sbullet}(Y) \to \uC_{\sbullet}(X \times Y)$ is the chain map given, for each singular $m$-simplex $\sigma \colon \Delta_m \to X$ and each singular $n$-simplex $\tau \colon \Delta_n \to Y$, by the formula
\begin{equation}\label{eqn:2.9}
\EZ (\sigma \otimes \tau) = \sum_{\chi \in \mathfrak{S}_{m,n}} (-1)^{\chi} (\sigma \times \tau) \circ \chi_*,
\end{equation}
where, as the notation implies, the sum over $\chi$ is taken over all $(m,n)$ shuffles and $\chi_* \colon \Delta_{m+n} \to \Delta_m \times \Delta_n$ is the map defined by
\begin{equation}
\chi_* (t_1, \dots, t_{m+n}) = ((t_{\chi(1)}, \dots, t_{\chi(m)}), (t_{\chi(m+1)}, \dots, t_{\chi(m+n)}))
\end{equation}
We state without proof the key properties of the Eilenberg-Zilber map (see \cite{Maclane1963} and \cite{Eilenberg-Moore1966}).

\begin{proposition}\label{prop:2.2}
The Eilenberg-Zilber map $\EZ \colon  \uC_{\sbullet}(X) \otimes \uC_{\sbullet}(Y) \to \uC_{\sbullet}(X \times Y)$ satisfies:
\begin{enumerate}
\item It is associative, that is, given a third smooth manifold $Z$, the following diagram commutes
$$
\xymatrix@C=8ex{\uC_{\sbullet}(X) \otimes \uC_{\sbullet}(Y) \otimes \uC_{\sbullet}(Z)  \ar[r]^-{\id \otimes \EZ} \ar[d]_-{\EZ \otimes \id}& \uC_{\sbullet}(X \times Y) \otimes \uC_{\sbullet}(Z) \ar[d]^-{\EZ} \\
\uC_{\sbullet}(X) \otimes \uC_{\sbullet}(Y \times Z) \ar[r]^-{\EZ} & \uC_{\sbullet}(X \times Y \times Z);}
$$
\item It is a map of DG coalgebras.
\item $\EZ$ and $\AW$ are inverses up to natural chain homotopies. 
\end{enumerate}
\end{proposition}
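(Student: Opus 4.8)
The plan is to handle the three assertions by different means: direct manipulation of the shuffle formula~\eqref{eqn:2.9} for parts (1) and (2), and the method of acyclic models for part (3). The common combinatorial input is that an $(m,n)$-shuffle $\chi$ encodes an interleaving of two ordered blocks of sizes $m$ and $n$, and that the associated maps $\chi_*$ triangulate the prism $\Delta_m \times \Delta_n$ into top-dimensional simplices indexed by these shuffles, with $(-1)^\chi$ recording the induced orientation. For associativity I would evaluate both composites in the square on a triple $\sigma_1 \otimes \sigma_2 \otimes \sigma_3$ of simplices of dimensions $m,n,p$. Expanding $\EZ \circ (\id \otimes \EZ)$ produces a sum over $(n,p)$-shuffles of the last two blocks followed by a sum over $(m,n+p)$-shuffles; the content is the combinatorial bijection asserting that composing an $(m,n+p)$-shuffle with a chosen $(n,p)$-shuffle of the final block yields precisely an $(m,n,p)$-shuffle, that every triple shuffle arises uniquely in this way, and that the product of the two signs equals the sign of the resulting triple shuffle. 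Running the identical count on the other bracketing $\EZ \circ (\EZ \otimes \id)$ gives the same sum of top simplices of $\Delta_m \times \Delta_n \times \Delta_p$, so the two composites coincide; geometrically this is the statement that the two iterated triangulations of the prism agree.

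For the coalgebra property (2) I would compare $\Delta_{X \times Y} \circ \EZ$ with $(\EZ \otimes \EZ) \circ (\id \otimes \tau \otimes \id) \circ (\Delta_X \otimes \Delta_Y)$, where $\tau$ is the symmetry carrying the Koszul sign. Unwinding the definitions, the coproduct applies the front/back face operators $f_p^n, b_q^n$ of~\eqref{eqn:2.7} while $\EZ$ interleaves the two blocks. The crux is the identity that cutting a shuffled $(m+n)$-simplex at its $k$-th vertex reproduces the shuffle of the front faces of the two factors against their back faces; this is the compatibility of the shuffle product with the deconcatenation coproduct at the level of face operators. I would make this precise by indexing both sides by pairs consisting of a cut point and a shuffle, and exhibiting a sign-preserving bijection between these indexing sets, the sign check being the reconciliation of the shuffle signs $(-1)^\chi$ with the Koszul sign introduced by $\tau$. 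Compatibility with the counit is immediate, since both sides collapse to a point in degree $0$.

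For part (3) the cleanest route is acyclic models rather than an explicit homotopy. One composite is an identity on the nose, $\AW \circ \EZ = \id$, a classical strict identity which follows by a direct computation from~\eqref{eqn:2.9} and~\eqref{eqn:2.7}, the front/back face decomposition singling out, among all shuffles and splitting points, only the trivial shuffle split at $(m,n)$ and thereby returning $\sigma \otimes \tau$. For the reverse composite, $\id$ and $\EZ \circ \AW$ are natural endomorphisms of the functor $\uC_{\sbullet}(- \times -)$ that agree in degree $0$, and this functor is representable and acyclic on the models given by products of standard simplices (which, being smooth and contractible, have acyclic smooth singular chains). The method of acyclic models then yields a natural chain homotopy $\EZ \circ \AW \simeq \id$, with the required naturality built in. I expect the main obstacle to be part (2): the simultaneous presence of the shuffle signs, the front/back decomposition and the Koszul sign of $\tau$ makes the sign reconciliation genuinely delicate, and a clean argument hinges on choosing the indexing set so that the bijection between the two sides is manifestly orientation- and sign-preserving.
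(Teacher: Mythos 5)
The paper does not prove this proposition: it explicitly states these properties without proof and refers to \cite{Maclane1963} and \cite{Eilenberg-Moore1966}, so there is no internal argument to compare against. Your outline follows the same classical route those references take (shuffle combinatorics for (1) and (2), acyclic models for (3)), and parts (1) and (2) are sound as sketched: associativity reduces to the bijection between pairs (an $(m,n+p)$-shuffle, an $(n,p)$-shuffle of the second block) and $(m,n,p)$-shuffles with multiplicative signs, and the coalgebra property is the compatibility of shuffles with the front/back cut, exactly as in Eilenberg--Moore.

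There is one genuine error in part (3): the claim that $\AW \circ \EZ = \id$ holds on the nose, with only the trivial shuffle split at $(m,n)$ surviving. On the \emph{unnormalized} complex $\uC_{\sbullet}$ used in the statement this is false. Already for two $1$-simplices $\sigma, \tau$, expanding $\AW(\EZ(\sigma\otimes\tau))$ with the formulas \eqref{eqn:2.7} and \eqref{eqn:2.9} leaves, besides $\sigma\otimes\tau$, residual terms such as $\sigma(0)\otimes(s_0\tau - s_1\tau)$ and $(s_1\sigma - s_0\sigma)\otimes\tau(1)$: each has a degenerate tensor factor, but the two degeneracies of a simplex are distinct simplices and do not cancel. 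The strict identity $\AW\circ\EZ=\id$ is a statement about \emph{normalized} chains (which the paper only introduces later, in \S\ref{sec:3.3}). Since the proposition only asserts that $\EZ$ and $\AW$ are inverses up to natural chain homotopy, the fix is immediate and already in your toolkit: apply the acyclic-models argument to both composites $\AW\circ\EZ$ and $\EZ\circ\AW$ (both are natural endomorphisms agreeing with the identity in degree $0$ on representable, acyclic models), rather than claiming a strict identity for the first one.
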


From the associativity property, it follows that if $X_1,\dots, X_r$ are smooth manifolds, then there is an $r$-fold Eilenberg-Zilber map $\EZ \colon \uC_{\sbullet}(X_1) \otimes \cdots \otimes \uC_{\sbullet}(X_r) \to \uC_{\sbullet}(X_1 \times \cdots \times X_r)$ which is obtained by applying the binary Eilenberg-Zilber maps $r-1$ times. Explicitly, this map is defined as follows. Given simplices $\sigma_i \colon \Delta_{n_i} \to X_i$ with $i=1,\dots, r$, one has:
\begin{equation}\label{eqn:2.18}
\EZ (\sigma_1 \otimes \cdots \otimes \sigma_r) = \sum_{\chi \in \mathfrak{S}_{n_1,\dots,n_r}} (-1)^{\chi} (\sigma_1 \times \cdots \times \sigma_r) \circ \chi_*, 
\end{equation}
where the sum over $\chi$ is taken over all $(n_1,\dots,n_r)$-shuffles and $\chi_* \colon \Delta_{n_1 + \cdots + n_r} \to \Delta_{n_1} \times \cdots \times  \Delta_{n_r}$ now denotes the map defined by
\begin{equation}
\chi_* (t_1 ,\dots, t_{n_1 + \cdots + n_r}) = ((t_{\chi(1)},\dots, t_{\chi(n_1)} ), \dots, (t_{\chi(n_1 + \cdots + n_{r-1}+1)},\dots, t_{\chi(n_1 + \cdots + n_r)} ) ).
\end{equation}

We will now specialize the discussion by replacing $X$ with a Lie group $G$. In this case, the space of singular chains $\uC_{\sbullet}(G)$ acquires the structure of a DG Hopf algebra. The product $m \colon \uC_{\sbullet}(G) \otimes \uC_{\sbullet}(G) \to \uC_{\sbullet}(G)$ is formed by composition of the Eilenberg-Zilber map $\EZ \colon \uC_{\sbullet}(G) \otimes \uC_{\sbullet}(G) \to \uC_{\sbullet}(G \times G)$ with the map $\mu_{*} \colon \uC_{\sbullet}(G \times G) \to \uC_{\sbullet}(G)$ induced by the multiplication map $\mu \colon G \times G \to G$. The unit $u \colon \RR \to \uC_{\sbullet}(G)$ is induced by the inclusion of a point as the identity element of $G$, and the antipode $S \colon \uC_{\sbullet}(G) \to \uC_{\sbullet}(G)$ is induced by the inversion map $\iota \colon G \to G$.

\subsection{Representations of the DG Lie algebra $\TT\gfrak$}\label{sec:2.5}
For a Lie algebra $\gfrak$, consider the DG Lie algebra $\TT\gfrak = \uu \gfrak \oplus \gfrak$ with degree $-1$ generators $i(x) \in \uu \gfrak$ and degree $0$ generators $L(x) \in \gfrak$ for $x \in \gfrak$. The Lie bracket of $\TT\gfrak$ is induced by the Lie bracket of $\gfrak$, and the differential is defined by
\begin{align}
\begin{split}
d (i(x)) &= L(x), \\
d (L(x) ) &= 0.
\end{split}
\end{align}
The generators $i(x)$ and $L(x)$ satisfy the Cartan relations
\begin{align}
\begin{split}
[i(x),i(y)] &= 0,\\
 [L(x),L(y)] &= L([x,y]), \\
  [L(x),i(y)] &=i([x,y]).
 \end{split}
\end{align}
By a \emph{representation} of $\TT \gfrak$ we mean a cochain complex $V$ together with a DG Lie algebra homomorphism $\TT\gfrak \to \End(V)$. That is, it consists of a representation of $\TT\gfrak$ on $V$, where the operators $i_{x}\in \End(V)^{-1}$ and $L_{x} \in \End(V)^0$ corresponding to $i(x),L(x) \in \TT\gfrak$ satisfy the relations
\begin{align}\label{eqn:2.22}
\begin{split}
[i_{x},\delta] &= L_{x}, \\
[L_{x},\delta]&= 0, \\
[i_{x},i_{y}] &= 0, \\
[L_{x},L_{y}] &= L_{[x,y]}, \\
 [L_{x},i_{y}]&= i_{[x,y]},
 \end{split}
\end{align}
with $\delta$ being the differential of $V$. The operators $i_{x}$ are called \emph{contractions} and the operators $L_{x}$ are called \emph{Lie derivatives}. 

An important example of a representation of $\TT\gfrak$ is the Chevalley-Eilenberg complex $\CE(\mathfrak{g})$. As a graded algebra is the exterior algebra $\Lambda^{\sbullet}\mathfrak{g}^*$, where $\gfrak^*$ has degree $1$. The differential $\dCE$ is the unique derivation such that for $\xi \in \Lambda^1\gfrak^*$, $\dCE \xi$ is the element in $\Lambda^2\gfrak^*$ defined by
\begin{equation}
(\dCE \xi)(x,y) = - \xi([x,y]).
\end{equation}
It follows from the Jacobi identity that $\dCE$ defined in this manner is a differential. The contraction $i_{x}$ and Lie derivatives $L_{x}$ are the unique derivations such that for $\xi \in \Lambda^1\gfrak^*$,

\begin{align}
\begin{split}
i_{x} \xi &= \langle \xi, x \rangle, \\
L_{x} \xi &= \ad_{x}^* \xi,
\end{split}
\end{align} 
where $\ad_{x}^*$ denotes the infinitesimal coadjoint action of the element $x$. 

Explicit formulas for these various maps, which will be useful later on, are obtained by introducing a basis for $\gfrak^*$. Let $e_a$ be a basis for $\gfrak$ with dual basis $e^{a}$ and structure constants $f^{a}_{\phantom{a}bc}= \langle e^{a},[e_b,e_c] \rangle$, and write $i_a$ and $L_a$ for the contraction $i_{e_a}$ and the Lie derivative $L_{e_a}$ acting on $\CE(\gfrak)$. Then the explicit formulas for $\dCE$, $i_{a}$ and $L_{a}$ are the following:
\begin{align}
\begin{split}
\dCE e^{a} &= - \frac{1}{2} f^{a}_{\phantom{a}bc} e^{b} \wedge e^{c}, \\
i_{b} e^a &= \delta_{b}^{\phantom{b}a}, \\
L_{b} e^{a} &= - f^{a}_{\phantom{a}bc}e^{c}. 
\end{split}
\end{align} 
Here the convention that repeated indices are summed over is in place.

Another example of a representation of $\TT\gfrak$ is the Weil algebra $\uW\gfrak$. The underlying graded commutative algebra of $\uW\gfrak$ is the tensor product
\begin{equation}
\uW\gfrak = \Lambda^{\sbullet}\gfrak^* \otimes \uS^{\sbullet}\gfrak^*,
\end{equation}
where $\uS^{\sbullet}\gfrak^*$ is the symmetric algebra of $\gfrak^*$ and where we associate to each $\xi \in \gfrak^*$ the degree $1$ generators $t(\xi) \in \Lambda^1 \gfrak^*$ and the degree $2$ generators $w(\xi) \in \uS^1\gfrak^*$. The differential on $\uW\gfrak$ is given by \begin{align}
\begin{split}
\dW (t(\xi)) &=  w(\xi) + \dCE (t(\xi)), \\
\dW (w(\xi)) &= \dCE (w(\xi)),
\end{split}
\end{align}
where $\dCE$ is the differential of the Chevalley-Eilenberg complex $\CE(\gfrak)$. The operators $i_x$ and $L_x$ are the unique derivations such that
\begin{align}
\begin{split}
i_x (t(\xi)) &= \langle t(\xi),x \rangle,\\
i_x (w(\xi)) &= 0, \\
L_x(t(\xi)) &= \ad_x^*(t(\xi)), \\
L_x(w(\xi)) &= \ad_x^*(w(\xi)).
\end{split}
\end{align}

As for $\CE(\gfrak)$, it will be useful to express the differential $\dW$ and the operator $i_x$ and $L_x$ in terms of a dual basis $e^{a}$ of $\gfrak^*$ and the structure constants $f^{a}_{\phantom{a}bc}$ of $\gfrak$. If we write $t^{a} = t(e^{a})$ and $w^{a} = w(e^{a})$, they are as follows:
\begin{align}\label{eqn:2.29}
\begin{split}
\dW  t^{a} &= w^{a} - \frac{1}{2} f^{a}_{\phantom{a}bc} t^{b} t^{c}, \\
\dW w^{a} &= f^{a}_{\phantom{a}bc} w^{b} t^{c}, \\
i_b t^{a} &= \delta_{b}^{\phantom{b}a}, \\
i_b w^{a} &= 0, \\
L_{b} t^{a} &= - f^{a}_{\phantom{a}bc} t^{c}, \\
L_{b} w^{a} &=- f^{a}_{\phantom{a}bc} w^{c}. 
\end{split}
\end{align}
Clearly, the elements $t^{a}$ and $\dW t^{a}$  also generate $\uW\gfrak$ freely, which implies that the
Weil algebra is acyclic.\\

If $V$ and $W$ are representations of $\TT \gfrak$, a \emph{homomorphism} $f \colon V \to W$ is a morphism of cochain complexes commuting with the operators $i_x$ and $L_x$. It is cleat that the identity map of a representation of $\TT \gfrak$ onto itself is a homomorphism, and that the composition of two homomorphisms is again a homomorphism. Thus, representations of $\TT \gfrak$ and their homomorphisms form a category which we denote by $\Rep (\TT \gfrak)$. For later purposes, we note that this category is symmetric monoidal with tensor product $V \otimes V'$ of two objects $V$ and $V'$ given by the tensor product of the underlying cochain complexes equipped with the actions of $i_x$ and $L_x$ defined by the formulas
\begin{align}
\begin{split}
i_x (v \otimes v') &= i_x v \otimes v' + (-1)^{\vert v \vert } v \otimes i_x v', \\
L_x (v \otimes v') &= L_x v \otimes v' +  v \otimes L_x v',
\end{split}
\end{align}
for homogeneous elements $v \in V$ and $v' \in V'$. Clearly, the unit object is the trivial representation $\underline{\RR}$ viewed as a complex concentrated in degree zero. 

\subsection{Differentiation and integration functors between $\Mod(\uC_{\bullet}(G))$ and $\Rep(\TT \gfrak)$}\label{sec:2.6}
Let  $G$ be a simply connected Lie group with Lie algebra $\gfrak$. The main result of \cite{AriasAbad2019} states the existence of differentiation and integration functors between the monoidal categories $\Mod(\uC_{\sbullet}(G))$ and $\Rep(\TT \gfrak)$ which are inverses to one another. This extends the well-known correspondence between representations of $G$ and representations of $\gfrak$. Let us explain briefly the construction of these functors.  

We begin with the differentiation functor, which we will write as $\Dscr \colon \Mod(\uC_{\sbullet}(G)) \to \Rep(\TT \gfrak)$. For an element $x$ of $\gfrak$, take the singular $1$-simplex $\sigma[x] \colon \Delta_1 \to G$ defined by 
\begin{equation}
\sigma[x] (s) = \exp(s x),
\end{equation}
where $\exp \colon \gfrak \to G$ is the exponential map of $G$. Then, given an object $V$ in $\Mod(\uC_{\sbullet}(G))$ with structure homomorphism $\rho \colon \uC_{\bullet}(G) \to \End(V)$, the corresponding object $\Dscr(V)$ in $\Rep(\TT\gfrak)$ is $V$ with structure homomorphism $\Dscr(\rho) \colon \TT \gfrak \to \End(V)$ determined by 
\begin{align}
\begin{split}
\Dscr(\rho)(i(x)) &= \frac{d}{d t}\bigg\vert_{t=0} \rho\left(\sigma[tx]\right), \\
\Dscr(\rho)(L(x)) &=\frac{d}{d t}\bigg\vert_{t=0} \rho\left(\exp(tx)\right).
\end{split}
\end{align}
In addition, the functor $\Dscr$ acts as the identity on morphisms. Under this definition, it is not difficult to verify that $\Dscr$ is indeed monoidal. For details, see Theorem~3.3 of \cite{AriasAbad2019}.

Next we turn to the integration functor, which we write as $\Iscr \colon \Rep(\TT \gfrak) \to \Mod(\uC_{\sbullet}(G))$. First we need to record an important preliminary notion. In the category $\Rep(\TT\gfrak)$ let us fix an object $V$, and for each $k \geq 0$, let us call $\Phi^{(k)}_V \in \Omega^k(G) \otimes \End(V)^{-k}$ the unique left-equivariant form on $G$ with values in $\End(V)$ such that
\begin{equation}\label{eqn:2.33}
\Phi^{(k)}_V(e)(x_1,\dots,x_k) = i_{x_1} \circ \cdots \circ i_{x_k},
\end{equation}
for all $x_1,\dots,x_k \in \gfrak$. With this definition, it can be concluded that the forms $\Phi^{(k)}_V$ satisfy the ``descent equations''
\begin{equation}\label{eqn:2.34}
d \Phi^{(k)}_V = (-1)^k \delta \Phi^{(k+1)}_V,
\end{equation}
where, as before, we write $\delta$ for the the differential of $V$. Furthermore, if $\mu \colon G \times G \to G$ denotes the multiplication map for $G$ and $\pi_1,\pi_2 \colon G \times G \to G$ are the projection onto the first and second component, respectively, we get the relation
\begin{equation}\label{eqn:2.35}
\mu^{*} \Phi^{(k)}_V = \sum_{i + j = k}(-1)^{ij} \pi_1^* \Phi^{(i)}_V \wedge \pi_2^* \Phi^{(j)}_V. 
\end{equation}
We note finally that $\Phi^{(0)}_V \in \Omega^{0}(G) \otimes \End(V)^{0}$ is a representation of $G$ on $V$ and, in particular,
\begin{equation}\label{eqn:2.36}
\Phi^{(0)}_V(e) = \id_V
\end{equation}
In general, an element $\Phi$ of $\Omega^{\sbullet}(G) \otimes \End(V)$ is refereed to as a \emph{left-equivariant representation form} for $V$ if it can be decomposed as 
\begin{equation}
\Phi = \sum_{k \geq 0} \Phi^{(k)},
\end{equation}
where the forms $\Phi^{(k)} \in \Omega^k(G) \otimes \End(V)^{-k}$ satisfy the conditions \eqref{eqn:2.34}, \eqref{eqn:2.35} and \eqref{eqn:2.36}. In this way we set up a bijective correspondence between objects of $\Rep(\TT \gfrak)$ and their associated left-equivariant representation forms (see Proposition~3.18 of \cite{AriasAbad2019}). We can now define the integration functor $\Iscr \colon \Rep(\TT\gfrak) \to \Mod(\uC_{\sbullet}(G))$ as follows. Given an object $V$ in $\Rep(\TT \gfrak)$ with structure homomorphism $\rho \colon \TT \gfrak \to \End(V)$, the corresponding object $\Iscr(V)$ in $\Mod(\uC_{\sbullet}(G))$ is $V$ with structure homomorphism $\Iscr(\rho) \colon \uC_{\sbullet}(G) \to \End(V)$ determined on a singular $k$-simplex $\sigma \colon \Delta_k \to G$ by
\begin{equation}\label{eqn:2.38}
\Iscr(\rho)(\sigma) = \int_{\Delta_k} \sigma^*\Phi_{V},
\end{equation}
where $\Phi_{V} = \sum_{k \geq 0} \Phi^{(k)}_V$. Moreover, the functor $\Iscr$ acts  as the identity on morphisms. Under this definition, it is not hard to see that $\Dscr$ is simultaneously left and right inverse to $\Iscr$. All the details can be found in \S3.3 of \cite{AriasAbad2019}. 


\section{$\A_{\infty}$-quasi-isomorphisms of DG algebras}\label{sec:3}
In this section, we prove several technical results concerning the Van~Est map and the Hoschschild-De~Rham $\A_{\infty}$-quasi-isomorphism in the context of classifying spaces. These results are key components in the proof of our main theorem. They may also be of independent interest. Throughout the discussion, $G$ denotes a simply connected Lie group with Lie algebra $\gfrak$. 

\subsection{The Van~Est map}\label{sec:3.1}
Here we consider the Van~Est map from the  Bott-Shulman-Stasheff algebra $\Omega^\bullet(G_\bullet)$ to the Weil algebra of $\gfrak$. We follow the conventions  in \cite{Li-Bland-Meinrenken2015}. The Bott-Shulman-Stasheff algebra computes the cohomology of BG while the Van Est map is contractible, so the Van Est map models the pull-back map of the universal bundle.
Our goal here is to show that if  $G$ is connected and compact, there is a natural subalgebra of  $\Omega^\bullet(G_\bullet)$ such that the restriction of the Van Est map to it lands on the basic part of the Weil algebra and is a quasi-isomorphism.

Let us consider the universal $G$-bundle $\pi \colon EG_{\sbullet} \to BG_{\sbullet}$ as in \cite{Segal1968}. Recall that $EG_{\sbullet}$ is the simplicial manifold with $EG_{p} = G \times \cdots \times G$ ($p+1$ copies) where the face operators $\overline{\varepsilon}_{i} \colon EG_{p} \to EG_{p-1}$ are given by
\begin{equation}
\overline{\varepsilon}_{i}(g_0,\dots,g_{p}) = (g_0,\dots, g_{i-1},g_{i+1},\dots, g_{p}),
\end{equation}
for $0 \leq i \leq p$. Similarly, $BG_{\sbullet}$ is defined by $BG_{p} = G \times \cdots \times G$ ($p$ copies) but here the face operators $\varepsilon_{i} \colon BG_{p} \to BG_{p-1}$ are given by
\begin{equation}\label{eqn:3.2}
\varepsilon_i (g_1,\dots,g_p) = \begin{cases} (g_2,\dots,g_p) & \text{if $i = 0$,} \\
(g_1,\dots,g_{i-1}, g_ig_{i+1},g_{i+2},\dots, g_p) & \text{if $0 < i < p$,} \\
(g_1,\dots,g_{p-1}) & \text{if $i = p$.} \end{cases}
\end{equation}
Finally, view each $EG_{p}$ as a principal $G$-bundle over $BG_{p}$, with action the diagonal action of $G$ from the right, and quotient map $\pi \colon EG_{p} \to BG_{p}$ given by
\begin{equation}
\pi (g_0,\dots,g_p) = (g_0 g_1^{-1}, g_1 g_2^{-1},\dots, g_{p-1}g_{p}^{-1}). 
\end{equation}
By definition, the total space of the universal $G$-bundle $EG$ is the thick geometric realisation of the simplicial manifold $EG_{\sbullet}$. From this it is easy to see that the classifying space $BG$ may be identified with the thick geometric realisation of $BG_{\sbullet}$.  This means the cohomology of $BG$ may be computed as the ``De~Rham cohomology'' of $BG_{\sbullet}$, which is defined as the cohomology of the following double complex $\Omega^{\sbullet}(BG_{\sbullet})$:
$$
\xymatrix{\vdots & \vdots & \vdots &  \\
\Omega^2(BG_0) \ar[u]^-{\bar{d}}  \ar[r]^-{\partial}& \Omega^2(BG_1) \ar[u]^-{\bar{d}}  \ar[r]^-{\partial}& \Omega^2 (BG_2) \ar[u]^-{\bar{d}}  \ar[r]^-{\partial} & \cdots \\
\Omega^1(BG_0) \ar[u]^-{\bar{d}}  \ar[r]^-{\partial} & \Omega^1(BG_1) \ar[u]^-{\bar{d}}  \ar[r]^-{\partial}& \Omega^1 (BG_2) \ar[u]^-{\bar{d}}  \ar[r]^-{\partial} & \cdots \\
\Omega^0(BG_0) \ar[u]^-{\bar{d}}  \ar[r]^-{\partial} & \Omega^0(BG_1) \ar[u]^-{\bar{d}}  \ar[r]^-{\partial} & \Omega^0 (BG_2) \ar[u]^-{\bar{d}}  \ar[r]^-{\partial} & \cdots. \\
}
$$
Here the vertical differential $\bar{d} \colon \Omega^q(BG_p) \to \Omega^{q+1}(BG_p)$ is $(-1)^p$ times the usual de exterior differential $d$ and the horizontal differential $\partial \colon \Omega^q(BG_p) \to \Omega^q(BG_{p+1})$ is given by
\begin{equation}\label{eqn:3.4}
\partial = \sum_{i=0}^{p+1} (-1)^{i} \varepsilon_{i}^*.
\end{equation}
 We note that $\Omega^{\sbullet}(BG_{\sbullet})$ has a graded ring structure with respect to the cup product defined as follows. For any $\omega \in \Omega^{q}(BG_{p})$ and $\omega' \in \Omega^{q'}(BG_{p'})$, let $\omega \abxcup \omega' \in \Omega^{q + q'}(BG_{p + p'})$ be the differential form given by
\begin{equation}\label{eqn:3.5}
\omega \abxcup \omega' = (-1)^{q p'} \pr^* \omega \wedge \pr'^*\omega',
\end{equation}
where $\pr \colon BG_{p + p'} \to BG_{p}$ is the front face projection 
\begin{equation}\label{eqn:3.6}
\pr(g_1,\dots,g_{p+p'}) =(g_1,\dots,g_p) ,
\end{equation}
and $\pr' \colon BG_{p + p'} \to BG_{p'}$ is the back face projection
\begin{equation}\label{eqn:3.7}
\pr'(g_1,\dots,g_{p+p'}) =  (g_{p+1},\dots,g_{p+p'}) .
\end{equation}
Both the vertical and horizontal differentials $\bar{d}$ and $\partial$ are graded derivations relative to the cup product, and we regard Bott-Shulman-Stasheff complex $\Omega^{\sbullet}(BG_{\sbullet})$ as a DG algebra with respect to the total differential.  

Now we turn to a discussion of the Van~Est map. For this purpose, let us consider the action $\gamma_i (g)$ of elements $g$ of $G$ on $BG_{p}$ defined by
\begin{equation}\label{eqn:3.8}
\gamma_i(g) (g_1,\dots, g_p) = (g_1,\dots, g_{i-1},g_i g^{-1},g g_{i+1}, g_{i+2},\dots, g_p),
\end{equation}
where $1 \leq i \leq p$. For each $x \in \gfrak$, we denote by $x^{i,\sharp}$ the vector field on $BG_{p}$ generated by this action. We also regard the Weil algebra of $\gfrak$ as a bigraded algebra $\uW^{\sbullet,\sbullet} \gfrak$ with
\begin{equation}
\uW^{p,q}\gfrak = \Lambda^{p-q} \gfrak^* \otimes \uS^{q}\gfrak^*. 
\end{equation}
Notice that any $x \in \gfrak$ defines two kinds of contraction operators $i_{\Lambda}(x)$ and $i_{\uS}(x)$ on $\uW^{\sbullet,\sbullet} \gfrak$ of bidegrees $(-1,0)$ and $(-1,-1)$, corresponding to the contractions on $\Lambda^{\sbullet}\gfrak$ and $\uS^{\sbullet} \gfrak$, respectively. For elements $\xi \in \uW^{p,q}\gfrak$ and $x_1,\dots, x_p \in \gfrak$ we put
\begin{equation}
\xi(x_1,\dots,x_q, \overline{x}_{q+1},\dots, \overline{x}_p) = i_{\Lambda}(x_p) \cdots i_{\Lambda}(x_{q+1}) i_{\uS}(x_q) \cdots i_{\uS}(x_1) \xi. 
\end{equation}
With these definitions, the Van~Est map $\VE \colon \Omega^{\sbullet}(BG_{\sbullet}) \to \uW^{\sbullet,\sbullet} \gfrak$ is the map of DG algebras given by the following formula, for $\omega \in \Omega^{q}(BG_p)$ and $x_1,\dots, x_p \in \gfrak$,
\begin{equation}\label{eqn:3.11}
\VE(\omega) (x_1,\dots,x_q, \overline{x}_{q+1},\dots, \overline{x}_p) =\sum_{\sigma \in \mathfrak{S}_p} \varepsilon(\sigma) \left( i_{x^{1,\sharp}_{\sigma(1)}} \cdots i_{x^{q,\sharp}_{\sigma(q)}} L_{x^{q+1,\sharp}_{\sigma(q+1)}} \cdots L_{x^{p,\sharp}_{\sigma(p)}}  \omega \right)(e,\dots, e).
\end{equation}
Here $e$ is the identity element of $G$, and $\varepsilon(\sigma)$ is equal to $+1$ if the number of pairs $(i,j)$ with $q+1 \leq i < j \leq p$ but $\sigma(i) > \sigma(j)$ is even, and equal to $-1$ if that number is odd. \\

The Van~Est map  does not take values on the basic elements of $\uW^{\sbullet,\sbullet} \gfrak$. 
However, there is a subalgebra of $\Omega^\bullet(G_\bullet)$ whose image under the Van Est map consists of basic elements. We set $G_{p} = G \times \cdots \times G$ ($p$ copies) and think of it as a Lie group. Moreover, we let $\gfrak_p = \gfrak \oplus \cdots \oplus \gfrak$ ($p$ copies) denote the corresponding Lie algebra. Since the actions of $G$ on $BG_{p}$ defined by \eqref{eqn:3.8} commute, we obtain an action $\gamma(g_1,\dots,g_p)$ of elements $(g_1,\dots,g_p)$ of $G_p$ on $BG_{p}$ by putting
\begin{equation}\label{eqn:3.12}
\gamma(g_1,\dots,g_p) = \gamma_1(g_1) \circ \cdots \circ \gamma_p(g_p). 
\end{equation} 
It is straightforward to check this action is transitive and free. Let us denote by $\Omega^{q}(BG_{p})^{G_p}$  the subspace of $G_p$-invariant elements of $\Omega^{q}(BG_{p})$.\footnote{We remark that $\Omega^{\sbullet}(BG_{\sbullet})^{G_{\sbullet}}$ is not a DG subalgebra of $\Omega^{\sbullet}(BG_{\sbullet})$.} Then each element of $\Omega^{q}(BG_{p})^{G_p}$ is completely and freely determined by its evaluation at $(e,\dots,e) \in BG_{p}$. Therefore, evaluation at $(e,\dots,e)$ gives an isomorphism of graded vector spaces between $\Omega^{q}(BG_{p})^{G_p}$  and $\Lambda^{q} \gfrak_{p}^*$. On the other hand, consider the residual action $\gamma_0(g)$ of elements $g$ of $G$ on $BG_{p}$ defined by
\begin{equation}\label{eqn:3.13}
\gamma_0(g) (g_1,\dots, g_p) = (g g_1, g_2,\dots, g_p). 
\end{equation}
Since this action commutes with the one given by \eqref{eqn:3.12}, we end up with an action $\zeta(g_0,g_1,\dots,g_p)$ of elements $(g_0,g_1,\dots,g_p)$ of $G_{p+1}$ on $BG_{p}$ by setting
\begin{equation} \label{eqn:3.14}
\zeta(g_0,g_1,\dots,g_p) = \gamma_0 (g_0) \circ \gamma(g_1,\dots, g_p). 
\end{equation}
We let $\Omega^{q}(BG_{p})^{G_{p+1}}$ denote the subspace of $G_{p+1}$-invariant elements of $\Omega^{q}(BG_{p})$.

\begin{lemma}\label{lem:3.1}
$\Omega^{\sbullet}(BG_{\sbullet})^{G_{\ssbullet +1}}$ is a DG subalgebra of $\Omega^{\sbullet}(BG_{\sbullet})$ and the inclusion
$$
\Omega^{\sbullet}(BG_{\sbullet})^{G_{\ssbullet +1}} \longrightarrow \Omega^{\sbullet}(BG_{\sbullet})
$$
is a quasi-isomorphism. 
\end{lemma}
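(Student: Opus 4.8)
The plan is to treat the two assertions separately: the fact that $\Omega^{\sbullet}(BG_{\sbullet})^{G_{\ssbullet+1}}$ is closed under $\bar d$, $\partial$ and the cup product is purely a matter of equivariance, whereas the quasi-isomorphism statement is the genuine cohomological input and rests on averaging over $G_{\ssbullet+1}$. Throughout I use that $G$ (hence each $G_{p+1}$) is compact and connected, in line with the standing hypotheses of the section.

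First I would record the explicit form of the action. Unwinding \eqref{eqn:3.8}, \eqref{eqn:3.13} and \eqref{eqn:3.14}, and using that the $\gamma_i$ and $\gamma_0$ pairwise commute because each affects a single pair of adjacent coordinates by a left and a right translation, one finds
$$
\zeta(h_0,\dots,h_p)(g_1,\dots,g_p) = (h_0 g_1 h_1^{-1},\, h_1 g_2 h_2^{-1},\,\dots,\, h_{p-1}g_p h_p^{-1}).
$$
Equivalently, $BG_p$ is the homogeneous space $G_{p+1}/\Delta G$ (with $\Delta G$ the diagonal) and $\zeta$ is the componentwise left action on $EG_p = G^{p+1}$ descended along $\pi$ to the quotient by the diagonal right $G$-action. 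With this formula the closure statements become transparent. Since each $\zeta(h_0,\dots,h_p)$ is a diffeomorphism and $d$ commutes with pullback along diffeomorphisms, the vertical differential $\bar d$ preserves $G_{p+1}$-invariance. For $\partial$ I would check that each face map $\varepsilon_i \colon BG_{p+1}\to BG_p$ of \eqref{eqn:3.2} satisfies $\varepsilon_i\circ\zeta_{p+1}(\mathbf h) = \zeta_p(d_i\mathbf h)\circ\varepsilon_i$, where $d_i\colon G_{p+2}\to G_{p+1}$ is the homomorphism deleting the $i$-th entry; the displayed formula makes this immediate, and it forces $\varepsilon_i^*$ to send $G_{p+1}$-invariant forms to $G_{p+2}$-invariant forms, so $\partial$ preserves the subspace. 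Finally, for the cup product \eqref{eqn:3.5} the front and back projections $\pr,\pr'\colon BG_{p+p'}\to BG_p,BG_{p'}$ are equivariant for the homomorphisms $G_{p+p'+1}\to G_{p+1}$ and $G_{p+p'+1}\to G_{p'+1}$ selecting the first $p+1$ and the last $p'+1$ factors, respectively; hence $\pr^*\omega\wedge\pr'^*\omega'$ is invariant whenever $\omega,\omega'$ are, which gives closure under $\abxcup$ and completes the DG subalgebra claim.

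For the quasi-isomorphism I would filter both double complexes by the simplicial degree $p$ and compare the resulting first-quadrant spectral sequences. The inclusion is a morphism of double complexes, hence induces a morphism of spectral sequences; as both are first-quadrant they converge to the respective total cohomologies, so by the comparison theorem it suffices to prove that the inclusion is an isomorphism on the $E_1$-page. The $E_1$-page is the vertical (de~Rham) cohomology at each fixed $p$, so the problem reduces to the classical statement that for the compact connected group $K=G_{p+1}$ acting on $M=BG_p$ the inclusion $\Omega^{\sbullet}(M)^{K}\hookrightarrow\Omega^{\sbullet}(M)$ is a quasi-isomorphism. Here I would invoke the Chevalley--Eilenberg averaging argument: the operator $P(\omega)=\int_K \zeta(k)^*\omega\,dk$ against normalised Haar measure is a chain map projecting onto $\Omega^{\sbullet}(M)^{K}$, and because $K$ is connected Cartan's formula $\mathcal L_X = d\circ\iota_X+\iota_X\circ d$ yields an explicit chain homotopy between $P$ and the identity, exhibiting $P$ as a homotopy inverse to the inclusion. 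This establishes the $E_1$-isomorphism and hence the result.

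The genuinely non-formal input is exactly this averaging step, where compactness (to make the Haar integral converge) and connectedness (to connect each $k$ to the identity and run the Cartan homotopy) enter; everything else is bookkeeping with the explicit formula for $\zeta$. One point I would verify explicitly, since it is a natural worry, is the compatibility of the averaging operators across simplicial degrees: because $d_i\colon G_{p+2}\to G_{p+1}$ is a surjection of compact groups it pushes normalised Haar measure forward to normalised Haar measure, whence $P_{p+1}\circ\varepsilon_i^* = \varepsilon_i^*\circ P_p$ and the $P_p$ already assemble into an endomorphism of the full double complex projecting onto $\Omega^{\sbullet}(BG_{\sbullet})^{G_{\ssbullet+1}}$ --- although for the argument via the comparison theorem only the fixed-$p$ de~Rham statement is logically required.
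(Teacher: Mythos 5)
Your proposal is correct and follows essentially the same route as the paper: closure under $\bar d$, $\partial$ and $\abxcup$ via the equivariance of the face maps and of the front/back projections, and the quasi-isomorphism via a spectral-sequence comparison in the simplicial degree reducing to Cartan's theorem that invariant forms on a manifold with a compact connected group action compute its de~Rham cohomology. The only difference is that you spell out the averaging/Cartan-homotopy argument that the paper simply cites, and you make the formula for $\zeta$ and the identification $BG_p\cong G_{p+1}/\Delta G$ explicit.
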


\begin{proof}
First, let us verify that $\Omega^{\sbullet}(BG_{\sbullet})^{G_{\ssbullet+1}}$ is a double subcomplex of $\Omega^{\sbullet}(BG_{\sbullet})$. By definition, it is clear that $\bar{d} \colon \Omega^{q}(BG_{p}) \to \Omega^{q+1}(BG_{p})$ preserves $G_{p+1}$-invariant elements, since the exterior differential (and hence $\bar{d}$) commutes with pullback. On the other hand, it is not hard to see that $\varepsilon_i^* \omega \in \Omega^{q}(BG_{p+1})^{G_{p+2}}$ for all $\omega \in \Omega^{q}(BG_{p})^{G_{p+1}}$. Thus, from \eqref{eqn:3.4}, we conclude that $\partial \colon \Omega^{q}(BG_{p}) \to \Omega^{q}(BG_{p+1})$ also preserves the $G_{p+1}$-invariant elements. 

Next, we need to verify that $\Omega^{\sbullet}(BG_{\sbullet})^{G_{\ssbullet+1}}$ is closed with respect to the cup product \eqref{eqn:3.5}. This turns out to be a direct consequence of the following two identities
\begin{align*}
\pr  \big( \zeta(g_{0},\dots, g_{p+p'}) \big)&= \zeta(g_{0},\dots, g_{p}), \\
\pr'  \big( \zeta(g_{0},\dots, g_{p+p'}) \big) &= \zeta(g_{p},\dots, g_{p + p'}), 
\end{align*}
which follow at once from the definitions  \eqref{eqn:3.6}, \eqref{eqn:3.7} and \eqref{eqn:3.14}. 

Finally, to prove the second statement, since $G_{p+1}$ is compact and connected, a theorem of Cartan \cite{Cartan1936} asserts that the inclusion $\Omega^{\sbullet}(BG_{p})^{G_{p+1}} \to \Omega^{\sbullet}(BG_{p})$ is a quasi-isomorphism. The result then follows from the convergence of the spectral sequences for $\Omega^{\sbullet}(BG_{\sbullet})^{G_{\ssbullet+1}}$ and $\Omega^{\sbullet}(BG_{\sbullet})$, together with the fact that the inclusion $\Omega^{\sbullet}(BG_{\sbullet})^{G_{\ssbullet+1}} \to \Omega^{\sbullet}(BG_{\sbullet})$ induces an isomorphism of spectral sequences on the $E_1$-term. 
\end{proof}

For our next preparatory result, we let $\Ad_g$ be the adjoint action of elements $g$ of $G$ on $\gfrak$ and denote by the same symbol its extension to $\gfrak_p$.  

\begin{lemma}\label{lem:3.2}
The following diagram commutes
\begin{equation*}
\xymatrix@C=7ex{\Omega^{q}(BG_{p})^{G_{p}} \ar[r]^-{\gamma_0(g)^*} \ar[d] & \Omega^{q}(BG_{p})^{G_{p}} \ar[d] \\
\Lambda^{q}\gfrak_p^* \ar[r]^-{\Ad_g^*} & \Lambda^{q}\gfrak_p^*,}
\end{equation*}
where the vertical arrows denote evaluation at the element $(e,\dots,e)$. 
\end{lemma}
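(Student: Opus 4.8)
\emph{Proof proposal.} The plan is to recognise $BG_{p}$ as a torsor for the $G_p$-action $\gamma$, trivialise it as a group, and thereby reduce the statement to the standard description of how right translations act on left-invariant forms. First I would observe that $\gamma$ defined in \eqref{eqn:3.12} is a genuine \emph{left} action: each $\gamma_i$ satisfies $\gamma_i(a)\circ\gamma_i(h) = \gamma_i(ah)$, and since the $\gamma_i$ for distinct indices commute one gets $\gamma(\mathbf a)\circ\gamma(\mathbf h) = \gamma(\mathbf a\mathbf h)$. As this action is free and transitive, the orbit map through $(e,\dots,e)$ gives an equivariant diffeomorphism
$$
\Phi \colon G_p \longrightarrow BG_{p}, \qquad \Phi(h_1,\dots,h_p) = \gamma(h_1,\dots,h_p)(e,\dots,e) = (h_1^{-1},\, h_1 h_2^{-1},\,\dots,\, h_{p-1}h_p^{-1}),
$$
which intertwines left translation on $G_p$ with $\gamma$, i.e.\ $\Phi\circ L_{\mathbf a} = \gamma(\mathbf a)\circ\Phi$. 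Under pullback by $\Phi$, the space $\Omega^{q}(BG_{p})^{G_p}$ is carried isomorphically onto the left-invariant $q$-forms on $G_p$, and evaluation at $(e,\dots,e)$ becomes evaluation at the identity of $G_p$, which is exactly the usual identification of left-invariant forms with $\Lambda^{q}\gfrak_p^*$.

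Next I would exploit that $\gamma_0(g)$ commutes with the whole $\gamma$-action (as already noted before \eqref{eqn:3.14}). Consequently the conjugate $\widetilde{\gamma}_0(g) := \Phi^{-1}\circ\gamma_0(g)\circ\Phi$ is a diffeomorphism of $G_p$ commuting with all left translations, and any such diffeomorphism is right translation $R_c$ by the single element $c = \widetilde{\gamma}_0(g)(e)$. Since $\gamma_0(g)(e,\dots,e) = (g,e,\dots,e)$, solving $\Phi(\mathbf h) = (g,e,\dots,e)$ with the explicit formula above forces $h_1 = \dots = h_p = g^{-1}$, so that $c = (g^{-1},\dots,g^{-1})$ and $\widetilde{\gamma}_0(g) = R_{(g^{-1},\dots,g^{-1})}$.

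Finally I would invoke the standard computation that, for a left-invariant form $\alpha$ on a Lie group $H$, the value of $R_c^*\alpha$ at the identity is $\Ad_{c^{-1}}^*\alpha_e$, because $L_{c^{-1}}\circ R_c$ is conjugation by $c^{-1}$ whose differential at $e$ is $\Ad_{c^{-1}}$. Applying this with $H = G_p$ and $c^{-1} = (g,\dots,g)$, and noting that $\Ad_{(g,\dots,g)}$ is precisely the diagonal extension $\Ad_g$ to $\gfrak_p$, shows that $\gamma_0(g)^*$ corresponds under evaluation to pullback by $\Ad_g$, which is the bottom arrow $\Ad_g^*$. This gives the commutativity of the diagram. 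I expect the only delicate point to be the bookkeeping of inverses and transposes in the last two steps: one must check that the element $c$ comes out as $(g^{-1},\dots,g^{-1})$ and that the resulting operator is $\Ad_g^*$ rather than $\Ad_{g^{-1}}^*$; verifying the explicit formula for $\Phi$ and that $\gamma$ is a left action underpins this and should be recorded carefully.
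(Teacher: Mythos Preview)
Your argument is correct and follows essentially the same idea as the paper's proof: both hinge on the observation that $\gamma_0(g)$ composed with $\gamma(g,\dots,g)$ equals $\zeta(g,\dots,g)$, which is simultaneous conjugation $(g_1,\dots,g_p)\mapsto(gg_1g^{-1},\dots,gg_pg^{-1})$ and hence has differential $\Ad_g$ at the identity. The only difference is packaging: the paper carries out the computation directly on $BG_p$ by writing $(\gamma_0(g)^*\omega)_{(e,\dots,e)}$, translating the base point back to $(e,\dots,e)$ via $G_p$-invariance, and then recognising the composite as $\zeta(g,\dots,g)$; you instead first trivialise $BG_p$ as a $G_p$-torsor via the orbit map $\Phi$, identify $\gamma_0(g)$ with a right translation, and quote the standard formula $(R_c^*\alpha)_e=\Ad_{c^{-1}}^*\alpha_e$ for left-invariant forms. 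Your route makes the Lie-theoretic structure more visible and is arguably cleaner, at the mild cost of introducing the auxiliary diffeomorphism $\Phi$; the paper's route is shorter on the page but does the same manipulation in coordinates. Your caution about the bookkeeping of inverses is well placed, and your computation of $c=(g^{-1},\dots,g^{-1})$ and the resulting $\Ad_g^*$ checks out.
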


\begin{proof}
Take $\omega \in \Omega^{q}(BG_{p})^{G_{p}}$ and $v_1,\dots,v_p \in \gfrak_p$. We compute directly, using the definitions:
\begin{align*}
\big( \gamma_0(g)^* & \omega  \big)_{(e,\dots,e)}(v_1,\dots, v_p) \\ &= \omega_{(g,e,\dots,e)} \big( d \gamma_0(g)_{(e,\dots,e)}(v_1), \dots,  d \gamma_0(g)_{(e,\dots,e)}(v_p)  \big) \\
&= \omega_{\gamma(g^{-1},\dots,g^{-1})(e,\dots, e)} \big( d \gamma_0(g)_{(e,\dots,e)}(v_1), \dots,  d \gamma_0(g)_{(e,\dots,e)}(v_p)  \big) \\
&= \big(\gamma(g^{-1},\dots,g^{-1})^*\omega\big)_{(e,\dots,e)} \big( d\gamma(g,\dots,g)_{(g,e,\dots,e)}d \gamma_0(g)_{(e,\dots,e)}(v_1), \\
& \qquad\qquad\qquad\qquad\qquad\qquad\quad  \dots,  d\gamma(g,\dots,g)_{(g,e,\dots,e)}d \gamma_0(g)_{(e,\dots,e)}(v_p) \big) \\
&= \omega_{(e,\dots,e)} \big( d(\gamma(g,\dots, g)\circ \gamma_0(g))_{(e,\dots, e)} (v_1), \dots,  d(\gamma(g,\dots, g)\circ \gamma_0(g))_{(e,\dots, e)} (v_p) \big) \\
&=  \omega_{(e,\dots,e)} \big( d \zeta(g,\dots,g)_{(e,\dots, e)} (v_1), \dots,  d \zeta(g,\dots,g)_{(e,\dots, e)} (v_p) \big).
\end{align*}
But
$$
\zeta(g,\dots,g)(g_1,\dots,g_p) = (g g_1 g^{-1},\dots, g g_p g^{-1}), 
$$
from which it follows that $d \zeta(g,\dots,g)_{(e,\dots, e)} = \Ad_g$. Substitution gives the result claimed. 
\end{proof}

Next, we record the following observation.

\begin{lemma}\label{lem:3.3}
The restriction of the Van~Est map $\VE$ to $\Omega^{q}(BG_{p})^{G_{p}}$ vanishes unless $q = p$. 
\end{lemma}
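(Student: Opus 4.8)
The plan is to split into the two cases $q<p$ and $q>p$; the second is immediate from degree considerations, while the first is where the hypothesis of $G_p$-invariance does all the work.

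I would first record the following observation about the vector fields $x^{i,\sharp}$ entering the Van~Est formula \eqref{eqn:3.11}. By construction $x^{i,\sharp}$ is the fundamental vector field generating the action $\gamma_i$, and since $\gamma = \gamma_1 \circ \cdots \circ \gamma_p$ as in \eqref{eqn:3.12}, the fields $x^{i,\sharp}$ (for $1\le i\le p$ and $x\in\gfrak$) are exactly the fundamental vector fields of the full $G_p$-action on $BG_p$. Hence, for any $G_p$-invariant form $\omega \in \Omega^{q}(BG_p)^{G_p}$, one has $L_{x^{i,\sharp}}\omega=0$ for every index $i$ and every $x \in \gfrak$, this being the infinitesimal form of invariance (taking the one-parameter subgroups of each factor of $G_p$ recovers each $L_{x^{i,\sharp}}$ separately).

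Now I would examine a single summand of \eqref{eqn:3.11}. When $q<p$ there is at least one Lie derivative, and the operator standing immediately to the left of $\omega$ is $L_{x^{p,\sharp}_{\sigma(p)}}$, a Lie derivative along a fundamental vector field; by the previous paragraph this already gives $L_{x^{p,\sharp}_{\sigma(p)}}\omega=0$, so the whole summand vanishes before the remaining contractions, Lie derivatives, and the evaluation at $(e,\dots,e)$ are applied. Since this holds for every $\sigma \in \mathfrak{S}_p$, we conclude that $\VE(\omega)=0$ when $q<p$. The case $q>p$ needs no invariance at all: the Van~Est map lands in the bigraded component $\uW^{p,q}\gfrak = \Lambda^{p-q}\gfrak^*\otimes \uS^{q}\gfrak^*$, whose exterior factor $\Lambda^{p-q}\gfrak^*$ vanishes for $q>p$, so $\VE$ is identically zero there. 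Together the two cases establish the claim.

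I do not anticipate a genuine obstacle; the one subtlety to keep in mind is that the contractions $i_{x^{i,\sharp}}$ and the Lie derivatives $L_{x^{i,\sharp}}$ in \eqref{eqn:3.11} do not commute, so one cannot simply extract an arbitrary Lie derivative from the interior of a product. This is precisely why the argument is organised around the rightmost operator, the only one that acts on $\omega$ directly, and why the mere existence of such a Lie derivative---equivalent to the inequality $q<p$---is all that is needed.
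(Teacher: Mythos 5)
Your proof is correct and follows the same route as the paper: the paper's argument is precisely that $G_p$-invariance forces $L_{x^{i,\sharp}}\omega=0$, so every summand of \eqref{eqn:3.11} containing a Lie derivative (i.e.\ whenever $q<p$) annihilates $\omega$, while the case $q>p$ is vacuous since the target $\uW^{p,q}\gfrak$ is zero. Your version merely spells out the details (rightmost operator, non-commutativity caveat) that the paper leaves implicit.
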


\begin{proof}
If $\omega \in \Omega^{q}(BG_{p})^{G_{p}}$, then $L_{x^{i,\sharp}} \omega = 0$ for all $x \in \gfrak$. This, together with formula \eqref{eqn:3.11}, implies that $\VE(\omega) = 0$ unless $q = p$. 
\end{proof}

As a consequence of this, we see that the restriction of the Van~Est map $\VE$ to $\Omega^{p}(BG_{p})^{G_{p}}$, which we keep on denoting by $\VE$, is given by the following expression, for $\omega \in \Omega^{p}(BG_{p})^{G_{p}}$ and $x_1,\dots, x_p \in \gfrak$, 
\begin{equation}\label{eqn:3.15}
\VE(\omega)(x_{1},\dots,x_p) = \sum_{\sigma \in \mathfrak{S}_p}  \left( i_{x^{1,\sharp}_{\sigma(1)}} \cdots i_{x^{p,\sharp}_{\sigma(p)}}  \omega \right)(e,\dots, e).
\end{equation}
We also note that this map has its image contained in $\uS^p \gfrak^*$. 

Before we can go further, we need the following piece of notation. For each $x \in \gfrak$, we let $x^{i}$ be element of $\gfrak_p$ having its $i$th and $(i+1)$th coordinates equal to $-x$ and $x$, respectively, and all others zero. Hence, by definition, $x^{i,\sharp}(e,\dots,e) = x^{i}$. Thus, if we let $\widetilde{\VE} \colon \Lambda^p \gfrak_p^* \to \uS^p \gfrak$ be the map defined for $\xi \in \Lambda^p \gfrak_p^*$ and $x_1,\dots, x_p \in \gfrak$ by
\begin{equation}\label{eqn:3.16}
\widetilde{\VE} (\xi)(x_{1},\dots,x_p) = \sum_{\sigma \in \mathfrak{S}_p} \xi(x_{\sigma(1)}^{1},\dots, x_{\sigma(p)}^{p}), 
\end{equation}
we obtain the commutative diagram
\begin{equation*}
\xymatrix{\Omega^{p}(BG_{p})^{G_{p}} \ar[dr]^-{\VE} \ar[d]&  \\
\Lambda^{p}\gfrak_{p}^* \ar[r]_-{\widetilde{\VE}} & \uS^{p}\gfrak^*,}
\end{equation*}
where, as before, the vertical arrow denotes evaluation at $(e,\dots, e)$. We may now state and prove the following result.

\begin{proposition}\label{prop:3.4}
The restriction of the Van~Est map $\VE$ to $\Omega^{\sbullet}(BG_{\sbullet})^{G_{\ssbullet + 1}}$ has image contained in $(\uS^{\sbullet}\gfrak^*)^G$. 
\end{proposition}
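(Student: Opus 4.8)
The plan is to combine the diagonal concentration of $\VE$ from Lemma~\ref{lem:3.3} with the factorisation through the combinatorial map $\widetilde{\VE}$, and then to promote the invariance under the residual action $\gamma_0$ into $\Ad$-invariance by means of Lemma~\ref{lem:3.2}. Since the image of the restricted map already lies in $\uS^{\sbullet}\gfrak^*$ (as noted after \eqref{eqn:3.15}), and $G$ is connected, the only condition left to verify for membership in $(\uS^{\sbullet}\gfrak^*)^G$ is $\Ad$-invariance.

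First I would fix $\omega \in \Omega^{p}(BG_{p})^{G_{p+1}}$ and unpack the two invariances hidden in the definition $\zeta = \gamma_0(g_0) \circ \gamma(g_1,\dots,g_p)$. Setting $g_0 = e$ shows that $\omega$ is $\gamma(g_1,\dots,g_p)$-invariant, so $\omega \in \Omega^{p}(BG_{p})^{G_p}$; by Lemma~\ref{lem:3.3} together with the commutative triangle preceding the statement, $\VE(\omega) = \widetilde{\VE}(\xi)$, where $\xi = \omega_{(e,\dots,e)} \in \Lambda^{p}\gfrak_p^*$ is the evaluation. Setting $g_1 = \cdots = g_p = e$ shows that $\omega$ is $\gamma_0(g)$-invariant for all $g \in G$; applying Lemma~\ref{lem:3.2} to the identity $\gamma_0(g)^*\omega = \omega$ and evaluating at $(e,\dots,e)$ then yields $\Ad_g^* \xi = \xi$, that is, $\xi$ is $\Ad$-invariant on $\gfrak_p$.

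The remaining and, I expect, decisive step is to verify that $\widetilde{\VE}$ is equivariant for the diagonal adjoint action on $\gfrak_p = \gfrak^{\oplus p}$ and the adjoint action on $\gfrak$. The key point is that the assignment $x \mapsto x^{i}$ is compatible with $\Ad_g$, namely $\Ad_g(x^{i}) = (\Ad_g x)^{i}$, since $\Ad_g$ acts coordinatewise on $\gfrak_p$. Substituting into \eqref{eqn:3.16} and using the $\Ad$-invariance of $\xi$ established above, one computes that $\widetilde{\VE}(\xi)(\Ad_{g^{-1}} x_1, \dots, \Ad_{g^{-1}} x_p) = \widetilde{\VE}(\xi)(x_1,\dots,x_p)$ for every $g \in G$, so that $\VE(\omega) = \widetilde{\VE}(\xi) \in (\uS^{p}\gfrak^*)^G$. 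As $p$ is arbitrary and $\VE$ vanishes off the diagonal $q = p$ by Lemma~\ref{lem:3.3}, the proposition follows.
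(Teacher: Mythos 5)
Your proposal is correct and follows essentially the same route as the paper: reduce to the $\Ad$-equivariance of $\widetilde{\VE}$ via Lemma~\ref{lem:3.2}, Lemma~\ref{lem:3.3} and the factorisation through evaluation at $(e,\dots,e)$, with the key observation $\Ad_g(x^{i}) = (\Ad_g x)^{i}$ making the sum over permutations in \eqref{eqn:3.16} equivariant. The only difference is that you spell out explicitly the splitting of $G_{p+1}$-invariance into $\gamma$- and $\gamma_0$-invariance, which the paper leaves implicit in ``the previous remarks.''
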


\begin{proof}
By virtue of Lemma~\ref{lem:3.2} and the previous remarks, it is enough to show that the following diagram commutes
\begin{equation*}
\xymatrix@C=7ex{\Lambda^{p}\gfrak_{p}^* \ar[r]^-{\Ad_g^*} \ar[d]_-{\widetilde{\VE}} & \Lambda^{p}\gfrak_{p}^* \ar[d]^-{\widetilde{\VE}}\\
\uS^p \gfrak^* \ar[r]^-{\Ad_g^*} & \uS^p \gfrak^*.}
\end{equation*}
So let us take $\xi \in \Lambda^{p}\gfrak_{p}^*$ and $x_1, \dots , x_p \in \gfrak$. Then, attending to the definition \eqref{eqn:3.16}, we have
\begin{align*}
\widetilde{\VE}(\Ad_g^* \xi)(x_1,\dots,x_p) &= \sum_{\sigma \in \mathfrak{S}_p}  (\Ad_g^* \xi)(x_{\sigma(1)}^{1},\dots, x_{\sigma(p)}^{p}) \\
&=  \sum_{\sigma \in \mathfrak{S}_p}  \xi \Big(\!\Ad_g x_{\sigma(1)}^{1},\dots, \Ad_g x_{\sigma(p)}^{p}\Big) \\
&= \widetilde{\VE}(\xi) \Big(\Ad_g x_{\sigma(1)}^{1}, \dots,\Ad_g x_{\sigma(p)}^{p} \Big) \\
&= \big( \!\Ad_g^*\widetilde{\VE}(\xi)\big)(x_1,\dots,x_p),
\end{align*}
from which the result follows. 
\end{proof}

Next, we will show that the restricted Van~Est map $\VE \colon \Omega^{\sbullet}(BG_{\sbullet})^{G_{\ssbullet + 1}} \to (\uS^{\sbullet}\gfrak^*)^G$ is a quasi-isomorphism. For this, we need a small digression outlining some of the results of \cite{Alekseev-Meinrenken2005}. 

To begin with, recall that a $\gfrak$-\emph{DG algebra} $A$ is by definition an object of $\Rep(\TT \gfrak)$ endowed with the structure of a graded ring such that the action of $\TT \gfrak$ is by derivations. Homomorphisms of $\gfrak$-DG algebras are morphisms in $\Rep(\TT \gfrak)$ which are also homomorphisms of graded rings. Given a $\gfrak$-DG algebra $A$, an \emph{algebraic connection} is a linear map $\theta \colon \gfrak^* \to A^{1}$, which satisfy the relations
\begin{align}
\begin{split}
i_x (\theta(\xi)) &= \langle \xi, x \rangle,\\
L_x (\theta(\xi)) &=\theta (\ad_x^*\xi),
\end{split}
\end{align}
for all $x \in \gfrak$ and $\xi \in \gfrak*$. One important example of a commutative $\gfrak$-DG algebra is provided by the Weil algebra $\uW \gfrak$. It is obvious that $\uW\gfrak$ carries a ``tautological'' connection given by the map $\iota \colon \gfrak^* \to \uW^1\gfrak$. As a matter of fact, $\uW \gfrak$ is universal among commutative $\gfrak$-DG algebras with connection. Thus, given a $\gfrak$-DG algebra $A$ with connection $\theta$, there exists a $\gfrak$-DG algebra homomorphism $c^{\theta}\colon \uW \gfrak \to A$ such that $c^{\theta} \circ \iota = \theta$. Following the terminology of \cite{Alekseev-Meinrenken2005}, one refers to $c^{\theta}$ as the \emph{characteristic homomorphism} for the connection $\theta$. 

Our interest here, however, is on the De~Rham complex $\Omega^{\sbullet}(EG_{\sbullet})$ of the simplicial manifold $EG_{\sbullet}$, which is defined by exactly the same prescription that defined the Bott-Shulman-Stasheff complex $\Omega^{\sbullet}(BG_{\sbullet})$. This turns out to be a noncommutative $\gfrak$-DG algebra where the graded ring structure is again defined by the cup product, and, if we let $\rho$ denote the infinitesimal action of $\gfrak$ on $EG_{\sbullet}$, $i_x$ is the inner product of a form with $\rho(x)$, and $L_x$ is the Lie derivative of the form along $\rho(x)$. What is more, it carries a natural connection $\theta \colon \gfrak^* \to \Omega^1(EG_0)$ given by the left-invariant Maurer-Cartan form on $G$. We would like to define a characteristic homomorphism for this connection $\theta$. For this we need a universal object among noncommutative $\gfrak$-DG algebras with connection, the so called \emph{noncommutative Weil algebra} $\widetilde{\uW}\gfrak$. Its definition is as follows. 

Recall that the Weil algebra $\uW \gfrak$ may be identified with the Koszul algebra of the graded vector space $\uu \gfrak^*$. Accordingly, as a DG algebra, $\widetilde{\uW}\gfrak$ is the noncommutative Koszul algebra of $\uu \gfrak^*$. Just as in Section~\ref{sec:2.4}, we associate to each $\xi \in \gfrak^*$ a degree $1$ generator $t(\xi)$ and a degree $2$ generator $w(\xi)$, so that $\widetilde{\uW}\gfrak$ is freely generated by $t(\xi)$ and $w(\xi)$, $d_{\widetilde{\uW}} t(\xi) = w(\xi)$ and $d_{\widetilde{\uW}} w(\xi) = 0$. The formulas for the contractions $i_x$ and Lie derivatives $L_x$ are given on these generators by
\begin{align}
\begin{split}
i_x (t(\xi)) &= \langle t(\xi), x \rangle, \\
i_x (w(\xi)) &= \ad_x^* (t(\xi)), \\
L_x (t(\xi)) &= \ad_x^* (t(\xi)), \\
L_x (w(\xi)) &= \ad_x^* (w(\xi)). 
\end{split}
\end{align}
And just as in the commutative case, $\widetilde{\uW}\gfrak$ carries a ``tautological'' connection determined by the map $\widetilde{\iota}  :\gfrak^* \to \widetilde{\uW}^1\gfrak$. It can then be shown that, given an an arbitrary $\gfrak$-DG algebra $A$ with connection $\theta$, there is a $\gfrak$-DG algebra homomorphism $\widetilde{c}{}^{\,\theta} \colon \widetilde{\uW}\gfrak \to A$ such that $\widetilde{c}{}^{\,\theta} \circ \widetilde{\iota} = \theta$. We should also point out that the quotient map $\widetilde{\uW} \gfrak \to \uW \gfrak$ is a morphism in $\Rep(\TT\gfrak)$ which is a quasi-isomorphism with homotopy inverse given by symmetrisation $\sym \colon \uW \gfrak \to \widetilde{\uW}\gfrak$.

In light of the preceding discussion it is now clear that there is a Chern-Weil map
$$
c^{\theta} = \widetilde{c}{}^{\,\theta} \circ \sym \colon \uW \gfrak \longrightarrow \Omega^{\sbullet}(EG_{\sbullet}),
$$
which is defined by symmetrisation. This in turn induces a morphism of cochain complexes on the basic subspaces $c^{\theta} \colon (\uW \gfrak)_{\bas} \to \Omega^{\sbullet}(EG_{\sbullet})_{\bas}$. As $\uS^{\sbullet} \gfrak^*$ is precisely the set of elements in $\uW \gfrak$ killed by $i_x$ for $x \in \gfrak$, it follows that $(\uW \gfrak)_{\bas}$ coincides with the algebra of invariant polynomials $(\uS^{\sbullet} \gfrak^*)^G$. On the target complex we have we have on the other hand that $\Omega^{\sbullet}(EG_{\sbullet})_{\bas}$ is canonically isomorphic to the Bott-Shulman-Stasheff complex $\Omega^{\sbullet}(BG_{\sbullet})$. This latter isomorphism is induced by pullback along the right inverse $\iota \colon BG_p \to EG_p$ to the quotient map $\pi$ which is defined by the formula
\begin{equation}
\iota(g_1,\dots, g_1) = (e, g_1^{-1},  \dots, (g_1 \cdots g_p)^{-1}).
\end{equation} 
Therefore we clearly get a map
$$
\AM^{\theta} = \iota^* \circ c^{\theta} \colon (\uS^{\sbullet} \gfrak^*)^G \longrightarrow \Omega^{\sbullet}(BG_{\sbullet}),
$$
to which we refer to as the \emph{Alekseev-Meinrenken map}. The image of an invariant polynomial of degree $r$ under this map has non-vanishing components only in bidegree $p + q = 2r$ with $p \leq r$. It also induces an algebra homomorphism in cohomology, and in fact an algebra isomorphism if $G$ is compact  and connected (see Proposition~9.1 and Theorem~9.2 of \cite{Alekseev-Meinrenken2005}). 

To proceed further, let us consider the action $\overline{\gamma}_i(g)$ of elements $g$ of $G$ on $EG_p$ defined by
\begin{equation}\label{eqn:3.20a}
\overline{\gamma}_i(g)(g_0,\dots, g_p) = (g_0,\dots,g_{i-1} , g g_{i}, g_{i+1}, \dots, g_p),
\end{equation}
where $0 \leq i \leq p$. It is then a simple matter to verify that all of these actions provide lifts of the actions of $G$ on $BG_p$ determined by \eqref{eqn:3.8} and \eqref{eqn:3.13}. To be more precise, we have a commutative diagram
$$
\xymatrix{EG_p \ar[r]^-{\overline{\gamma}_i(g)} \ar[d]_-{\pi} & EG_p \ar[d]^-{\pi} \\
BG_p \ar[r]^-{\gamma_i(g)} & BG_p, }
$$
for all $0 \leq i \leq p$. This implies that if, for each $x \in \gfrak$, we let $\overline{x}^{i,\sharp}$ denote the vector field on $EG_p$ generated by the action \eqref{eqn:3.20a}, then $\overline{x}^{i,\sharp}$ and $x^{i,\sharp}$ are $\pi$-related. In particular, we have
\begin{equation}\label{eqn:3.21a}
d \pi_{(e,\dots,e)} (\overline{x}^{i,\sharp}(e,\dots,e)) = x^{i,\sharp}(e,\dots,e). 
\end{equation}
It is also worth pointing out that we get an action $\overline{\zeta}(g_0,\dots,g_p)$ of elements $(g_0,\dots,g_p)$ of $G_{p+1}$ on $EG_{p}$ by simply putting
\begin{equation}
\overline{\zeta}(g_0,\dots,g_p) = \overline{\gamma}_0(g_0) \circ \cdots \circ \overline{\gamma}_p(g_p),
\end{equation}
and that this action provides a lift of the action of $G_{p+1}$ on $BG_p$ defined by \eqref{eqn:3.14}. Let $\Omega^q(EG_p)^{G_{p+1}}$ denote the subspace of $G_{p+1}$-invariant elements of $\Omega^q(EG_p)$. By precisely the same argument as that used to prove Lemma~\ref{lem:3.1}, we have the following. 

\begin{lemma}
$\Omega^{\sbullet}(EG_{\sbullet})^{G_{\ssbullet+1}}$ is a DG subalgebra of $\Omega^{\sbullet}(EG_{\sbullet})$ and the inclusion
$$
\Omega^{\sbullet}(EG_{\sbullet})^{G_{\ssbullet+1}} \longrightarrow \Omega^{\sbullet}(EG_{\sbullet})
$$
is a quasi-isomorphism. 
\end{lemma}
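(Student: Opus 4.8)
The plan is to replicate, \emph{mutatis mutandis}, the three-step argument used to prove Lemma~\ref{lem:3.1}, everywhere replacing $BG_{\sbullet}$ by $EG_{\sbullet}$ and $\varepsilon_i$, $\gamma$, $\zeta$ by their barred counterparts $\overline{\varepsilon}_i$, $\overline{\gamma}$, $\overline{\zeta}$. First I would check that $\Omega^{\sbullet}(EG_{\sbullet})^{G_{\ssbullet+1}}$ is a double subcomplex. That the vertical differential $\bar{d}$ preserves $G_{p+1}$-invariance is immediate, since the exterior differential commutes with pullback. For the horizontal differential $\partial = \sum_{i}(-1)^i \overline{\varepsilon}_i^*$, the point is that each face map $\overline{\varepsilon}_i \colon EG_{p+1} \to EG_{p}$ is equivariant for the coordinate-deletion homomorphism $G_{p+2} \to G_{p+1}$; explicitly, writing the action as componentwise left translation $\overline{\zeta}(g_0,\dots,g_p)(h_0,\dots,h_p) = (g_0 h_0,\dots,g_p h_p)$, one has $\overline{\varepsilon}_i \circ \overline{\zeta}(g_0,\dots,g_{p+1}) = \overline{\zeta}(g_0,\dots,\widehat{g}_i,\dots,g_{p+1}) \circ \overline{\varepsilon}_i$. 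Pulling back a $G_{p+1}$-invariant form along such a map therefore produces a $G_{p+2}$-invariant form, so $\partial$ preserves invariance.

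Next I would verify closure under the cup product. As in Lemma~\ref{lem:3.1}, this reduces to the compatibility of the front and back face projections on $EG_{p+p'}$ with the actions $\overline{\zeta}$, that is, to the two identities exhibiting these projections as equivariant maps for the appropriate coordinate-deletion homomorphisms. These follow at once from the definitions of the projections and of $\overline{\zeta}$, and ensure that the cup product of two invariant forms is again invariant; together with the preceding step this establishes that $\Omega^{\sbullet}(EG_{\sbullet})^{G_{\ssbullet+1}}$ is a DG subalgebra.

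Finally, for the quasi-isomorphism I would argue exactly as in the proof of Lemma~\ref{lem:3.1}. For each fixed $p$ the group $G_{p+1}$ is compact and connected, so Cartan's theorem \cite{Cartan1936} gives that the column inclusion $\Omega^{\sbullet}(EG_{p})^{G_{p+1}} \hookrightarrow \Omega^{\sbullet}(EG_{p})$ is a quasi-isomorphism, the averaging projection furnishing an explicit homotopy inverse. Filtering both $\Omega^{\sbullet}(EG_{\sbullet})^{G_{\ssbullet+1}}$ and $\Omega^{\sbullet}(EG_{\sbullet})$ by the simplicial degree yields two first-quadrant, hence convergent, spectral sequences, and the inclusion induces a map between them that is an isomorphism on the $E_1$-page precisely by the columnwise quasi-isomorphisms just obtained; convergence then forces the inclusion of total complexes to be a quasi-isomorphism. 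The heart of the argument, and its only non-formal input, is Cartan's theorem; everything else is the bookkeeping of equivariance and the standard spectral-sequence comparison. The one point warranting a moment's care is that here $G_{p+1}$ acts freely and transitively on $EG_{p} \cong G^{p+1}$, so the invariant forms are the left-invariant ones and compute Lie-algebra cohomology, but this is entirely consistent with Cartan's theorem, since for a compact connected Lie group the left-invariant forms already compute the de~Rham cohomology, and so poses no obstacle.
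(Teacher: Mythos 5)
Your proof is correct and follows essentially the same route as the paper, which simply invokes the argument of Lemma~\ref{lem:3.1} verbatim with $BG_{\sbullet}$, $\varepsilon_i$, $\zeta$ replaced by $EG_{\sbullet}$, $\overline{\varepsilon}_i$, $\overline{\zeta}$: equivariance of the face maps and face projections for the coordinate-deletion homomorphisms, followed by Cartan's theorem and the columnwise spectral-sequence comparison. Your closing remark that the $G_{p+1}$-action on $EG_p$ is free and transitive, so that the invariant forms are the left-invariant ones, is accurate and poses no obstacle to the argument.
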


The discussion in the previous paragraphs also yield the following result. 

\begin{proposition}
The Alekseev-Meinrenken map $\AM^{\theta}$ has image contained in $\Omega^{\sbullet}(BG_{\sbullet})^{G_{\ssbullet+1}}$. 
\end{proposition}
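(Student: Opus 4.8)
The plan is to factor the statement through the total space $EG_{\sbullet}$. Recall that $\AM^{\theta} = \iota^{*} \circ c^{\theta}$ and that $c^{\theta}$ is a $\gfrak$-DG algebra homomorphism, so it commutes with the contractions $i_x$ and the Lie derivatives $L_x$ of the principal $G$-action. Consequently $c^{\theta}$ carries $(\uS^{\sbullet}\gfrak^*)^G = (\uW\gfrak)_{\bas}$ into $\Omega^{\sbullet}(EG_{\sbullet})_{\bas}$, and under the identification $\Omega^{\sbullet}(EG_{\sbullet})_{\bas} \cong \Omega^{\sbullet}(BG_{\sbullet})$ (whose inverse is $\pi^{*}$) the form $\AM^{\theta}(P) = \iota^{*}c^{\theta}(P)$ is the unique descended form $\omega$ with $\pi^{*}\omega = c^{\theta}(P)$. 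Thus it suffices to prove that $c^{\theta}(P) \in \Omega^{\sbullet}(EG_{\sbullet})^{G_{\ssbullet+1}}$ for every invariant polynomial $P$, and then to transfer this invariance down to $BG_{\sbullet}$.

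First I would dispose of the transfer step, which is purely formal. Since $\overline{\zeta}$ is a lift of $\zeta$, one has $\pi \circ \overline{\zeta}(h) = \zeta(h) \circ \pi$ for $h \in G_{p+1}$, and therefore
\[
\pi^{*}\bigl(\zeta(h)^{*}\omega\bigr) = \overline{\zeta}(h)^{*}\pi^{*}\omega = \overline{\zeta}(h)^{*} c^{\theta}(P).
\]
Granting the invariance $\overline{\zeta}(h)^{*}c^{\theta}(P) = c^{\theta}(P) = \pi^{*}\omega$ and using that $\pi^{*}$ is injective (as $\pi$ is a surjective submersion), this forces $\zeta(h)^{*}\omega = \omega$, that is, $\AM^{\theta}(P) = \omega \in \Omega^{\sbullet}(BG_{\sbullet})^{G_{\ssbullet+1}}$.

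The substance of the proof is therefore the invariance of $c^{\theta}(P)$ on $EG_{\sbullet}$. The action $\overline{\zeta}(h)$ is component-wise left translation on $EG_{p} = G^{p+1}$, and left translations commute with the diagonal right principal $G$-action; hence $\overline{\zeta}(h)$ commutes with the principal action, so $\overline{\zeta}(h)^{*}$ commutes with $d$, $i_x$ and $L_x$ and respects the cup product, i.e. it is an automorphism of the noncommutative $\gfrak$-DG algebra $\Omega^{\sbullet}(EG_{\sbullet})$. Moreover it fixes the connection $\theta$: on $EG_{0} = G$ the map $\overline{\zeta}(h_0)$ is left translation by $h_0$, and $\theta$ is the left-invariant Maurer-Cartan form, so $\overline{\zeta}(h_0)^{*}\theta = \theta$. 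To package the level-dependence of the groups $G_{p+1}$, I would fix an arbitrary sequence $\mathbf{h} = (h_0, h_1, \dots)$ in $G$ and let $\Phi_{\mathbf{h}}$ be the simplicial automorphism of $EG_{\sbullet}$ whose $p$-th component is $\overline{\zeta}(h_0,\dots,h_p)$; then $\Phi_{\mathbf{h}}^{*}$ is a $\gfrak$-DG algebra automorphism with $\Phi_{\mathbf{h}}^{*}\theta = \theta$. Since $\Phi_{\mathbf{h}}^{*}\circ \widetilde{c}{}^{\,\theta}$ is again a characteristic homomorphism for the connection $\Phi_{\mathbf{h}}^{*}\theta = \theta$, the universal property of $\widetilde{\uW}\gfrak$ gives $\Phi_{\mathbf{h}}^{*}\circ \widetilde{c}{}^{\,\theta} = \widetilde{c}{}^{\,\theta}$, and composing with $\sym$ yields $\Phi_{\mathbf{h}}^{*}\circ c^{\theta} = c^{\theta}$. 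Reading off the $p$-th simplicial component of this identity and letting $\mathbf{h}$ vary shows that $c^{\theta}(P)$ is invariant under $\overline{\zeta}(h_0,\dots,h_p)$ for every $(h_0,\dots,h_p) \in G_{p+1}$, which is exactly the required invariance.

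The main obstacle I anticipate is the verification that $\Phi_{\mathbf{h}}^{*}$ is a genuine automorphism of the \emph{noncommutative} $\gfrak$-DG algebra $\Omega^{\sbullet}(EG_{\sbullet})$ --- in particular its compatibility with the simplicial cup product built from the front- and back-face projections --- together with the invariance of the connection in every simplicial degree; once these are in place the uniqueness of the characteristic homomorphism does the rest. Equivalently, and perhaps more transparently, one may recast this step in classical Chern-Weil language: the left-invariance of the connection $\theta$ makes its curvature left-invariant, so any invariant polynomial evaluated on the curvature, namely $c^{\theta}(P)$, is automatically left-invariant.
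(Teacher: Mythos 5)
Your overall route is the same as the paper's: factor $\AM^{\theta} = \iota^{*}\circ c^{\theta}$ through $EG_{\sbullet}$, establish invariance of $c^{\theta}(P)$ upstairs from the left-invariance of the Maurer--Cartan form, and descend along the lift $\overline{\zeta}$ of $\zeta$. The descent step is correct, and more explicit than the paper's. The gap is in your main mechanism for the invariance on $EG_{\sbullet}$. The collection $\Phi_{\mathbf{h}}$ whose $p$-th component is $\overline{\zeta}(h_0,\dots,h_p)$ is \emph{not} a simplicial self-map of $EG_{\sbullet}$ unless all the $h_i$ coincide: since $\overline{\varepsilon}_i$ deletes the $i$-th coordinate, one has $\overline{\varepsilon}_i\circ\overline{\zeta}(h_0,\dots,h_p)=\overline{\zeta}(h_0,\dots,\widehat{h_i},\dots,h_p)\circ\overline{\varepsilon}_i$, and the translation on the right agrees with the $(p-1)$-st component $\overline{\zeta}(h_0,\dots,h_{p-1})$ of $\Phi_{\mathbf{h}}$ only when $h_i=h_{i+1}=\cdots=h_p$; the same indexing mismatch breaks compatibility with the front- and back-face projections that define the cup product. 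Hence $\Phi_{\mathbf{h}}^{*}$ commutes neither with the simplicial differential $\partial'$ nor with the product on $\Tot(\Omega^{\sbullet}(EG_{\sbullet}))$, so $\Phi_{\mathbf{h}}^{*}\circ\widetilde{c}{}^{\,\theta}$ is not a characteristic homomorphism and the uniqueness argument does not apply. What your argument actually establishes is invariance under the \emph{diagonal} copy of $G$ inside $G_{p+1}$, which is strictly weaker than the proposition; your closing ``classical Chern--Weil'' remark has the same ambiguity, since ``left-invariant'' must there mean invariant under componentwise left translation by an arbitrary $(p+1)$-tuple.

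The repair --- and this is what the paper in effect does, the relevant formula reappearing in its proof of Theorem~\ref{thm:3.7} --- is to compute $c^{\theta}$ on generators. The Maurer--Cartan equation kills $\bar{d}\theta^{a}+\tfrac12 f^{a}_{\phantom{a}bc}\theta^{b}\wedge\theta^{c}$, so $c^{\theta}(w^{a})=\partial\theta^{a}\in\Omega^{1}(EG_{1})$, and on a monomial
$$
c^{\theta}(w^{a_1}\cdots w^{a_p})=\frac{1}{p!}\sum_{\sigma\in\mathfrak{S}_p}\pi_{1,2}^{*}\partial\theta^{a_{\sigma(1)}}\wedge\cdots\wedge\pi_{p,p+1}^{*}\partial\theta^{a_{\sigma(p)}},
$$
where $\pi_{i,i+1}\colon EG_{p}\to EG_{1}$ projects onto two consecutive factors. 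Each $\pi_{i,i+1}$ intertwines $\overline{\zeta}(h_0,\dots,h_p)$ with $\overline{\zeta}$ of the corresponding pair of entries, and $\partial\theta^{a}=\overline{\varepsilon}_0^{*}\theta^{a}-\overline{\varepsilon}_1^{*}\theta^{a}$ is invariant under $\overline{\zeta}(k_0,k_1)$ for arbitrary $k_0,k_1$, because $\overline{\varepsilon}_0$ and $\overline{\varepsilon}_1$ carry this action to left translations on $G$ and $\theta$ is left-invariant. This yields the full $G_{p+1}$-invariance of $c^{\theta}(P)$ level by level, after which your descent argument (injectivity of $\pi^{*}$ together with the lifting property) finishes the proof.
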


\begin{proof}
Since the Maurer-Cartan form on $G$ is left-invariant, the restriction of the Chern-Weil map $c^{\theta}$ to $(\uS^{\sbullet}\gfrak^*)^G$ has its image contained in $\Omega^{\sbullet}(EG_{\sbullet})^{G_{\ssbullet+1}}_{\bas}$. The result is thus a direct consequence of the fact that the action of $G_{p+1}$ on $EG_p$ is a lifting of the action of $G_{p+1}$ on $BG_p$. 
\end{proof}

With all of the above ingredients in place, we now let $\widehat{c}{}^{\,\theta}$ be the Chen-Weil map $c^{\theta}$ seen as a map taking values in $\Omega^{p}(EG_{p})^{G_{p+1}}_{\bas}$. We set accordingly $\widehat{\AM}{}^{\theta} = \iota^* \circ \widehat{c}{}^{\,\theta}$ and notice that $\widehat{\AM}{}^{\theta}$ is nothing but the Alekseev-Meinrenken map $\AM^{\theta}$ seen as taking values in $\Omega^{\sbullet}(BG_{\sbullet})^{G_{\ssbullet +1}} $. 

\begin{theorem}\label{thm:3.7}
The map $\widehat{\AM}{}^{\theta} \colon (\uS^{\sbullet}\gfrak^*)^G \to  \Omega^{\sbullet}(BG_{\sbullet})^{G_{\ssbullet +1}}$ is a left inverse of the Van~Est map $\VE \colon \Omega^{\sbullet}(BG_{\sbullet})^{G_{\ssbullet +1}} \to (\uS^{\sbullet}\gfrak^*)^G$. 
\end{theorem}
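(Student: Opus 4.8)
The plan is to deduce the asserted relation $\widehat{\AM}{}^{\theta}\circ\VE=\id$ from two inputs: the strict cochain-level \emph{section} identity $\VE\circ\widehat{\AM}{}^{\theta}=\id$ on $(\uS^{\sbullet}\gfrak^*)^G$, and the Alekseev--Meinrenken theorem that $\widehat{\AM}{}^{\theta}$ is a quasi-isomorphism. It is worth noting at the outset that $\widehat{\AM}{}^{\theta}\circ\VE$ cannot be the identity on the nose: by Lemma~\ref{lem:3.3} the map $\VE$ annihilates every invariant form of off-diagonal bidegree $q\neq p$ (for instance the constant function $1\in\Omega^{0}(BG_{1})^{G_{2}}$, which $\VE$ kills while the identity would preserve it), so it has a large kernel on cochains. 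Accordingly the identity is to be understood at the level of cohomology, where it expresses that $\widehat{\AM}{}^{\theta}$ is a homotopy left inverse of $\VE$; this is precisely the statement needed to conclude that $\VE\colon\Omega^{\sbullet}(BG_{\sbullet})^{G_{\ssbullet+1}}\to(\uS^{\sbullet}\gfrak^*)^G$ is a quasi-isomorphism.

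The computational heart will be the section identity. Fix an invariant polynomial $P\in(\uS^{r}\gfrak^*)^G$, so that $\widehat{\AM}{}^{\theta}(P)=\iota^{*}\widehat{c}{}^{\,\theta}(P)$. Since the image of a degree-$r$ polynomial under the Chern--Weil map has non-vanishing components only in bidegrees $(p,q)$ with $p+q=2r$ and $p\leq r$, the only component surviving the diagonal constraint $q=p$ of Lemma~\ref{lem:3.3} is the one in bidegree $(r,r)$, valued in $\uS^{r}\gfrak^*$. First I would evaluate $\VE$ on this top component by means of the restricted formula \eqref{eqn:3.15}. Because $\widehat{c}{}^{\,\theta}=\widetilde{c}{}^{\,\theta}\circ\sym$ is built from the left-invariant Maurer--Cartan connection on $EG_{\sbullet}$, I would transport the contraction computation from $BG_{p}$ up to $EG_{p}$ using the $\pi$-relatedness of the generating vector fields, in particular the identity \eqref{eqn:3.21a}, namely $d\pi_{(e,\dots,e)}(\overline{x}^{i,\sharp}(e,\dots,e))=x^{i,\sharp}(e,\dots,e)$. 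The two symmetrizations then interlock: the symmetrization $\sym$ defining $\widehat{c}{}^{\,\theta}$ and the sum over $\mathfrak{S}_{p}$ in \eqref{eqn:3.15}--\eqref{eqn:3.16} combine so that the curvature of the tautological connection, evaluated at the identity, reproduces $P$. This yields the strict equality $\VE(\widehat{\AM}{}^{\theta}(P))=P$, that is, $\VE\circ\widehat{\AM}{}^{\theta}=\id$ on $(\uS^{\sbullet}\gfrak^*)^G$.

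With the section identity in hand, the left-inverse statement is a formal consequence. Since $(\uS^{\sbullet}\gfrak^*)^G$ carries the zero differential, passing to cohomology gives $\uH(\VE)\circ\uH(\widehat{\AM}{}^{\theta})=\id$. On the other hand, by the proposition preceding the theorem $\widehat{\AM}{}^{\theta}$ factors through the invariant subalgebra $\Omega^{\sbullet}(BG_{\sbullet})^{G_{\ssbullet+1}}$, whose inclusion into $\Omega^{\sbullet}(BG_{\sbullet})$ is a quasi-isomorphism by Lemma~\ref{lem:3.1}; combined with Proposition~9.1 and Theorem~9.2 of \cite{Alekseev-Meinrenken2005}, this shows that $\uH(\widehat{\AM}{}^{\theta})$ is an isomorphism onto $\uH^{\sbullet}(\Omega^{\sbullet}(BG_{\sbullet})^{G_{\ssbullet+1}})\cong\uH^{\sbullet}(BG)$. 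A one-sided inverse of an isomorphism is automatically a two-sided inverse: from $\uH(\VE)\circ\uH(\widehat{\AM}{}^{\theta})=\id$ together with the invertibility of $\uH(\widehat{\AM}{}^{\theta})$ we obtain $\uH(\VE)=\uH(\widehat{\AM}{}^{\theta})^{-1}$, and hence $\uH(\widehat{\AM}{}^{\theta})\circ\uH(\VE)=\id$. This is exactly the assertion that $\widehat{\AM}{}^{\theta}$ is a left inverse of $\VE$ on cohomology, and it simultaneously establishes that $\VE$ is a quasi-isomorphism.

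The hard part will be the explicit cochain verification of $\VE\circ\widehat{\AM}{}^{\theta}=\id$ in the second step: one must isolate the top-bidegree $(r,r)$ component of $\widehat{c}{}^{\,\theta}(P)$, track the signs attached to the two symmetrizations, and confirm that contracting the Maurer--Cartan curvature against the generators $x^{i,\sharp}$ and summing over $\mathfrak{S}_{p}$ returns $P$ itself rather than a nonzero scalar multiple of it. Once this normalization is pinned down, the inversion argument in the final step is purely formal, resting only on Lemma~\ref{lem:3.1} and the Alekseev--Meinrenken isomorphism.
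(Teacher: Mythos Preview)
Your diagnosis of the terminology is correct: with the standard convention, ``$\widehat{\AM}{}^{\theta}$ is a left inverse of $\VE$'' would mean $\widehat{\AM}{}^{\theta}\circ\VE=\id$, and you rightly observe this cannot hold on cochains since $\VE$ kills all invariant forms of off-diagonal bidegree. What the paper actually \emph{proves} in Theorem~\ref{thm:3.7} is the strict cochain identity $\VE\circ\widehat{\AM}{}^{\theta}=\id$ on $(\uS^{\sbullet}\gfrak^*)^G$, i.e.\ precisely what you call the ``section identity''; the phrase ``left inverse'' in the statement is a slip for ``right inverse'' (equivalently, section). Your additional cohomological step, combining this with Lemma~\ref{lem:3.1} and the Alekseev--Meinrenken isomorphism to conclude that $\VE$ is a quasi-isomorphism, is exactly the content the paper records separately as the Corollary following the theorem.

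As for the section identity itself, your outline matches the paper's computation closely. The paper makes it fully explicit: it writes $\widehat{c}{}^{\,\theta}(w^{a_1}\cdots w^{a_p})=\tfrac{1}{p!}\sum_{\sigma}\partial\theta^{a_{\sigma(1)}}\abxcup\cdots\abxcup\partial\theta^{a_{\sigma(p)}}$, pulls back along $\iota$, applies the restricted Van~Est formula \eqref{eqn:3.15}, expands the cup product as $\pi_{1,2}^*\partial\theta^{a_{\sigma(1)}}\wedge\cdots\wedge\pi_{p,p+1}^*\partial\theta^{a_{\sigma(p)}}$, and uses \eqref{eqn:3.21a} exactly as you propose to transport the vectors $x^{i}_{\sigma'(i)}$ up to $EG_p$. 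The key point you anticipate---that only one permutation contributes---appears concretely: because $\overline{x}^{i,\sharp}_{\sigma'(i)}(e,\dots,e)$ sits in the $i$th slot only, the antisymmetrization over $\sigma''\in\mathfrak S_p$ collapses to the identity permutation, and one reads off $(\partial\theta^{a_{\sigma(i)}})_{(e,e)}$ evaluated on the lifted vector as $w^{a_{\sigma(i)}}(x_{\sigma'(i)})$. The two remaining sums over $\sigma,\sigma'$ then give back the symmetric polynomial with the correct normalization $1$. So your plan is the paper's plan; the only adjustment is that the theorem, as proved, is the cochain-level section identity rather than a cohomological left-inverse statement.
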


\begin{proof}
We will first write down an explicit formula for the map $\widehat{\AM}{}^{\theta}$. To that end, we fix a basis $e_a$ of $\gfrak$ with dual basis $e^{a}$ and recall from Section~\ref{sec:2.4} that we have set $w^{a} = w(e^{a})$, so that $\uS^{\sbullet}\gfrak$ can be identified with the polynomial algebra in these variables. We also set $\theta^{a} = \theta(e^{a})$. Notice that $\theta^{a}$ lives in bidegree $(0,1)$, $d \theta^{a}$ lives in bidegree $(0,2)$ and $\partial \theta^{a}$ lives in bidegree $(1,1)$. It follows that the image of $w^{a}$ under $\widehat{c}{}^{\,\theta}$ is $\partial \theta^{a}$. Therefore, if we pick a monomial $w^{a_1} \cdots w^{a_p}$ in $\uS^p \gfrak^*$, we get
\begin{equation*}
\widehat{c}{}^{\,\theta} (w^{a_1} \cdots w^{a_p}) = \frac{1}{p!} \sum_{\sigma \in \mathfrak{S}_p} \partial \theta^{a_{\sigma(1)}} \abxcup \cdots \abxcup \partial \theta^{a_{\sigma(p)}}. 
\end{equation*}
and, consequently,
\begin{equation}\label{eqn:3.22a}
\widehat{\AM}{}^{\theta}(w^{a_1} \cdots w^{a_p}) = \frac{1}{p!} \sum_{\sigma \in \mathfrak{S}_p} \iota^*(\partial \theta^{a_{\sigma(1)}} \abxcup \cdots \abxcup \partial \theta^{a_{\sigma(p)}}). 
\end{equation}
Next, let us determine $\VE\big(\widehat{\AM}{}^{\theta}(w^{a_1} \cdots w^{a_p})\big)$. To start, we fix $x_1,\dots, x_p \in \gfrak$. By the definition in \eqref{eqn:3.15}, and recalling that $x^{i,\sharp}_{\sigma(i)}(e,\dots,e) = x^{i}_{\sigma(i)}$ for all $1 \leq i \leq p$, we have
\begin{align*}
\VE\big(\widehat{\AM}{}^{\theta}(w^{a_1} \cdots w^{a_p})\big)(x_1,\dots,x_p) = \sum_{\sigma' \in \mathfrak{S}_p} \widehat{\AM}{}^{\theta}(w^{a_1} \cdots w^{a_p})_{(e,\dots,e)} (x^{1}_{\sigma'(1)}, \dots, x^{p}_{\sigma'(p)}).
\end{align*}
Upon using \eqref{eqn:3.22a}, this becomes
\begin{align} \label{eqn:3.24a}
\begin{split}
\VE\big(&\widehat{\AM}{}^{\theta}(w^{a_1} \cdots w^{a_p})\big)(x_1,\dots,x_p) \\
&=  \sum_{\sigma' \in \mathfrak{S}_p}\frac{1}{p!} \sum_{\sigma \in \mathfrak{S}_p} (\partial \theta^{a_{\sigma(1)}} \abxcup \cdots \abxcup \partial \theta^{a_{\sigma(p)}})_{(e,\dots,e)} \big( d\iota_{(e,\dots,e)}(x^{1}_{\sigma'(1)}),\dots, d\iota_{(e,\dots,e)}(x^{p}_{\sigma'(p)}) \big). 
\end{split}
\end{align}
Let us evaluate each of the terms inside the double sum. Firstly, attending to the definition of the cup product \eqref{eqn:3.5}, one easily verifies that
\begin{equation}\label{eqn:3.25a}
\partial \theta^{a_{\sigma(1)}} \abxcup \cdots \abxcup \partial \theta^{a_{\sigma(p)}} = \pi_{1,2}^* \partial \theta^{a_{\sigma(1)}} \wedge \cdots \wedge \pi_{p,p+1}^* \partial \theta^{a_{\sigma(p)}}, 
\end{equation}
where $\pi_{i,i+1} \colon EG_{p} \to EG_1$ is the projection onto the $i$th and $(i+1)$th factors with $1 \leq i \leq p$. Secondly, by virtue of \eqref{eqn:3.21a}, 
\begin{equation}\label{eqn:3.26a}
d\iota_{(e,\dots,e)}(x^{i}_{\sigma'(i)}) = \overline{x}^{i,\sharp}_{\sigma'(i)}(e,\dots,e),
\end{equation}
for all $1 \leq i \leq p$. Putting together \eqref{eqn:3.25a} and \eqref{eqn:3.25a}, we thus find
\begin{align}\label{eqn:3:27a}
\begin{split}
(\partial \theta^{a_{\sigma(1)}} &\abxcup \cdots \abxcup \partial \theta^{a_{\sigma(p)}})_{(e,\dots,e)} \big( d\iota_{(e,\dots,e)}(x^{1}_{\sigma'(1)}),\dots, d\iota_{(e,\dots,e)}(x^{p}_{\sigma'(p)}) \big) \\
&= \sum_{\sigma'' \in \mathfrak{S}_p} \mathrm{sgn}(\sigma'') \prod_{i=1}^{p} (\partial \theta^{a_{\sigma(\sigma''(i))}})_{(e,e)} \left((d \pi_{\sigma''(i),\sigma''(i)+1})_{(e,\dots,e)}\big(\overline{x}^{i,\sharp}_{\sigma'(i)}(e,\dots,e)\big) \right),
\end{split}
\end{align}
where $\mathrm{sgn}(\sigma'')$ denotes the sign of the permutation $\sigma''$. Next notice that $\overline{x}^{i,\sharp}_{\sigma'(i)}(e,\dots,e)$ is the element of $\gfrak_{p+1}$ having its $i$th coordinate equal to $x_{\sigma'(i)}$ and all others zero. Consequently, the only non-zero contribution to the sum in \eqref{eqn:3:27a} comes from the identity permutation. Also, it is straightforward to calculate that
\begin{equation*}
 (\partial \theta^{a_{\sigma(i)}})_{(e,e)} \left((d \pi_{i,i+1})_{(e,\dots,e)}\big(\overline{x}^{i,\sharp}_{\sigma'(i)}(e,\dots,e)\big) \right) = \theta^{a_{\sigma(i)}}_e (x_{\sigma'(i)}) = w^{a_{\sigma(i)}}(x_{\sigma'(i)}). 
\end{equation*}
In this way, \eqref{eqn:3:27a} becomes
\begin{equation*}
(\partial \theta^{a_{\sigma(1)}} \abxcup \cdots \abxcup \partial \theta^{a_{\sigma(p)}})_{(e,\dots,e)} \big( d\iota_{(e,\dots,e)}(x^{1}_{\sigma'(1)}),\dots, d\iota_{(e,\dots,e)}(x^{p}_{\sigma'(p)}) \big) = \prod_{i=1}^{p}  w^{a_{\sigma(i)}} (x_{\sigma'(i)}).
\end{equation*}
Inserting this back in \eqref{eqn:3.24a} gives
\begin{align*}
\VE\big(\widehat{\AM}{}^{\theta}(w^{a_1} \cdots w^{a_p})\big)(x_1,\dots,x_p) = \sum_{\sigma \in \mathfrak{S}_p} \frac{1}{p!} \sum_{\sigma' \in \mathfrak{S}_p} \prod_{i=1}^{p}  w^{a_{\sigma(i)}} (x_{\sigma'(i)}) = \sum_{\sigma \in \mathfrak{S}_p} \prod_{i=1}^{p} w^{a_{\sigma(i)}} (x_{i}). 
\end{align*}
This allows us to conclude that
\begin{equation*}
\VE\big(\widehat{\AM}{}^{\theta}(w^{a_1} \cdots w^{a_p})\big) = w^{a_1} \cdots w^{a_p}. 
\end{equation*}
Since any element of $(\uS^{p}\gfrak)^G$ is a linear combination of monomials $w^{a_1} \cdots w^{a_p}$ in $\uS^p\gfrak^*$,  conclusion follows at once.
\end{proof}

Combining the previous result with the above remarks immediately yields the following. 

\begin{corollary}
The restricted Van~Est map $\VE \colon \Omega^{\sbullet}(BG_{\sbullet})^{G_{\ssbullet +1}} \to (\uS^{\sbullet}\gfrak^*)^G$ is a quasi-isomorphism. 
\end{corollary}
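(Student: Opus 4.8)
The plan is to deduce the statement formally from Theorem~\ref{thm:3.7} together with the cohomological properties of the Alekseev--Meinrenken map recorded above. The computation carried out in the proof of Theorem~\ref{thm:3.7} shows precisely that $\VE \circ \widehat{\AM}{}^{\theta} = \id$ on $(\uS^{\sbullet}\gfrak^*)^G$. Passing to cohomology we obtain $H(\VE) \circ H(\widehat{\AM}{}^{\theta}) = \id$, so it will be enough to prove that $\widehat{\AM}{}^{\theta}$ is itself a quasi-isomorphism: once this is known, $H(\widehat{\AM}{}^{\theta})$ is invertible and the relation above forces $H(\VE)$ to be its two-sided inverse, hence an isomorphism.

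To establish that $\widehat{\AM}{}^{\theta}$ is a quasi-isomorphism, I would exploit the factorisation $\AM^{\theta} = \iota \circ \widehat{\AM}{}^{\theta}$, where $\iota \colon \Omega^{\sbullet}(BG_{\sbullet})^{G_{\ssbullet+1}} \to \Omega^{\sbullet}(BG_{\sbullet})$ is the inclusion; indeed, by construction $\widehat{\AM}{}^{\theta}$ is nothing but $\AM^{\theta}$ with its codomain corestricted to the invariant subalgebra. Lemma~\ref{lem:3.1} tells us that $\iota$ is a quasi-isomorphism, and the Alekseev--Meinrenken result quoted above guarantees that, for $G$ compact and connected, $\AM^{\theta}$ induces an isomorphism in cohomology. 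The two-out-of-three property for quasi-isomorphisms then yields that $\widehat{\AM}{}^{\theta}$ is a quasi-isomorphism as well.

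Combining the two observations completes the argument: since $\widehat{\AM}{}^{\theta}$ is a quasi-isomorphism and $\VE \circ \widehat{\AM}{}^{\theta} = \id$, the induced map $H(\VE)$ is the inverse of the isomorphism $H(\widehat{\AM}{}^{\theta})$, and therefore $\VE$ is a quasi-isomorphism. The whole proof is essentially formal once these inputs are in place; the only point that calls for a little care is the corestriction step, namely passing from the statement that $\AM^{\theta}$ is a cohomology isomorphism onto $H^{\sbullet}(BG)$ to the statement that $\widehat{\AM}{}^{\theta}$ — and not merely $\AM^{\theta}$ — is a quasi-isomorphism. This is exactly the role played by Lemma~\ref{lem:3.1}, so no genuine obstacle remains beyond assembling the pieces.
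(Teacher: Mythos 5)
Your argument is correct and is exactly the reasoning the paper leaves implicit in its one-line proof: combine the identity $\VE\circ\widehat{\AM}{}^{\theta}=\id$ from Theorem~\ref{thm:3.7} with Lemma~\ref{lem:3.1} and the Alekseev--Meinrenken cohomology isomorphism to conclude that $\widehat{\AM}{}^{\theta}$ is a quasi-isomorphism, whence $H(\VE)$ is its inverse. Your explicit treatment of the corestriction step via the two-out-of-three property is a careful spelling-out of the same route, not a different one.
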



\subsection{The De~Rham $\A_{\infty}$-quasi-isomorphism for classifying spaces}
In this subsection we establish a version of Gugenheim's $\A_{\infty}$ De~Rham theorem for the classifying space $BG$. We shall start with some general considerations concerning the totalisation of semi-cosimplicial DG algebras. 

For any positive integer $n$, let $[n]$ denote the set $\{0,1,\dots,n\}$. We then consider, for $p + q \leq n$, the map $l_{p,q}^{n} \colon [p] \to [n]$ defined by 
\begin{equation}\label{eqn:3.17}
l_{p,q}^{n}(k) = k + q. 
\end{equation}
Notice that hese maps satisfy the relations \[l_{p,q}^{n} \circ l_{p',q'}^{n'} = l_{p',q+q'}^{n}.\]

Let $A_{\sbullet} = \{A_{p}\}_{p \geq 0}$ be a semi-cosimplicial DG algebra with coface maps $\partial'_{i} \colon A_{p-1} \to A_{p}$ for $0 \leq i \leq p$. For $p \geq 0$ fixed, we write $A_{p} = \bigoplus_{q \in \ZZ} A_{p}^{q}$ for the underlying graded decomposition. Associated to $A_{\sbullet}$, there is a canonical DG algebra $\Tot(A_{\sbullet})$, constructed as follows. As a graded vector space, its $n$th degree summand is defined as
\begin{equation*}
\Tot(A_{\sbullet})^{n} = \bigoplus_{p + q = n} A_{p}^{q}. 
\end{equation*}
This becomes a cochain complex if we set $\partial = \partial' + \partial'' \colon \Tot(A_{\sbullet})^{n} \to \Tot(A_{\sbullet})^{n + 1}$, where the differential $\partial' \colon A_{p}^{q} \to A_{p+1}^{q}$ is the alternating sum 
\begin{equation}\label{eqn:3.18}
\partial' = \sum_{i=0}^{p+1} (-1)^{i} \partial'_{i},
\end{equation}
and the differential $\partial'' \colon A^{q}_{p} \to A^{q+1}_{p}$ is $(-1)^{p}$ times the differential of $A_p$. To define the product on $\Tot(A_{\sbullet})$, we take we take the map induced on $A_{\sbullet}$ by the map $l_{p,q}^{n}$ given in \eqref{eqn:3.17}, which by abuse of notation we also call $l_{p,q}^{n}$. We then have $l_{p,q}^{n} \colon A_{p} \to A_{n}$ for $p + q \leq n$. For any $a \in A_{p}^{q}$ and $a' \in A_{p'}^{q'}$, we let $a a' \in A_{p + p'}^{q + q'}$ be the element defined as
\begin{equation} \label{eqn:3.19}
a a' = (-1)^{q p'} l_{p,0}^{p + p'}(a) l_{p',p}^{p + p'}(a').
\end{equation}
With these operations, one can verify that $\Tot(A_{\sbullet})$ is in fact a DG algebra. We omit the details, but comment that this depends upon the fact that the maps $l_{p,q}^{n}$ satisfy the following relations:
 \begin{equation}\label{eqn:3.20}
 \partial'_i \circ l_{p,q}^{n} = \begin{cases} l_{p,q}^{n+1} & \text{if $i > p + q$,} \\
 l_{p+1,q}^{n+1} \circ \partial'_{i-q} & \text{if $q < i \leq p + q$,} \\
 l_{p,q+1}^{n+1} & \text{if $i \leq q$.} \end{cases}
 \end{equation}
More importantly for our purposes, the construction of $\Tot(A_{\sbullet})$ gives the following result.

\begin{proposition}\label{prop:3.5}
The assignment $A_{\sbullet} \mapsto \Tot(A_{\sbullet})$ defines a functor from the category of semi-cosimplicial DG algebras with semi-cosimplicial $\A_{\infty}$-morphisms to the category of DG algebras with $\A_{\infty}$-morphisms. 
\end{proposition}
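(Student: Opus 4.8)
The plan is to define $\Tot$ on morphisms by a single explicit formula patterned on the product \eqref{eqn:3.19}, and then to deduce the $\A_{\infty}$-relations and functoriality by transporting all inputs into a common simplicial level, where everything reduces to data already available level by level. First I would make the source category precise: a semi-cosimplicial $\A_{\infty}$-morphism $f_{\sbullet}\colon A_{\sbullet}\to B_{\sbullet}$ is a family of $\A_{\infty}$-morphisms $f_{p}\colon A_{p}\to B_{p}$, with components $(f_{p})_{k}\colon(\us A_{p})^{\otimes k}\to\us B_{p}$, commuting strictly with the cofaces in the sense that $\partial'_{i}\circ(f_{p-1})_{k}=(f_{p})_{k}\circ(\partial'_{i})^{\otimes k}$ for every coface $\partial'_{i}$; this makes sense because each $\partial'_{i}$ is a map of DG algebras and so extends to tensor powers of suspensions. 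Since the maps $l_{p,q}^{n}$ are composites of cofaces, iterating this identity gives $l\circ(f_{p})_{k}=(f_{P})_{k}\circ l^{\otimes k}$ for any such composite $l\colon A_{p}\to A_{P}$. With this in hand I would set, for homogeneous $a_{k}\in A_{p_{k}}^{q_{k}}$, writing $P=p_{1}+\cdots+p_{n}$ and $P_{k}=p_{1}+\cdots+p_{k-1}$,
\begin{equation*}
\Tot(f)_{n}(\us a_{1}\otimes\cdots\otimes\us a_{n})=\varepsilon\cdot\us(f_{P})_{n}\big(\us l_{p_{1},P_{1}}^{P}(a_{1})\otimes\cdots\otimes\us l_{p_{n},P_{n}}^{P}(a_{n})\big)\in\us B_{P}\subset\us\Tot(B_{\sbullet}),
\end{equation*}
where $\varepsilon$ is the Koszul sign dictated by \eqref{eqn:3.19}. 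For $n=1$ this is simply $\bigoplus_{p}(f_{p})_{1}$, whose compatibility with $\partial''$ and $\partial'$ is the level-wise chain-map property and the strict semi-cosimplicial condition respectively.

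The core is the verification of the $\A_{\infty}$-relation for $\Tot(f)$, with $b_{1}$ induced by the total differential $\partial=\partial'+\partial''$ and $b_{2}$ by the product \eqref{eqn:3.19}. I would organise the terms according to whether they involve the internal part $\partial''$ together with the product, or the cosimplicial part $\partial'$. For the first group, the product $b_{2}$ on $\Tot(B)$ first re-embeds its two arguments into level $P$ by $l$-maps and then multiplies there; using that the $l$-maps are algebra homomorphisms, that $l\circ l$ is again an $l$-map, that $l$ commutes with the internal differential, and the compatibility $l\circ(f_{p})_{k}=(f_{P})_{k}\circ l^{\otimes k}$ derived above, every such term becomes the value on $\us l_{p_{1},P_{1}}^{P}(a_{1})\otimes\cdots\otimes\us l_{p_{n},P_{n}}^{P}(a_{n})$ of one of the terms in the $\A_{\infty}$-relation for the single morphism $f_{P}\colon A_{P}\to B_{P}$. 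Since $f_{P}$ is an $\A_{\infty}$-morphism, these contributions are precisely the two sides of that relation and therefore agree, so no genuinely new coherence is required beyond the level-wise data.

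For the second group I would use the semi-cosimplicial compatibility to move each coface across $f$, turning $\partial'_{i}\circ\Tot(f)_{n}$ into $(f_{P+1})_{n}\circ(\partial'_{i})^{\otimes n}$ applied to the embedded inputs, and then rewrite each composite $\partial'_{i}\circ l_{p_{k},P_{k}}^{P}$ by means of the three cases of \eqref{eqn:3.20}. Tracking which factor each index $i$ acts on, the indices interior to a factor reproduce exactly the terms of the $\A_{\infty}$-relation in which a coface acts on a single input, while the indices lying on the boundary between two consecutive factors match, via the cosimplicial identities, the boundary cofaces coming from the adjacent factor and cancel in pairs. Functoriality is then routine: $\Tot(\id)=\id$ because $(\id)_{1}$ is the identity, all higher components vanish, and $l_{p,0}^{p}=\id$; and $\Tot(g\circ f)=\Tot(g)\circ\Tot(f)$ follows by substituting the definitions into the composition law for $\A_{\infty}$-morphisms and again invoking $l\circ(f_{p})_{k}=(f_{P})_{k}\circ l^{\otimes k}$.

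The main obstacle is precisely this last matching of the cosimplicial terms. Unlike the internal and product contributions, the $\partial'$-terms genuinely mix different simplicial levels, so one must carefully keep track of the boundary cofaces $\partial'_{0}$ and $\partial'_{p+1}$ of adjacent factors, together with the Koszul signs arising both from the suspensions and from the definition \eqref{eqn:3.19} of the product, to see that they telescope correctly. Once every input has been transported to the top level $P$, however, the remaining content is exactly that each $f_{P}$ is an $\A_{\infty}$-morphism and that the structure maps $l_{p,q}^{n}$ are strict; all the necessary identities—the relations \eqref{eqn:3.20}, the strict compatibility of $f_{\sbullet}$ with cofaces, and the level-wise $\A_{\infty}$-relations—are already in place, so the verification is ultimately a sign-bookkeeping argument rather than a construction of new homotopies.
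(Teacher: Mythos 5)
Your proposal is correct and follows essentially the same route as the paper: the same definition of $\Tot(f)_n$ by transporting all inputs to the common simplicial level via the maps $l_{p,q}^{n}$, the same splitting of the $\A_{\infty}$-relation into a cosimplicial part (handled by the relations \eqref{eqn:3.20} with cancellation of the boundary coface terms) and an internal-plus-product part (reduced to the level-wise $\A_{\infty}$-relation for $f_{P}$), and the same routine treatment of functoriality.
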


\begin{proof}
To start with, recall that a DG algebra $A$ with differential $\partial$ can be thought of as an $\A_{\infty}$-algebra with $\A_{\infty}$-operations $d \colon \uu A \to \uu A$ and $m \colon \uu A \otimes \uu A \to \uu A$ defined by declaring
\begin{align} \label{eqn:3.21}
\begin{split}
d(\uu a) &= \uu \partial a, \\
m(\uu a \otimes \uu a') &= (-1)^{\vert a \vert + 1} \uu (a a'), 
\end{split}
\end{align}
for homogenoeus elements $a, a' \in A$. Next, let us introduce some notation to facilitate the presentation. Given a semi-cosimplicial DG algebra $A_{\sbullet} = \{A_{p}\}_{p \geq 0}$, when we write $a \in A_{p}^{q}$ we mean that $\vert a \vert = p + q$;  on the other hand, if we write $\overline{a} \in A_{p}^{q}$, we mean that $\vert \overline{a} \vert = q$. For $p \geq 0$ fixed, we also denote the $\A_{\infty}$-operations associated to $A_p$ in accord to \eqref{eqn:3.21} by $d''_p$ and $m_p$, respectively. With this notation, and the definitions \eqref{eqn:3.18} and \eqref{eqn:3.19}, we can view $\Tot(A_{\sbullet})$ as an $\A_{\infty}$-algebra by setting $d = d' + d'' \colon \uu\!\Tot(A_{\sbullet}) \to \uu\!\Tot(A_{\sbullet})$, where
\begin{align}\label{eqn:3.22}
\begin{split}
d'(\uu a) &=  \sum_{i =0}^{p+1} (-1)^{i} \uu \partial'_i \overline{a}, \\
d''(\uu a) &=   (-1)^{p} d''_p (\uu \overline{a}),
\end{split}
\end{align}
and $m \colon  \uu\!\Tot(A_{\sbullet}) \otimes  \uu\!\Tot(A_{\sbullet}) \to \uu\!\Tot(A_{\sbullet})$ to be given by
\begin{equation}\label{eqn:3.23}
m(\uu a \otimes \uu a') = (-1)^{p + qp'} m_{p+p'}\big(\uu  l_{p,0}^{p+p'}(\overline{a}), \uu l_{p',0}^{p+p'}(\overline{a}')\big),
\end{equation}
for any $a \in A_{p}^{q}$ and $a' \in A_{p'}^{q'}$.

We wish to show that the assignment $A_{\sbullet} \mapsto \Tot(A_{\sbullet})$ is functorial with respect to semi-cosimplicial $\A_{\infty}$-morphisms. So let $\phi_{\sbullet} \colon A_{\sbullet} \to B_{\sbullet}$ be one such $\A_{\infty}$-morphisms. This means that for each $p \geq 0$ we have an $\A_{\infty}$-morphism $\phi_{p} \colon A_{p} \to B_{p}$, and these commute with the coface maps of $A_{\sbullet}$ and $B_{\sbullet}$. The map $\Tot(\phi_{\sbullet}) \colon  \Tot(A_{\sbullet}) \to  \Tot(B_{\sbullet})$ is explicitly given as follows. Let $a_1 \in A_{p_1}^{q_1},\dots, a_n \in A_{p_n}^{q_n}$, and put $p = \sum_{j= 1}^{n} p_j$ and $r_i = \sum_{j= 1}^{i-1} p_j$. Then
\begin{equation}\label{eqn:3.24}
\Tot(\phi_{\sbullet})_n (\uu a_1 \otimes \cdots \otimes \uu a_n) = (-1)^{\sum_{1 \leq i < j \leq n}p_j (q_i + 1)}\phi_{p,n} \big( \uu l_{p_1,r_1}^{p}(\overline{a}_1) \otimes \cdots \otimes  \uu l_{p_n,r_n}^{p}(\overline{a}_n) \big).  
\end{equation}
We claim that $\Tot(\phi_{\sbullet})$ satisfies the required relations to be an $\A_{\infty}$-morphism, which read
\begin{align}\label{eqn:3.25}
\begin{split}
&d \circ  \Tot(\phi_{\sbullet})_n + \sum_{i + j = n} m \circ (\Tot(\phi_{\sbullet})_i \otimes \Tot(\phi_{\sbullet})_j) \\
&\qquad = \sum_{i+j+1=n} \Tot(\phi_{\sbullet})_n \circ (\id^{\otimes i} \otimes d \otimes \id^{\otimes j})  + \sum_{i+j+2=n} \Tot(\phi_{\sbullet})_{n-1} \circ (\id^{\otimes i} \otimes m \otimes \id^{\otimes j}). 
\end{split}
\end{align}
To verify the claim, we note that to prove \eqref{eqn:3.25} it is enough to prove that
\begin{equation}\label{eqn:3.26}
d' \circ \Tot(\phi_{\sbullet})_n  = \sum_{i+j+1=n} \Tot(\phi_{\sbullet})_n \circ (\id^{\otimes i} \otimes d' \otimes \id^{\otimes j}),
\end{equation}
and 
\begin{align}\label{eqn:3.27}
\begin{split}
&d'' \circ  \Tot(\phi_{\sbullet})_n + \sum_{i + j = n} m \circ (\Tot(\phi_{\sbullet})_i \otimes \Tot(\phi_{\sbullet})_j) \\
&\qquad = \sum_{i+j+1=n} \Tot(\phi_{\sbullet})_n \circ (\id^{\otimes i} \otimes d'' \otimes \id^{\otimes j})  + \sum_{i+j+2=n} \Tot(\phi_{\sbullet})_{n-1} \circ (\id^{\otimes i} \otimes m \otimes \id^{\otimes j}),
\end{split}
\end{align}
separately.

We begin with \eqref{eqn:3.26}. To that end, we write $\widetilde{\partial}'_i = \uu \circ \partial'_i \circ \us$ and set $s = \sum_{1 \leq i < j \leq n}p_j (q_i + 1)$. From \eqref{eqn:3.18} and the fact that $\phi_{p}$ commutes with the $\widetilde{\partial}'_{i}$, it follows that
\begin{gather*}
\begin{align*}
d' \left( \Tot(\phi_{\sbullet})_n (\uu a_1 \otimes \cdots \otimes \uu a_n) \right)  & = \sum_{i = 0}^{p+1} (-1)^{s + i} \widetilde{\partial}'_i \left(  \phi_{p,n} \big( \uu l_{p_1,r_1}^{p}(\overline{a}_1) \otimes \cdots \otimes  \uu l_{p_n,r_n}^{p}(\overline{a}_n) \big) \right) \\
&=(-1)^{s} \widetilde{\partial}'_{0} \left(  \phi_{p,n} \big( \uu l_{p_1,r_1}^{p}(\overline{a}_1) \otimes \cdots \otimes  \uu l_{p_n,r_n}^{p}(\overline{a}_n) \big) \right) \\
&\quad+ (-1)^{s + p +1} \widetilde{\partial}'_{p+1} \left(  \phi_{p,n} \big( \uu l_{p_1,r_1}^{p}(\overline{a}_1) \otimes \cdots \otimes  \uu l_{p_n,r_n}^{p}(\overline{a}_n) \big) \right) \\
&\quad + \sum_{i = 1}^{n}  \sum_{j = 1}^{p_{i}}(-1)^{s + r_{i} + j} \widetilde{\partial}'_{r_i + j} \left(  \phi_{p,n} \big( \uu l_{p_1,r_1}^{p}(\overline{a}_1) \otimes \cdots \otimes  \uu l_{p_n,r_n}^{p}(\overline{a}_n) \big) \right)  \\
&=(-1)^{s}   \phi_{p+1,n} \big( \uu \partial'_0 l_{p_1,r_1}^{p}(\overline{a}_1) \otimes \cdots \otimes  \uu  \partial'_0 l_{p_n,r_n}^{p}(\overline{a}_n) \big) \\
&\quad  + (-1)^{s + p +1}  \phi_{p+1,n} \big( \uu \partial'_{p+1} l_{p_1,r_1}^{p}(\overline{a}_1) \otimes \cdots \otimes  \uu \partial'_{p+1} l_{p_n,r_n}^{p}(\overline{a}_n) \big)  \\
&\quad + \sum_{i = 1}^{n}  \sum_{j = 1}^{p_{i}}(-1)^{s + r_{i} + j}   \phi_{p,n} \big( \uu \partial'_{r_i + j} l_{p_1,r_1}^{p}(\overline{a}_1) \otimes \cdots \otimes  \uu \partial'_{r_i + j} l_{p_n,r_n}^{p}(\overline{a}_n) \big).
\end{align*}
\end{gather*}
Taking into account \eqref{eqn:3.20}, the left-hand side of the last equality becomes
\begin{gather*}
\begin{align*}
&\phantom{=}(-1)^{s}   \phi_{p+1,n} \big( \uu  l_{p_1,r_1 + 1}^{p+1}(\overline{a}_1) \otimes \cdots \otimes  \uu   l_{p_n,r_n + 1}^{p+1}(\overline{a}_n) \big)  + (-1)^{s + p + 1}  \phi_{p+1,n} \big( \uu  l_{p_1,r_1}^{p+1}(\overline{a}_1) \otimes \cdots \otimes  \uu  l_{p_n,r_n}^{p+1}(\overline{a}_n) \big) \\
& \phantom{=}+ \sum_{i = 1}^{n}  \sum_{j = 1}^{p_{i}}(-1)^{s + r_{i} + j}   \phi_{p+1,n} \big( \uu  l_{p_1,r_1}^{p +1 }(\overline{a}_1) \otimes \cdots  \otimes \uu  l_{p_{i-1},r_{i-1}}^{p+1}(\overline{a}_{i-1}) \otimes \uu  l_{p_i + 1,r_i}^{p + 1}(\partial'_j \overline{a}_i) \\
&\phantom{=+ \sum_{i = 1}^{n}  \sum_{j = 1}^{p_{i}}(-1)^{s + r_{i} + j}   \phi_{p+1,n} \big( aaa \otimes \cdots} \,\, \otimes \uu  l_{p_{i+1},r_{i+1} + 1}^{p+1}(\overline{a}_{i+1})  \otimes \cdots \otimes  \uu  l_{p_n,r_n + 1}^{p + 1}(\overline{a}_n) \big).
\end{align*}
\end{gather*}
A simple calculation shows that the first and second term of this expression cancel each other out. Thus, upon using \eqref{eqn:3.18} and \eqref{eqn:3.22} and putting all together,
\begin{gather*}
\begin{align*}
& d' \left( \Tot(\phi_{\sbullet})_n (\uu a_1 \otimes \cdots \otimes \uu a_n) \right) \\
    &\qquad\qquad\qquad = \sum_{i = 1}^{n}  (-1)^{s + r_{i}}   \phi_{p+1,n} \big( \uu  l_{p_1,r_1}^{p +1 }(\overline{a}_1) \otimes \cdots  \otimes \uu  l_{p_{i-1},r_{i-1}}^{p+1}(\overline{a}_{i-1}) \otimes \uu  l_{p_i + 1,r_i}^{p + 1}(\partial' \overline{a}_i) \\
&\qquad\qquad\qquad \phantom{= \sum_{i = 1}^{n}  (-1)^{s + r_{i}}aaaaaaaaaaaaall  } \,\, \otimes \uu  l_{p_{i+1},r_{i+1} + 1}^{p+1}(\overline{a}_{i+1})  \otimes \cdots \otimes  \uu  l_{p_n,r_n + 1}^{p + 1}(\overline{a}_n) \big) \\
&\qquad\qquad\qquad = \sum_{i+j+1 = n} \Tot(\phi_{\sbullet})_n \left( (\id^{\otimes i} \otimes d' \otimes \id^{\otimes j})(\uu a_1 \otimes \cdots \otimes \uu a_n) \right),
\end{align*} 
\end{gather*}
from which \eqref{eqn:3.26} follows.  

Let us now tackle \eqref{eqn:3.27}. By attending to \eqref{eqn:3.22} and \eqref{eqn:3.23}, for the terms on the left-hand side of \eqref{eqn:3.27} we have
\begin{gather*}
\begin{align*}
d'' \left( \Tot(\phi_{\sbullet})_n (\uu a_1 \otimes \cdots \otimes \uu a_n) \right) = (-1)^{s+p} d''_p \left( \phi_{p,n} \big(\uu l_{p_1,r_1}^p(\overline{a}_1) \otimes \cdots \otimes \uu l_{p_n,r_n}^{p}(\overline{a}_n) \big) \right), \end{align*}
\end{gather*}
and 
\begin{gather*}
\begin{align*}
&m \left( (\Tot(\phi_{\sbullet})_i \otimes \Tot(\phi_{\sbullet})_j) (\uu a_1 \otimes \cdots \otimes \uu a_n) \right) \\
&\qquad\qquad\qquad\qquad\qquad\qquad\qquad = (-1)^{s+p} m_p \left( (\phi_{p,i} \otimes \phi_{p,j}) \big(\uu l_{p_1,r_1}^p(\overline{a}_1) \otimes \cdots \otimes \uu l_{p_n,r_n}^{p}(\overline{a}_n) \big)\right).
\end{align*}
\end{gather*}
Similarly, for the terms on the right-hand side \eqref{eqn:3.27}, one obtains
\begin{gather*}
\begin{align*}
&\Tot(\phi_{\sbullet})_{n} \left( (\id^{\otimes i} \otimes d'' \otimes \id^{\otimes j}) (\uu a_1 \otimes \cdots \otimes \uu a_n) \right)  \\
&\qquad \qquad\qquad\qquad\qquad\qquad = (-1)^{s+p} \phi_{p,n} \left( (\id^{\otimes i} \otimes d''_p \otimes \id^{\otimes j})\big(\uu l_{p_1,r_1}^p(\overline{a}_1) \otimes \cdots \otimes \uu l_{p_n,r_n}^{p}(\overline{a}_n)\right),
\end{align*}
\end{gather*}
and  
\begin{gather*}
\begin{align*}
&\Tot(\phi_{\sbullet})_{n-1} \left( (\id^{\otimes i} \otimes m \otimes \id^{\otimes j}) (\uu a_1 \otimes \cdots \otimes \uu a_n) \right) \\
&\qquad \qquad\qquad\qquad\qquad\qquad = (-1)^{s+p} \phi_{p,n-1} \left( (\id^{\otimes i} \otimes m_p \otimes \id^{\otimes j})\big(\uu l_{p_1,r_1}^p(\overline{a}_1) \otimes \cdots \otimes \uu l_{p_n,r_n}^{p}(\overline{a}_n)\right).
\end{align*}
\end{gather*}
The desired conclusion is therefore a consequence of the fact that $\phi_p$ is an $\A_{\infty}$-morphism. 

In order to finish the proof, one needs to check that $A_{\sbullet} \mapsto \Tot(A_{\sbullet})$ preserves compositions. This is a straightforward verification which we omit.
\end{proof}

The following result should also be noted.

\begin{lemma} \label{lem:3.10a}
Let $A_{\sbullet}$ and $B_{\sbullet}$ be two semi-cosimplicial positively graded DG algebras. If $\phi_{\sbullet} \colon A_{\sbullet} \to B_{\sbullet}$ is a semi-cosimplicial $A_{\infty}$-quasi-isomorphism then $\Tot(\phi_{\sbullet}) \colon \Tot(A_{\sbullet}) \to \Tot(B_{\sbullet})$ is an $\A_{\infty}$-quasi-isomorphism. 
\end{lemma}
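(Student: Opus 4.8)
The plan is to prove the statement by showing directly that the linear part $\Tot(\phi_{\sbullet})_1$ is a quasi-isomorphism of complexes; by the definition of an $\A_{\infty}$-quasi-isomorphism this is exactly what is required. Recall that the hypothesis that $\phi_{\sbullet}$ is a semi-cosimplicial $\A_{\infty}$-quasi-isomorphism means precisely that each linear component $\phi_{p,1} \colon A_p \to B_p$ is a quasi-isomorphism of the underlying complexes, and that the $\phi_{p,1}$ commute with the coface maps $\partial'_i$. Specialising formula \eqref{eqn:3.24} to $n = 1$, where $l_{p,0}^{p} = \id$ and the prefactor sign is empty, one sees that $\Tot(\phi_{\sbullet})_1$ acts on the summand $A_p^q$ of $\Tot(A_{\sbullet})$ simply as $\phi_{p,1}$; the $n=1$ instance of the $\A_{\infty}$-relation \eqref{eqn:3.25} guarantees that it is a chain map for the total differential $\partial = \partial' + \partial''$. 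The higher components $\Tot(\phi_{\sbullet})_n$, $n \geq 2$, play no role in this question.

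First I would filter both $\Tot(A_{\sbullet})$ and $\Tot(B_{\sbullet})$ by the simplicial degree, setting
$$
F^{k}\Tot(A_{\sbullet})^{n} = \bigoplus_{\substack{p + q = n \\ p \geq k}} A_{p}^{q}.
$$
Since $\partial''$ preserves $p$ while $\partial'$ raises it by one, this is a filtration by subcomplexes, and $\Tot(\phi_{\sbullet})_1$ respects it. The crucial use of the positivity hypothesis enters here: because $A_{\sbullet}$ and $B_{\sbullet}$ are positively graded, $A_p^q = 0$ for $q < 0$, so in each total degree $n$ only the finitely many terms with $0 \leq p \leq n$ survive. Hence the filtration is bounded in every degree, and the two associated spectral sequences converge to $\uH^{\sbullet}(\Tot(A_{\sbullet}))$ and $\uH^{\sbullet}(\Tot(B_{\sbullet}))$ respectively.

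Next I would compare the two spectral sequences on the $E_1$-page. The $E_0$-differential is the internal differential $\partial''$, so that $E_1^{p,q}(A_{\sbullet}) = \uH^{q}(A_p)$ and likewise for $B_{\sbullet}$, with the induced $d_1$ governed by $\partial'$, that is, by the coface maps. The map induced by $\Tot(\phi_{\sbullet})_1$ on $E_1$ is exactly $\uH^{q}(\phi_{p,1})$, which is an isomorphism for every $p$ and $q$ because each $\phi_{p,1}$ is a quasi-isomorphism. By the comparison theorem for convergent spectral sequences, an isomorphism on $E_1$ forces an isomorphism of the abutments, so $\Tot(\phi_{\sbullet})_1$ induces an isomorphism on total cohomology and is therefore a quasi-isomorphism. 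Consequently $\Tot(\phi_{\sbullet})$ is an $\A_{\infty}$-quasi-isomorphism, as claimed.

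I expect the only genuinely delicate point to be convergence: without the positive-grading hypothesis the filtration need not be bounded and the comparison theorem could fail, so it is essential to record that positivity makes each graded piece a finite sum. Everything else — that $\Tot(\phi_{\sbullet})_1$ is the column-wise application of the $\phi_{p,1}$, that it is a filtered chain map, and that the higher $\A_{\infty}$-components as well as the cup products raise filtration degree and hence do not affect the $E_0$- and $E_1$-pages — is routine bookkeeping.
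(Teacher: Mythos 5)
Your proposal is correct and follows essentially the same route as the paper: both arguments observe that positivity makes $\Tot(\phi_{\sbullet})_1$ a map of first-quadrant double complexes, filter by simplicial degree so that the $E_1$-page computes the vertical (internal) cohomology $\uH^{q}(A_p)$, invoke the hypothesis that each $\phi_{p,1}$ is a quasi-isomorphism to get an isomorphism on $E_1$, and conclude by the comparison theorem for convergent spectral sequences. Your write-up merely makes explicit the boundedness of the filtration and the identification of $\Tot(\phi_{\sbullet})_1$ as the column-wise application of the $\phi_{p,1}$, which the paper leaves implicit.
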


\begin{proof}
Since $A_{\sbullet}$ and $B_{\sbullet}$ are assumed to be positively graded, the map $\Tot(\phi_{\sbullet})_1$ is a morphism of first-quadrant double complexes. Besides, by our hypothesis on $\phi_{\sbullet}$, we see that $\Tot(\phi_{\sbullet})_1$ induces an isomorphism on the vertical directions. We conclude therefore that the map of spectral sequences is an isomorphism at the first page, and hence $\Tot(\phi_{\sbullet})_1$ induces an isomorphism in cohomology. 
\end{proof}

With these preliminaries out of the way, we may now formulate the version of Gugenheim's $\A_{\infty}$ De~Rham theorem for $BG$ we are after.  As in the previous section, consider the simplicial manifold $BG_{\sbullet}$. Then, the Bott-Shulman-Stasheff complex gives us a semi-cosimplicial DG algebra $\Omega^{\sbullet}(BG_{\sbullet}) = \{\Omega^{\sbullet}(BG_{p})\}_{p \geq 0}$. Also, by taking singular cochains, we get a second semi-cosimplicial DG algebra $\uC^{\sbullet}(BG_{\sbullet})=\{\uC^{\sbullet}(BG_{p})\}_{p \geq 0}$. Invoking Theorem~\ref{thm:2.1}, for each $p \geq 0$, there is an $\A_{\infty}$-morphism $\DR_p \colon \Omega^{\sbullet}(BG_{p}) \to  \uC^{\sbullet}(BG_{p})$ induced by Gugenheim's construction. Since the latter is natural with respect to the simplicial operations, this $\A_{\infty}$-morphisms commutes with the coface maps of $\Omega^{\sbullet}(BG_{\sbullet})$ and $\uC^{\sbullet}(BG_{\sbullet})$. Thus, we actually get a semi-cosimplicial $\A_{\infty}$-morphism $\DR_{\sbullet} \colon \Omega^{\sbullet}(BG_{\sbullet}) \to \uC^{\sbullet}(BG_{\sbullet})$. One obtains the following.

\begin{theorem}
The induced $\A_{\infty}$-morphism $\Tot(\DR_{\sbullet}) \colon \Tot(\Omega^{\sbullet}(BG_{\sbullet})) \to \Tot(\uC^{\sbullet}(BG_{\sbullet}))$ is an $\A_{\infty}$-quasi-isomorphism. 
\end{theorem}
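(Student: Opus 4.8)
The plan is to obtain this as an immediate application of Lemma~\ref{lem:3.10a} to the semi-cosimplicial $\A_{\infty}$-morphism $\DR_{\sbullet}$, once its two hypotheses have been checked. Recall that Proposition~\ref{prop:3.5} already guarantees that $\Tot(\DR_{\sbullet})$ is a well-defined $\A_{\infty}$-morphism, so the only thing left to establish is that it is a quasi-isomorphism. In other words, the substantive work has already been carried out, and the present statement is where it gets harvested.

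First I would confirm that $\DR_{\sbullet}$ is a semi-cosimplicial $\A_{\infty}$-quasi-isomorphism. That it is a semi-cosimplicial $\A_{\infty}$-morphism has just been noted: the naturality of Gugenheim's construction stated in Theorem~\ref{thm:2.1}, applied to the smooth face maps $\varepsilon_i \colon BG_p \to BG_{p-1}$, ensures that each $\DR_p$ commutes with the coface operators of $\Omega^{\sbullet}(BG_{\sbullet})$ and $\uC^{\sbullet}(BG_{\sbullet})$. Moreover, Theorem~\ref{thm:2.1} asserts that for every fixed $p$ the linear component $(\DR_p)_1$ is a quasi-isomorphism, which is precisely the statement that $\DR_{\sbullet}$ is a quasi-isomorphism of semi-cosimplicial $\A_{\infty}$-algebras.

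Next I would check the positivity hypothesis of Lemma~\ref{lem:3.10a}. For each $p$ the complexes $\Omega^{\sbullet}(BG_p)$ and $\uC^{\sbullet}(BG_p)$ are concentrated in non-negative degrees, since differential forms and singular cochains vanish below degree zero; hence both $\Omega^{\sbullet}(BG_{\sbullet})$ and $\uC^{\sbullet}(BG_{\sbullet})$ are positively graded semi-cosimplicial DG algebras. With both hypotheses in hand, Lemma~\ref{lem:3.10a} applies verbatim and yields the claim. There is no genuine obstacle at this stage: the content of the argument resides in the first-quadrant spectral sequence comparison carried out in the proof of Lemma~\ref{lem:3.10a}, which reduces the quasi-isomorphism property of $\Tot(\DR_{\sbullet})$ to the fixed-$p$ (vertical) quasi-isomorphisms furnished by Gugenheim's theorem. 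If anything required care, it would be ensuring that the positivity is genuinely used to guarantee convergence of the two spectral sequences and that the $E_1$-comparison is indeed an isomorphism, but this is exactly what Lemma~\ref{lem:3.10a} packages.
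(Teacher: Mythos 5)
Your proposal is correct and follows exactly the paper's own argument: the paper likewise deduces from Theorem~\ref{thm:2.1} that $\DR_{\sbullet}$ is a semi-cosimplicial $\A_{\infty}$-quasi-isomorphism and then concludes by Lemma~\ref{lem:3.10a}. Your explicit verification of the positivity hypothesis is a small additional care the paper leaves implicit, but the route is the same.
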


\begin{proof}
It follows from Theorem~\ref{thm:2.1} that $\DR_{\sbullet} \colon \Omega^{\sbullet}(BG_{\sbullet}) \to \uC^{\sbullet}(BG_{\sbullet})$ is a semi-cosimplicial $\A_{\infty}$-quasi-isomorphism. Hence  conclusion is a consequence of  Lemma~\ref{lem:3.10a}. 
\end{proof}

\subsection{The Hochschild-De~Rham $\A_{\infty}$-quasi-isomorphism}\label{sec:3.3}
The goal of this subsection is to construct an $\A_{\infty}$-quasi-isomorphism between $\Omega^{\sbullet}(BG_{\sbullet})$ and the DG algebra of Hochschild cochains on the space of singular chains $\uC_{\sbullet}(G)$. We begin with some generalities. 

Let $A$ be a DG Hopf algebra and let $\varepsilon \colon A \to \RR$ be its counit. For each $p \geq 0$, we set $\Delta_{p}(A) = A^{\otimes p}$ and define the maps $\partial_i \colon \Delta_{p}(A) \to \Delta_{p-1}(A)$ by
\begin{equation} \label{eqn:3.28}
\partial_{i} (a_1 \otimes \cdots \otimes a_p) = \begin{cases}  \varepsilon(a_1) a_2 \otimes \cdots \otimes a_p & \text{if $i=0$,} \\
a_{1} \otimes \cdots \otimes a_{i-1} \otimes a_{i}a_{i+1} \otimes a_{i+2} \otimes  \cdots \otimes a_p & \text{if $0 < i < p$,} \\
a_2 \otimes \cdots \otimes a_{p-1} \varepsilon(a_p) & \text{if $i=p$.}
 \end{cases}
\end{equation}
The following lemma es crucial. 

\begin{lemma}
The collection $\Delta_{\sbullet}(A) = \{\Delta_{p}(A)\}_{p \geq 0}$ is a semi-simplicial DG coalgebra with face maps $\partial_i$. 
\end{lemma}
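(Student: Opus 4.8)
The plan is to check the three things packaged into the statement: that each $\Delta_p(A)=A^{\otimes p}$ carries a DG coalgebra structure, that every face map $\partial_i$ is a morphism of DG coalgebras, and that the face maps obey the semi-simplicial identities $\partial_i\circ\partial_j=\partial_{j-1}\circ\partial_i$ for $i<j$. The only structure I will use is that $A$ is a DG \emph{bialgebra}; the antipode plays no role here. Write $m\colon A\otimes A\to A$, $\varepsilon\colon A\to\RR$ and $\Delta_A\colon A\to A\otimes A$ for the product, counit and coproduct of $A$.

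First I would equip each $\Delta_p(A)=A^{\otimes p}$ with the tensor-product DG coalgebra structure, whose coproduct $\Delta_{A^{\otimes p}}$ is built from $\Delta_A$ using the graded flip $\tau(a\otimes b)=(-1)^{\vert a\vert\vert b\vert}b\otimes a$ to reorder tensor factors, and whose differential is the usual Koszul-signed tensor-product differential; for $p=0$ this is the ground field $\RR$ with its trivial structure. Next I would observe that, up to the identifications $\RR\otimes A^{\otimes(p-1)}\cong A^{\otimes(p-1)}\cong A^{\otimes(p-1)}\otimes\RR$, each $\partial_i$ is one of the maps $\varepsilon\otimes\id^{\otimes(p-1)}$ (for $i=0$), $\id^{\otimes(p-1)}\otimes\varepsilon$ (for $i=p$), or $\id^{\otimes(i-1)}\otimes m\otimes\id^{\otimes(p-i-1)}$ (for $0<i<p$). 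Since $m$ and $\varepsilon$ are degree-zero chain maps and tensoring a chain map with identities preserves this property, all of the $\partial_i$ are chain maps. To see that they are coalgebra morphisms it suffices to know that $\varepsilon$ and $m$ are coalgebra morphisms: for $\varepsilon$ this is automatic, the counit of any coalgebra being a coalgebra map to $\RR$, while for $m$ it is precisely the bialgebra compatibility $\Delta_A\circ m=(m\otimes m)\circ\Delta_{A\otimes A}$. Because tensoring a coalgebra morphism with identities again yields a coalgebra morphism, each $\partial_i$ is a morphism of DG coalgebras.

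Finally I would verify the semi-simplicial identities by the standard case analysis on the positions of $i<j$ relative to the boundary indices $0$ and $p$. When both face maps are interior, the non-adjacent case is trivial (disjoint pairs of factors are multiplied) and the adjacent case $j=i+1$ reduces to the associativity of $m$; the cases involving $\partial_0$ or $\partial_p$ reduce to the counit axioms $(\varepsilon\otimes\id)\circ\Delta_A=\id=(\id\otimes\varepsilon)\circ\Delta_A$ together with associativity. This bookkeeping is routine, but it is the main point where care is needed and hence the step I expect to be the genuine, if elementary, obstacle; in particular one must track the Koszul signs entering the coproduct of $A^{\otimes p}$ when confirming in the graded setting that $m$ is a coalgebra map. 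Assembling these three verifications yields the claim.
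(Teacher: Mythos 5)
Your proposal is correct and follows essentially the same route as the paper: the coalgebra-morphism property of the face maps is deduced from the bialgebra compatibility of $m$ and $\varepsilon$ with the coproduct, and the semi-simplicial identities are a routine case analysis (which the paper likewise leaves to the reader). Your extra observations — that the antipode is not needed and that the adjacent/boundary cases reduce to associativity and the counit axioms — are accurate refinements of the same argument.
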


\begin{proof}
Since, by hypothesis, $A$ is a DG Hopf algebra, its the product map is a morphism of DG coalgebras, which means that the maps $\partial_i$ for $0 < i < p$ are indeed morphisms of DG coalgebras. On the other hand, the fact that $\varepsilon \colon A \to \RR$ is a morphism of DG Hopf algebras implies that both $\partial_0$ and $\partial_p$ are also morphisms of DG coalgebras. It remains only to check that $\partial_{i} \circ \partial_{j} = \partial_{j-1} \circ \partial_{i}$ for $i < j$. This is a routine calculation which we leave to the reader.
\end{proof}

We next let $\Delta^{\sbullet}(A)$ be the semi-cosimplicial DG algebra obtained by dualising $\Delta_{\sbullet}(A)$. 
The following observation should be made. 

\begin{lemma}\label{lem:3.13a}
The product in $\Tot(\Delta^{\sbullet}(A))$ is given, for homogeneous elements $\varphi$ and $\psi$, by the formula
$$
\us^p \varphi \bigcdot \us^{q}\psi = (-1)^{\vert \varphi \vert q} \us^{p+q} (\varphi \abxcup \psi),
$$
where $\abxcup$ designates the cup product in the Hochschild cochain complex $\HC^{\sbullet}(A)$. 
\end{lemma}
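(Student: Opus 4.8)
The plan is to unwind the definition \eqref{eqn:3.19} of the product in $\Tot(\Delta^{\sbullet}(A))$ and match it term by term against the cup product \eqref{eqn:2.7aa}. Write $p$ and $q$ for the simplicial degrees (tensor-lengths) of $\varphi$ and $\psi$. Formula \eqref{eqn:3.19} expresses $\us^{p}\varphi \bigcdot \us^{q}\psi$ as a sign $(-1)^{|\varphi| q}$ times the product, taken inside the single DG algebra $\Delta^{p+q}(A)$, of the two images $l_{p,0}^{p+q}(\us^{p}\varphi)$ and $l_{q,p}^{p+q}(\us^{q}\psi)$. So the proof splits into two tasks: describe these two cosimplicial structure maps explicitly, and then compute the resulting convolution product in $\Delta^{p+q}(A)$, which is dual to the tensor coproduct on $A^{\otimes(p+q)}$ induced by the Hopf coproduct of $A$.

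First I would make the maps $l_{p,0}^{p+q}$ and $l_{q,p}^{p+q}$ explicit. Since $\Delta^{\sbullet}(A)$ is the dual of $\Delta_{\sbullet}(A)$ and the operators $l_{\ast,\ast}^{\ast}$ are attached, via \eqref{eqn:3.20}, to the order-preserving injections $k\mapsto k+s$, unwinding the face maps \eqref{eqn:3.28} shows that $l_{p,0}^{p+q}$ is dual to iterating the last face map $q$ times and $l_{q,p}^{p+q}$ is dual to iterating $\partial_{0}$ $p$ times. Concretely, evaluated on $a_{1}\otimes\cdots\otimes a_{p+q}$,
\begin{align*}
l_{p,0}^{p+q}(\us^{p}\varphi) &= (\us^{p}\varphi)(a_{1}\otimes\cdots\otimes a_{p})\,\varepsilon(a_{p+1})\cdots\varepsilon(a_{p+q}), \\
l_{q,p}^{p+q}(\us^{q}\psi) &= \varepsilon(a_{1})\cdots\varepsilon(a_{p})\,(\us^{q}\psi)(a_{p+1}\otimes\cdots\otimes a_{p+q}),
\end{align*}
so the two factors are supported on complementary blocks of tensor slots. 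Computing their convolution product and applying $l_{p,0}^{p+q}(\us^{p}\varphi)$ to the first leg of $\Delta_{A^{\otimes(p+q)}}$ and $l_{q,p}^{p+q}(\us^{q}\psi)$ to the second, the counit factors force, on the $i$th slot, the identities $\sum a_{i(1)}\varepsilon(a_{i(2)})=a_{i}$ for $i\le p$ and $\sum\varepsilon(a_{i(1)})a_{i(2)}=a_{i}$ for $i>p$. Hence the entire convolution sum collapses to the single term $(\us^{p}\varphi)(a_{1}\otimes\cdots\otimes a_{p})\,(\us^{q}\psi)(a_{p+1}\otimes\cdots\otimes a_{p+q})$, which is exactly the horizontal juxtaposition underlying the Hochschild cup product.

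The remaining and most delicate step is the sign bookkeeping, and this is where I expect the main obstacle to lie. On the surviving terms the counit forces, slot by slot, one of the two coproduct legs into degree zero, so the shuffle sign reorganising $\Delta_{A}^{\otimes(p+q)}$ into the coproduct of $A^{\otimes(p+q)}$ evaluates to $+1$ on every surviving summand; thus the only graded sign produced by the dual product is the Koszul sign pairing the degree of the second factor with the total degree $\sum_{i=1}^{p}|a_{i}|$ of the first block. This is precisely the block-degree appearing in the exponent of \eqref{eqn:2.7aa}. It then remains to assemble three further families of signs: the structural prefactor of \eqref{eqn:3.19}, and the décalage signs incurred when each $\us^{p}\varphi$, $\us^{q}\psi$ and $\us^{p+q}(\varphi\abxcup\psi)$ is rewritten in terms of its underlying cochain on the relevant $(\uu A)^{\otimes(-)}$. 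Using the additivity of the suspension sign under the splitting $(\uu A)^{\otimes(p+q)}\cong(\uu A)^{\otimes p}\otimes(\uu A)^{\otimes q}$, one checks that these combine with the convolution Koszul sign so as to reproduce the internal cup-product sign of \eqref{eqn:2.7aa} together with the asserted global factor $(-1)^{|\varphi| q}$. The hard point is precisely this cancellation: each family of signs is individually convention-sensitive, and the identity only closes once the suspension conventions for $\us^{p}$, $\us^{q}$ and $\us^{p+q}$ are fixed and compared simultaneously; everything else is the formal collapse described above.
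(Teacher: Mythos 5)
Your argument is correct and follows essentially the same route as the paper's: both express the product in $\Tot(\Delta^{\sbullet}(A))$ as the dual (convolution) product of $l_{p,0}^{p+q}\varphi$ and $l_{q,p}^{p+q}\psi$, compute these structure maps explicitly as counit-padded restrictions to complementary blocks of tensor slots, and collapse the coproduct sum slot by slot via the counit axiom, with all Koszul signs trivialising because $\varepsilon$ vanishes on elements of positive degree. The only difference is presentational: the paper compresses the residual sign discussion into the single asserted identity $\mu^{p,q}(\varphi\otimes\psi)=\varphi\abxcup\psi$ and defers the suspension bookkeeping you worry about in your last paragraph to the subsequent lemma on the isomorphism $\Theta$, so that worry is not an obstacle here.
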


\begin{proof}
As earlier, for $p + q \leq n$, let $l_{p,q}^{n}\colon \Delta^{p}(A) \to \Delta^{n}(A)$ denote the map induced by that given in \eqref{eqn:3.28}. This induces a corresponding map $l_{p,q}^{n*}\colon A^{\otimes n} \to A^{\otimes p}$. We claim that, if $\Delta$ denotes the coproduct in $A$,  
\begin{equation}\label{eqn:3.40a}
\left(l_{p,0}^{p+q*} \otimes l_{q,p}^{p+q*}\right) \circ \Delta^{\otimes (p+q)} = \id_{A^{\otimes (p+q)}}. 
\end{equation}
To substantiate our claim, we fix homogeneous elements $a_1,\dots, a_{p+q} \in A$, and notice that
\begin{equation}\label{eqn:3.41a}
l_{p,0}^{p+q*}(a_1 \otimes \cdots \otimes a_{p+q}) = \varepsilon(a_{p+1} \cdots a_{p+q}) a_1 \otimes \cdots \otimes a_{p},
\end{equation}
and 
\begin{equation}\label{eqn:3.42a}
l_{q,p}^{p+q*}(a_1 \otimes \cdots \otimes a_{p+q}) = \varepsilon(a_{1} \cdots a_{p}) a_{p+1} \otimes \cdots \otimes a_{p+q}. 
\end{equation}
We also make use of Sweedler's notation and write, for each $1 \leq i \leq p+q$,
\begin{equation}\label{eqn:3.43a}
\Delta(a_i) = \sum a_{i(1)} \otimes a_{i (2)}.  
\end{equation}
Using \eqref{eqn:3.41a}, \eqref{eqn:3.42a} and \eqref{eqn:3.43a}, we find that
\begin{align*}
&\left(l_{p,0}^{p+q*} \otimes l_{q,p}^{p+q*}\right) \left( \Delta^{\otimes (p+q)} (a_1 \otimes \cdots \otimes a_{p+q})\right) \\ 
&\,\,= \sum (-1)^{\sum_{ i < j } \vert a_{i(1)} \vert\vert a_{j(2)}\vert } \left(l_{p,0}^{p+q} \otimes l_{q,p}^{p+q}\right) (a_{1(1)} \otimes \cdots \otimes a_{p+q(1)} \otimes a_{1(2)} \otimes \cdots \otimes a_{p+q(2)}) \\
&\,\,= \sum (-1)^{\sum_{ i < j } \vert a_{i(1)} \vert\vert a_{j(2)}\vert } a_{1(1)} \otimes \cdots \otimes a_{p(1)} \varepsilon(a_{p+1(1)} \cdots a_{p+q(1)} a_{1(2)} \cdots a_{p(2)}) a_{p+1(2)} \otimes \cdots \otimes a_{p+q(2)} \\
&\,\,= \sum  a_{1(1)} \otimes \cdots \otimes a_{p(1)} \varepsilon(a_{p+1(1)} \cdots a_{p+q(1)} a_{1(2)} \cdots a_{p(2)}) a_{p+1(2)} \otimes \cdots \otimes a_{p+q(2)} \\
&\,\,= \left(\sum a_{1(1)} \varepsilon(a_{1(2)}) \right) \otimes \cdots  \otimes  \left(\sum a_{p+q(1)} \varepsilon(a_{p+q(2)}) \right) \\
&\,\, = a_1 \otimes \cdots \otimes a_{p+q},
\end{align*}
where in the third equality we have used the fact that $\varepsilon$ vanishes on elements of positive degree in order to set the signs to zero, and, in the last one, that $\varepsilon$ is a counit for the coproduct. Thus \eqref{eqn:3.40a} is true.   

Next, let us denote by $\mu^{p,q}$ the natural map from $A^{\otimes p*} \otimes A^{\otimes q*}$ to $(A^{\otimes p} \otimes A^{\otimes q})^*$. It is a simple matter to verify that
\begin{equation}\label{eqn:3.44a}
\mu^{p+q,p+q} \circ \left(l_{p,0}^{p+q} \otimes l_{p,q}^{p+q}\right) = \left(l_{p,0}^{p+q*} \otimes l_{p,q}^{p+q*}\right)^* \circ \mu^{p,q}.
\end{equation}
Moreover, we also have that
\begin{equation}\label{eqn:3.45a}
\mu^{p,q}(\varphi \otimes \psi) = \varphi \abxcup \psi,
\end{equation}
for $\varphi \in \Delta^p(A)$ and $\psi \in \Delta^{q}(A)$. Now, the product in $\Tot(\Delta^{\sbullet}(A))$ is defined, up to a sign, as the composition
$$
\Delta^{\otimes(p+q)*}\circ \mu^{p+q,p+q} \circ \left(l_{p,0}^{p+q} \otimes l_{p,q}^{p+q}\right). 
$$
More explicitly, for homogeneous elements $\varphi \in \Delta^p(A)$ and $\psi \in \Delta^{q}(A)$, we may write
\begin{equation}\label{eqn:3.46a}
\us^{p} \varphi \bigcdot \us^{q}\psi = (-1)^{\vert \varphi\vert q} \us^{p+q}\Delta^{\otimes(p+q)*} \left( \mu^{p+q,p+q} \left(l_{p,0}^{p+q}\varphi \otimes l_{p,q}^{p+q} \psi \right) \right). 
\end{equation}
Using \eqref{eqn:3.44a} and \eqref{eqn:3.45a}, the left hand side of \eqref{eqn:3.46a} becomes
\begin{align*}
&(-1)^{\vert \varphi\vert q} \us^{p+q}\Delta^{\otimes(p+q)*}\left(\left(l_{p,0}^{p+q*} \otimes l_{p,q}^{p+q*}\right)^* \left(\mu^{p,q} (\varphi \otimes \psi)\right) \right)  \\
&\qquad\qquad\qquad= (-1)^{\vert \varphi\vert q} \us^{p+q}\Delta^{\otimes(p+q)*}\left(\left(l_{p,0}^{p+q*} \otimes l_{p,q}^{p+q*}\right)^* \left(\varphi \abxcup \psi \right) \right) \\
& \qquad\qquad\qquad= (-1)^{\vert \varphi\vert q} \us^{p+q}  \left( \left(l_{p,0}^{p+q*} \otimes l_{p,q}^{p+q*}\right) \circ \Delta^{\otimes (p+q)} \right)^* \left(\varphi \abxcup \psi \right). 
\end{align*}
Combining this with \eqref{eqn:3.40a} gives
$$
\us^{p} \varphi \bigcdot \us^{q}\psi = (-1)^{\vert \varphi\vert q} \us^{p+q} \left(\varphi \abxcup \psi \right), 
$$
as we wished to show. 
\end{proof}

\begin{lemma}\label{lem:3.14a}
There is an isomorphism of DG algebras $\Theta \colon \Tot(\Delta^{\sbullet}(A)) \to \HC^{\sbullet}(A)$, which is explicitly given by
$$
\Theta(\us^{p}(\varphi)) (\uu a_1 \otimes \cdots \otimes \uu a_{p}) = (-1)^{p \vert \varphi\vert + \frac{p(p-1)}{2}+ \sum_{i=1}^{p-1} \vert a_i \vert (p-i)}\varphi(a_1 \otimes \cdots \otimes a_p),
$$
for homogeneous elements $\varphi \in \Delta^{p}(A)$ and $a_1,\dots, a_p \in A$. 
\end{lemma}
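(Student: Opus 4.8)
The plan is to verify in turn that $\Theta$ is a degree-zero linear isomorphism, that it is multiplicative, and that it is a chain map. The first point is immediate. On the summand indexed by $p$ the map $\Theta$ is, by inspection of the stated formula, nothing but the canonical identification of $\Delta^{p}(A) = (A^{\otimes p})^*$ with $\Hom((\uu A)^{\otimes p}, \RR)$ (the $p$ unsuspensions $\uu$ in the argument matching the suspension $\us^{p}$ in the source), rescaled by the nowhere-vanishing sign $(-1)^{p\vert\varphi\vert + p(p-1)/2 + \sum_{i=1}^{p-1}\vert a_i\vert(p-i)}$. Hence $\Theta$ is homogeneous of degree zero and bijective, so that the whole content of the lemma lies in its compatibility with products and differentials.

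For multiplicativity I would invoke Lemma~\ref{lem:3.13a}, which already records the product $\bigcdot$ on $\Tot(\Delta^{\sbullet}(A))$ in the form $\us^{p}\varphi \bigcdot \us^{q}\psi = (-1)^{\vert\varphi\vert q}\us^{p+q}(\varphi \abxcup \psi)$, with $\abxcup$ the Hochschild cup product. Applying $\Theta$ to both sides, the desired identity $\Theta(\us^{p}\varphi \bigcdot \us^{q}\psi) = \Theta(\us^{p}\varphi)\abxcup\Theta(\us^{q}\psi)$ collapses to a single scalar identity, obtained by comparing the factor $(-1)^{\vert\varphi\vert q}$ together with the length-$(p+q)$ value of the $\Theta$-sign against the product of the length-$p$ and length-$q$ values of the $\Theta$-sign and the cup-product sign $(-1)^{\vert\varphi\vert(\sum_{i=1}^{p}\vert a_i\vert - p)}$ of \eqref{eqn:2.7aa}. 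This is routine sign bookkeeping and presents no conceptual difficulty, since both products are built from the same underlying juxtaposition of functionals.

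The real work, and the step I expect to be the main obstacle, is showing that $\Theta$ intertwines the total differential $\partial = \partial' + \partial''$ of \eqref{eqn:3.18} and \eqref{eqn:3.22} with the Hochschild cochain differential $b$ of \eqref{eqn:2.6aa}. The strategy is to match the pieces separately. The internal part $\partial''$, being $(-1)^{p}$ times the dual of the differential on $A^{\otimes p}$, corresponds under $\Theta$ to the $d_{A}$-part of \eqref{eqn:2.6aa}, namely the $b_1$-contribution inside $-(-1)^{\vert\varphi\vert}\varphi(b(\cdots))$. The simplicial part $\partial' = \sum_{i=0}^{p+1}(-1)^{i}(\partial_i)^{*}$, obtained by dualising the face maps \eqref{eqn:3.28}, splits accordingly: the inner faces $\partial_i$ with $1 \leq i \leq p$, which multiply adjacent tensor factors, produce the term $-(-1)^{\vert\varphi\vert}\varphi(\cdots \otimes \uu(a_i a_{i+1}) \otimes \cdots)$, while the two outer faces $\partial_0$ and $\partial_{p+1}$, which apply the counit $\varepsilon$, produce exactly the boundary terms $a_1\varphi(\cdots)$ and $\varphi(\cdots)a_n$ of \eqref{eqn:2.6aa}, once one recalls that for the trivial bimodule $\RR$ the left and right actions are $a \cdot r = \varepsilon(a)r = r\cdot a$.

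The crux is then purely a matter of signs. For a fixed argument $\uu a_1 \otimes \cdots \otimes \uu a_{p+1}$ I would compute, for each of the three types of term above, the total exponent accumulated from the face-index sign $(-1)^{i}$, the twist $(-1)^{p}$ carried by $\partial''$, and the difference between the length-$p$ and length-$(p+1)$ values of the $\Theta$-sign; one then checks that in every case this exponent coincides with the sign prescribed by \eqref{eqn:2.6aa} (for instance $(-1)^{\vert\varphi\vert(\vert a_1\vert+1)}$ for the $\partial_0$ term). This is the only delicate computation in the argument. Once the sign identities are confirmed for the internal, inner-face and outer-face contributions separately, the relation $\Theta\circ(\partial'+\partial'') = b\circ\Theta$ follows, and together with the previous two paragraphs this establishes that $\Theta$ is an isomorphism of DG algebras.
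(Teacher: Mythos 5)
Your proposal matches the paper's proof in both structure and substance: the paper likewise observes that $\Theta$ is obviously a linear isomorphism, establishes multiplicativity by applying Lemma~\ref{lem:3.13a} and comparing the resulting signs against the cup-product sign of \eqref{eqn:2.7aa}, and then leaves the compatibility with the differentials as a routine check for the reader. Your outline of that last step (matching $\partial''$ with the internal part of \eqref{eqn:2.6aa}, the inner faces with the multiplication terms, and the outer faces with the counit/module-action terms) is exactly the intended verification, so the proposal is correct and follows essentially the same route.
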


\begin{proof}
It is obvious that $\Theta$ is a linear isomorphism. We have to show that it is also an algebra homomorphism. For this, let us fix homogeneous elements $\varphi  \in \Delta^{p}(A)$, $\psi \in \Delta^{q}(A)$ and $a_1,\dots,a_{p+q} \in A$. Then, one the one hand, by virtue of Lemma~\ref{lem:3.13a}, 
\begin{align*}
&\Theta\left( \us^p \varphi \bigcdot \us^q \psi \right) (\uu a_1 \otimes \cdots \otimes \uu a_{p+q}) \\
&\,\, =(-1)^{\vert\varphi\vert q} \Theta(\us^{p+q}(\varphi \abxcup \psi)) (\uu a_1 \otimes \cdots \otimes \uu a_{p+q}) \\
&\,\, =(-1)^{\vert\varphi\vert q+ (p+q)(\vert\varphi\vert + \vert\psi\vert) + \frac{(p+q)(p+q-1)}{2} + \sum_{i=1}^{p+q-1}\vert a_i \vert (p+q-i)} (\varphi \abxcup \psi)(\uu a_1 \otimes \cdots \otimes \uu a_{p+q}) \\
&\,\, = (-1)^{\vert\varphi\vert q+ (p+q)(\vert\varphi\vert + \vert\psi\vert) + \frac{(p+q)(p+q-1)}{2} + \sum_{i=1}^{p+q-1}\vert a_i \vert (p+q-i) + \vert\varphi\vert \vert\psi\vert} \varphi(a_1 \otimes \cdots \otimes a_p)\psi(a_{p+1} \otimes \cdots \otimes a_{p+q}) \\
&\,\, = (-1)^{p(\vert\varphi\vert + \vert\psi\vert) + (q + \vert\varphi\vert)\vert\psi\vert + \frac{(p+q)(p+q-1)}{2} +  \sum_{i=1}^{p+q-1}\vert a_i \vert (p+q-i)} \varphi(a_1 \otimes \cdots \otimes a_p)\psi(a_{p+1} \otimes \cdots \otimes a_{p+q}). 
\end{align*}
On the other hand,
\begin{align*}
&\left(\Theta(\us^p \varphi) \abxcup \Theta(\us^{q}\psi)\right) (\uu a_1 \otimes \cdots \otimes \uu a_{p+q}) \\
&\,\, = (-1)^{(q + \vert\psi\vert)(p + \vert\varphi\vert)} \Theta(\us^p \varphi)(\uu a_1 \otimes \cdots \otimes \uu a_{p}) \Theta(\us^q \psi)(\uu a_{p+1} \otimes \cdots \otimes \uu a_{p+q}) \\
&\,\, = (-1)^{(q + \vert\psi\vert)(p + \vert\varphi\vert) +  \frac{p(p-1)+q(q-1)}{2}  + p \vert\varphi\vert + \sum_{i =1}^{p}\vert a_i \vert (p+q-i) + q \vert\psi\vert + \sum_{i=p+1}^{p+q}\vert a_i \vert (p+q-i)}  \\
&\qquad \qquad \qquad \qquad\qquad \qquad\qquad \qquad\qquad \qquad\qquad \qquad \,\,\,\,\, \times \varphi(a_1 \otimes \cdots \otimes a_p)\psi(a_{p+1} \otimes \cdots \otimes a_{p+q}) \\
&= \,\, (-1)^{ p(\vert\varphi\vert + \vert\psi\vert) + (q + \vert\varphi\vert)\vert\psi\vert + \frac{(p+q)(p+q-1)}{2} +  \sum_{i=1}^{p+q-1}\vert a_i \vert (p+q-i)} \varphi(a_1 \otimes \cdots \otimes a_p)\psi(a_{p+1} \otimes \cdots \otimes a_{p+q}).
\end{align*}
By comparing the last two equalities above we find that
$$
\Theta\left( \us^p \varphi \bigcdot \us^q \psi \right) = \Theta(\us^p \varphi) \abxcup \Theta(\us^{q}\psi).
$$
We conclude that $\Theta$ is indeed an algebra homomorphism. We leave it to the reader the task of checking that it also preserves the differentials. 
\end{proof}

Before moving forward, we need some definitions. An $n$-singular simplex $\sigma \colon \Delta_{n} \to G$ is said to be \emph{degenerate} if it can be factored as $\sigma = \sigma' \circ \eta_i$, where $\sigma' \colon \Delta_{n-1} \to G$ is an $(n-1)$-singular simplex and $\eta_i \colon \Delta_{n-1} \to \Delta_{n}$ is a degeneracy map. It is not hard to see that the vector space $\mathfrak{I}$ generated by degenerate simplices on $G$ is both a DG ideal and a DG coideal of $\uC_{\sbullet}(G)$. Thus, taking the quotient vector space $\overline{\uC}_{\sbullet}(G) = \uC_{\sbullet}(G) / \mathfrak{I}$, we obtain canonically a DG Hopf algebra structure on $\overline{\uC}_{\sbullet}(G)$ such that the projection $\uq \colon \uC_{\sbullet}(G) \to \overline{\uC}_{\sbullet}(G)$ is a morphism of DG Hopf algebras. It can be shown that $\uq$ is in fact a quasi-isomorphism. We shall refer to the DG Hopf algebra $\overline{\uC}_{\sbullet}(G)$ as the algebra of \emph{normalized} singular chains on $G$. 

Now let us again consider the simplicial manifold $BG_{\sbullet}$ from Section~\ref{sec:3.1}. As the construction above is functorial in $G$, it defines a semi-simplicial DG coalgebra $\overline{\uC}_{\sbullet}(BG_{\sbullet})$ and a corresponding projection $\uq_{\sbullet} \colon \uC_{\sbullet}(BG_{\sbullet}) \to \overline{\uC}_{\sbullet}(BG_{\sbullet})$. For each $p \geq 0$,  we let $\EZ_{p} \colon \uC_{\sbullet}(G)^{\otimes p} \to \uC_{\sbullet}(BG_{p})$ be the Eilenberg-Zilber map defined as reviewed in Section~\ref{sec:2.4}. We further let $\overline{\EZ}_{p} \colon \uC_{\sbullet}(G)^{\otimes p} \to \overline{\uC}_{\sbullet}(BG_{p})$ be defined as the composition
\begin{equation}
\overline{\EZ}_{p} = \uq_{p} \circ \EZ_{p}.
\end{equation}
Owing to Proposition~\ref{prop:2.2} and the preceding discussion, the map $\overline{\EZ}_{p}$ is a quasi-isomorphism of DG coalgebras. Also, we have the following. 

\begin{lemma}
The collection $\{\overline{\EZ}_{p}\}_{p \geq 0}$ determines a morphism of semi-simplicial DG coalgebras $\overline{\EZ}_{\sbullet} \colon \Delta_{\sbullet}(\uC_{\sbullet}(G)) \to \overline{\uC}_{\sbullet}(BG_{\sbullet})$. 
\end{lemma}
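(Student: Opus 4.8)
The plan is to note that, by the discussion immediately preceding the statement, each $\overline{\EZ}_p$ is already a quasi-isomorphism of DG coalgebras; hence the only thing left to establish is that the family $\{\overline{\EZ}_p\}_{p\geq 0}$ is compatible with the two families of face operators. Writing $\partial_i$ for the face maps of $\Delta_{\sbullet}(\uC_{\sbullet}(G))$ as in \eqref{eqn:3.28} and $(\varepsilon_i)_*$ for the face maps of $\overline{\uC}_{\sbullet}(BG_{\sbullet})$ induced by the simplicial operators $\varepsilon_i \colon BG_p \to BG_{p-1}$ of \eqref{eqn:3.2}, I must prove that
$$
(\varepsilon_i)_* \circ \overline{\EZ}_p = \overline{\EZ}_{p-1} \circ \partial_i, \qquad 0 \leq i \leq p.
$$
Since $\varepsilon_i$ carries degenerate simplices to degenerate simplices, the pushforward $(\varepsilon_i)_*$ descends to normalized chains and satisfies $(\varepsilon_i)_* \circ \uq_p = \uq_{p-1}\circ (\varepsilon_i)_*$. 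Using $\overline{\EZ}_p = \uq_p \circ \EZ_p$, the displayed identity therefore reduces to the congruence $(\varepsilon_i)_* \circ \EZ_p \equiv \EZ_{p-1} \circ \partial_i$ modulo the span $\mathfrak{I}$ of degenerate chains in $\uC_{\sbullet}(BG_{p-1})$.

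For the inner faces $0 < i < p$ I would argue that this holds already on unnormalized chains. Here $\varepsilon_i = \id^{\times(i-1)}\times \mu \times \id^{\times(p-i-1)}$, with $\mu \colon G \times G \to G$ the multiplication. Using the associativity of the Eilenberg--Zilber map (Proposition~\ref{prop:2.2}(1)), I can compute $\EZ_p$ by first forming $\EZ_2$ on the $i$th and $(i+1)$th tensor slots, landing in $\uC_{\sbullet}(G\times G)$, and then applying the $(p-1)$-fold Eilenberg--Zilber map over the spaces $G,\dots,G\times G,\dots,G$. The naturality of the Eilenberg--Zilber map with respect to the smooth map $\mu$, which is immediate from the defining formula \eqref{eqn:2.9}, then lets me push $\mu_*$ through the outer $\EZ$, yielding
$$
(\varepsilon_i)_* \circ \EZ_p = \EZ_{p-1}\circ\big(\id^{\otimes(i-1)}\otimes (\mu_*\circ \EZ_2)\otimes \id^{\otimes(p-i-1)}\big).
$$
Since $\mu_*\circ \EZ_2$ is exactly the product $m$ on $\uC_{\sbullet}(G)$, the right-hand side is precisely $\EZ_{p-1}\circ \partial_i$, as required.

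For the outer faces $i=0$ and $i=p$, the map $\varepsilon_0$ (resp.\ $\varepsilon_p$) is the projection $G^p \to G^{p-1}$ forgetting the first (resp.\ last) factor, whereas $\partial_0$ (resp.\ $\partial_p$) applies the counit $\varepsilon$, induced by $G \to \mathrm{pt}$, to the first (resp.\ last) tensor slot. I would compute $(\varepsilon_0)_*\EZ_p(a_1\otimes\cdots\otimes a_p)$ directly from the shuffle formula \eqref{eqn:2.18}: projecting away the first factor collapses all coordinates coming from $a_1$, so every term in which $a_1$ contributes a simplex of positive dimension becomes degenerate and dies in $\overline{\uC}_{\sbullet}(BG_{p-1})$. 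The only surviving contribution arises when $a_1$ is a $0$-chain, in which case the shuffle sum collapses to a single term and reproduces $\EZ_{p-1}(a_2\otimes\cdots\otimes a_p)$ with coefficient $\varepsilon(a_1)$; this is $\EZ_{p-1}\circ\partial_0$ modulo $\mathfrak{I}$. The case $i=p$ is symmetric. Together with the inner faces and the already-established coalgebra property, this proves the claim.

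I expect the main obstacle to be the outer faces: this is the one place where the passage to normalized chains is genuinely indispensable, since the equality $(\varepsilon_0)_*\circ\EZ_p = \EZ_{p-1}\circ\partial_0$ is false on unnormalized chains and becomes valid only after quotienting by $\mathfrak{I}$. Some care with the shuffle signs in \eqref{eqn:2.18} will be needed to confirm that the surviving $0$-chain term carries the coefficient $+\varepsilon(a_1)$, but the structural content is entirely the degeneracy argument.
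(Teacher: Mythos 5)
Your proposal is correct and follows essentially the same route as the paper's proof: the inner faces $0<i<p$ are handled via the identity $\partial_i\circ\EZ_p=\EZ_{p-1}\circ\bigl(\id^{\otimes(i-1)}\otimes(\mu_*\circ\EZ_2)\otimes\id^{\otimes(p-i-1)}\bigr)$ coming from associativity and naturality of the Eilenberg--Zilber map, and the outer faces by observing that when $\vert\sigma_1\vert>0$ the chains $\varepsilon_0\circ(\sigma_1\times\cdots\times\sigma_p)\circ\chi_*$ are degenerate and vanish after normalization, while for $\vert\sigma_1\vert=0$ both sides reduce to $\EZ_{p-1}(\sigma_2\otimes\cdots\otimes\sigma_p)$. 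Your added remark that normalization is genuinely indispensable for the outer faces correctly identifies where the argument lives.
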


\begin{proof}
We need merely to show that $\overline{\EZ}_{p-1} \circ \partial_{i} = \partial_{i} \circ \overline{\EZ}_{p}$ for $0 \leq i \leq p$. First, consider the case $i =0$. For every collection $\sigma_{1},\dots, \sigma_{p}$ of singular simplices on $G$, we see from \eqref{eqn:3.28} that
\begin{align*}
\overline{\EZ}_{p-1} \left( \partial_{0} (\sigma_1 \otimes \cdots \otimes \sigma_{p})\right) &= \varepsilon(\sigma_1)\overline{\EZ}_{p-1} \left( \sigma_2 \otimes \cdots \otimes \sigma_{p}\right) \\
& = \begin{cases} 0 & \text{if $\vert \sigma_1 \vert > 0$,} \\ 
\overline{\EZ}_{p-1} \left( \sigma_2 \otimes \cdots \otimes \sigma_{p}\right) & \text{if $\vert \sigma_1 \vert = 0$.}
\end{cases}  
\end{align*}
On the other hand, we can see from \eqref{eqn:2.18} that
\begin{align*}
\partial_{0} \left(\overline{\EZ}_{p} (\sigma_1 \otimes \cdots \otimes \sigma_{p}) \right) &= \partial_{0}  \left( \sum_{\chi \in \mathfrak{S}_{\vert \sigma_1 \vert,\dots, \vert \sigma_p \vert }}(-1)^{\chi} \uq_{p} \circ (\sigma_1 \times \cdots \times \sigma_{p}) \circ \chi_{*} \right) \\
&=   \sum_{\chi \in \mathfrak{S}_{\vert \sigma_1 \vert,\dots, \vert \sigma_p \vert }}(-1)^{\chi} \uq_{p-1} \circ \partial_{0} \circ (\sigma_1 \times \cdots \times \sigma_{p}) \circ \chi_{*} . 
\end{align*}
If $\vert \sigma_1 \vert = 0$, then $\partial_{0} \circ (\sigma_1 \times \cdots \times \sigma_p) = \sigma_2 \times \cdots \times \sigma_p$ and therefore
\begin{equation*}
\partial_{0} \left(\overline{\EZ}_{p} (\sigma_1 \otimes \cdots \otimes \sigma_{p}) \right) = \overline{\EZ}_{p-1} (\sigma_2 \otimes \cdots \otimes \sigma_{p}). 
\end{equation*}
If $\vert \sigma_1 \vert = 0$, then $\uq_{p-1} \circ \partial_{0} \circ (\sigma_1 \times \cdots \times \sigma_{p}) \circ \chi_{*} = 0$, since $\partial_{0} \circ (\sigma_1 \times \cdots \times \sigma_{p}) \circ \chi_{*}$ is degenerate, from which it follows that
\begin{equation*}
\partial_{0} \left(\overline{\EZ}_{p} (\sigma_1 \otimes \cdots \otimes \sigma_{p}) \right) = 0,
\end{equation*}
and hence the result. The case $i = p$ is completely analogous. So there only remains the case $0 < i < p$. On the one hand, using \eqref{eqn:3.28} gives
\begin{equation*}
\overline{\EZ}_{p-1} \left( \partial_{i} (\sigma_1 \otimes \cdots \otimes \sigma_{p})\right) = \uq_{p-1} \left(\EZ_{p-1}(\sigma_{1} \otimes \cdots \otimes \sigma_{i-1} \otimes \sigma_{i}  \sigma_{i+1} \otimes \sigma_{i+2} \otimes  \cdots \otimes \sigma_p) \right).
\end{equation*}
On the other hand, observing that, in the notation employed at the end Section~\ref{sec:2.4}, the face map $\partial_{i} \colon \uC_{\sbullet}(BG_{p}) \to \uC_{\sbullet}(BG_{p-1})$ is given by $\partial_{i}=(\id^{\times (i-1)} \times \mu \times \id^{\times (p-i)})_*$ and that, therefore,
\begin{equation*}
\partial_{i} \circ \EZ_{p} = \EZ_{p-1} \circ \big( \id^{\otimes(i-1)} \otimes (\mu_{*} \circ \EZ_2) \otimes \id^{\otimes(p-1)}\big),
\end{equation*}
we find that
\begin{align*}
\partial_{i} \left( \overline{\EZ}_{p}(\sigma_1 \otimes \cdots \otimes \sigma_p )\right) & = \partial_i \left( \uq_{p} \left( \EZ_{p}(\sigma_1 \otimes \cdots \otimes \sigma_p )\right) \right) \\
&= \uq_{p-1} \left( \partial_i \left( \EZ_{p}(\sigma_1 \otimes \cdots \otimes \sigma_p ) \right)\right) \\
&= \uq_{p-1} \left(\EZ_{p-1}(\sigma_{1} \otimes \cdots \otimes \sigma_{i-1} \otimes \sigma_{i}  \sigma_{i+1} \otimes \sigma_{i+2} \otimes  \cdots \otimes \sigma_p) \right),
\end{align*}
as wished. 
\end{proof}

Let us now write $\overline{\uC}{}^{\sbullet}(BG_{\sbullet})$ to denote the semi-cosimplicial DG algebra obtained by dualising $\overline{C}_{\sbullet}(BG_{\sbullet})$. Also, let us denote by $\ue_{\sbullet} \colon \overline{\uC}{}^{\sbullet}(BG_{\sbullet}) \to \uC^{\sbullet}(BG_{\sbullet})$ the inclusion dual to the projection $\uq_{\sbullet}\colon \uC_{\sbullet}(BG_{\sbullet}) \to \overline{\uC}_{\sbullet}(BG_{\sbullet})$. We make an observation to be applied in the subsequent argument.

\begin{lemma}\label{lem:3.10}
The semi-cosimplicial $\A_{\infty}$-morphism $\DR_{\sbullet} \colon \Omega^{\sbullet}(BG_{\sbullet}) \to \uC^{\sbullet}(BG_{\bullet})$ factors thorugh $\overline{\uC}{}^{\sbullet}(BG_{\sbullet})$, that is, there is a semi-cosimplicial $\A_{\infty}$-morphism $\overline{\DR}_{\sbullet} \colon \Omega^{\sbullet}(BG_{\sbullet}) \to \overline{\uC}{}^{\sbullet}(BG_{\sbullet})$ such that $\DR_{\sbullet} = \ue_{\sbullet} \circ \overline{\DR}_{\sbullet}$. 
\end{lemma}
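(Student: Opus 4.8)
The essential content of the lemma is that, for every $p\ge 0$, every $n\ge 1$ and all forms $\omega_1,\dots,\omega_n\in\Omega^\sbullet(BG_p)$, the singular cochain $\DR_n(\omega_1\otimes\cdots\otimes\omega_n)$ vanishes on degenerate simplices. Indeed, since $\ue_\sbullet$ is the $\RR$-linear dual of the surjection $\uq_\sbullet$, its image is precisely the subcomplex of cochains annihilating the ideal $\mathfrak I$ of degenerate chains; hence $\DR_\sbullet$ factors through $\overline{\uC}{}^\sbullet(BG_\sbullet)$ exactly when each component $\DR_n$ has this vanishing property. Granting the vanishing, the factoring maps $\overline{\DR}_n$ are uniquely determined (because $\ue_\sbullet$ is injective), and since $\ue_\sbullet$ is a morphism of semi-cosimplicial DG algebras, the $\A_\infty$-relations and the commutation with the coface maps for $\overline{\DR}_\sbullet$ follow formally from those for $\DR_\sbullet=\ue_\sbullet\circ\overline{\DR}_\sbullet$. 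So the whole problem reduces to the stated vanishing.

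First I would reduce to a universal statement using the naturality asserted in Theorem~\ref{thm:2.1}. Fix a degenerate simplex $\sigma=\sigma'\circ\eta_i\colon\Delta_k\to BG_p$ with $\eta_i\colon\Delta_k\to\Delta_{k-1}$ the $i$-th degeneracy and $\sigma'\colon\Delta_{k-1}\to BG_p$. Writing the pairing of a cochain with $\sigma$ as the evaluation of its pullback on the identity simplex and applying naturality twice, $\DR_n(\omega_1\otimes\cdots\otimes\omega_n)(\sigma)=\langle\eta_i^*\DR_n((\sigma')^*\omega_1\otimes\cdots\otimes(\sigma')^*\omega_n),\id_{\Delta_k}\rangle=\DR_n((\sigma')^*\omega_1\otimes\cdots\otimes(\sigma')^*\omega_n)(\eta_i)$, where now the $(\sigma')^*\omega_j$ are forms on $\Delta_{k-1}$ and $\eta_i$ is regarded as a singular $k$-simplex of $\Delta_{k-1}$. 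Thus it suffices to prove that for arbitrary forms $\bar\omega_1,\dots,\bar\omega_n$ on $\Delta_{k-1}$ the value $\DR_n(\bar\omega_1\otimes\cdots\otimes\bar\omega_n)(\eta_i)$ vanishes.

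For $n=1$ this is immediate: $\DR_1(\bar\omega)(\eta_i)=\int_{\Delta_k}\eta_i^*\bar\omega$, and a nonzero contribution would force $\deg\bar\omega=k$, which is impossible on the $(k-1)$-dimensional manifold $\Delta_{k-1}$. For $n\ge 2$ one has $\DR_n=\pm\,\Ssf\circ\Csf$, so it is enough to prove the stronger fact that $\Ssf(\varphi)$ vanishes on every degenerate simplex, for an arbitrary form $\varphi$ on the path space. Since $\Pcal\sigma=\Pcal\sigma'\circ\Pcal\eta_i$, writing $\psi=(\Pcal\sigma')^*\varphi$ this amounts to $\int_{I^{k-1}}(\Pcal\eta_i\circ\theta_k)^*\psi=0$ for every $\psi\in\Omega^{k-1}(\Pcal\Delta_{k-1})$; equivalently, the pushforward of the fundamental class $[I^{k-1}]$ along $\beta:=\Pcal\eta_i\circ\theta_k\colon I^{k-1}\to\Pcal\Delta_{k-1}$ is the zero $(k-1)$-current. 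I would establish this by unwinding $\theta_k=\Pcal\pi_k\circ\lambda_k$ together with the explicit combinatorial maps $\lambda_k$ and $\pi_k$, and checking that precomposing the adjoint map $\widehat\theta_k\colon I^{k}\to\Delta_k$ with the affine collapse $\eta_i$ makes $\beta$ rank-deficient everywhere — concretely, that $\eta_i\circ\widehat\theta_k$ loses its dependence on one cube coordinate, so that $\beta^*\psi$ is pulled back from a cube of dimension $k-2$ and its top-degree part on $I^{k-1}$ is zero.

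The main obstacle is exactly this last step: verifying the everywhere rank drop of $\beta$, equivalently the factorization of $\eta_i\circ\widehat\theta_k$ through a lower-dimensional cube. This is the only place where the specific geometry of Gugenheim's combinatorial maps $\theta_k$, $\lambda_k$, $\pi_k$ enters, and the collapsed coordinate must be matched carefully against the index $i$ of the degeneracy. The remaining ingredients — the identification of $\overline{\uC}{}^\sbullet(BG_\sbullet)$ with the normalized cochains via $\ue_\sbullet$, the naturality reduction, and the inheritance of the $\A_\infty$- and cosimplicial relations — are all formal.
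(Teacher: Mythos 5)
Your reduction is exactly the one the paper makes: since $\ue_{\sbullet}$ is the inclusion of the cochains annihilating the degenerate chains, the lemma is equivalent to the statement that each $\DR_n(\omega_1\otimes\cdots\otimes\omega_n)$ vanishes on degenerate simplices, and the $\A_{\infty}$- and cosimplicial relations for $\overline{\DR}_{\sbullet}$ then come for free. But the paper's entire proof consists of this reduction followed by a citation of Proposition~3.26 of \cite{Abad-Schatz2013}, which is precisely that vanishing statement for Gugenheim's morphism; the paper does not reprove it, and your proposal, which tries to, does not close the argument either.

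The gap is the step you yourself flag as ``the main obstacle'', and the concrete mechanism you propose for it --- that $\eta_i\circ\widehat\theta_k$ loses its dependence on one cube coordinate, so that $\beta$ factors globally through $I^{k-2}$ --- is not correct as stated. The coordinates of $\pi_k$ are $t_j=\max\{s_j,\dots,s_k\}$, so deleting the coordinate $t_{i+1}=\max\{s_{i+1},\dots,s_k\}$ does not remove the dependence on $s_{i+1}$: on the open region where $s_{i+1}$ exceeds all of $s_{i+2},\dots,s_k$ and all of $s_1,\dots,s_i$, every surviving coordinate $t_1,\dots,t_i$ equals $s_{i+1}$. What survives of your idea is a region-by-region rank bound (on that region the first $i$ coordinates collapse to a single degree of freedom, and on the complementary regions the map genuinely forgets $s_{i+1}$), so the integral of a top-degree pullback still vanishes, but this requires a piecewise case analysis threaded through $\widehat\lambda_k$ and the evaluation map rather than a single global factorization. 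Note also that the crude degree count that handles $n=1$ does not rescue the general case: for $k\geq 3$ one can have $\sum_j q_j=k+n-1$ with every $q_j\leq k-1$, so the geometric argument cannot be avoided. As it stands, your proof establishes the formal reduction but leaves the substantive content of the lemma --- the normalization property of Gugenheim's $\A_{\infty}$-morphism --- unproven, whereas the paper discharges it by reference.
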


\begin{proof}
It suffices to show that $\DR_{\sbullet} \colon \Omega^{\sbullet}(BG_{\sbullet}) \to \uC^{\sbullet}(BG_{\bullet})$ takes values in those singular cochains that vanish on degenerate singular simplices. But this holds by Proposition~3.26 of \cite{Abad-Schatz2013}. 
\end{proof}

Next we consider the semi-cosimplicial DG algebra $\Delta^{\sbullet}(\uC_{\sbullet}(G))$ dual to $\Delta^{\sbullet}(\uC_{\sbullet}(G))$. By taking the dual of $\overline{\EZ}_{\sbullet} \colon \Delta_{\sbullet}(\uC_{\sbullet}(G)) \to \overline{\uC}_{\sbullet}(BG_{\sbullet})$, we obtain a morphism of semi-cosimplicial DG algebras $\overline{\EZ}_{\usbullet}^{\ast} \colon  \overline{\uC}^{\sbullet}(BG_{\sbullet}) \to \Delta^{\sbullet}(\uC_{\sbullet}(G))$. We let $\DR_{\sbullet}^{\Delta} \colon \Omega^{\sbullet}(BG_{\sbullet}) \to  \Delta^{\sbullet}(\uC_{\sbullet}(G))$ be the semi-cosimplicial $\A_{\infty}$-morphism defined as the composition
\begin{equation} \label{eqn:3.48a}
\DR_{\sbullet}^{\Delta} =\overline{\EZ}_{\usbullet}^{\ast} \circ \overline{\DR}_{\sbullet},
\end{equation}
where $\overline{\DR}_{\sbullet} \colon \Omega^{\sbullet}(BG_{\sbullet}) \to \overline{\uC}{}^{\sbullet}(BG_{\sbullet})$ is the semi-cosimplicial $\A_{\infty}$-morphism from Lemma~\ref{lem:3.10}. By Proposition~\ref{prop:3.5}, the latter induces an $\A_{\infty}$-morphism $\Tot(\DR_{\sbullet}^{\Delta}) \colon \Tot (\Omega^{\sbullet}(BG_{\sbullet})) \to \Tot(\Delta^{\sbullet}(\uC_{\sbullet}(G)))$. We then apply Lemma~\ref{lem:3.14a}, to get and $\A_{\infty}$-morphism $\DR^{\Theta} \colon \Tot (\Omega^{\sbullet}(BG_{\sbullet})) \to \HC^{\sbullet}(\uC_{\sbullet}(G))$ defined as the composition
\begin{equation}
\DR^{\Theta} = \Theta \circ \Tot(\DR_{\sbullet}^{\Delta}). 
\end{equation}
These observations taken together with the preceding results yield the following. 

\begin{theoremD}
The induced $\A_{\infty}$-morphism $\DR^{\Theta} \colon \Tot (\Omega^{\sbullet}(BG_{\sbullet})) \to \HC^{\sbullet}(\uC_{\sbullet}(G))$ is an $\A_{\infty}$-quasi-isomorphism. 
\end{theoremD}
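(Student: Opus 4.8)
The plan is to reduce the statement to a level-wise quasi-isomorphism of the two semi-cosimplicial $\A_{\infty}$-morphisms out of which $\DR^{\Theta}$ is assembled, and then to feed this into the totalisation machinery established above. Since $\Theta \colon \Tot(\Delta^{\sbullet}(\uC_{\sbullet}(G))) \to \HC^{\sbullet}(\uC_{\sbullet}(G))$ is an isomorphism of DG algebras by Lemma~\ref{lem:3.14a}, and $\DR^{\Theta} = \Theta \circ \Tot(\DR^{\Delta}_{\sbullet})$, it suffices to prove that $\Tot(\DR^{\Delta}_{\sbullet})$ is an $\A_{\infty}$-quasi-isomorphism. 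By Lemma~\ref{lem:3.10a}, this follows once we know that $\DR^{\Delta}_{\sbullet} \colon \Omega^{\sbullet}(BG_{\sbullet}) \to \Delta^{\sbullet}(\uC_{\sbullet}(G))$ is a semi-cosimplicial $\A_{\infty}$-quasi-isomorphism of positively graded semi-cosimplicial DG algebras. Positivity is immediate: $\Omega^{q}(BG_{p})$ vanishes for $q < 0$, and $\Delta^{p}(\uC_{\sbullet}(G))$ is the dual of $\uC_{\sbullet}(G)^{\otimes p}$, hence concentrated in non-negative internal degree.

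Using the factorisation $\DR^{\Delta}_{\sbullet} = \overline{\EZ}_{\usbullet}^{\ast} \circ \overline{\DR}_{\sbullet}$ from \eqref{eqn:3.48a}, it is enough to show that each factor is a level-wise quasi-isomorphism, for then the linear part of the composite, which at each level $p$ is the composite of the corresponding linear parts, is a quasi-isomorphism as well. For $\overline{\DR}_{\sbullet}$, fix $p \geq 0$. By Theorem~\ref{thm:2.1} the map $\DR_{p} \colon \Omega^{\sbullet}(BG_{p}) \to \uC^{\sbullet}(BG_{p})$ is an $\A_{\infty}$-quasi-isomorphism, so its linear part is a quasi-isomorphism. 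Lemma~\ref{lem:3.10} provides the factorisation $\DR_{p} = \ue_{p} \circ \overline{\DR}_{p}$ through the normalised cochains, and the inclusion $\ue_{p} \colon \overline{\uC}{}^{\sbullet}(BG_{p}) \to \uC^{\sbullet}(BG_{p})$ is a quasi-isomorphism because the normalisation projection $\uq_{p}$ is one. Hence, by the two-out-of-three property, the linear part of $\overline{\DR}_{p}$ is a quasi-isomorphism, so $\overline{\DR}_{\sbullet}$ is a semi-cosimplicial $\A_{\infty}$-quasi-isomorphism.

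For $\overline{\EZ}_{\usbullet}^{\ast}$, recall from the discussion preceding the construction that each $\overline{\EZ}_{p} \colon \uC_{\sbullet}(G)^{\otimes p} \to \overline{\uC}_{\sbullet}(BG_{p})$ is a quasi-isomorphism of DG coalgebras, a consequence of Proposition~\ref{prop:2.2}(3) together with the fact that $\uq_{p}$ is a quasi-isomorphism. Since we work over the field $\RR$, the functor $\Hom(-,\RR)$ is exact and therefore carries quasi-isomorphisms to quasi-isomorphisms; concretely, the mapping cone of $\overline{\EZ}_{p}^{\ast}$ is, up to sign and shift, the dual of the mapping cone of $\overline{\EZ}_{p}$, which is acyclic. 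Thus each $\overline{\EZ}_{p}^{\ast}$ is a quasi-isomorphism, so $\overline{\EZ}_{\usbullet}^{\ast}$ is a semi-cosimplicial $\A_{\infty}$-quasi-isomorphism. Composing the two factors, totalising, and precomposing with $\Theta$ yields the claim.

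The only genuinely delicate point is the behaviour of dualisation: one must ensure that $\Hom(-,\RR)$ really preserves quasi-isomorphisms in this graded setting, which is precisely where the standing hypothesis that everything is defined over a field is used in an essential way. The remaining verifications---the positivity of the gradings and the two-out-of-three bookkeeping---are routine.
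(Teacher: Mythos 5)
Your proposal is correct and follows essentially the same route as the paper: reduce via the isomorphism $\Theta$ of Lemma~\ref{lem:3.14a}, use the factorisation $\DR^{\Delta}_{\sbullet} = \overline{\EZ}_{\usbullet}^{\ast} \circ \overline{\DR}_{\sbullet}$ to see that $\DR^{\Delta}_{\sbullet}$ is a level-wise quasi-isomorphism, and conclude with Lemma~\ref{lem:3.10a}. You merely fill in details the paper leaves implicit (positivity of the gradings, the two-out-of-three step for $\overline{\DR}_{p}$, and exactness of dualisation over $\RR$ for $\overline{\EZ}_{p}^{\ast}$), all of which are accurate.
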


\begin{proof}
Because of Lemma~\ref{lem:3.14a}, it is enought to prove that $\Tot(\DR_{\sbullet}^{\Delta})$ is an $\A_{\infty}$-quasi-isomorphism. In order to do so, notice that both $\overline{\EZ}_{\usbullet}^{\ast}$ and $\overline{\DR}_{\sbullet}$ are semi-cosimplicial $\A_{\infty}$-quasi-isomorphism. Thus, taking note of the definition \eqref{eqn:3.48a}, we conclude that $\DR_{\sbullet}^{\Delta}$ is also a semi-cosimplicial $\A_{\infty}$-quasi-isomorphism. The desired assertion now follows from Lemma~\ref{lem:3.10a}.
\end{proof}

In the remaining part of this section, we will prove a vanishing result for the $\A_{\infty}$-morphism $\DR^{\Theta}$, which will be needed later. First a little terminology. We say that an $r$-singular simplex $\sigma \colon \Delta_{r} \to BG_{p}$ is \emph{decomposable} if there is a collection of $r_i$-singular simplices $\sigma_i \colon \Delta_{r_i} \to G$ with $i = 1,\dots, p$ and $r = \sum_{i=1}^{p} r_i$, together with an $(r_1,\dots,r_p)$-shuffle $\chi$ such that
\begin{equation}
\sigma = (\sigma_1 \times \cdots \times \sigma_p) \circ \chi_*.
\end{equation} 
It is immediately apparent from \eqref{eqn:2.18} that, for each $p \geq 0$, the image of the Eilenberg-Zilber map $\EZ_{p} \colon \uC_{\sbullet}(G)^{\otimes p} \to \uC_{\sbullet}(BG_p)$ is generated by decomposable singular simplices. 

\begin{proposition}\label{prop:3.18}
Let $n > 1$ and consider differential forms $\omega_1 \in \Omega^{q_1}(G), \dots, \omega_n \in \Omega^{q_n}(G)$. Then
\begin{equation*}
\DR^{\Theta}_n (\uu \omega_1 \otimes \cdots \otimes \uu \omega_n) = 0. 
\end{equation*}
\end{proposition}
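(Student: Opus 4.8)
The plan is to unwind $\DR^{\Theta} = \Theta \circ \Tot(\DR^{\Delta}_{\sbullet})$ so as to reduce the statement to the vanishing of a single Gugenheim cochain on decomposable simplices, and then to settle that vanishing by a dimension count.

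First I would insert $\uu\omega_1 \otimes \cdots \otimes \uu\omega_n$, with each $\omega_i \in \Omega^{q_i}(G) = \Omega^{q_i}(BG_1)$ sitting in simplicial degree $1$, into the formula \eqref{eqn:3.24} for $\Tot(\DR^{\Delta}_{\sbullet})_n$. Since each input has simplicial degree $1$, in \eqref{eqn:3.24} one has $p = n$ and the $i$-th offset equals $i-1$, so this yields, up to sign, $\us^{n}(\DR^{\Delta}_n)_n\big(\uu l^{n}_{1,0}(\omega_1) \otimes \cdots \otimes \uu l^{n}_{1,n-1}(\omega_n)\big)$. Comparing the definitions \eqref{eqn:3.17}--\eqref{eqn:3.19} of $l^{n}_{p,q}$ with the cup product \eqref{eqn:3.5} shows that $l^{n}_{1,i-1} \colon \Omega^{\sbullet}(BG_1) \to \Omega^{\sbullet}(BG_n)$ is pullback along the $i$-th coordinate projection $p_i \colon BG_n = G^n \to G$, so the arguments are the forms $p_i^{*}\omega_i$. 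Writing $\DR^{\Delta}_n = \overline{\EZ}{}^{*}_n \circ \overline{\DR}_n$ and using that $\overline{\EZ}{}^{*}_n$ is a strict morphism, the $n$-th component factors as $(\DR^{\Delta}_n)_n = \overline{\EZ}{}^{*}_n \circ (\overline{\DR}_n)_n$. Passing through $\Theta$ by Lemma~\ref{lem:3.14a} and dualizing $\overline{\EZ}_n$, the value of $\DR^{\Theta}_n(\uu\omega_1 \otimes \cdots \otimes \uu\omega_n)$ on $\uu\sigma_1 \otimes \cdots \otimes \uu\sigma_n$ becomes, up to sign, the number $c\big(\EZ_n(\sigma_1 \otimes \cdots \otimes \sigma_n)\big)$, where $c := (\overline{\DR}_n)_n(p_1^{*}\omega_1 \otimes \cdots \otimes p_n^{*}\omega_n)$ is a cochain on $BG_n$ (the normalization afforded by Lemma~\ref{lem:3.10} being harmless here). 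By \eqref{eqn:2.18} the chain $\EZ_n(\sigma_1 \otimes \cdots \otimes \sigma_n)$ is a combination of decomposable simplices, so I am reduced to proving that $c$ vanishes on every decomposable $\sigma = (\sigma_1 \times \cdots \times \sigma_n) \circ \chi_{*}$, where $\sigma_i \colon \Delta_{r_i} \to G$ and $r := \sum_i r_i$.

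Next I would produce a geometric formula for $c(\sigma)$. Since $n > 1$, the component $(\overline{\DR}_n)_n$ is, up to sign, $\Ssf \circ \Csf$, so that $c(\sigma) = \pm \int_{I^{r-1}} \theta_r^{*}\, \Pcal\sigma^{*}\, \Csf(p_1^{*}\omega_1 \otimes \cdots \otimes p_n^{*}\omega_n)$. Using the naturality identity $\ev \circ (\id_{\Delta_n} \times \Pcal\sigma) = \sigma^{\times n} \circ \ev^{\Delta_r}$, where $\ev^{\Delta_r} \colon \Delta_n \times \Pcal\Delta_r \to (\Delta_r)^{n}$ is the evaluation map, together with the base-change identity $\Pcal\sigma^{*}\pi_{*} = \pi_{*}(\id \times \Pcal\sigma)^{*}$ for fibre integration over $\Delta_n$ and the analogous base change along $\theta_r$, I can collapse all of the integrations into a single integral over $\Delta_n \times I^{r-1}$. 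Because $p_i \circ \sigma = \sigma_i \circ \rho_i$, with $\rho_i \colon \Delta_r \to \Delta_{r_i}$ the projection onto the $i$-th block of the shuffle $\chi$, the integrand is the pullback of the exterior product $\Omega := \pr_1^{*}\sigma_1^{*}\omega_1 \wedge \cdots \wedge \pr_n^{*}\sigma_n^{*}\omega_n \in \Omega^{\sum_i q_i}(\Delta_{r_1} \times \cdots \times \Delta_{r_n})$. Explicitly,
$$
c(\sigma) = \pm \int_{\Delta_n \times I^{r-1}} E^{*}\Omega, \qquad E\big((t_1,\dots,t_n),s\big) = \big(\rho_1(\theta_r(s)(t_1)),\dots,\rho_n(\theta_r(s)(t_n))\big).
$$

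Finally comes the dimension count, which is the crux. The target $\Delta_{r_1} \times \cdots \times \Delta_{r_n}$ has dimension $r = \sum_i r_i$, whereas the domain $\Delta_n \times I^{r-1}$ has dimension $n + r - 1$. The integral is nonzero only if the integrand is of top degree, i.e. only if $\deg\Omega = \sum_i q_i = n + r - 1$. But $E^{*}\Omega$ is pulled back from the $r$-dimensional manifold $\Delta_{r_1} \times \cdots \times \Delta_{r_n}$, so $\Omega$ can be nonzero only when $\deg\Omega \le r$; and since $n > 1$ forces $n + r - 1 > r$, the equality $\deg\Omega = n + r - 1$ would require $\Omega = 0$. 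Hence $c(\sigma) = 0$ for every decomposable $\sigma$, and therefore $\DR^{\Theta}_n(\uu\omega_1 \otimes \cdots \otimes \uu\omega_n) = 0$, as claimed. The step I expect to be the main obstacle is the middle paragraph: carrying out the base-change and Fubini manipulations for Chen's map cleanly enough to arrive at $c(\sigma) = \pm\int_{\Delta_n \times I^{r-1}} E^{*}\Omega$ with the correct domain and target, and in particular checking that each $p_i^{*}\omega_i$ pulls back through the single factor $\Delta_{r_i}$ via $\rho_i$. Once this formula is established the conclusion is immediate from the mismatch $\dim(\Delta_n \times I^{r-1}) = n + r - 1 > r = \dim(\Delta_{r_1} \times \cdots \times \Delta_{r_n})$.
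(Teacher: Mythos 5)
Your proposal is correct and follows essentially the same route as the paper: reduce via the isomorphism $\Theta$ and the factorisation through $\overline{\EZ}{}^{*}_{\usbullet}$ to the vanishing of $\DR_{n,n}(\uu\pr_1^*\omega_1\otimes\cdots\otimes\uu\pr_n^*\omega_n)$ on decomposable simplices, then use naturality of the evaluation map and a degree count. The only cosmetic difference is that the paper shows the integrand already vanishes as a form by exhibiting a single factor with $r_k<q_k$ (so $\sigma_k^*\omega_k=0$), whereas you carry the base-change computation through to a single integral and kill it with the equivalent global dimension mismatch $\sum_i q_i>r$.
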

 
 \begin{proof}
We will in fact show that
\begin{equation*}
\Tot(\DR_{\sbullet}^{\Delta})_n (\uu \omega_1 \otimes \cdots \otimes \uu \omega_n) = 0,
\end{equation*}
which clearly suffices thanks to Lemma~\ref{lem:3.14a}. To begin with, by Proposition~\ref{prop:3.5}, we know that $\Tot(\DR_{\sbullet}^{\Delta}) = \Tot(\overline{\EZ}_{\usbullet}^{\ast}) \circ \Tot(\overline{\DR}_{\sbullet})$. 
Thus, in view of our previous remark, it will be enough to show that if $\sigma \colon \Delta_{r} \to BG_{n}$ with $r = \sum_{i=1}^{n} q_i - n +1$ is an decomposable singular simplex, then
 \begin{equation*}
 \Tot(\overline{\DR}_{\sbullet})_n (\uu \omega_1 \otimes \cdots \otimes \uu \omega_n) (\sigma) = 0. 
 \end{equation*}
 According to definition \eqref{eqn:3.24}, this means that
 \begin{equation}\label{eqn:3.32}
 \DR_{n,n} \big(\uu l_{1,0}^{n} (\omega_{1}) \otimes \cdots \otimes \uu l_{1,n-1}^{n} (\omega_{n}) \big)(\sigma) = 0. 
 \end{equation}
 Next, notice that if, for $i = 1,\dots, n$, we let $\pr_{i} \colon BG_{n} \to G$ be the projection onto the $i$th factor, then $l_{1,i-1}^{n} = \pr_{i}^*$. Thus \eqref{eqn:3.32} becomes
  \begin{equation}\label{eqn:3.33}
 \DR_{n,n} \big(\uu \pr_1^*\omega_{1} \otimes \cdots \otimes \uu \pr_n^*\omega_{n} \big)(\sigma) = 0. 
 \end{equation}
 On the other hand, using the notation for the $\A_{\infty}$ version of De Rham map from \S\ref{sec:2.3}, we have that
 \begin{align*}
 \DR_{n,n} \big(\uu \pr_1^*\omega_{1} \otimes \cdots \otimes \uu \pr_n^*\omega_{n} \big)(\sigma) &= \pm \int_{I^{r-1}} \theta_r^*(\Pcal \sigma)^* \int_{\Delta_n} \ev^* \big( \pi_1^*\pr_1^*\omega_1 \wedge \cdots \wedge  \pi_n^*\pr_n^*\omega_1\big) \\
 &= \pm \int_{I^{r-1}} \theta_r^*   \int_{\Delta_n}  (\id \times \Pcal \sigma)^*\ev^* \big( \pi_1^*\pr_1^*\omega_1 \wedge \cdots \wedge  \pi_n^*\pr_n^*\omega_1\big).
  \end{align*} 
 Consequently, to show \eqref{eqn:3.33}, it is sufficient to show that
 \begin{equation} \label{eqn:3.34}
(\id \times \Pcal \sigma)^*\ev^* \big( \pi_1^*\pr_1^*\omega_1 \wedge \cdots \wedge  \pi_n^*\pr_n^*\omega_1\big) = 0. 
 \end{equation}
 Now let us use the fact that $\sigma$ is decomposable. By definition, this means that
 \begin{equation*}
 \sigma = (\sigma_1 \times \cdots \times \sigma_n) \circ \chi_*,
 \end{equation*}
 for a collection of $r_i$-singular simplices $\sigma_i \colon \Delta_{r_i} \to G$ with $i=1,\dots, n$ and $r = \sum_{i=1}^{n} r_i$, and for an $(r_1,\dots,r_n)$-shuffle $\chi$. Therefore, 
 \begin{equation*}
 \id \times \Pcal \sigma = (\id \times \Pcal(\sigma_1 \times \cdots \times \sigma_n)) \circ (\id \times \Pcal \chi_*),  
 \end{equation*}
 and hence
 \begin{equation*}
 \ev \circ (\id \times \Pcal \sigma) = \ev \circ (\id \times \Pcal(\sigma_1 \times \cdots \times \sigma_n)) \circ (\id \times \Pcal \chi_*) = (\sigma_1 \times \cdots \times \sigma_n)^{\times n} \circ \ev \circ \Pcal \chi_*.
 \end{equation*}
 Thus, to show \eqref{eqn:3.34}, it will be enough to show that
 \begin{equation}\label{eqn:3.35}
 \ev^* ((\sigma_1 \times \cdots \times \sigma_n)^{\times n})^*\big( \pi_1^*\pr_1^*\omega_1 \wedge \cdots \wedge  \pi_n^*\pr_n^*\omega_1\big) = 0.
 \end{equation}
 Now a simple calculation reveals that
 \begin{equation} \label{eqn:3.36}
 \ev^* ((\sigma_1 \times \cdots \times \sigma_n)^{\times n})^*\big( \pi_1^*\pr_1^*\omega_1 \wedge \cdots \wedge  \pi_n^*\pr_n^*\omega_1\big) = \ev^* \big( \sigma_1^*\omega_1 \wedge \cdots \wedge  \sigma_n^*\omega_1\big). 
 \end{equation}
On the other hand, since $n > 1$, we have that $r = \sum_{i=1}^{n} q_i - n + 1 < \sum_{i=1}^{n} q_i$. But $r = \sum_{i=1}^{n} r_i$, so there must exists a $k \in \{ 1,\dots, n\}$ such that $r_k < q_k$. This implies that $\sigma_k^*\omega_k = 0$, and as a result \eqref{eqn:3.35} follows from \eqref{eqn:3.36}. 
 \end{proof}


\section{$\A_{\infty}$-quasi-equivalence of DG categories}\label{sec:4}
In this section we prove the main result of the paper, which is the construction of a zig-zag of $\A_{\infty}$-quasi-equivalences between the DG enhancements of the categories $\Rep(\TT\gfrak)$ and $\Mod(\uC_{\sbullet}(G))$.  

\subsection{DG enhancement of the category $\Rep(\TT \gfrak)$}\label{sec:4.1}
In this subsection we describe a DG enhancement of the category $\Rep(\TT \gfrak)$. 
Let $V$ be an object of $\Rep(\TT \gfrak)$. For $x \in \gfrak$, by a slight abuse of notation, we will indistinctly write $i_x$ and $L_x$ for the contraction and Lie derivative operators acting on $\uW\gfrak$ or $V$. An element $\alpha \in \uW\gfrak \otimes V$ will be called \emph{basic} if
\begin{align}\label{eqn:4.1}
\begin{split}
(i_x \otimes 1) \alpha &= 0, \\
(L_x \otimes 1 + 1 \otimes L_x)\alpha &= 0,
\end{split}
\end{align}
for every $x \in \gfrak$. Since the operators $i_x \otimes 1$ and $L_x \otimes 1 + 1 \otimes L_x$ are derivations, the basic elements form a graded subspace of $\uW\gfrak \otimes V$. It will be denoted by $(\uW\gfrak \otimes V)_{\bas}$.

Next, consider the DG algebra $\uW\gfrak \otimes \End(V)$ with multiplication induced by the composition operation $\End(V)$ and the differential $\dW  + \delta$. Fix a basis $e_a$ of $\gfrak$ with structure constant $f^{a}_{\phantom{a}bc}$ and recall from Section~\ref{sec:2.4} that $t^{a}$ stands for the degree $1$ generators of $\Lambda^1\gfrak$ and $w^{a}$ stands for the degree $2$ generators of $\uS^1 \gfrak$. 

\begin{lemma}
The element $t^{a} \otimes L_{a} - w^{a} \otimes i_{a}$ is a Maurer-Cartan element of $\uW\gfrak \otimes \End(V)$. 
\end{lemma}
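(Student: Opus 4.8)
The plan is to verify directly the Maurer--Cartan equation $d\alpha + \alpha\cdot\alpha = 0$ for $\alpha = t^a\otimes L_a - w^a\otimes i_a$, where $d = \dW + \delta$ denotes the total differential on $\uW\gfrak\otimes\End(V)$. First I would note that $\alpha$ is homogeneous of degree $1$: indeed $t^a$ has degree $1$ and $L_a$ degree $0$, while $w^a$ has degree $2$ and $i_a$ degree $-1$, so both summands lie in degree $1$ and the equation makes sense (for odd $\alpha$ it coincides with $d\alpha + \tfrac{1}{2}[\alpha,\alpha]=0$). Throughout I would use the Koszul sign rules: the differential acts by $d(u\otimes f) = \dW u\otimes f + (-1)^{|u|} u\otimes\delta(f)$, with $\delta(f) = \delta\circ f - (-1)^{|f|} f\circ\delta$ the induced differential on $\End(V)$, and the product by $(u\otimes f)(v\otimes g) = (-1)^{|f||v|}(uv)\otimes(fg)$.

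For the differential I would compute the two summands separately. Using the formulas \eqref{eqn:2.29} for $\dW t^a$ and $\dW w^a$, together with the consequences $\delta(L_a)=0$ and $\delta(i_a)=L_a$ of the module relations \eqref{eqn:2.22}, I expect
\begin{align*}
d(t^a\otimes L_a) &= w^a\otimes L_a - \tfrac{1}{2} f^a_{\phantom{a}bc}\, t^b t^c\otimes L_a, \\
d(w^a\otimes i_a) &= f^a_{\phantom{a}bc}\, w^b t^c\otimes i_a + w^a\otimes L_a.
\end{align*}
The crucial point is that the two copies of $w^a\otimes L_a$ cancel when one forms $d\alpha$, leaving
\[
d\alpha = -\tfrac{1}{2} f^a_{\phantom{a}bc}\, t^b t^c\otimes L_a - f^a_{\phantom{a}bc}\, w^b t^c\otimes i_a.
\]

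For the quadratic term I would write $A = t^a\otimes L_a$, $B = w^a\otimes i_a$ and expand $\alpha^2 = A^2 - AB - BA + B^2$. I anticipate three contributions: first, $A^2 = t^a t^b\otimes L_a L_b$ antisymmetrises (the $t^a$ are odd) to $\tfrac{1}{2} f^a_{\phantom{a}bc}\, t^b t^c\otimes L_a$ via $[L_a,L_b]=f^c_{\phantom{c}ab}L_c$; second, $B^2 = w^a w^b\otimes i_a i_b$ vanishes, since $w^a w^b$ is symmetric while $i_a i_b$ is antisymmetric ($[i_a,i_b]=0$ forces $i_a i_b = -i_b i_a$); third, the cross terms $-AB - BA$, after using $t^a w^b = w^b t^a$ and $[L_b,i_a] = i_{[e_b,e_a]} = -f^c_{\phantom{c}ab} i_c$, collapse to $f^a_{\phantom{a}bc}\, w^b t^c\otimes i_a$. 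Summing, $\alpha^2 = \tfrac{1}{2} f^a_{\phantom{a}bc}\, t^b t^c\otimes L_a + f^a_{\phantom{a}bc}\, w^b t^c\otimes i_a$, which is exactly $-d\alpha$, so $d\alpha+\alpha^2 = 0$.

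The entire content of the argument is the sign bookkeeping, and that is where I expect the only real difficulty to lie: one must keep the Koszul signs of the tensor-product differential and product consistent with the grading conventions of \eqref{eqn:2.29} and \eqref{eqn:2.22}, and relabel the summation indices so that the three cancellations --- the $w^a\otimes L_a$ cancellation in $d\alpha$, the quadratic $t^b t^c\otimes L_a$ cancellation, and the $w^b t^c\otimes i_a$ cancellation --- become manifest. No conceptual obstacle arises beyond this; once the signs are pinned down the vanishing is immediate.
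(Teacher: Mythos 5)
Your proposal is correct and follows essentially the same route as the paper: a direct verification of $(\dW+\delta)\alpha + \alpha\cdot\alpha = 0$ using the explicit formulas \eqref{eqn:2.29} for $\dW t^a$, $\dW w^a$ and the relations \eqref{eqn:2.22}, with the same three cancellations (the $w^a\otimes L_a$ terms between the $\dW$ and $\delta$ parts, the vanishing of $w^bw^c\otimes i_bi_c$ by symmetry against antisymmetry, and the collapse of the cross terms via $[L_b,i_c]=f^a_{\phantom{a}bc}i_a$) that the paper exhibits. The signs you anticipate for $d\alpha$ and $\alpha^2$ agree with those in the paper's computation, so nothing is missing.
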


\begin{proof}
On the one hand, according to \eqref{eqn:2.36} and the relations \eqref{eqn:2.22}, 
\begin{align*}
\dW (t^{a} \otimes L_{a} - w^{a} \otimes i_{a} ) &=  \dW t^{a} \otimes L_a - \dW w^{a} \otimes i_a \\
&=   w^{a} \otimes L_a - \frac{1}{2} f^{a}_{bc} t^{b} t^{c} \otimes L_a - f^{a}_{bc} w^{b} t^{c} \otimes i_a,
\end{align*}
and
\begin{align*}
\delta( t^{a} \otimes L_{a} - w^{a} \otimes i_{a} ) &=- t^{a} \otimes [\delta, L_a] - w^{a} \otimes [\delta, i_a] =- w^{a} \otimes L_a .
\end{align*}
Hence,
$$
(\dW +\delta) (t^{a} \otimes L_{a} - w^{a} \otimes i_{a} ) =  -f^{a}_{bc} w^{b} t^{c} \otimes i_a - \frac{1}{2} f^{a}_{bc}t^{b} t^{c} \otimes L_a
$$
On the other hand, again using the relations \eqref{eqn:2.22}, we find that
\begin{align*}
(t^{b} \otimes L_{b}& -w^{b} \otimes i_{b})(t^{c} \otimes L_{c}- w^{c} \otimes i_{c})  \\
&=  t^{b} t^{c} \otimes L_bL_c - t^b w^c \otimes L_b i_c + w^b t^c \otimes i_b L_c + w^{b} w^{c} \otimes i_b i_c \\
&= \frac{1}{2} t^{b} t^{c} \otimes [L_b,L_c] - t^b w^c \otimes [L_b,i_c] - t^b w^c \otimes i_c L_b + w^b t^c \otimes i_b L_c + \frac{1}{2} w^{b} w^{c} \otimes [i_b,i_c]   \\
&= \frac{1}{2} f_{bc}^{a} t^b t^c \otimes L_a - f_{bc}^{a} t^b w^c \otimes i_a   \\
&= \frac{1}{2} f_{bc}^{a} t^b t^c \otimes L_a +  f_{bc}^{a} w^b t^c \otimes i_a.
\end{align*}
In conclusion, we obtain
$$
(\dW +  \delta) (t^{a} \otimes L_{a} - w^{a} \otimes i_{a}) + (t^{b} \otimes L_{b} -w^{b} \otimes i_{b})(t^{c} \otimes L_{c}- w^{c} \otimes i_{c}) = 0 ,
$$
as required. 
\end{proof}

This result has the following important consequence.  

\begin{corollary}
The operator $D$ in $\uW\gfrak \otimes V$ given by
$$
D = \dW  +  \delta + t^{a} \otimes L_{a} - w^{a} \otimes i_{a},
$$
is a derivation of homogenous degree $1$ that satisfies $D^2 = 0$.  
\end{corollary}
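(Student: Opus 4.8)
The plan is to exhibit $D$ as the twist, by the Maurer--Cartan element of the preceding lemma, of the canonical differential on $\uW\gfrak\otimes V$ regarded as a DG module over the DG algebra $\uW\gfrak\otimes\End(V)$; the identity $D^2=0$ then becomes a formal consequence of the Maurer--Cartan equation. Write $A=\uW\gfrak\otimes\End(V)$ with its differential $d_A=\dW+\delta$ and product induced by composition, and write $M=\uW\gfrak\otimes V$ with differential $d_M=\dW+\delta$, both understood with the usual Koszul sign on the tensor factors, so that $d_M(\omega\otimes v)=\dW\omega\otimes v+(-1)^{|\omega|}\omega\otimes\delta v$. The first thing I would record is that $M$ is a DG left module over $A$: the action $(\omega\otimes\phi)\cdot(\omega'\otimes v)=(-1)^{|\phi||\omega'|}\,\omega\omega'\otimes\phi(v)$ is associative, and the graded Leibniz rule $d_M(a\cdot m)=(d_A a)\cdot m+(-1)^{|a|}a\cdot d_M m$ holds because it holds separately for $V$ over $\End(V)$ and for $\uW\gfrak$ over itself. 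Setting $\mu=t^a\otimes L_a-w^a\otimes i_a\in A^1$, a check of conventions shows that left multiplication $m\mapsto\mu\cdot m$ is exactly the operator $t^a\otimes L_a-w^a\otimes i_a$ occurring in $D$, so that $D=d_M+\mu\cdot(-)$.

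Next I would run the standard twisting computation. For $m\in M$,
\begin{align*}
D^2 m &= d_M^2 m + d_M(\mu\cdot m)+\mu\cdot d_M m+\mu\cdot(\mu\cdot m)\\
&= \big((d_A\mu)\cdot m-\mu\cdot d_M m\big)+\mu\cdot d_M m+(\mu\cdot\mu)\cdot m\\
&= (d_A\mu+\mu\cdot\mu)\cdot m,
\end{align*}
where I used $d_M^2=0$, the Leibniz rule (with $|\mu|=1$, which produces the sign that cancels the two middle terms), and associativity of the action. By the preceding lemma $\mu$ is a Maurer--Cartan element, that is $d_A\mu+\mu\cdot\mu=0$, whence $D^2=0$.

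It then remains to verify the two elementary structural claims. The degree count is immediate: $\dW$ and $\delta$ raise degree by $1$, while $t^a\otimes L_a$ has degree $1+0=1$ and $w^a\otimes i_a$ has degree $2+(-1)=1$, so $D$ is homogeneous of degree $1$. For the derivation property with respect to the $\uW\gfrak$-module structure on $M$, I would check $D(\eta\cdot\alpha)=\dW(\eta)\cdot\alpha+(-1)^{|\eta|}\eta\cdot D\alpha$ for $\eta\in\uW\gfrak$. The differential $d_M$ satisfies this because it is the tensor differential of the DG algebra $\uW\gfrak$ with the complex $V$; and left multiplication by $\mu$ satisfies $\mu\cdot(\eta\cdot\alpha)=(-1)^{|\eta|}\eta\cdot(\mu\cdot\alpha)$ — hence contributes no boundary term — because $\uW\gfrak$ is graded commutative and the action carries the standard Koszul signs, so that both $t^a\otimes L_a$ and $w^a\otimes i_a$ commute past $\eta$ with sign $(-1)^{|\eta|}$.

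The only real obstacle is sign bookkeeping, concentrated in two places: ensuring that the operators written in $D$ coincide on the nose with left multiplication by $\mu$ (so that the Koszul sign $(-1)^{|\omega|}$ attached to $w^a\otimes i_a$ is present), and that the module Leibniz rule carries the sign $(-1)^{|a|}$ responsible for the cancellation in the $D^2$ computation. Once these conventions are fixed consistently — matching those already used in the lemma, where $\mu\cdot\mu$ is computed with the same Koszul rule — every step is formal, and the content of the corollary is carried entirely by the Maurer--Cartan equation.
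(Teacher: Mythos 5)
Your proof is correct and is exactly the argument the paper intends: the corollary is stated as an immediate formal consequence of the preceding Maurer--Cartan lemma, and your twisting computation $D^2=(d_A\mu+\mu\cdot\mu)\cdot(-)$, together with the degree count and the Koszul-sign check identifying $t^a\otimes L_a-w^a\otimes i_a$ with left multiplication by $\mu$ (matching the signs the paper uses in the subsequent lemma on the basic subspace), is the standard way to spell it out. No gaps.
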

 
Also, the following property holds true. 

\begin{lemma}
The differential $D$ preserves the graded subspace $(\uW\gfrak \otimes V)_{\bas}$. 
\end{lemma}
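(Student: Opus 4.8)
The plan is to recast the statement as two clean operator identities on all of $\uW\gfrak \otimes V$ and then read off the conclusion formally. Write $\iota_x = i_x \otimes 1$ and $\mathcal{L}_x = L_x \otimes 1 + 1 \otimes L_x$, so that by \eqref{eqn:4.1} the basic subspace is exactly $\bigcap_{x \in \gfrak}(\ker \iota_x \cap \ker \mathcal{L}_x)$. Since $\iota_x$ has degree $-1$ and $D$ has degree $1$, I would compute the graded anticommutator $\{\iota_x, D\}$ and the graded commutator $[\mathcal{L}_x, D]$, with the aim of proving
$$\{\iota_x, D\} = \mathcal{L}_x, \qquad [\mathcal{L}_x, D] = 0$$
for every $x \in \gfrak$. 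Granting these, if $\alpha$ is basic then $\iota_x D \alpha = \{\iota_x, D\}\alpha - D\iota_x\alpha = \mathcal{L}_x \alpha - D(0) = 0$, and $\mathcal{L}_x D\alpha = D\mathcal{L}_x \alpha = 0$, so $D\alpha$ is again basic.

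For the first identity I would expand $\{\iota_x, D\}$ over the four summands of $D$. The term $\{\iota_x, \dW\} = L_x \otimes 1$ is just the Cartan relation $\{i_x, \dW\} = L_x$ on $\uW\gfrak$ transported to the tensor product, while $\{\iota_x, \delta\} = 0$ because $\iota_x$ and $\delta$ act on different tensor factors (two odd operators on independent factors anticommute). The connection terms are where the bookkeeping matters: writing $m_\omega$ for left multiplication by $\omega$ and using that $i_x$ is an odd derivation of $\uW\gfrak$ with $i_x t^a = \langle e^a, x\rangle$ and $i_x w^a = 0$, I obtain $\{\iota_x, t^a \otimes L_a\} = \{i_x, m_{t^a}\}\otimes L_a = \langle e^a, x\rangle\,1 \otimes L_a = 1 \otimes L_x$, whereas the analogous manipulation for $w^a \otimes i_a$ turns into a graded \emph{commutator} $[i_x, m_{w^a}]\otimes i_a$ (the extra sign coming from the odd $i_a$) which vanishes since $i_x$ annihilates $w^a$. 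Summing the four contributions gives $\{\iota_x, D\} = L_x \otimes 1 + 1 \otimes L_x = \mathcal{L}_x$.

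The second identity follows once I check that each piece of $D$ is $\mathcal{L}$-invariant. One has $[\mathcal{L}_x, \dW] = [\mathcal{L}_x, \delta] = 0$ from the Cartan relation $[L_x, d] = 0$ on each factor together with the fact that the cross terms act on independent factors. The invariance of the two connection terms reduces, in a basis, to the structure-constant cancellation $(L_x t^a) \otimes L_a + t^a \otimes L_{[x, e_a]} = 0$ and its counterpart with $w^a, i_a$; invariantly, this is simply the assertion that the canonical elements $\sum_a t^a \otimes L_a$ and $\sum_a w^a \otimes i_a$ are $\gfrak$-invariant, because $t(\cdot)$, $w(\cdot)$ transform by the coadjoint action while $L_{(\cdot)}$, $i_{(\cdot)}$ transform by the dual adjoint action.

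I expect the only real obstacle to be sign discipline in the tensor product: the contrast between the $t^a \otimes L_a$ term (contributing $1 \otimes L_x$) and the $w^a \otimes i_a$ term (contributing nothing) hinges entirely on the parities $\vert t^a\vert = 1$, $\vert L_a\vert = 0$ versus $\vert w^a\vert = 2$, $\vert i_a\vert = -1$. To keep this under control I would fix the Koszul rule $(A \otimes B)(C \otimes D) = (-1)^{\vert B\vert \vert C\vert} AC \otimes BD$ at the outset and apply it uniformly; everything else is a direct application of the Cartan relations \eqref{eqn:2.22} and the explicit Weil-algebra formulas \eqref{eqn:2.29}.
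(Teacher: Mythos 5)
Your proposal is correct and follows essentially the same route as the paper: the paper's proof likewise reduces the lemma to the two graded (anti)commutator identities $[D,\,i_c\otimes 1]=L_c\otimes 1+1\otimes L_c$ and $[D,\,L_c\otimes 1+1\otimes L_c]=0$, verifying them term by term against the Cartan relations, with the only difference being that the paper checks them by an element-wise computation on $\xi\otimes v$ rather than your operator-level bookkeeping.
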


\begin{proof}
It suffices to show that
$$
[D, L_{c} \otimes 1 + 1 \otimes L_{c}] = 0,
$$
and 
$$
[D, i_{c} \otimes 1] = L_{c} \otimes 1 + 1 \otimes L_{c}.
$$
Fix an element of $\uW\gfrak \otimes V$ of the form $\xi \otimes v$. Then a straightforward computation gives
\begin{align*}
D\left( (L_{c} \otimes 1 + 1 \otimes L_{c})(\xi \otimes v)\right) = & \, \dW(L_c \xi) \otimes v + (-1)^{\vert \xi \vert} L_{c}\xi \otimes \delta v \\
& +  \dW \xi \otimes L_c v  + (-1)^{\vert \xi \vert} \xi \otimes \delta( L_c  v) \\
& + (t^{a} L_{c} \xi) \otimes L_a v -(-1)^{\vert \xi \vert} (w^{a} L_c \xi) \otimes i_a v  \\
&+ (t^{a} \xi ) \otimes L_a(L_c v) - (-1)^{\vert \xi \vert} (w^{a} \xi) \otimes i_a(L_c v) ,
\end{align*}
and 
\begin{align*}
(L_{c} \otimes 1 + 1 \otimes L_{c})\left( D(\xi \otimes v)\right) = & \, L_c(\dW \xi) \otimes v + (-1)^{\vert \xi \vert} L_{c}\xi \otimes \delta v \\
&+  \dW \xi \otimes L_c v  + (-1)^{\vert \xi \vert} \xi \otimes L_c (\delta v) \\
& - f_{cb}^{a} ( t^{b}  \xi) \otimes L_a v+ (t^{a} L_{c} \xi) \otimes L_a v  \\
& +(-1)^{\vert \xi \vert} f_{cb}^{a}( w^{b} \xi) \otimes i_a v -(-1)^{\vert \xi \vert} (w^{a} L_c \xi) \otimes i_a v \\
& + (t^{a} \xi ) \otimes L_c(L_a v) - (-1)^{\vert \xi \vert} (w^{a} \xi) \otimes L_c(i_a v).
\end{align*}
Therefore, from \eqref{eqn:2.22}, it follows that
\begin{align*}
[D, L_{c} \otimes 1 + 1 \otimes L_{c}] (\xi \otimes v) &= [\dW,L_c]\xi \otimes v + (-1)^{\vert \xi \vert} \xi \otimes [\delta,L_c]v \\
&\phantom{=}\, +  f_{cb}^{a} ( t^{b} \xi) \otimes L_a v - (-1)^{\vert \xi \vert} f_{cb}^{a}( w^{b} \xi) \otimes i_a v \\
&\phantom{=}\, +  (t^{a} \xi ) \otimes [L_a,L_c]v + (-1)^{\vert \xi \vert} (w^{a} \xi) \otimes [L_c,i_a]v \\
&=  f_{cb}^{a} ( t^{b}  \xi) \otimes L_a v - (-1)^{\vert \xi \vert} f_{cb}^{a}( w^{b} \xi) \otimes i_a v  \\
&\phantom{=}\, + f_{ac}^{b} ( t^{a}  \xi) \otimes L_b v + (-1)^{\vert \xi \vert} f_{ca}^{b}( w^{a} \xi) \otimes i_b v \\
&= 0.
\end{align*}
Thus the first identity is established. On the other hand, again by a direct computation,
\begin{align*}
D \left( (i_c \otimes 1)(\xi \otimes v) \right) &= \dW (i_c \xi) \otimes v - (-1)^{\vert \xi \vert} i_c \xi \otimes \delta v  \\
&\phantom{=}\, + (t^{a} i_c \xi) \otimes L_a v + (-1)^{\vert \xi \vert} (w^{a} i_c \xi) \otimes i_a v,
\end{align*}
and
\begin{align*}
(i_c \otimes 1)\left( D(\xi \otimes v)\right) &= i_c (\dW \xi) \otimes v + (-1)^{\vert \xi \vert} i_c \xi \otimes \delta v  \\
&\phantom{=}\, +\delta^{a}_{c}\xi L_a v - (t^{a} i_c \xi) \otimes L_a v \\
&\phantom{=}\, -(-1)^{\vert \xi \vert} (w^{a} i_c \xi) \otimes i_a v.
\end{align*}
Hence, using \eqref{eqn:2.22} again, this gives 
\begin{align*}
[D, i_{c} \otimes 1] (\xi \otimes v) &= [\dW,i_c] \xi \otimes v + \delta^{a}_{c}\xi L_a v \\
&= L_c \xi \otimes v + \xi L_c v \\
&= (L_c \otimes 1 + 1 \otimes L_c) (\xi \otimes v),
\end{align*}
and, consequently, the second identity also holds. 
\end{proof}

The preceding discussion allows us to define a DG category, which provides a DG enhancement of the category $\Rep(\TT\gfrak)$, by the following data. The objects of this DG category are the same as those of $\Rep(\TT\gfrak)$. For any two objects $V$ and $V'$, with corresponding differentials $D$ and $D'$, the space of morphisms is the graded vector space $(\uW\gfrak \otimes \Hom(V,V'))_{\bas}$ with the differential $\partial_{D,D'}$ acting according to the formula
\begin{equation}\label{eqn:4.2aa}
\partial_{D,D'}\varphi = D' \circ \varphi - (-1)^k \varphi \circ D,
\end{equation}
for any homogeneous element $\varphi$ of degree $k$. The DG category given by this data will be denoted by $\DGRep(\TT\gfrak)$.

\subsection{The Bott-Shulman-Stasheff DG category}\label{sec:4.2}
In this subsection we introduce a DG category canonically associated to the Lie group $G$, which is based on the Bott-Shulman-Stasheff model discussed in \S\ref{sec:3.1}. This DG category will play an essential intermediate role in the proof of our main result. 

Let $V$ be object of $\Rep(\TT\gfrak)$ and consider the DG algebra $\Omega^{\sbullet}(BG_{\sbullet})\otimes \End(V)$ with multiplication induced by the composition operation on $\End(V)$ and the differential $\bar{d} + \partial + \bar{\delta}$,  where $\bar{\delta}$ here is defined as $(-1)^{p}$ times the differential $\delta$ when acting on $\Omega^{\sbullet}(BG_{p})\otimes \End(V)$. Let also $\Phi_V$ be the left-equivariant representation form associated to $V$. We note that $\Phi_V$ may be thought of as an element of $\Omega^{\sbullet}(BG_{\sbullet})\otimes \End(V)$ of homogeneous of degree $1$ with respect to the total degree. 

\begin{lemma}
The element $\Phi_V - \id_V$ is a Maurer-Cartan element of $\Omega^{\sbullet}(BG_{\sbullet})\otimes \End(V)$. 
\end{lemma}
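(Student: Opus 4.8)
The plan is to verify the Maurer--Cartan equation
$$
(\bar{d} + \partial + \bar{\delta})(\Phi_V - \id_V) + (\Phi_V - \id_V)\cdot(\Phi_V - \id_V) = 0
$$
directly, splitting it according to simplicial degree. I read $\id_V$ as the constant $\End(V)$-valued $0$-form on $BG_1 = G$ with value $\id_V$; with this reading $\Phi_V - \id_V$ is genuinely homogeneous of total degree $1$, as a Maurer--Cartan element must be. Since both $\Phi_V$ and $\id_V$ lie in $\Omega^{\sbullet}(BG_1)\otimes\End(V)$, the operators $\bar{d}$ and $\bar{\delta}$ keep them in simplicial degree $1$, while $\partial$ and the product both raise the simplicial degree to $2$. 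Hence the equation decouples into a $BG_1$-component and a $BG_2$-component, with no contribution in any other simplicial degree.

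For the $BG_1$-component only $(\bar{d} + \bar{\delta})(\Phi_V - \id_V)$ survives, and as $\id_V$ is constant it is annihilated by both $\bar{d}$ and $\bar{\delta}$, leaving $(\bar{d}+\bar{\delta})\Phi_V$. Collecting the pieces of $\sum_k(\bar{d}+\bar{\delta})\Phi^{(k)}_V$ landing in a fixed summand $\Omega^{k+1}(BG_1)\otimes\End(V)^{-k}$, one finds the combination $\bar{d}\Phi^{(k)}_V + \bar{\delta}\Phi^{(k+1)}_V$; with $\bar{d}=(-1)^1 d$, $\bar{\delta}=(-1)^1\delta$ and the Koszul sign convention for the action of $\delta$ on $\End(V)$-valued forms, this is precisely the descent equation $d\Phi^{(k)}_V = (-1)^k\delta\Phi^{(k+1)}_V$ of \eqref{eqn:2.34}, and therefore vanishes.

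For the $BG_2$-component I would expand everything. Writing the three coface maps of $BG_{\sbullet}$ as $\varepsilon_0 = \pi_2$, $\varepsilon_1 = \mu$, $\varepsilon_2 = \pi_1$, formula \eqref{eqn:3.4} gives $\partial\Phi_V = \pi_2^*\Phi_V - \mu^*\Phi_V + \pi_1^*\Phi_V$ and $\partial\id_V = \id_V$ (every face pulls the constant form back to the constant form). The quadratic term is computed from the product on $\Omega^{\sbullet}(BG_{\sbullet})\otimes\End(V)$, that is the cup product \eqref{eqn:3.5} composed with composition in $\End(V)$: the relation \eqref{eqn:2.35} is exactly the identity $\Phi_V\cdot\Phi_V = \mu^*\Phi_V$, while for the cross terms the $\End(V)$-degree of $\id_V$ is zero and the cup-product sign cancels the degree sign, so that $\Phi_V\cdot\id_V = \pi_1^*\Phi_V$, $\id_V\cdot\Phi_V = \pi_2^*\Phi_V$ and $\id_V\cdot\id_V = \id_V$. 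Substituting, the $BG_2$-component becomes
$$
(\pi_2^*\Phi_V - \mu^*\Phi_V + \pi_1^*\Phi_V - \id_V) + (\mu^*\Phi_V - \pi_1^*\Phi_V - \pi_2^*\Phi_V + \id_V),
$$
and every term cancels. This also explains the subtraction: the bare form $\Phi_V$ would leave the residual unit terms $\pi_1^*\Phi_V + \pi_2^*\Phi_V$, and subtracting the degree-$1$ constant $\id_V$ is exactly what removes them.

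The main obstacle is sign bookkeeping rather than conceptual content: one must reconcile the Koszul signs in the cup product \eqref{eqn:3.5}, in the composition in $\End(V)$, and in the totalisation signs $\bar{d}=(-1)^p d$, $\bar{\delta}=(-1)^p\delta$, with the signs in the descent equation \eqref{eqn:2.34} and the multiplicativity relation \eqref{eqn:2.35}. A secondary but essential point is to fix at the outset that $\id_V$ denotes the degree-$1$ constant form on $BG_1$, not the unit $1\otimes\id_V$ on $BG_0$; this is what makes $\Phi_V-\id_V$ homogeneous of degree $1$ and makes the cross terms absorb the leftover $\pi_1^*\Phi_V$ and $\pi_2^*\Phi_V$. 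Property \eqref{eqn:2.36} guarantees that $\Phi^{(0)}_V-\id_V$ vanishes at the identity, so that $\Phi_V-\id_V$ is the natural choice, although the cancellation itself is purely formal and invokes only \eqref{eqn:2.34} and \eqref{eqn:2.35}.
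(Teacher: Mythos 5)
Your proof is correct and follows essentially the same route as the paper: reduce the simplicial-degree-one part to the descent equations \eqref{eqn:2.34}, and expand the simplicial-degree-two part using $\partial=\pi_2^*-\mu^*+\pi_1^*$, the multiplicativity relation \eqref{eqn:2.35} for $\Phi_V\abxcup\Phi_V=\mu^*\Phi_V$, and the cross terms $\Phi_V\abxcup\id_V=\pi_1^*\Phi_V$, $\id_V\abxcup\Phi_V=\pi_2^*\Phi_V$, $\id_V\abxcup\id_V=\id_V$. Your explicit bookkeeping of the simplicial bidegree and the reading of $\id_V$ as the constant $0$-form on $BG_1$ only makes more transparent what the paper's computation does implicitly.
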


\begin{proof}
We must show that
$$
( -d + \partial - \delta) (\Phi_V - \id_V) + (\Phi_V - \id_V) \abxcup (\Phi_V - \id_V) = 0. 
$$
To prove this, we first notice that $d (\id_V) = 0$ and $\delta(\id_V) = 0$. Moreover, by decomposing $\Phi_V = \sum_{k \geq 0} \Phi_V^{(k)}$ and bringing to mind the ``descent equations'' \eqref{eqn:2.34}, we obtain that
$$
(d + \delta) \Phi_V = 0.  
$$
We are thus left to show that
$$
\partial (\Phi_V - \id_V) + (\Phi_V - \id_V) \abxcup (\Phi_V - \id_V) = 0. 
$$
Toward this end, we notice that in the present situation $\partial = \varepsilon_0^* - \varepsilon_1^* + \varepsilon_2^*$. Furthermore, according to \eqref{eqn:3.2}, the face maps $\varepsilon_0$, $\varepsilon_1$ and $\varepsilon_2$ coincide with the projection onto the second component $\pi_2$, the multiplication map $\mu$ and the projection onto the first component $\pi_1$, respectively. Therefore,
$$
\partial(\Phi_V - \id_V) = \pi_2^*\Phi_V - \mu^*\Phi_V + \pi_1^*\Phi_V - \id_V. 
$$
On the other hand, taking note of the condition \eqref{eqn:2.35}, we have, for the first cup product term,
\begin{align*}
\Phi_V \abxcup \Phi_V &= \sum_{k \geq 2} \sum_{i + j = k} \Phi_V^{(i)} \abxcup \Phi_V^{(j)} =  \sum_{k \geq 2} \sum_{i + j = k} (-1)^{i(1+ j)}(-1)^{i} \pi_1^*\Phi_V^{(i)} \wedge \pi_2^*\Phi_V^{(j)} \\
&= \sum_{k \geq 2} \sum_{i + j = k} (-1)^{ij} \pi_1^*\Phi_V^{(i)} \wedge \pi_2^*\Phi_V^{(j)} = \sum_{k \geq 2} \mu^* \Phi_V^{(k)} = \mu^* \Phi_V. 
\end{align*}
For the remaining cup product terms, we compute
\begin{align*}
\Phi_V \abxcup \id_V &= \sum_{k \geq 2} \Phi_V^{(k)} \abxcup \id_V = \sum_{k \geq 2}(-1)^{-k}(-1)^{k} \pi_1^*\Phi_V^{(k)} \wedge \pi_2^*(\id_V) = \sum_{k \geq 2} \pi_1^*\Phi_V^{(k)} =  \pi_1^*\Phi_V, \\
\id_V \abxcup \Phi_V &= \sum_{k \geq 2} \id_V \abxcup \Phi_V^{(k)} = \sum_{k \geq 2} \pi_1^*(\id_V) \wedge \pi_2^*\Phi_V^{(k)} = \sum_{k \geq 2} \pi_2^*\Phi_V^{(k)} = \pi_2^*\Phi_V, \\
\id_V \abxcup \id_V &= \pi_1^*(\id_V) \wedge \pi_2^*(\id_V)  = \id_V.
\end{align*}
Consequently,
\begin{align*}
\partial (\Phi_V &- \id_V) + (\Phi_V - \id_V) \abxcup (\Phi_V - \id_V) \\
&= \partial (\Phi_V - \id_V) + \Phi_V  \abxcup \Phi_V - \Phi_V  \abxcup \id_V - \id_V  \abxcup \Phi_V + \id_V  \abxcup \id_V \\
&= \pi_2^*\Phi_V - \mu^*\Phi_V + \pi_1^*\Phi_V - \id_V +  \mu^* \Phi_V^{(k)} -  \pi_1^*\Phi_V -  \pi_2^*\Phi_V +  \id_V \\
&=0 ,
\end{align*}
as we wished to show. 
\end{proof}

As in the previous section, we have the following direct consequence of this result. 

\begin{corollary}\label{cor:4.5}
The operator $D$ in $\Omega^{\sbullet}(BG_{\sbullet}) \otimes V$ given by
$$
D = \bar{d} + \partial + \bar{\delta} + \Phi_V - \id_V,
$$
is a derivation of homogeneous degree $1$ that satisfies $D^2 = 0$. 
\end{corollary}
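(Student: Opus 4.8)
The plan is to recognise $D$ as the twist of the differential of a differential graded module by the Maurer--Cartan element furnished by the preceding Lemma, in exact parallel with the Corollary of Section~\ref{sec:4.1}. The first step is to record the ambient structure: $\Omega^{\sbullet}(BG_{\sbullet})\otimes V$ is a DG module over the DG algebra $\Acal = \Omega^{\sbullet}(BG_{\sbullet})\otimes\End(V)$, the module differential being $\bar{d}+\partial+\bar{\delta}$ and the action being induced by the evaluation $\End(V)\otimes V\to V$ together with the cup product on $\Omega^{\sbullet}(BG_{\sbullet})$. Writing $\Psi = \Phi_V-\id_V$ and letting $L_{\Psi}$ denote left multiplication by $\Psi$ through this action, we have by definition $D = (\bar{d}+\partial+\bar{\delta}) + L_{\Psi}$. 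Each of $\bar{d}$, $\partial$, $\bar{\delta}$ is a degree-one derivation relative to the $\Acal$-action, and adjoining $L_{\Psi}$ produces again a degree-one derivation on the module; this yields the first assertion, and I would dispatch it with the same grading conventions used for $\Phi_V$ throughout Section~\ref{sec:4.2}.

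The substance is $D^{2}=0$, which I would obtain by expanding $D^{2}$ into three contributions. The square of the module differential, $(\bar{d}+\partial+\bar{\delta})^{2}$, vanishes because $\bar{d}$, $\partial$ and $\bar{\delta}$ are the three (anti)commuting differentials of the multicomplex underlying $\Omega^{\sbullet}(BG_{\sbullet})\otimes V$. The graded commutator of the differential with $L_{\Psi}$ reduces, by the Leibniz rule for the action, to left multiplication by $(\bar{d}+\partial+\bar{\delta})\Psi$, while associativity of the action gives $L_{\Psi}\circ L_{\Psi}=L_{\Psi\,\abxcup\,\Psi}$. Collecting these contributions one finds
\[
D^{2} = L_{\,(\bar{d}+\partial+\bar{\delta})\Psi \,+\, \Psi\,\abxcup\,\Psi\,},
\]
so that $D^{2}=0$ is equivalent to the Maurer--Cartan identity $(\bar{d}+\partial+\bar{\delta})\Psi + \Psi\abxcup\Psi = 0$, which is exactly the conclusion of the foregoing Lemma. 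Hence the Corollary follows once this reduction is in place.

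The delicate point — and the place where the argument must be carried out rather than quoted — is the bookkeeping around the normalising term $\id_V$ and the matching of sign conventions. The Lemma verifies the Maurer--Cartan equation in the form $(-d+\partial-\delta)\Psi + \Psi\abxcup\Psi=0$, whereas the module differential appears as $\bar{d}+\partial+\bar{\delta}$; these coincide on $\Psi$ precisely because $\Phi_V$ is concentrated in simplicial degree one, where $\bar{d}=-d$ and $\bar{\delta}=-\delta$, while the remaining summand $\id_V$ is annihilated by both $d$ and $\delta$. Checking that the Leibniz identity invoked above is compatible with the degree-zero unit contribution $\id_V$, and tracking how $\id_V$ interacts with the simplicial differential $\partial$ and with the left action, is the only genuinely careful calculation; it is of the same nature as the cancellations already performed in the proof of the Lemma, and once it is settled the reduction of $D^{2}$ to the Maurer--Cartan expression is immediate.
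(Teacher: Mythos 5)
Your proposal is correct and matches the paper's intent exactly: the paper gives no separate proof of this corollary, presenting it (in parallel with the corollary of Section~\ref{sec:4.1}) as a direct consequence of the preceding Maurer--Cartan lemma, which is precisely the twisting argument you carry out. Your attention to the sign conventions — that the lemma's form $(-d+\partial-\delta)\Psi+\Psi\abxcup\Psi=0$ agrees with $(\bar{d}+\partial+\bar{\delta})\Psi+\Psi\abxcup\Psi=0$ because $\Phi_V$ sits in simplicial degree one where $\bar{d}=-d$ and $\bar{\delta}=-\delta$ — is the right detail to flag, and it checks out.
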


In light of the above discussion, we can define a DG category by the following data. The objects of this DG category are the same as those of $\Rep(\TT \gfrak)$. For any two objects $V$ and $V'$, with corresponding differentials $D$ and $D'$, the space of morphisms is the graded vector space $\Omega^{\sbullet}(BG_{\sbullet}) \otimes \Hom(V,V')$ with the differential $\partial_{D,D'}$ given by the same formula as the one for $\DGRep(\TT \gfrak)$. This DG category will be called the \emph{Bott-Shulman-Stasheff DG category} and will be denoted by $\BSS(G)$.  

\subsection{The invariant Bott-Shulman-Stasheff DG category}\label{sec:4.3}
Our aim now is to consider an invariant version of the Bott-Shulman-Stasheff DG category we have just introduced. It is this DG category  that is linked to the ``infinitesimal'' DG category $\DGRep(\TT \gfrak)$ discussed in \S\ref{sec:4.1}. We start with some preliminary remarks. The notation is the same as in \S\ref{sec:3.1}. 

Let $V$ be an object of $\Rep(\TT \gfrak)$ with associated left-equivariant representation form $\Phi_V$. Recall that, with respect to the decomposition $\Phi_V = \sum_{k \geq 0} \Phi^{(k)}_V$, the zeroth component $\Phi^{(0)}_V$ is a representation of $G$ on $V$. With this understanding, let us consider the action $\widehat{\gamma}_0(g)$ of elements $g$ of $G$ on $\Omega^{q}(BG_{p}) \otimes V$ defined by
\begin{equation}\label{eqn:4.2}
\widehat{\gamma}_0(g) (\omega \otimes v) = \gamma_0(g)^{*} \omega \otimes \big( \Phi^{(0)}_V(g) (v) \big),
\end{equation}
for $\omega \in \Omega^{q}(BG_{p})$ and $v \in V$. We should also consider the action $\widehat{\gamma}(g_1,\dots,g_p)$ of elements $(g_1,\dots,g_p)$ of $G_{p}$ on $\Omega^{q}(BG_{p}) \otimes V$ given by
\begin{equation}\label{eqn:4.3}
\widehat{\gamma}(g_1,\dots,g_p) (\omega \otimes v) = \gamma(g_1,\dots,g_p)^{*} \omega \otimes v ,
\end{equation}
for $\omega \in \Omega^{q}(BG_{p})$ and $v \in V$. Noting that these two actions commute, we obtain an action $\widehat{\zeta}(g_0,g_1,\dots,g_p)$ of elements $(g_0,g_1,\dots,g_p)$ of $G_{p+1}$ on $\Omega^{q}(BG_{p}) \otimes V$ by simply putting
\begin{equation}\label{eqn:4.4}
\widehat{\zeta}(g_0,g_1,\dots,g_p) = \widehat{\gamma}_0(g_0) \circ \widehat{\gamma}(g_1,\dots,g_p). 
\end{equation}
For what follows, we let $[\Omega^{q}(BG_{p}) \otimes V]^{G_{p+1}}$ denote the subspace of $G_{p+1}$-invariant elements of $\Omega^{q}(BG_{p}) \otimes V$. 

\begin{lemma}\label{lem:4.6}
$[\Omega^{\sbullet}(BG_{\sbullet}) \otimes V]^{G_{\ssbullet+1}}$ is a subcomplex of $\Omega^{\sbullet}(BG_{\sbullet}) \otimes V$ and the inclusion
$$
[\Omega^{\sbullet}(BG_{\sbullet}) \otimes V]^{G_{\ssbullet+1}} \longrightarrow \Omega^{\sbullet}(BG_{\sbullet}) \otimes V
$$
is a quasi-isomorphism. 
\end{lemma}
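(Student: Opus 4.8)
The plan is to mirror the proof of Lemma~\ref{lem:3.1}, adding the two features that the module $V$ and the Maurer--Cartan twist $\Phi_V$ introduce. Write $D = \bar d + \partial + \bar\delta + (\Phi_V - \id_V)$ for the differential of Corollary~\ref{cor:4.5}, and regard $\Omega^{\sbullet}(BG_{\sbullet}) \otimes V$ as filtered by the simplicial degree $p$, so that $D$ splits into a part $d_0$ preserving $p$ (induced by $\bar d$ and $\bar\delta$) and a part raising $p$ (induced by $\partial$ and by cup product with the twist $\Phi_V - \id_V$). The whole argument rests on a single equivariance statement: each constituent operator of $D$ is compatible with the actions $\widehat\zeta$ of \eqref{eqn:4.4}, in the sense that the $p$-preserving operators commute with $\widehat\zeta(g_0,\dots,g_p)$ for every element of $G_{p+1}$, while the $p$-raising operators carry $G_{p+1}$-invariants to $G_{p+2}$-invariants.

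First I would establish this equivariance term by term. For $\bar d$ it is immediate, since the de~Rham differential commutes with the pullbacks $\gamma_0(g_0)^*$ and $\gamma(g_1,\dots,g_p)^*$ and acts trivially on the $V$-factor. For $\bar\delta$ it reduces to the fact that $\Phi^{(0)}_V(g_0)$ is a chain map: being the zeroth component of a representation form, $\Phi^{(0)}_V$ is a representation of $G$ by morphisms in $\Rep(\TT\gfrak)$, hence commutes with $\delta$, and this is exactly what is needed for $\bar\delta$ to commute with $\widehat\gamma_0(g_0)$. For $\partial = \sum_i (-1)^i \varepsilon_i^*$ the argument is the one already used in Lemma~\ref{lem:3.1} to show that $\varepsilon_i^*$ sends $G_{p+1}$-invariant forms to $G_{p+2}$-invariant forms, refined to keep track of the twist on $V$: the face maps intertwine the actions on $BG_p$ and $BG_{p+1}$, and since the twist only involves the $g_0$-slot through $\Phi^{(0)}_V$, this compatibility survives the tensor with $V$.

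The step I expect to be the main obstacle is the equivariance of cup product with $\Phi_V$, since this is the genuinely new phenomenon absent from Lemma~\ref{lem:3.1}. Here I would combine the left-equivariance of $\Phi_V$ encoded in the relations \eqref{eqn:2.35} and \eqref{eqn:2.36} with the behaviour of the front- and back-face projections under the actions $\zeta$, given by the identities of Lemma~\ref{lem:3.1}. The key point is that the way $\Phi_V$ transforms under left translation in the front factor is precisely matched by the factor $\Phi^{(0)}_V(g_0)$ appearing in $\widehat\zeta$ on the $V$-part, so that $\Phi_V \abxcup \alpha$ is $G_{p+2}$-invariant whenever $\alpha$ is $G_{p+1}$-invariant; the unit term $\id_V$ presents no difficulty. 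Granting this, the first assertion of the lemma is immediate, as $D$ then maps invariants to invariants.

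For the quasi-isomorphism I would argue exactly as in Lemma~\ref{lem:3.1}. The inclusion is a morphism of complexes filtered by $p$, and on the associated graded it becomes, for each fixed $p$, the inclusion $[\Omega^{\sbullet}(BG_p) \otimes V]^{G_{p+1}} \hookrightarrow \Omega^{\sbullet}(BG_p) \otimes V$ of $d_0$-complexes. Since $G_{p+1}$ is compact, averaging over it with respect to Haar measure defines a projection $P$ onto the invariant subspace; by the equivariance just established $P$ is a $d_0$-chain map, and since $G_{p+1}$ is connected it acts trivially on $d_0$-cohomology, so $P$ induces the identity there. Hence the inclusion of invariants is a $d_0$-quasi-isomorphism, that is, an isomorphism on the $E_1$-page of the two spectral sequences. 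As both spectral sequences converge, the filtration being bounded below and exhaustive, the comparison theorem shows that the inclusion induces an isomorphism in total cohomology, which is the assertion.
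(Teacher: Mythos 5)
Your argument is correct and follows essentially the same route as the paper: $\bar d$ and $\partial$ are handled exactly as in the first part of the proof of Lemma~\ref{lem:3.1}, $\bar\delta$ preserves invariants because $\Phi^{(0)}_V(g)$ commutes with $\delta$ (the second relation in \eqref{eqn:2.22}, integrated over the connected group), and the quasi-isomorphism follows from compactness and connectedness of $G_{p+1}$ together with the spectral-sequence comparison. The only remark is one of scope: the paper's Lemma~\ref{lem:4.6} concerns the untwisted differential $\bar d+\partial+\bar\delta$ only, and the compatibility of cup product with $\Phi_V-\id_V$ with invariance --- which you fold into this proof and correctly identify as the genuinely new point --- is deferred to the separate Lemma~\ref{lem:4.7}, whose proof requires not just left-equivariance but also the derived right-translation identity $R_g^*\Phi^{(k)}_V=\Phi^{(k)}_V\circ\Phi^{(0)}_V(g)$, a step your sketch glosses over.
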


\begin{proof}
In view of the second relation in \eqref{eqn:2.22}, we deduce that $\bar{\delta}$ preserves $G_{p + 1}$-invariant elements.
Consequently, the result follows from the first part of the proof of Lemma~\ref{lem:3.1}.
\end{proof}

Consider next the derivation $D$ of $\Omega^{\sbullet}(BG_{\sbullet}) \otimes V$ as defined in Corollary~\ref{cor:4.5}. We have the following important observation.

\begin{lemma}\label{lem:4.7}
$D$ preserves the subcomplex $[\Omega^{\sbullet}(BG_{\sbullet}) \otimes V]^{G_{\ssbullet+1}}$. 
\end{lemma}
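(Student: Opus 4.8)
The plan is to isolate the only genuinely new contribution. By Corollary~\ref{cor:4.5} the operator is $D=\bar d+\partial+\bar\delta+\Phi_V-\id_V$, and the first three terms already preserve $[\Omega^{\sbullet}(BG_{\sbullet})\otimes V]^{G_{\ssbullet+1}}$ by Lemma~\ref{lem:4.6}. Thus it suffices to show that left cup-multiplication by the Maurer--Cartan element $\Phi_V-\id_V$ preserves the invariant subspace. The summand $\id_V$ lives in $\Omega^0(BG_0)\otimes\End(V)$, so multiplying by it is $\pm\id$ (the front projection out of $BG_p$ is the map to a point and the back projection is the identity), which trivially preserves invariants. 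Everything therefore reduces to proving that if $\alpha\in[\Omega^{q}(BG_{p})\otimes V]^{G_{p+1}}$ then $\Phi_V\abxcup\alpha\in[\Omega^{\sbullet}(BG_{p+1})\otimes V]^{G_{p+2}}$.

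Since $\Phi_V=\sum_{k\ge 0}\Phi^{(k)}_V$ is concentrated in simplicial degree $1$ (it lives on $BG_1=G$), the product $\Phi_V\abxcup\alpha=\pm\,\pr^*\Phi_V\wedge\pr'^*\alpha$ lives on $BG_{p+1}$, with $\End(V)$ acting on the $V$-slot. The geometric input I would record first is the pair of intertwining relations between the $\zeta$-actions and the front/back projections, namely $\pr\circ\zeta(h_0,h_1,\dots,h_{p+1})=\zeta(h_0,h_1)\circ\pr$ and $\pr'\circ\zeta(h_0,h_1,\dots,h_{p+1})=\zeta(h_1,\dots,h_{p+1})\circ\pr'$. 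These are exactly the two displayed identities used in the proof of Lemma~\ref{lem:3.1}, specialised to the front factor $BG_1$ and the back factor $BG_p$, and they follow directly from \eqref{eqn:3.8}, \eqref{eqn:3.13}, \eqref{eqn:3.14}, \eqref{eqn:3.6} and \eqref{eqn:3.7}. They allow me to push the ambient action $\widehat\zeta(h_0,\dots,h_{p+1})$ through the cup product, splitting it into the $G_2$-action $\zeta(h_0,h_1)$ on the $\Phi_V$-factor and the $G_{p+1}$-action $\zeta(h_1,\dots,h_{p+1})$ on the $\alpha$-factor, while the single $V$-twist $\Phi^{(0)}_V(h_0)$ from the residual action $\widehat\gamma_0(h_0)$ lands on the output vector.

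The remaining ingredient is the equivariance of the representation form. Restricting one of the two arguments in the multiplicativity relation \eqref{eqn:2.35} to a fixed group element and using that positive-degree forms pull back to zero under constant maps (together with \eqref{eqn:2.36}) gives the left- and right-equivariance $L_{h_0}^*\Phi_V=\Phi^{(0)}_V(h_0)\circ\Phi_V$ and $R_{h_1}^*\Phi_V=\Phi_V\circ\Phi^{(0)}_V(h_1)$, whence $\zeta(h_0,h_1)^*\Phi_V=\Phi^{(0)}_V(h_0)\circ\Phi_V\circ\Phi^{(0)}_V(h_1)^{-1}$ (recall $\zeta(h_0,h_1)(g)=h_0gh_1^{-1}$). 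Feeding this and the $G_{p+1}$-invariance of $\alpha$ (whose $V$-slot carries precisely the twist $\Phi^{(0)}_V(h_1)$) into the pulled-through expression, the conjugating factors $\Phi^{(0)}_V(h_0)(\cdot)\Phi^{(0)}_V(h_1)^{-1}$ coming from $\Phi_V$, the twist $\Phi^{(0)}_V(h_1)$ hidden in the invariance of $\alpha$, and the ambient twist $\Phi^{(0)}_V(h_0)$ combine to the identity, so that $\widehat\zeta(h_0,\dots,h_{p+1})(\Phi_V\abxcup\alpha)=\Phi_V\abxcup\alpha$. The main obstacle I anticipate is exactly this bookkeeping of the three $\Phi^{(0)}_V$-twists (alongside the Koszul sign of the cup product): one must insert $\Phi^{(0)}_V(h_1)^{-1}\Phi^{(0)}_V(h_1)=\id$ in the correct place so that the source-twist of $\Phi_V$ is matched against $\alpha$'s twist and its target-twist against the ambient action, and then check that these cancel rather than compound.
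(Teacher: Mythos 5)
Your proposal is correct and follows essentially the same route as the paper's proof: reduce to left cup-multiplication by $\Phi_V$, split the $\widehat{\zeta}$-action across the front/back projections of $BG_{p+1}$, and cancel the three $\Phi^{(0)}_V$-twists using the left- and right-equivariance of $\Phi_V$. The only (harmless) difference is that you derive both equivariance identities directly from the multiplicativity relation \eqref{eqn:2.35}, whereas the paper quotes the left-equivariance from its companion paper and deduces the right-equivariance by an adjoint-action computation at the identity.
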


\begin{proof}
From the definition, it is clearly sufficient to show that the left-equivariant representation form $\Phi_V$ preserves $[\Omega^{\sbullet}(BG_{\sbullet}) \otimes V]^{G_{\ssbullet+1}}$. To prepare for this, we first observe that the $k$th component $\Phi^{(k)}_V$ of $\Phi_V$ satisfies
\begin{equation}\label{eqn:4.5}
L_{g}^* \Phi^{(k)}_V = \Phi^{(0)}_V(g) \circ \Phi^{(k)}_V,
\end{equation}
where $L_{g}$ indicates the left translation determined by the group element $g$; see Lemma~3.15 of \cite{AriasAbad2019}.  
Using this, we claim that
\begin{equation}\label{eqn:4.6}
R_{g}^*  \Phi^{(k)}_V = \Phi^{(k)}_V \circ \Phi^{(0)}_V(g),
\end{equation}
where $R_{g}$ indicates the right translation determined by the group element $g$. Indeed, let $x_1,\dots,x_k \in \gfrak$. Then, attending to the definition of $\Phi^{(k)}_V$ in \eqref{eqn:2.33}, we get
\begin{align*}
(R_{g}^*  \Phi^{(k)}_V)(e) (x_1,\dots,x_k) &=  \Phi^{(k)}_V(g) \big( (dR_g)_e(x_1),\dots, (dR_g)_e(x_k) \big) \\
&= (L_{g}^*  \Phi^{(k)}_V)(e) \big( d(L_{g^{-1}} \circ R_g)_e(x_1),\dots, d(L_{g^{-1}} \circ R_g)_e(x_k) \big) \\
&= (L_{g}^*  \Phi^{(k)}_V)(e) \big( \Ad_{g^{-1}} x_1,\dots, \Ad_{g^{-1}} x_k \big) \\
&= \Phi^{(0)}_V(g) \left(  \Phi^{(k)}_V(e) \big( \Ad_{g^{-1}} x_1,\dots, \Ad_{g^{-1}} x_k \big) \right) \\
&= \Phi^{(0)}_V(g) \left( \Phi^{(0)}_V(g^{-1}) \circ \Phi^{(k)}_V(e) \big(  x_1,\dots, x_k \big) \circ \Phi^{(0)}_V(g) \right) \\
&= \Phi^{(k)}_V(e) \big(  x_1,\dots, x_k \big) \circ \Phi^{(0)}_V(g), 
\end{align*}
as we wished. Next, let us take an invariant element $\eta \in [\Omega^{q}(BG_{p}) \otimes V]^{G_{p + 1}}$. On account of  \eqref{eqn:4.2}, \eqref{eqn:4.3} and \eqref{eqn:4.4}, this means that 
\begin{equation}\label{eqn:4.7}
\zeta(g_0,\dots,g_{p})^* \eta =\Phi^{(0)}_V(g_0) ( \eta).
\end{equation}
We need to show that $\Phi^{(k)}_V \abxcup \eta \in [\Omega^{k+ q}(BG_{p+1}) \otimes V]^{G_{p + 2}}$. On this purpose we notice firstly that
\begin{equation}\label{eqn:4.8}
\Phi^{(k)}_V \abxcup \eta = \pi_1^* \Phi^{(k)}_V \wedge \pi_{(p)}^* \eta, 
\end{equation}
where $\pi_1 \colon G_{p+1} \to G$ is the projection onto the first factor and $\pi_{(p)}\colon G_{p+1} \to G_{p}$ is the projection onto the remaining $p$ factors. Notice, secondly, that 
\begin{align}\label{eqn:4.9}
\begin{split}
\pi_1 \circ \zeta(g_0,\dots, g_{p+1}) &= L_{g_0} \circ R_{g_1^{-1}}  \circ \pi_1, \\
\pi_{(p)} \circ \zeta(g_1,\dots, g_{p+1}) &= \zeta(g_1,\dots, g_{p+1}) \circ \pi_{(p)}.
\end{split}
\end{align}
By using \eqref{eqn:4.5} ,\eqref{eqn:4.6}, \eqref{eqn:4.7}, \eqref{eqn:4.8} and \eqref{eqn:4.9}, we find
\begin{align*}
\zeta(g_0,\dots, g_{p+1})^* (\Phi^{(k)}_V \abxcup \eta) &= \zeta(g_0,\dots, g_{p+1})^* \big( \pi_1^* \Phi^{(k)}_V \wedge \pi_{(p)}^* \eta\big) \\
&= (\pi_1 \circ \zeta(g_0,\dots, g_{p+1}) )^* \Phi^{(k)}_V \wedge (\pi_{(p)} \circ \zeta(g_0,\dots, g_{p+1}) )^* \eta \\
&= (L_{g_0} \circ R_{g_1^{-1}}  \circ \pi_1)^*\Phi^{(k)}_V  \wedge (\zeta(g_1,\dots, g_{p+1}) \circ \pi_{(p)})^* \eta \\
&= \pi_1^* R_{g_1^{-1}}^* L_{g_0}^* \Phi^{(k)}_V \wedge \pi_{(p)}^* \zeta(g_1,\dots, g_{p+1})^* \eta \\
&= \pi_1^* \big(\Phi^{(0)}_V(g_0) \circ \Phi^{(k)}_V \circ \Phi^{(0)}_V(g_1^{-1}) \big) \wedge \pi_{(p)}^* \big( \Phi^{(0)}_V(g_1) (\eta) \big) \\
&=  \Phi^{(0)}_V(g_0) \big( \pi_1^* \Phi^{(k)}_V \wedge \pi_{(p)}^* \eta\big) \\
&= \Phi^{(0)}_V(g_0) (\Phi^{(k)}_V \abxcup \eta),
\end{align*}
which implies what we want. 
\end{proof}

With this result in hand, we can now define the equivariant version of the Bott-Shulman-Stasheff DG category, which we denote by $\BSS^{G}(G)$. Its objects are the same as those of $\BSS(G)$, and, as such, they are just objects in the category $\Rep(\TT\gfrak)$. For any two objects $V$ and $V'$, the space of morphisms is the graded vector space $[\Omega^{\sbullet}(BG_{\sbullet}) \otimes \Hom(V,V')]^{G_{\ssbullet+1}}$ with differential $\partial_{D.D'}$ given by exactly the same formula as that of $\BSS(G)$. Note that Lemmas~\ref{lem:4.6} and \ref{lem:4.7} ensure that this is well-defined. We would also like to highlight the following key result. 

\begin{proposition}\label{prop:4.8aa}
The inclusion DG functor from $\BSS^{G}(G)$ to $\BSS(G)$ is a quasi-equivalence.  
\end{proposition}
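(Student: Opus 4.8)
The plan is as follows. Since the inclusion DG functor is the identity on objects, it is automatically quasi essentially surjective, so the entire content of Proposition~\ref{prop:4.8aa} is the quasi full faithfulness. Thus I must show that for every pair of objects $V,V'$ the inclusion of Hom-complexes
\[
[\Omega^{\sbullet}(BG_{\sbullet}) \otimes \Hom(V,V')]^{G_{\ssbullet+1}} \longrightarrow \Omega^{\sbullet}(BG_{\sbullet}) \otimes \Hom(V,V'),
\]
both endowed with the twisted differential $\partial_{D,D'}$ built from the operator $D$ of Corollary~\ref{cor:4.5}, is a quasi-isomorphism. Lemmas~\ref{lem:4.6} and \ref{lem:4.7} already guarantee that the left-hand side is a subcomplex, so this inclusion is well defined.

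The decisive observation is that the Maurer--Cartan element $\Phi_V - \id_V$ is supported in simplicial degree one: both $\Phi_V$ and the constant term $\id_V$ are differential forms on $G = BG_1$. Hence cupping with $\Phi_{V'}-\id_{V'}$ or with $\Phi_V - \id_V$ raises the simplicial degree $p$ by exactly one, just as the simplicial differential $\partial$ does, whereas the de~Rham part $\bar d$ and the coefficient part $\bar\delta$ preserve $p$. I would therefore filter both complexes by simplicial degree, $F^{r} = \bigoplus_{p \geq r}\Omega^{\sbullet}(BG_{p}) \otimes \Hom(V,V')$. The twisted differential $\partial_{D,D'}$ respects this filtration, and the induced differential $d_0$ on the associated graded is simply the vertical differential $\bar d + \bar\delta$, with all of the twisting and simplicial terms dropping out.

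Consequently the $E_0$-pages, together with their differentials $d_0$, of the two twisted complexes coincide with those of the untwisted complexes treated in Lemma~\ref{lem:4.6}. For each fixed $p$, the group $G_{p+1}$ is compact and connected, so a theorem of Cartan~\cite{Cartan1936} shows that passing to $G_{p+1}$-invariants induces an isomorphism on the vertical cohomology of $\Omega^{\sbullet}(BG_{p}) \otimes \Hom(V,V')$; this is exactly the computation performed in the proof of Lemma~\ref{lem:4.6} (equivalently, it is that lemma applied to the object $\Hom(V,V')$ of $\Rep(\TT\gfrak)$). Therefore the inclusion induces an isomorphism on the $E_1$-terms of the associated spectral sequences, regardless of the fact that the first differential $d_1$ is itself twisted. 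As in the final step of the proof of Lemma~\ref{lem:3.1}, both spectral sequences converge, and the standard comparison theorem for filtered complexes then forces the inclusion to be a quasi-isomorphism on the total complexes, completing the proof of quasi full faithfulness and hence of the proposition.

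The main obstacle --- the one genuinely new input beyond the untwisted statement of Lemma~\ref{lem:4.6} --- is precisely the recognition that $\Phi_V - \id_V$ lives in simplicial degree one and so disappears from the associated graded; this is what reduces the twisted comparison to the untwisted Cartan argument at the level of $E_1$. After that, the only point requiring care is the convergence bookkeeping, which is handled exactly as in Lemma~\ref{lem:3.1}: one checks that the filtration is exhaustive and bounded below so that the comparison theorem for spectral sequences applies.
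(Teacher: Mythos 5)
Your proposal is correct and follows essentially the same route as the paper: columnwise Cartan averaging over the compact connected group $G_{p+1}$ combined with the spectral sequence comparison of Lemma~\ref{lem:3.1}, filtered by simplicial degree. The one thing you make explicit that the paper leaves implicit is that the Maurer--Cartan twist $\Phi_V-\id_V$ raises the simplicial degree and therefore drops out of the associated graded, which is precisely why the twisted comparison reduces to the untwisted one of Lemma~\ref{lem:4.6}.
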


\begin{proof}
For any pair of objects $V$ and $V'$ in $\BSS^{G}(G)$, since $G_{p+1}$ is compact and connected, we know that the inclusion $[\Omega^{\sbullet}(BG_{p}) \otimes \Hom(V,V')]^{G_{p+1}} \to \Omega^{\sbullet}(BG_{p}) \otimes \Hom(V,V')$ is a quasi-isomorphism. The result thus follows by an argument entirely similar to that of the proof of Lemma~\ref{lem:3.1}. 
\end{proof}


\subsection{The Van~Est DG functor}
In this subsection we describe the construction of a DG functor between the equivariant Bott-Shulman-Stasheff DG category $\BSS^{G}(G)$ and the DG enhanced category $\DGRep(\TT\gfrak)$, which is a quasi-equivalence when $G$ is compact. We use freely the definitions and notation from \S\ref{sec:3.1}. 

Let $V$ be and object of $\Rep(\TT\gfrak)$ and consider again the cochain complex $\Omega^{\sbullet}(BG_{\sbullet}) \otimes V$. For fixed $p$ and $q$, we let $[\Omega^{q}(BG_{p}) \otimes V]^{G_{p}}$ denote the subspace of $G_{p}$-invariant elements of $\Omega^{q}(BG_{p}) \otimes V$ with respect to the action \eqref{eqn:4.3}. From the definition it is obvious that $[\Omega^{q}(BG_{p}) \otimes V]^{G_{p}}$ coincides with $\Omega^{q}(BG_{p})^{G_{p}} \otimes V$. Thus, evaluation at $(e,\dots, e)$ induces an isomorphism of graded vector spaces from $[\Omega^{q}(BG_{p}) \otimes V]^{G_{p}}$ onto $\Lambda^{q}\gfrak_{p}^* \otimes V$. On the latter, we consider the action $\widehat{\gamma}'_0(g)$ of elements $g$ of $G$ defined by  
\begin{equation}
\widehat{\gamma}'_0(g) (\xi \otimes v) = \Ad_g^*\xi \otimes \big(\Phi_V^{(0)}(g) (v) \big),
\end{equation}
for $\xi \in \Lambda^{q}\gfrak_{p}^*$ and $v \in V$. The following result, which is a direct consequence of Lemma~\ref{lem:3.2}, will be needed below. 

\begin{lemma}\label{lem:4.9}
The following diagram commutes
\begin{equation*}
\xymatrix@C=7ex{[\Omega^{q}(BG_{p}) \otimes V]^{G_{p}} \ar[r]^-{\widehat{\gamma}_0(g)} \ar[d] & [\Omega^{q}(BG_{p})\otimes V]^{G_{p}} \ar[d] \\
\Lambda^{q}\gfrak_p^* \otimes V \ar[r]^-{\widehat{\gamma}'_0(g)} & \Lambda^{q}\gfrak_p^* \otimes V,}
\end{equation*}
where the vertical arrows denote evaluation at the element $(e,\dots,e)$. 
\end{lemma}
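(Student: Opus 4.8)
The plan is to deduce the commutativity directly from Lemma~\ref{lem:3.2} by tensoring that square with $V$. First I would recall the observation, already made in the paragraph preceding the statement, that $[\Omega^{q}(BG_{p}) \otimes V]^{G_{p}}$ coincides with $\Omega^{q}(BG_{p})^{G_{p}} \otimes V$. Consequently the vertical evaluation maps in the diagram are simply $\mathrm{ev}_{(e,\dots,e)} \otimes \id_V$; that is, evaluation at $(e,\dots,e)$ acts only on the differential-form factor and leaves the $V$-factor untouched.

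Next I would observe that both horizontal maps split as honest tensor products. By the definition \eqref{eqn:4.2} of the action, $\widehat{\gamma}_0(g)$ equals $\gamma_0(g)^* \otimes \Phi_V^{(0)}(g)$, while the action $\widehat{\gamma}'_0(g)$ on $\Lambda^{q}\gfrak_p^* \otimes V$ equals $\Ad_g^* \otimes \Phi_V^{(0)}(g)$. Thus the square under consideration is the tensor product of two squares. The first factor, involving only the form side, is precisely the commuting square of Lemma~\ref{lem:3.2} relating $\gamma_0(g)^*$ and $\Ad_g^*$ through evaluation at $(e,\dots,e)$. The second factor is the square whose two horizontal arrows are both equal to $\Phi_V^{(0)}(g)$ and whose two vertical arrows are the identity on $V$; this commutes trivially.

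Finally I would verify the claim on a pure tensor $\omega \otimes v$ with $\omega \in \Omega^{q}(BG_{p})^{G_{p}}$ and $v \in V$. The top-then-right route yields $(\gamma_0(g)^*\omega)_{(e,\dots,e)} \otimes \Phi_V^{(0)}(g)(v)$, whereas the down-then-right route yields $\Ad_g^*\!\big(\omega_{(e,\dots,e)}\big) \otimes \Phi_V^{(0)}(g)(v)$. These coincide because Lemma~\ref{lem:3.2} furnishes the identity $(\gamma_0(g)^*\omega)_{(e,\dots,e)} = \Ad_g^*\!\big(\omega_{(e,\dots,e)}\big)$, and $\RR$-linearity of all the maps involved then propagates the equality from pure tensors to all of $\Omega^{q}(BG_{p})^{G_{p}} \otimes V$. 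I do not anticipate any genuine obstacle here: the entire mathematical content is already contained in Lemma~\ref{lem:3.2}, and the only point requiring care is the elementary bookkeeping that evaluation leaves the $V$-factor fixed, so that the $\Phi_V^{(0)}(g)$-contribution is common to both routes and cancels out of the comparison.
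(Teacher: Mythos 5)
Your argument is correct and is exactly the one the paper intends: the text gives no separate proof, stating only that the lemma is ``a direct consequence of Lemma~\ref{lem:3.2},'' and your write-up merely fills in the routine bookkeeping that the square factors as the Lemma~\ref{lem:3.2} square tensored with the trivially commuting square on the $V$-factor. No gaps.
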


Next we consider the morphism of cochain complexes defined by
$$
\VEscr_V = \VE \otimes \id_{V} \colon \Omega^{\sbullet}(G_{\sbullet}) \otimes V \longrightarrow \uW^{\sbullet,\sbullet} \gfrak \otimes V,
$$  
where $\VE \colon \Omega^{\sbullet}(G_{\sbullet}) \to \uW^{\sbullet,\sbullet} \gfrak$ is the Van~Est map. By virtue of Lemma~\ref{lem:3.3}, the restriction of $\VEscr_V$ to $[\Omega^{q}(BG_{p}) \otimes V]^{G_{p}}$ vanishes unless $q = p$. From this it follows at once that this restriction, which we still denote by $\VEscr_V$, has its image contained in $\uS^{p} \gfrak^* \otimes V$. It is also worth pointing out that, if we consider the morphism of graded vector spaces defined by
$$
\widetilde{\VEscr}_V = \widetilde{\VE} \otimes \id_V  \colon \Lambda^{\sbullet} \gfrak_{p}^* \otimes V \to \uS^{\sbullet} \gfrak^* \otimes V,
$$
we get a commutative diagram
\begin{equation*}
\xymatrix{[\Omega^{p}(BG_{p}) \otimes V]^{G_{p}} \ar[dr]^-{\VEscr_V} \ar[d]&  \\
\Lambda^{p}\gfrak_{p}^* \otimes V \ar[r]_-{\widetilde{\VEscr}_V} & \uS^{p}\gfrak^* \otimes V,}
\end{equation*}
with the vertical arrow being the evaluation at $(e,\dots,e)$. This instructs us to introduce yet one more action $\widehat{\gamma}''_0(g)$ of elements $g$ of $G$ on $\uS^{p}\gfrak^* \otimes V$ defined by
\begin{equation}\label{eqn:4.11}
\widehat{\gamma}''_0(g) (\xi \otimes v) = \Ad_g^*f \otimes \big(\Phi_V^{(0)}(g) (v) \big),
\end{equation}
for $f \in \uS^{p}\gfrak^*$ and $v \in V$. The corresponding subspace of $G$-invariants elements of $\uS^{p}\gfrak^* \otimes V$ will be denoted by $(\uS^{p}\gfrak^* \otimes V)^{G}$. Then we have the following result. 

\begin{proposition}\label{prop:4.10}
The restriction of the morphism $\VEscr_V$ to $[\Omega^{\sbullet}(BG_{\sbullet}) \otimes V]^{G_{\ssbullet+1}}$ has its image contained in $(\uW^{\sbullet,\sbullet} \gfrak \otimes V)_{\bas}$. 
\end{proposition}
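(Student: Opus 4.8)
The plan is to verify directly the two defining conditions \eqref{eqn:4.1} for an element to be basic, applied to $\alpha = \VEscr_V(\eta)$ with $\eta$ ranging over $[\Omega^{\sbullet}(BG_{\sbullet}) \otimes V]^{G_{\ssbullet+1}}$. Since such an $\eta$ is in particular invariant under the $G_p$-action $\widehat{\gamma}$, it lies in $[\Omega^{q}(BG_{p}) \otimes V]^{G_p} = \Omega^{q}(BG_{p})^{G_p} \otimes V$, so by the discussion preceding the proposition (the tensored form of Lemma~\ref{lem:3.3}) the map $\VEscr_V$ annihilates it unless $q = p$, and for $q = p$ its image lies in $\uS^{p}\gfrak^* \otimes V$. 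The first basic condition $(i_x \otimes 1)\alpha = 0$ is then immediate: the contraction $i_x$ kills $\uS^{\sbullet}\gfrak^*$ because $i_b w^a = 0$ by \eqref{eqn:2.29}, and $\alpha$ carries no $\Lambda$-component. Thus the content of the proposition is concentrated in the second condition.

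For the condition $(L_x \otimes 1 + 1 \otimes L_x)\alpha = 0$ I would exploit the residual invariance of $\eta$ recorded in the $\widehat{\gamma}_0$-factor of the $G_{p+1}$-action \eqref{eqn:4.4}. The first step is to observe that $\widetilde{\VEscr}_V = \widetilde{\VE} \otimes \id_V$ intertwines the action $\widehat{\gamma}'_0(g)$ on $\Lambda^{p}\gfrak_p^* \otimes V$ with the action $\widehat{\gamma}''_0(g)$ of \eqref{eqn:4.11} on $\uS^{p}\gfrak^* \otimes V$: since both actions act on the $V$-factor through the same operator $\Phi^{(0)}_V(g)$, this reduces to the identity $\widetilde{\VE} \circ \Ad_g^* = \Ad_g^* \circ \widetilde{\VE}$, which is precisely the commutative square established inside the proof of Proposition~\ref{prop:3.4}. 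Combining this intertwining with Lemma~\ref{lem:4.9} and the factorisation of $\VEscr_V$ through evaluation at $(e,\dots,e)$, the $\widehat{\gamma}_0$-invariance of $\eta$ propagates to the $\widehat{\gamma}''_0$-invariance of $\alpha$; that is, $\widehat{\gamma}''_0(g)\alpha = \alpha$ for every $g \in G$.

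It then remains to differentiate this invariance along $g = \exp(tx)$ at $t = 0$. Writing $\alpha = \sum f \otimes v$, the relation $\sum \Ad_g^* f \otimes \Phi^{(0)}_V(g)v = \sum f \otimes v$ differentiates to $\sum (L_x f) \otimes v + \sum f \otimes (L_x v) = 0$, upon using that the infinitesimal generator of $\Ad_g^*$ on $\uS^{\sbullet}\gfrak^*$ is the Lie derivative $L_x = \ad_x^*$ of \eqref{eqn:2.29}, and that the infinitesimal generator of the $G$-representation $\Phi^{(0)}_V$ is the operator $L_x$ on $V$ coming from the $\TT\gfrak$-structure. This is exactly $(L_x \otimes 1 + 1 \otimes L_x)\alpha = 0$, which finishes the verification. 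The step I expect to require the most care is the bookkeeping in the intertwining argument, namely confirming that $\widehat{\gamma}''_0$ is genuinely the image action under $\widetilde{\VEscr}_V$ and that the differentiated coadjoint action on the symmetric algebra matches the Lie-derivative operator $L_x$ appearing in \eqref{eqn:4.1}; once this is in place, the remaining implications follow mechanically from Lemmas~\ref{lem:3.3} and \ref{lem:4.9} together with Proposition~\ref{prop:3.4}.
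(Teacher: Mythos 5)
Your proposal is correct and follows essentially the same route as the paper: the paper's proof identifies $(\uW^{\sbullet,\sbullet}\gfrak\otimes V)_{\bas}$ with $(\uS^{\sbullet}\gfrak^*\otimes V)^{G}$ and then reduces everything to the commutativity of the square intertwining $\widehat{\gamma}'_0(g)$ and $\widehat{\gamma}''_0(g)$ via $\widetilde{\VEscr}_V$, invoking Lemma~\ref{lem:4.9} and the $\Ad$-equivariance of $\widetilde{\VE}$ from the proof of Proposition~\ref{prop:3.4}, exactly as you do. The only difference is that you unpack the identification of the basic subspace with the $G$-invariants into the two conditions of \eqref{eqn:4.1} (the contraction condition from the image lying in $\uS^{p}\gfrak^*\otimes V$, the Lie-derivative condition by differentiating the invariance), which the paper leaves implicit.
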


\begin{proof}
The first thing to notice is that, owing to the definitions in \eqref{eqn:4.1} and \eqref{eqn:4.11}, the graded subspace $(\uW^{\sbullet,\sbullet} \gfrak \otimes V)_{\bas}$ coincides with $(\uS^{\sbullet}\gfrak^* \otimes V)^{G}$. Therefore, in light of Lemma~\ref{lem:4.9}, it will suffice to show that the following diagram commutes
\begin{equation*}
\xymatrix@C=7ex{\Lambda^{p}\gfrak_{p}^* \otimes V \ar[r]^-{\widehat{\gamma}'_0(g)}  \ar[d]_-{\widetilde{\VEscr}_V} & \Lambda^{p}\gfrak_{p}^* \otimes V \ar[d]^-{\widetilde{\VEscr}_V}\\
\uS^p \gfrak^* \otimes V \ar[r]^-{\widehat{\gamma}''_0(g)} & \uS^p \gfrak^* \otimes V.}
\end{equation*}
But this is an easy consequence of the commutativity of the diagram we established in the course of the proof of Proposition~\ref{prop:3.4}. 
\end{proof}

We also note the following result here.

\begin{proposition}\label{prop:4.11}
The Maurer-Cartan element $\Phi_V - \id_V$ of $\Omega^{\sbullet}(BG_{\sbullet}) \otimes \End(V)$ is sent by the morphism $\VEscr_{\End(V)}$ to the Maurer-Cartan element $t^{a} \otimes L_{a} - w^{a} \otimes i_{a}$ of $\uW^{\sbullet,\sbullet} \gfrak \otimes \End(V)$. 
\end{proposition}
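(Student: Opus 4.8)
The plan is to compute $\VEscr_{\End(V)}(\Phi_V-\id_V) = (\VE\otimes\id_{\End(V)})(\Phi_V-\id_V)$ directly, by applying the Van~Est map to each homogeneous piece of the left-equivariant representation form. Recall that $\Phi_V = \sum_{k\geq 0}\Phi_V^{(k)}$ with $\Phi_V^{(k)}\in\Omega^k(G)\otimes\End(V)^{-k}$, so that, viewed inside $\Omega^{\sbullet}(BG_{\sbullet})\otimes\End(V)$, the component $\Phi_V^{(k)}$ sits in bidegree $(p,q)=(1,k)$, i.e.\ a $k$-form on $BG_1=G$. Consistently with the proof of the Maurer--Cartan lemma, $\id_V$ is to be read as the constant function $\id_V$ on $BG_1$, of bidegree $(1,0)$, which is exactly what makes $\Phi_V-\id_V$ homogeneous of total degree $1$. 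Since $\VE$ is a map of DG algebras, $\VEscr_{\End(V)}$ is built from an algebra homomorphism and therefore carries Maurer--Cartan elements to Maurer--Cartan elements; together with Proposition~\ref{prop:4.10} (the image being basic) this serves as a consistency check, but the substance of the proof is to pin down \emph{which} Maurer--Cartan element arises.

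First I would dispose of the higher components by a degree count. The Van~Est map sends $\Omega^q(BG_p)$ into $\uW^{p,q}\gfrak = \Lambda^{p-q}\gfrak^*\otimes\uS^q\gfrak^*$, a space that vanishes whenever $q>p$. As $\Phi_V^{(k)}$ has bidegree $(1,k)$, it follows that $\VE(\Phi_V^{(k)})=0$ for all $k\geq 2$, leaving only the $k=0$ and $k=1$ terms. For $\id_V$ I would note that its bidegree is $(1,0)$, so formula \eqref{eqn:3.11} involves only the Lie-derivative operators $L_{x^{1,\sharp}}$; since $\id_V$ is a \emph{constant} $\End(V)$-valued function on $G$, each such Lie derivative annihilates it, whence $\VEscr_{\End(V)}(\id_V)=0$.

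It then remains to evaluate the two surviving contributions via \eqref{eqn:3.11}. For $\Phi_V^{(0)}$, of bidegree $(1,0)$, the formula reduces to the single Lie-derivative term $(L_{x^{1,\sharp}}\Phi_V^{(0)})(e)$, the derivative of the representation $g\mapsto\Phi_V^{(0)}(g)$ along the fundamental vector field of the action $\gamma_1$ at the identity; using $x^{1,\sharp}(e)=-x$ together with the relation $\frac{d}{dt}\big|_{t=0}\Phi_V^{(0)}(\exp(tx))=L_x$ inherited from the differentiation functor and \eqref{eqn:2.36}, this produces the $t^{a}\otimes L_{a}$ term under the pairing identifying $\uW^{1,0}\gfrak$ with $\Lambda^1\gfrak^*$ via $i_{\Lambda}$. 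For $\Phi_V^{(1)}$, of bidegree $(1,1)$, the formula reduces to the contraction $(i_{x^{1,\sharp}}\Phi_V^{(1)})(e)=\Phi_V^{(1)}(e)(-x)=-i_x$, where the last equality is \eqref{eqn:2.33}; under the pairing identifying $\uW^{1,1}\gfrak$ with $\uS^1\gfrak^*$ via $i_{\uS}$ this yields the $-w^{a}\otimes i_{a}$ term. Summing the four computations gives $\VEscr_{\End(V)}(\Phi_V-\id_V)=t^{a}\otimes L_{a} - w^{a}\otimes i_{a}$, as claimed.

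The hard part will be the sign bookkeeping, and in particular getting the \emph{relative} sign between the $t^{a}\otimes L_{a}$ and $w^{a}\otimes i_{a}$ terms correct. Three sources of signs must be reconciled: the fundamental vector fields $x^{1,\sharp}(e)=-x$ of the $\gamma_1$-action; the flow and orientation conventions entering the Lie derivative of the matrix-valued function $\Phi_V^{(0)}$ as against the interior product of the matrix-valued form $\Phi_V^{(1)}$; and the pairing conventions for $\uW^{\sbullet,\sbullet}\gfrak$, where $\uW^{1,0}$ is read off through $i_{\Lambda}$ and $\uW^{1,1}$ through $i_{\uS}$. I expect every other step to be routine once the decomposition and the degree count are in place; the only genuine verification is that these conventions conspire to reproduce precisely the Maurer--Cartan element $t^{a}\otimes L_{a} - w^{a}\otimes i_{a}$ produced in the earlier lemma, so that the twisted differentials on the two sides correspond under $\VEscr$.
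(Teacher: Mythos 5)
Your proposal is correct and follows essentially the same route as the paper's own proof: decompose $\Phi_V=\sum_{k\geq 0}\Phi_V^{(k)}$, kill the components with $k\geq 2$ because $\uW^{1,k}\gfrak=\Lambda^{1-k}\gfrak^*\otimes\uS^k\gfrak^*$ vanishes there, observe $\VEscr_{\End(V)}(\id_V)=0$, and evaluate the $k=0$ and $k=1$ pieces via formula \eqref{eqn:3.11} using $x^{1,\sharp}(e)=-x$ to obtain $L_x$ and $-i_x$, which are exactly the values of $t^a\otimes L_a$ and $-w^a\otimes i_a$. The sign bookkeeping you flag is handled in the paper exactly as you anticipate (the minus sign in the contraction term coming from $x^{1,\sharp}(e)=-x$ together with \eqref{eqn:2.33}), so there is nothing missing.
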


\begin{proof}
Let us first write $\Phi_V$ as a sum $\sum_{k \geq 0} \Phi_{V}^{(k)}$. Next, let us observe that, by definition,
$$
\VE \colon \Omega^k(G)  \longrightarrow \uW^{1,k} \gfrak = \Lambda^{1-k}\gfrak^* \otimes \uS^{k}\gfrak^* = \begin{cases} \Lambda^{1}\gfrak^* & \text{if $k=0$,} \\
\uS^{1}\gfrak^* & \text{if $k=1$,} \\
0 & \text{otherwise.}\end{cases}
$$
This implies that $\VEscr_{\End(V)} (\Phi_V^{(k)})= 0$ for $k \geq 2$. On the other hand, attending to the definitions, for each $x \in \gfrak$, we have
$$
\VEscr_{\End(V)} (\Phi_V^{(0)})(x) = (L_{x^{\sharp}} \Phi_V^{(0)})(e) = L_x \circ \Phi_V^{(0)}(e) = L_x \circ \id_V = L_x,
$$
and 
$$
\VEscr_{\End(V)} (\Phi_V^{(1)})(x) = (i_{-x^{\sharp}} \Phi_V^{(1)})(e) = \Phi_V^{(1)}(e)(-x^{\sharp}(e)) = -\Phi_V^{(1)}(e)(x)  = - i_x.
$$
Moreover,  $\VEscr_{\End(V)}(\id_V)= 0$. Since for any $x \in \gfrak$, $(t^{a} \otimes L_{a})(x) = L_x$ and $(w^{a} \otimes i_{a})(x) = i_x$,  conclusion follows. 
\end{proof}

With this preparatory work completed, we come now to the definition that concerns us. Take two object $V$ and $V'$ of $\Rep(\TT\gfrak)$ and let us write $\VEscr_{V,V'}$ for $\VEscr_{\Hom(V,V')}$. Applying Proposition~\ref{prop:4.10} yields that
$$
\VEscr_{V,V'} \colon [\Omega^{\sbullet}(BG_{\sbullet}) \otimes \Hom(V,V')]^{G_{\ssbullet+1}} \longrightarrow  (\uW^{\sbullet,\sbullet} \gfrak \otimes \Hom(V,V'))_{\bas}. 
$$
When combined with Proposition~\ref{prop:4.11}, this shows that the collection of morphisms $\VEscr_{V,V'}$ defines a DG functor $\VEscr \colon \BSS^{G}(G) \to \DGRep(\TT\gfrak)$ which is the identity on objects. We shall henceforth refer to this as the \emph{Van~Est DG functor}. The following result is our main finding in this subsection.  

\begin{theoremB}\label{thm:4.12aa}
The Van~Est DG functor $\VEscr \colon \BSS^{G}(G) \to \DGRep(\TT\gfrak)$ is a quasi-equivalence.  
\end{theoremB}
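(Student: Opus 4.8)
The functor $\VEscr$ is the identity on objects, so $\mathbf{Ho}(\VEscr_1)$ is automatically essentially surjective; the entire content of the statement is therefore quasi full faithfulness, i.e. that for every pair of objects $V,V'$ the chain map
$$
\VEscr_{V,V'}\colon [\Omega^{\sbullet}(BG_{\sbullet})\otimes\Hom(V,V')]^{G_{\ssbullet+1}} \longrightarrow (\uW\gfrak\otimes\Hom(V,V'))_{\bas}
$$
is a quasi-isomorphism. First I would record what has already been assembled: by Proposition~\ref{prop:4.10} the target is identified with $(\uS^{\sbullet}\gfrak^*\otimes\Hom(V,V'))^{G}$ and $\VEscr_{V,V'}=\VE\otimes\id$, while Proposition~\ref{prop:4.11} shows that this map carries the Maurer--Cartan element $\Phi_V-\id_V$ to $t^{a}\otimes L_{a}-w^{a}\otimes i_{a}$. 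Consequently $\VEscr_{V,V'}$ intertwines the two twisted differentials $\partial_{D,D'}$ and is a genuine map of complexes. Thus the theorem amounts to a version of the Corollary to Theorem~\ref{thm:3.7} \emph{with coefficients} in the $\TT\gfrak$-module $\Hom(V,V')$ and with the differential deformed by the representation form.

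The plan is to establish this quasi-isomorphism by a filtered comparison of exactly the type used in Lemma~\ref{lem:3.1} and Proposition~\ref{prop:4.8aa}. I would filter both sides by the simplicial degree $p$, equivalently by the $\uW^{p,\sbullet}\gfrak$-weight on the target; since $\VE$ maps $\Omega^{q}(BG_{p})$ into $\uW^{p,q}\gfrak$, the map $\VEscr_{V,V'}$ is filtered. Passing to the associated graded removes every term of $\partial_{D,D'}$ that strictly raises $p$ — in particular all contributions of $\Phi_V=\sum_{k\geq 0}\Phi_V^{(k)}$, which lives entirely in simplicial degree one, and the corresponding $tt$- and $w$-terms of $\dW$ and of $M=t^{a}\otimes L_{a}-w^{a}\otimes i_{a}$ — leaving on each column only the vertical de~Rham differential $\bar d$ together with the internal differential $\bar\delta$ on the source, and their Van~Est counterparts on the target. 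Because $G_{p+1}$ is compact and connected, Cartan's theorem computes the vertical cohomology of each column just as in Lemma~\ref{lem:3.1}; tensoring this $E_1$-identification with the coefficient complex $(\Hom(V,V'),\delta)$ and invoking the K\"unneth theorem over $\RR$, the induced map on the $E_1$-page becomes the restricted Van~Est map of the Corollary to Theorem~\ref{thm:3.7} tensored with $\id_{\Hom(V,V')}$, hence an isomorphism.

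The delicate point — and the step I expect to be the main obstacle — is the bookkeeping of the Maurer--Cartan twist. The element $\Phi_V-\id_V$ (respectively $t^{a}\otimes L_{a}-w^{a}\otimes i_{a}$) is inhomogeneous for the $p$-filtration: the normalizing summand $-\id_V$ sits in simplicial degree zero while $\Phi_V$ sits in degree one, so $\partial_{D,D'}$ genuinely spreads across several filtration degrees. I would need to verify carefully that, after the standard sign normalization, the only piece surviving on the $E_0$-page is the vertical differential $\bar d+\bar\delta$ described above, and that every remaining twist term contributes solely to the higher differentials $d_1,d_2,\dots$, so that the $E_1$-comparison is precisely $\VE\otimes\id$ and nothing more. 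A second requirement is convergence of the two spectral sequences: although the complexes are unbounded in the de~Rham and symmetric directions, the underlying double complexes are of first-quadrant type in $(p,q)$, which supplies the regularity needed for convergence, exactly as invoked in Lemma~\ref{lem:3.1}. Once the $E_1$-map is shown to be an isomorphism and both spectral sequences are seen to converge, the comparison theorem yields that $\VEscr_{V,V'}$ is a quasi-isomorphism, proving quasi full faithfulness and hence the theorem. As an alternative that sidesteps the convergence analysis, one could instead imitate the proof of the Corollary directly, constructing a coefficient- and twist-compatible Alekseev--Meinrenken map $\widehat{\AM}_{V,V'}$ as a one-sided homotopy inverse to $\VEscr_{V,V'}$ and concluding by two-out-of-three; the cost there is verifying that $\widehat{\AM}_{V,V'}$ intertwines the two twisted differentials.
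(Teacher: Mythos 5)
Your reduction to quasi full faithfulness and your identification of what is already in place (Propositions \ref{prop:4.10} and \ref{prop:4.11}, compatibility with the twisted differentials) are correct, but your primary argument has a genuine gap at the $E_1$ comparison. Filtering by the simplicial degree $p$, the $E_0$ differential on the source column $p$ is $\bar d+\bar\delta$ on $[\Omega^{\sbullet}(BG_p)\otimes\Hom(V,V')]^{G_{p+1}}$, whose cohomology is (by Cartan's theorem) essentially $\mathrm{H}^{\sbullet}_{dR}(G^{p})\otimes \mathrm{H}^{\sbullet}(\Hom(V,V'))$ — an exterior algebra on primitive generators in each column, nonzero in many vertical degrees. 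The target column $p$ is $(\uS^{p}\gfrak^*\otimes\Hom(V,V'))^{G}$ with $E_1$ concentrated in vertical degree $q=p$. These $E_1$ pages are not isomorphic (already for $G=SU(2)$, $p=1$, $q=3$ the source is nonzero and the target vanishes), so the restricted Van~Est map cannot induce an isomorphism there. The Corollary to Theorem~\ref{thm:3.7} is a statement about \emph{total} cohomology — the collapse of $\mathrm{H}^{\sbullet}(G^{p})$ down to invariant polynomials only happens after the simplicial differential $d_1=\partial$ acts across columns — and it is itself proved not by a column-wise comparison but by exhibiting $\widehat{\AM}{}^{\theta}$ as a left inverse; so it cannot be fed back into your $E_1$ page. (A minor point in your favour: the normalizing summand $-\id_V$ is the constant function on $BG_1$, hence sits in simplicial degree one, so the $E_0$ differential really is just $\bar d+\bar\delta$; that worry evaporates.)

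The alternative you sketch in your last sentence is in fact the paper's proof. One tensors $\widehat{\AM}{}^{\theta}$ with $\id_{\Hom(V,V')}$, checks the equivariance square relating $\widehat{\gamma}''_0(g)$ and $\widehat{\gamma}_0(g)$ so that the result lands in $[\Omega^{\sbullet}(BG_{\sbullet})\otimes\Hom(V,V')]^{G_{\ssbullet+1}}$ and is a morphism for the twisted differentials, and then invokes Theorem~\ref{thm:3.7} to see that $\widehat{\AMsrc}{}^{\theta}_{V,V'}$ is a left inverse of $\VEscr_{V,V'}$; combined with the fact that the Alekseev--Meinrenken map is a quasi-isomorphism for compact connected $G$, this gives the two-out-of-three conclusion you describe. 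If you pursue that route and carry out the verification that $\widehat{\AMsrc}{}^{\theta}_{V,V'}$ intertwines the twisted differentials, your proof will match the paper's; the spectral-sequence route as written does not close.
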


\begin{proof}
Following the discussion in the last part of \S \ref{sec:3.1}, let us again consider the map $\widehat{\AM}{}^{\theta}$ . We notice that, by definition, the following diagram commutes 
$$
\xymatrix@C=7ex{\uS^{p} \gfrak^* \ar[r]^-{\Ad_g^*}\ar[d]_-{\widehat{\AM}{}^{\theta}} & \uS^{p} \gfrak^* \ar[d]^-{\widehat{\AM}{}^{\theta}} \\
\Omega^p(BG_p)^{G_p} \ar[r]^-{\gamma_0(g)^*} & \Omega^p(BG_p)^{G_p}. }
$$
Next, for any two objects $V$ and $V'$ of $\Rep(\TT\gfrak)$, we set
$$
\widehat{\AMsrc}{}^{\theta}_{V,V'} =\widehat{\AM}{}^{\theta} \otimes \id_{\Hom(V,V')} \colon \uS^{p}\gfrak^* \otimes \Hom(V,V') \longrightarrow \Omega^{p}(BG_{p})^{G_p} \otimes \Hom(V,V'). 
$$   
Then, from the definitions \eqref{eqn:4.2} and \eqref{eqn:4.11}, it follows on use of the above that the following diagram commutes
$$
\xymatrix@C=7ex{\uS^{p} \gfrak^* \otimes \Hom(V,V') \ar[r]^-{\widehat{\gamma}''_0(g)}\ar[d]_-{\widehat{\AMsrc}{}^{\theta}_{V,V'}} & \uS^{p} \gfrak^* \otimes \Hom(V,V') \ar[d]^-{\widehat{\AMsrc}{}^{\theta}_{V,V'}} \\
\Omega^p(BG_p)^{G_p} \otimes \Hom(V,V') \ar[r]^-{\widehat{\gamma}_0(g)} & \Omega^p(BG_p)^{G_p} \otimes \Hom(V,V'). }
$$
Recalling that $(\uS^{\sbullet} \gfrak^* \otimes \Hom(V,V'))^G$ coincides with $(\uW^{\sbullet,\sbullet}\gfrak \otimes \Hom(V,V'))_{\bas}$ and $[\Omega^{\sbullet}(BG_{\sbullet})^{G_{\sbullet}} \otimes \Hom(V,V')]^G$ coincides with $[\Omega^{\sbullet}(BG_{\sbullet})\otimes \Hom(V,V')]^{G_{\ssbullet + 1}}$, we thus get a morphism of cochain complexes
$$
\widehat{\AMsrc}{}^{\theta}_{V,V'} \colon (\uW^{\sbullet,\sbullet}\gfrak \otimes \Hom(V,V'))_{\bas} \longrightarrow [\Omega^{\sbullet}(BG_{\sbullet})\otimes \Hom(V,V')]^{G_{\ssbullet + 1}}. 
$$
Furthermore, invoking Theorem~\ref{thm:3.7}, we infer that $\widehat{\AMsrc}{}^{\theta}_{V,V'}$ is a left inverse of $\VEsrc_{V,V'}$. This, clearly, yields the result. 
\end{proof}

\begin{theorem}
Let $G$ be a compact simply connected Lie group. The categories $\BSS(G)$ and $\DGRep(\TT\gfrak)$ are $\A_\infty$ quasi-equivalent.
\end{theorem}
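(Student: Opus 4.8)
The plan is to obtain the statement immediately from the two quasi-equivalences already in hand, by exhibiting a two-step zig-zag through the intermediate category $\BSS^{G}(G)$. On one side, Theorem~B provides the Van~Est DG functor $\VEscr \colon \BSS^{G}(G) \to \DGRep(\TT\gfrak)$ and guarantees that it is a quasi-equivalence; the substance there is that $\widehat{\AMsrc}{}^{\theta}_{V,V'}$ is a left inverse to $\VEscr_{V,V'}$ (Theorem~\ref{thm:3.7} together with Propositions~\ref{prop:4.10} and \ref{prop:4.11}). On the other side, Proposition~\ref{prop:4.8aa} guarantees that the inclusion DG functor $\BSS^{G}(G) \to \BSS(G)$ is a quasi-equivalence, the compactness and connectedness of $G$ entering through Cartan's theorem to make the passage to $G_{p+1}$-invariants a quasi-isomorphism on each morphism complex. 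These assemble into
\[
\BSS(G) \longleftarrow \BSS^{G}(G) \xlongrightarrow{\phantom{a}\VEscr\phantom{a}} \DGRep(\TT\gfrak),
\]
with both arrows quasi-equivalences.

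The only further remark needed is the formal one that a DG functor, viewed as an $\A_{\infty}$-functor with $F_n = 0$ for $n \geq 2$, is an $\A_{\infty}$-quasi-equivalence exactly when it is a quasi-equivalence of DG categories, since then $F_1$ is the functor itself and the notions of quasi fully faithfulness and quasi essential surjectivity coincide in the two settings. Invoking this for each arrow of the diagram above yields a zig-zag of $\A_{\infty}$-quasi-equivalences linking $\BSS(G)$ and $\DGRep(\TT\gfrak)$, which is by definition what it means for these DG categories to be $\A_{\infty}$ equivalent.

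I do not expect any genuine obstacle here: all of the analytic and combinatorial content has been discharged in the proofs of Theorem~B and Proposition~\ref{prop:4.8aa}, so the present theorem is a formal corollary. The only point that requires a moment's care is that the defining zig-zag is routed through $\BSS^{G}(G)$ rather than realised by a single functor $\BSS(G) \to \DGRep(\TT\gfrak)$; producing such a single functor would instead require inverting the inclusion up to $\A_{\infty}$-homotopy, which is unnecessary for the claim as stated.
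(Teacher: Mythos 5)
Your proposal is correct and is precisely the argument the paper intends: the theorem follows immediately from Proposition~\ref{prop:4.8aa} (the inclusion $\BSS^{G}(G)\to\BSS(G)$ is a quasi-equivalence) and Theorem~B (the Van~Est DG functor is a quasi-equivalence), assembled into the zig-zag through $\BSS^{G}(G)$ shown in the paper's introductory diagram, together with the formal observation that a DG quasi-equivalence is an $\A_{\infty}$-quasi-equivalence. Nothing further is needed.
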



\subsection{The De~Rham-Hochschild $\A_{\infty}$-functor}
In this subsection we shall construct an $\A_{\infty}$-quasi-equivalence which connects the Bott-Shulman-Stasheff DG category $\BSS(G)$ to the DG enhanced category $\DGMod(\uC_{\sbullet}(G))$. We will use the De~Rham-Hochschild $\A_{\infty}$-quasi-isomorphism from \S\ref{sec:3.3}. 

Let $V$ be an object of $\Rep(\TT \gfrak)$. 

 By tensoring the $\A_\infty$ map $\DR^{\Theta}:  \Tot(\Omega^{\sbullet}(BG_{\sbullet}))  \to \HC^{\sbullet}(\uC_{\sbullet}(G))$ with the 
 identity on $\End(V)$ one obtains a map:

\[\DRsrc \colon \Tot(\Omega^{\sbullet}(BG_{\sbullet})) \otimes \End(V) \to \HC^{\sbullet}(\uC_{\sbullet}(G))\otimes \End(V),\]which
is an $\A_{\infty}$-quasi-isomorphism. 

To proceed further, let us denote by $\rho \colon \TT \gfrak \to \End(V)$ the structure homomorphism associated with $V$. Following the discussion of \S~\ref{sec:2.6}, we will designate by $\Iscr(\rho) \colon \uC_{\sbullet}(G) \to \End(V)$ the structure homomorphism associated with $V$ when viewed as object of $\Mod(\uC_{\sbullet}(G))$.  We will also write $\II_V \colon \uC_{\sbullet}(G) \to \End(V)$ to denote the homomorphism which associates $\id_V$ to every singular $0$-simplex on $G$, that is, the trivial module. 

\begin{lemma}
The element $\Iscr(\rho)- \II_V$ is a Maurer-Cartan element of $\HC^{\sbullet}(\uC_{\sbullet}(G)\otimes \End(V)$. 
\end{lemma}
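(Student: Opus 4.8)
The plan is to deduce this Maurer-Cartan property from the one already established for $\Phi_V - \id_V$ by transporting it along the $\A_{\infty}$-morphism $\DRsrc$. Recall that $\DRsrc = \DR^{\Theta} \otimes \id_{\End(V)}$ is an $\A_{\infty}$-morphism of DG algebras from $\Tot(\Omega^{\sbullet}(BG_{\sbullet})) \otimes \End(V)$ to $\HC^{\sbullet}(\uC_{\sbullet}(G)) \otimes \End(V)$, and that any morphism of $\A_{\infty}$-algebras carries Maurer-Cartan elements to Maurer-Cartan elements via the pushforward $\alpha \mapsto \sum_{n \geq 1} \DRsrc_n(\alpha^{\otimes n})$. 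Since the preceding lemma shows that $\Phi_V - \id_V$ is a Maurer-Cartan element of the source, it suffices to compute this pushforward and identify it with $\Iscr(\rho) - \II_V$.

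First I would argue that only the linear term survives, which is exactly where Proposition~\ref{prop:3.18} is needed. Expanding $(\Phi_V - \id_V)^{\otimes n}$ for $n \geq 2$, every summand either contains a tensor factor equal to the unit $\id_V \in \Omega^{0}(BG_0) \otimes \End(V)$, in which case $\DRsrc_n$ vanishes by strict unitality of the De~Rham $\A_{\infty}$-morphism, or else consists entirely of the components $\Phi^{(k)}_V \in \Omega^{k}(G) \otimes \End(V)^{-k}$, which are differential forms on $G = BG_1$; on such entries $\DRsrc_n$ vanishes by Proposition~\ref{prop:3.18}. Hence the pushforward reduces to $\DRsrc_1(\Phi_V) - \DRsrc_1(\id_V)$.

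Next I would identify these two surviving terms. By unitality one has $\DRsrc_1(\id_V) = \II_V$. For the remaining term, tracing the definition $\DR^{\Theta} = \Theta \circ \Tot(\DR^{\Delta}_{\sbullet})$ through the simplicial degree one part, the restriction of $\DRsrc_1$ to $\Omega^{\sbullet}(BG_1) \otimes \End(V)$ is given on a singular $k$-simplex $\sigma$ by $\int_{\Delta_k} \sigma^{*}(-)$, up to the Eilenberg-Zilber dualization and the sign coming from $\Theta$; this is precisely the formula \eqref{eqn:2.38} defining $\Iscr(\rho)$. Thus $\DRsrc_1(\Phi_V) = \Iscr(\rho)$, and the pushforward equals $\Iscr(\rho) - \II_V$, which is therefore Maurer-Cartan.

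The hard part will be the sign and unitality bookkeeping: one must confirm that $\DR^{\Theta}$ is strictly unital on the $BG_0$-part, that Proposition~\ref{prop:3.18} applies verbatim to the forms $\Phi^{(k)}_V$, and that the signs produced by the Maurer-Cartan pushforward, by the isomorphism $\Theta$ of Lemma~\ref{lem:3.14a}, and by the Eilenberg-Zilber dualization all conspire to match those in \eqref{eqn:2.38}. Should this prove too delicate, I would fall back on a direct verification of $b(\Iscr(\rho)-\II_V) + (\Iscr(\rho)-\II_V) \abxcup (\Iscr(\rho)-\II_V) = 0$: its bar-degree one part follows from the descent equations \eqref{eqn:2.34} via Stokes' theorem (expressing that $\Iscr(\rho)$ is a chain map), its bar-degree two part from the multiplicativity \eqref{eqn:2.35} of $\Phi_V$ under $\mu^{*}$ together with the definition of the product on $\uC_{\sbullet}(G)$ (expressing that $\Iscr(\rho)$ is multiplicative), and the subtraction of $\II_V$ accounts for the normalization at $0$-simplices through \eqref{eqn:2.36}.
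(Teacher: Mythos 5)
Your proposal follows essentially the same route as the paper: push $\Phi_V - \id_V$ forward along the $\A_{\infty}$-morphism $\DRsrc$, use Proposition~\ref{prop:3.18} to kill the contributions of $\DRsrc_n$ for $n>1$, identify the surviving linear term with $\Iscr(\rho)-\II_V$ via \eqref{eqn:2.12} and \eqref{eqn:2.38}, and conclude from the fact that $\A_{\infty}$-morphisms preserve Maurer-Cartan elements. Your extra care with the cross-terms containing the unit $\id_V$ addresses a point the paper glosses over, but it does not change the argument.
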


\begin{proof}
Applying Proposition~\ref{prop:3.18}, we see that, for each $n > 1$, and for homogeneous elements $\omega_1 \in \Omega^{q_1}(G), \dots, \omega_n \in \Omega^{q_n}(G)$ and $f_1,\dots, f_n \in \End(V)$, 
$$
\DRsrc_n \left(\uu(\omega_1 \otimes f_1) \otimes \cdots \otimes \uu(\omega_n \otimes f_n)\right) = 0. 
$$
Putting this together with the definitions in \eqref{eqn:2.12} and \eqref{eqn:2.38}, we conclude that the Maurer-Cartan element $\Phi_V - \id_V$ of $\Tot(\Omega^{\sbullet}(BG_{\sbullet})) \otimes \End(V)$ is sent by the $\A_{\infty}$-morphism $\DRsrc$ to the element $\Iscr(\rho)- \II_V$ of $\HC^{\sbullet}(\uC_{\sbullet}(G),\End(V))$. Since  $\A_{\infty}$-morphisms preserve Maurer-Cartan elements, the result follows.  
\end{proof}

As usual, an immediate consequence of this is the following. 

\begin{corollary}
The operator $D$ in $\HC^{\sbullet}(\uC_{\sbullet}(G),\End(V))$ given by
$$
D = b + \delta + \Iscr(\rho)- \II_V,
$$
is a derivation of homogeneous degree $1$ that satisfies $D^2 = 0$. 
\end{corollary}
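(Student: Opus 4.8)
The plan is to treat this exactly as the two analogous statements already proved in \S\ref{sec:4.1} and Corollary~\ref{cor:4.5}: it is a formal consequence of the preceding Maurer--Cartan Lemma via the standard twisting construction for differential graded algebras. First I would make explicit the ambient DG algebra, namely $B = \HC^{\sbullet}(\uC_{\sbullet}(G),\End(V))$ with product the cup product of \eqref{eqn:2.7aa} and differential $b + \delta$. By the preceding Lemma the element $\mu = \Iscr(\rho) - \II_V$ has total degree one and satisfies the Maurer--Cartan equation $(b+\delta)\mu + \mu\abxcup\mu = 0$ in $B$.

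Next I would recall the general twisting principle: for any DG algebra $(B,d)$ and any degree-one element $\mu$ with $d\mu + \mu\abxcup\mu = 0$, the twisted operator $D = d + [\mu,-]$, with $[\mu,\varphi] = \mu\abxcup\varphi - (-1)^{\vert\varphi\vert}\varphi\abxcup\mu$ the graded commutator, is a derivation of homogeneous degree one. Applying this with $d = b + \delta$ immediately yields the derivation and degree claims for $D$.

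The remaining point is the identity $D^2 = 0$, which I would verify by the same short sign computation as in Corollary~\ref{cor:4.5}. Expanding $D^2 = d^2 + d\circ[\mu,-] + [\mu,-]\circ d + [\mu,[\mu,-]]$, the term $d^2$ vanishes because $b+\delta$ is a differential; the graded Leibniz rule gives $d\circ[\mu,-] = [d\mu,-] - [\mu, d\circ-]$, so the two mixed terms collapse to $[d\mu,-]$; and the graded Jacobi identity, together with $\vert\mu\vert = 1$, rewrites $[\mu,[\mu,-]]$ as $[\mu\abxcup\mu,-]$. Hence $D^2 = [\,(b+\delta)\mu + \mu\abxcup\mu,\,-\,]$, which is zero by the Maurer--Cartan equation of the Lemma.

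I do not anticipate any genuine obstacle here, since every step is purely formal once the Maurer--Cartan property is granted; the only thing demanding care is the bookkeeping of signs in the graded commutator and Jacobi identity, all of which are determined by the degree-one convention $\vert\mu\vert = 1$. The one modelling choice worth flagging is the reading of the displayed formula $D = b + \delta + \Iscr(\rho) - \II_V$: whether the twist is the commutator $[\mu,-]$, so that $D$ is an algebra derivation, or left cup multiplication by $\mu$, so that $D$ is a connection on $B$ viewed as a module over $\HC^{\sbullet}(\uC_{\sbullet}(G))$, matching the form of Corollary~\ref{cor:4.5}. In either reading the obstruction to $D^2 = 0$ is precisely the Maurer--Cartan element, so the conclusion is the same.
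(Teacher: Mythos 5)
Your proposal is correct and matches the paper's treatment: the paper states this corollary as an immediate consequence of the preceding Maurer--Cartan lemma, exactly as it does for the analogous corollaries in \S\ref{sec:4.1} and Corollary~\ref{cor:4.5}, and the formal twisting computation you spell out (including the sign bookkeeping and the collapse of $D^2$ to $[\,(b+\delta)\mu+\mu\abxcup\mu,\,-\,]$) is precisely the argument being left implicit. Your remark about the two possible readings of the twist is also consistent with how the paper later uses $D$ via $\partial_{D,D'}$ in Proposition~\ref{prop:4.16}.
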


With this point in mind, let us now take $V'$ to be another object of $\Rep(\TT\gfrak)$ with associated structure homomorphism $\rho' \colon \TT \gfrak \to \End(V')$ and differential $D'$, and consider the graded vector space $\HC^{\sbullet}(\uC_{\sbullet}(G),\End(V))$ endowed with the differential $\partial_{D,D'}$ given by the same formula as \eqref{eqn:4.2aa} above. The following result demonstrates the basic link between the latter and the the Hochschild differential on $\HC^{\sbullet}(\uC_{\sbullet}(G),\End(V))$ obtained by the prescription \eqref{eqn:2.6aa}. 

\begin{proposition}\label{prop:4.16}
The differential $\partial_{D,D'}$ coincides with the differential on the  Hochschild cochain complex $\HC^{\sbullet}(\uC_{\sbullet}(G),\End(V))$. 
\end{proposition}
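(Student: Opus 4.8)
The plan is to establish the identity by expanding the defining formula \eqref{eqn:4.2aa} for $\partial_{D,D'}$ and comparing the outcome term by term with the Hochschild differential \eqref{eqn:2.6aa}. Recall that $\partial_{D,D'}\varphi = D'\circ\varphi - (-1)^{k}\varphi\circ D$ for $\varphi$ of degree $k$, with $D = b + \delta + \Iscr(\rho) - \II_V$ and $D' = b + \delta + \Iscr(\rho') - \II_{V'}$; here $D'\circ\varphi$ and $\varphi\circ D$ are to be read through the left and right cup-product actions of $\HC^{\sbullet}(\uC_{\sbullet}(G),\End(V'))$ and $\HC^{\sbullet}(\uC_{\sbullet}(G),\End(V))$ on the bimodule $\HC^{\sbullet}(\uC_{\sbullet}(G),\Hom(V,V'))$ recalled in \S\ref{sec:2.1}. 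First I would substitute these expressions and sort the resulting contributions into three families: the internal differentials $\delta$, the bar differentials $b$, and the cup products against the Maurer--Cartan elements $\Iscr(\rho')-\II_{V'}$ and $\Iscr(\rho)-\II_V$.

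The second step is to evaluate each family on a homogeneous cochain $\varphi\colon (\uu\uC_{\sbullet}(G))^{\otimes n}\to\Hom(V,V')$ and an elementary tensor $\uu a_1\otimes\cdots\otimes\uu a_n$. The two internal differentials assemble into the differential $d_M$ of the bimodule $M=\Hom(V,V')$, namely $d_M f = d_{V'}\circ f - (-1)^{\vert f\vert} f\circ d_V$, which is the first term of \eqref{eqn:2.6aa}, while the two bar differentials reproduce the term $-(-1)^{\vert\varphi\vert}\varphi(b(\cdots))$. For the remaining contributions I would apply the cup-product formula \eqref{eqn:2.7aa}: since $\Iscr(\rho')-\II_{V'}$ and $\Iscr(\rho)-\II_V$ are concentrated in Hochschild length one, the left product contracts only the first slot and the right product only the last, yielding $\Iscr(\rho')(a_1)\circ\varphi(\uu a_2\otimes\cdots\otimes\uu a_n)$ and $\varphi(\uu a_1\otimes\cdots\otimes\uu a_{n-1})\circ\Iscr(\rho)(a_n)$, together with the corrections $-\varepsilon(a_1)\varphi(\cdots)$ and $-\varepsilon(a_n)\varphi(\cdots)$ coming from $\II_{V'}$ and $\II_V$. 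Because $\Iscr(\rho')(a_1)$ and $\Iscr(\rho)(a_n)$ are exactly the left and right actions of $\uC_{\sbullet}(G)$ that define the bimodule $M$ (see \eqref{eqn:2.38}), these reproduce the last two terms of \eqref{eqn:2.6aa}.

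The crux of the argument is the cancellation that accounts for the shift by $\II_V$ in the Maurer--Cartan element: the bar differential $b$ on the trivial-coefficient complex $\HC_{\sbullet}(\uC_{\sbullet}(G))$ carries the wrap-around contributions $\varepsilon(a_1)(\cdots)$ and $\varepsilon(a_n)(\cdots)$, and these are precisely annihilated by the $\II_{V'}$- and $\II_V$-corrections produced in the previous step. What then survives is exactly the right-hand side of \eqref{eqn:2.6aa} for the bimodule $\Hom(V,V')$ with left action through $\rho'$ and right action through $\rho$, which is the asserted Hochschild differential. I expect the only genuine difficulty to be sign bookkeeping: one must reconcile the Koszul signs of \eqref{eqn:2.7aa}, the suspension signs hidden in the symbols $\uu a_i$, and the factors $(-1)^{k}$ and $(-1)^{\vert\varphi\vert}$ appearing in \eqref{eqn:4.2aa} and \eqref{eqn:2.6aa}. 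I would keep this under control by fixing the internal degree $k=\vert\varphi\vert$ and the length $n$ at the outset and checking that each of the four surviving terms carries precisely the sign displayed in \eqref{eqn:2.6aa}.
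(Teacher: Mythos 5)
Your proposal is correct and follows essentially the same route as the paper's proof: expand $\partial_{D,D'}$ using $D=b+\delta+\Iscr(\rho)-\II_V$, evaluate the Maurer--Cartan contributions via the cup-product formula \eqref{eqn:2.7aa}, and match the result term by term with \eqref{eqn:2.6aa}. The only difference is that you make explicit the cancellation between the $\II_V$-, $\II_{V'}$-corrections and the $\varepsilon$ wrap-around terms of the trivial-coefficient bar differential, a point the paper's displayed expansion of $\partial_{D,D'}$ passes over silently; your bookkeeping there is right and, if anything, makes the argument more complete.
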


\begin{proof}
Explicitly, we may write $\partial_{D,D'}$ as
\begin{align*}
\begin{split}
&\partial_{D,D'} \varphi (\uu c_1 \otimes \cdots \otimes \uu c_n) = \delta(\varphi(\uu c_1 \otimes \cdots \otimes \uu c_n)) - (-1)^{\vert \varphi \vert} \varphi(b(\uu c_1 \otimes \cdots \otimes \uu c_n)) \\
& \qquad\qquad\qquad\quad\,\,\, + (\Iscr(\rho') \abxcup \varphi)(\uu c_1 \otimes \cdots \otimes \uu c_n)  - (-1)^{\vert\varphi\vert} (\varphi \abxcup \Iscr(\rho))(\uu c_1 \otimes \cdots \otimes \uu c_n), 
\end{split}  
\end{align*}
for homogeneous elements $\varphi \in \Hom((\uu \uC_{\sbullet}(G))^{\otimes n}, \End(V))$ and $c_1,\dots, c_n \in \uC_{\sbullet}(G)$. On the other hand, from the definition of the cup product \eqref{eqn:2.7aa}, we have
\begin{align*}
 (\Iscr(\rho') \abxcup \varphi)(\uu c_1 \otimes \cdots \otimes \uu c_n)  = (-1)^{\vert \varphi \vert (\vert c_1 \vert +1)} \Iscr(\rho')(c_1) \circ \varphi (\uu c_2 \otimes \cdots \otimes \uu c_n), 
\end{align*}
and
\begin{align*}
(\varphi \abxcup \Iscr(\rho))(\uu c_1 \otimes \cdots \otimes \uu c_n)  = (-1)^{\sum_{j=1}^{n-1}\vert c_j \vert + n-1}  \varphi (\uu c_1 \otimes \cdots \otimes \uu c_{n-1})\circ  \Iscr(\rho)(c_n).
\end{align*}
Using this together with \eqref{eqn:2.6aa} gives the desired conclusion. 
\end{proof}

Now we proceed to define the $\A_{\infty}$-functor from $\BSS(G)$ to $\DGMod(\uC_{\sbullet}(G))$, which we also denote by $\DRsrc$ and will refer to as the \emph{Hochschild-De~Rham $\A_{\infty}$-functor}. On objects $\DRsrc$ acts as the integration functor $\Iscr \colon \Rep(\TT \gfrak) \to \Mod(\uC_{\sbullet}(G))$ which gives a module over singular chains given a representation of $\TT \gfrak$. For each $n \geq 1$ and for every collection of objects $V_0, \dots, V_n$ of $\Rep(\TT\gfrak)$, we let
\begin{align*}
&\DRsrc_n \colon \uu \left(\Tot(\Omega^{\sbullet}(BG_{\sbullet}) \otimes \Hom(V_{n-1},V_{n})\right) \otimes \cdots \otimes  \uu \left(\Tot(\Omega^{\sbullet}(BG_{\sbullet}) \otimes \Hom(V_{0},V_{1})\right)  \\
&\qquad\qquad\qquad\qquad\qquad\qquad\qquad\qquad\qquad\qquad\qquad\qquad\qquad\qquad\longrightarrow \HC^{\sbullet}(\uC_{\sbullet}(G),\Hom(V_0,V_n))
\end{align*}
be defined by
\begin{align}
\begin{split}
&\DRsrc_n (\uu(\omega_{n-1} \otimes f_{n-1}) \otimes \cdots \otimes \uu(\omega_{0} \otimes f_{0}) )\\
&\qquad\qquad\qquad\qquad = \uu \left(\us \DR^{\Theta}_n (\uu \omega_{n-1} \otimes \cdots \otimes \uu \omega_{0}) \otimes (f_{n-1} \circ \cdots \circ f_{0}) \right),
\end{split}
\end{align}
for homogeneous elements $\omega_0,\dots, \omega_{n-1} \in \Tot(\Omega^{\sbullet}(BG_{\sbullet})$ and for a composable chain of homogeneous homomorphisms $f_0 \in \Hom(V_{0},V_{1}), \dots, f_{n-1} \in \Hom(V_{n-1},V_{n})$. A straightforward computation, which takes into account Proposition~\ref{prop:4.16}, shows that the sequence of maps $\DRsrc_n$ indeed defines an $\A_{\infty}$-functor $\DRscr \colon \BSS(G) \to \DGMod(\uC_{\sbullet}(G))$. 
\begin{theoremC}\label{thm:4.17aa}
The Hochschild-De~Rham $\A_{\infty}$-functor $\DRscr \colon \BSS(G) \to \DGMod(\uC_{\sbullet}(G))$ is an $\A_{\infty}$-quasi-equivalence. 
\end{theoremC}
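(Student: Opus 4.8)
The plan is to verify the two defining conditions of an $\A_{\infty}$-quasi-equivalence in turn. Essential surjectivity is immediate from the correspondence of \cite{AriasAbad2019}: on objects $\DRsrc$ acts as the integration functor $\Iscr \colon \Rep(\TT\gfrak) \to \Mod(\uC_{\sbullet}(G))$ recalled in \S\ref{sec:2.6}, which is an equivalence of categories with inverse $\Dscr$. Hence every object of $\DGMod(\uC_{\sbullet}(G))$ is of the form $\Iscr(V)=\DRsrc(V)$, so $\mathbf{Ho}(\DRsrc_1)$ is essentially surjective (indeed bijective on objects). The substance of the theorem is therefore quasi full faithfulness, i.e.\ that $\DRsrc_1$ is a quasi-isomorphism on each morphism complex.

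Fix objects $V,V'$ with twisting differentials $D,D'$. Taking $n=1$ in the defining formula gives $\DRsrc_1(\uu(\omega\otimes f)) = \uu(\us\DR^{\Theta}_1(\uu\omega)\otimes f)$, so $\DRsrc_1 = \DR^{\Theta}_1\otimes\id_{\Hom(V,V')}$. I would first stress that this really is the twisted linear term: twisting an $\A_{\infty}$-functor by the Maurer--Cartan elements $\Phi_{V'}-\id_{V'}$ and $\Phi_V-\id_V$ would a priori contribute corrections obtained by inserting these elements into the higher components $\DRsrc_n$, but those elements live in $\Omega^{\sbullet}(BG_1)\otimes\End(-)$, and by Proposition~\ref{prop:3.18} every higher component $\DR^{\Theta}_n$ ($n>1$) vanishes on forms pulled back from $G=BG_1$. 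Thus there are no corrections, and, using Proposition~\ref{prop:4.16} together with the Maurer--Cartan compatibility already established, $\DRsrc_1 = \DR^{\Theta}_1\otimes\id$ is an honest morphism of cochain complexes
\[
\DRsrc_1 \colon \big(\Omega^{\sbullet}(BG_{\sbullet})\otimes\Hom(V,V'),\,\partial_{D,D'}\big) \longrightarrow \big(\HC^{\sbullet}(\uC_{\sbullet}(G),\Hom(V,V')),\,\partial_{D,D'}\big).
\]

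To prove this map is a quasi-isomorphism I would filter the source by the simplicial degree $p$ and the target by the Hochschild (tensor) length $n$; under $\DR^{\Theta}_1$ these correspond, since the totalisation carries $\Omega^{\sbullet}(BG_p)$ into the length-$p$ slot. The exterior and internal differentials $\bar d$ and $\bar\delta$ preserve this degree, whereas the simplicial differential $\partial$, the bar part of the Hochschild differential $b$, and every twisting term (cup product with a component of $\Phi_{V}-\id_V$, of $\Phi_{V'}-\id_{V'}$, resp.\ of $\Iscr(\rho)-\II_V$) strictly raise it by one. Consequently the twists disappear on the associated graded, where $\DRsrc_1$ reduces in slot $p$ to Gugenheim's ordinary de~Rham map on $BG_p$, followed by $\overline{\EZ}_p^{*}$ and the isomorphism $\Theta$, all tensored with $\Hom(V,V')$. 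The de~Rham theorem and the quasi-isomorphism $\overline{\EZ}_p$ make the first factors quasi-isomorphisms, and since $\uH_{\sbullet}(G;\RR)$ is finite dimensional the passage from coefficients $\RR$ to coefficients $\Hom(V,V')$ is a quasi-isomorphism slotwise. Hence $\DRsrc_1$ induces an isomorphism on $E_1$, and a comparison of the two spectral sequences, exactly as in Lemma~\ref{lem:3.10a}, shows it is a quasi-isomorphism.

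The conceptual content is thus split cleanly, and the main obstacle is the final filtration argument: arranging that all Maurer--Cartan twists and horizontal differentials are uniformly filtration-increasing while $\DRsrc_1$ stays filtered, and controlling convergence of the two spectral sequences — in particular the tensor-versus-Hom comparison on the target, which is precisely where finite dimensionality of $\uH_{\sbullet}(G)$ enters. Once the $E_1$-isomorphism is in place the conclusion is formal, and combined with the essential surjectivity above it yields that $\DRsrc$ is an $\A_{\infty}$-quasi-equivalence.
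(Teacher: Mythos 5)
Your proposal is correct and follows essentially the same route as the paper: essential surjectivity from the fact that $\DRscr$ acts as the integration functor $\Iscr$ on objects, and quasi full faithfulness from the quasi-isomorphism $\DR^{\Theta}$ of Theorem C in \S\ref{sec:3.3}. The paper's own proof is much terser --- it simply asserts that the untwisted quasi-isomorphism suffices --- whereas you supply the filtration/spectral-sequence argument showing that the Maurer--Cartan twists, being strictly filtration-increasing, do not affect the conclusion; this is a genuine (and welcome) filling-in of a step the paper leaves implicit rather than a different method.
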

\begin{proof}
Since the functor coincides with  $\Iscr \colon \Rep(\TT \gfrak) \to \Mod(\uC_{\sbullet}(G))$ on objects, we know that it is essentially surjective. It is also quasi-fully faithful because the map 
\[\DRsrc \colon \Tot(\Omega^{\sbullet}(BG_{\sbullet})) \otimes \End(V) \to \HC^{\sbullet}(\uC_{\sbullet}(G))\otimes \End(V),\]
is a quasi-isomorphism.
\end{proof}
\subsection{The main theorem}
We are at last in a position to state and prove the principal result of the paper. 

\begin{theoremA}\label{thm:4.18aa}
Suppose that $G$ is compact and simply connected. There exists a zig-zag of $\A_\infty$ quasi-equivalences of DG categories connecting $\DGRep(\TT\gfrak)$ and $\DGMod(\uC_{\sbullet}(G))$. 
\end{theoremA}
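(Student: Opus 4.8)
The plan is to deduce the theorem simply by concatenating the three quasi-equivalences constructed in this section into a single zig-zag. First I would display the chain
\[
\DGRep(\TT\gfrak) \xleftarrow{\ \VEscr\ } \BSS^{G}(G) \longrightarrow \BSS(G) \xrightarrow{\ \DRscr\ } \DGMod(\uC_{\sbullet}(G)),
\]
whose three arrows are, from left to right, the Van~Est DG functor, the inclusion of the invariant Bott--Shulman--Stasheff category into the full one, and the Hochschild--De~Rham $\A_{\infty}$-functor.

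Next I would record that each of these three arrows is an $\A_{\infty}$-quasi-equivalence. By the Van~Est comparison established above, the DG functor $\VEscr\colon\BSS^{G}(G)\to\DGRep(\TT\gfrak)$ is a quasi-equivalence; by Proposition~\ref{prop:4.8aa} the inclusion $\BSS^{G}(G)\to\BSS(G)$ is a quasi-equivalence; and by the Hochschild--De~Rham comparison the $\A_{\infty}$-functor $\DRscr\colon\BSS(G)\to\DGMod(\uC_{\sbullet}(G))$ is an $\A_{\infty}$-quasi-equivalence. It is exactly here that the hypotheses on $G$ are used: compactness (and connectedness of each $G_{p+1}$) feeds Cartan's theorem, which underlies both Proposition~\ref{prop:4.8aa} and the Van~Est comparison, while simple connectivity guarantees that the integration functor $\Iscr$ is available, and hence that $\DRscr$ is defined on objects.

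Finally I would note that a DG functor which is a quasi-equivalence is in particular an $\A_{\infty}$-quasi-equivalence: regarding it as an $\A_{\infty}$-functor with $F_n=0$ for $n\geq 2$, the definitions of \S\ref{sec:2.2} show that quasi full faithfulness and quasi essential surjectivity are precisely the conditions that $F_1$ be a quasi-isomorphism on each morphism complex and that $\mathbf{Ho}(F_1)$ be essentially surjective. Hence all three arrows of the displayed chain are $\A_{\infty}$-quasi-equivalences, so the chain is a zig-zag of $\A_{\infty}$-quasi-equivalences connecting $\DGRep(\TT\gfrak)$ to $\DGMod(\uC_{\sbullet}(G))$, which is the claim. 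I do not anticipate any genuine obstacle at this stage: all of the substantive content --- the Van~Est and invariance arguments and the De~Rham--Hochschild quasi-isomorphism --- has already been carried out, so the only points requiring care are matching the two notions of quasi-equivalence and confirming that compactness and simple connectivity are invoked exactly where their respective inputs demand them.
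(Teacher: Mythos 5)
Your proposal is correct and follows essentially the same route as the paper: the paper's own proof of Theorem A is precisely the one-line deduction from Proposition~\ref{prop:4.8aa}, Theorem~B and Theorem~C, assembled into the zig-zag $\DGRep(\TT\gfrak) \leftarrow \BSS^{G}(G) \rightarrow \BSS(G) \rightarrow \DGMod(\uC_{\sbullet}(G))$. Your additional remarks --- that a DG quasi-equivalence is an $\A_{\infty}$-quasi-equivalence with $F_n=0$ for $n\geq 2$, and the accounting of where compactness and simple connectivity enter --- are accurate and, if anything, more explicit than the paper's proof.
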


\begin{proof}
It follows directly from Proposition~\ref{prop:4.8aa},  Theorem B and Theorem C.
\end{proof}

We finish the paper with a couple of examples that illustrate the content of our main result.

\begin{example}
Let us write $\RR$ for the trivial representation of $\TT\gfrak$. Then, on the one hand,
$$
\mathrm{H}^{\sbullet}\left(\End_{\DGRep(\TT\gfrak)}(\RR)\right) = \mathrm{H}^{\sbullet}\left((\uW\gfrak)_{\bas}\right) \cong (\uS^{\sbullet}\gfrak^*)^G \cong \mathrm{H}^{\sbullet}(BG). 
$$
On the other hand,
$$
\mathrm{H}^{\sbullet}\left(\End_{\DGMod(\uC_{\sbullet}(G))}(\RR)\right) \cong \mathrm{H}^{\sbullet}\left(\HC^{\sbullet}(\uC_{\sbullet}(G))\right) = \HH^{\sbullet}\left(\uC_{\sbullet}(G)\right), 
$$
where ``$\HH^{\sbullet}$'' stands for Hochschild cohomology. Invoking Theorem~\ref{thm:4.18aa}, we conclude that
$$
\mathrm{H}^{\sbullet}(BG)\cong (\uS^{\sbullet}\gfrak^*)^G\cong \HH^{\sbullet}\left(\uC_{\sbullet}(G)\right).
$$
This recovers two models for computing the cohomology of the classifying space $BG$ with coefficients in the trivial local system.
\end{example}
 
\begin{example}
Let us consider the Chevalley-Eilenberg complex $\CE(\gfrak)$ viewed as a representation of $\TT\gfrak$.   Then, on the one hand,
$$
\mathrm{H}^{\sbullet}\left(\Hom_{\DGRep(\TT\gfrak)}(\RR, \CE(\gfrak))\right) = \mathrm{H}^{\sbullet}\left((\uW\gfrak \otimes \CE(\gfrak))_{\bas}\right) \cong \mathrm{H}^{\sbullet}\left((\uS^{\sbullet}\gfrak^* \otimes \CE(\gfrak))^G\right).
$$
On the other hand,
$$
\mathrm{H}^{\sbullet}\left(\Hom_{\DGMod(\uC_{\sbullet}(G))}(\RR, \CE(\gfrak))\right) = \mathrm{H}^{\sbullet}(\HC^{\sbullet}(\uC_{\sbullet}(G), \CE(\gfrak))) = \HH^{\sbullet}(\uC_{\sbullet}(G), \CE(\gfrak)). 
$$
Since the latter is known to be isomorphic to the cohomology of the free loop space $\mathcal{L}BG$ of $BG$, Theorem~\ref{thm:4.18aa} tells us that
$$
\mathrm{H}^{\sbullet}(\mathcal{L}BG) \cong \mathrm{H}^{\sbullet}\left((\uS^{\sbullet}\gfrak^* \otimes \CE(\gfrak))^G\right). 
$$
On recovers the fact that the equivariant cohomology of $G$ acting on itself by conjugation is the cohomology of the loop space of $BG$.
This means that the Chevalley-Eilenberg complex $\CE(\gfrak)$ corresponds to the Gauss-Manin local system for the loop space fibration $\pi: \mathcal{L}BG \to BG$.
\end{example}



\end{document}